\newtheorem{theorem}{Theorem}
\newtheorem{lemma}{Lemma}
\newtheorem{definition}{Definition}
\newtheorem{proposition}{Proposition}
\newtheorem{corollary}{Corollary}
\newtheorem{remark}{Remark}
\newcommand{\boxto}{\ensuremath{%
		\mathrel{\Box\kern-1.5pt\raise1pt\hbox{$\mathord{\rightarrow}$}}}}
\newcommand{\diamondto}{\ensuremath{%
		\mathrel{\Diamond\kern-1.5pt\raise1pt\hbox{$\mathord{\rightarrow}$}}}}
\newcommand{\boxTo}{\ensuremath{%
		\mathrel{\Box\kern-1.5pt\raise1pt\hbox{$\mathord{\Rightarrow}$}}}}
\newcommand{\diamondTo}{\ensuremath{%
		\mathrel{\Diamond\kern-1.5pt\raise1pt\hbox{$\mathord{\Rightarrow}$}}}}
\def\@clipped@vdash{%
	\raise .6ex\hbox{\clipbox{0pt .6ex 0pt .6ex}{$\vdash$}}%
}
\begin{document}
{\selectlanguage{english}
\binoppenalty = 10000 %
\relpenalty   = 10000 %

\pagestyle{headings} \makeatletter
\renewcommand{\@evenhead}{\raisebox{0pt}[\headheight][0pt]{\vbox{\hbox to\textwidth{\thepage\hfill \strut {\small Grigory K. Olkhovikov}}\hrule}}}
\renewcommand{\@oddhead}{\raisebox{0pt}[\headheight][0pt]{\vbox{\hbox to\textwidth{{Nelsonian conditionals and their first-order embeddings}\hfill \strut\thepage}\hrule}}}
\makeatother

\title{Conditional reasoning and the shadows it casts onto the first-order logic: the Nelsonian case}
\author{Grigory K. Olkhovikov\\ Department of Philosophy I\\ Ruhr University Bochum\\
email: grigory.olkhovikov@\{rub.de, gmail.com\}}
\date{}
\maketitle
\begin{quote}
{\bf Abstract.} We define a natural notion of standard translation for the formulas of conditional logic which is analogous to the standard translation of modal formulas into the first-order logic. We briefly show that this translation works (modulo a lightweight first-order encoding of the conditional models) for the minimal classical conditional logic $\mathsf{CK}$ introduced by Brian Chellas in \cite{chellas}; however, the main result of the paper is that a classically equivalent reformulation of these notions (i.e. of standard translation plus theory of conditional models) also faithfully embeds the basic Nelsonian conditional logic $\mathsf{N4CK}$, introduced in \cite{nelsonian} into $\mathsf{QN4}$, the paraconsistent variant of Nelson's first-order logic of strong negation. Thus $\mathsf{N4CK}$ is the logic induced by the Nelsonian reading of the classical Chellas semantics of conditionals and can, therefore, be considered a faithful analogue of $\mathsf{CK}$ on the non-classical basis provided by the propositional fragment of $\mathsf{QN4}$. Moreover, the methods used to prove our main result can be easily adapted to the case of modal logic, which allows to improve an older result \cite[Proposition 7]{odintsovwansing} by S. Odintsov and H. Wansing about the standard translation embedding of the Nelsonian modal logic $\mathsf{FSK}^d$ into $\mathsf{QN4}$. 
\end{quote}
\begin{quote}{\bf Keywords.} conditional logic, strong negation, paraconsistent logic, modal logic, first-order logic, constructive logic 
\end{quote}

\section{Introduction}\label{S:intro}
The present paper is a study of the relation between the relatively new system of conditional logic $\mathsf{N4CK}$, introduced recently in \cite{nelsonian} by the author and the paraconsistent version $\mathsf{QN4}$ of Nelson's logic of strong negation.\footnote{The only difference between $\mathsf{QN4}$ and the original version $\mathsf{QN3}$ of Nelson's logic of strong negation (see \cite{nelson}), is that in $\mathsf{QN4}$ the extensions and the anti-extensions of predicates are no longer required to be disjoint.} The main result of the paper says that a natural notion of the standard first-order translation of conditional formulas provides a faithful embedding of $\mathsf{N4CK}$ into $\mathsf{QN4}$ modulo the assumption of a certain first-order theory encoding the notion of a conditional model. 

Since this embedding is, in a sense, the same embedding that obtains in the case of $\mathsf{CK}$, the minimal classical conditional logic, relative to the classical first-order logic $\mathsf{QCL}$, one can view this result as showing that $\mathsf{N4CK}$ as conditional logic is the same logic as $\mathsf{CK}$ only \textit{read non-classically}, that is to say, read in terms of the Nelson's logic of strong negation rather than classical logic. In this capacity, $\mathsf{N4CK}$ can be viewed as a natural candidate for the role of the minimal normal conditional logic extending $\mathsf{N4}$, the propositional fragment of $\mathsf{QN4}$.

One can better appreciate the true meaning of this result if one views it as the final piece in the mosaic of results relating the classical, the intuitionistic and the Nelsonian modal and conditional logics to their corresponding fragments of first-order reasoning by way of standard translation embeddings --- but also to one another. In order to supply this richer context, we have to do quite a bit of preliminary work before we get to the main proof, if this paper is to be reasonably self-contained. Our strive towards this goal explains most of our choices related both to the structure of the paper and to its length. We did our best to compensate for the latter shortcoming by making our explanations as lucid and easy to follow as possible. The more technical and tedious parts of our reasoning are systematically shifted to numerous appendices to be found at the end of the paper.

The rest of the paper is organized as follows. Section \ref{S:Prel} introduces the notational preliminaries, after which Section \ref{S:logics} defines a notion of logic which is wide enough to cover every system to be mentioned below. We then proceed to introduce three first-order logics in Section \ref{S:fo}, namely $\mathsf{QCL}$, $\mathsf{QN4}$, and $\mathsf{QIL}^+$, the positive fragment of the first-order intuitionistic logic  $\mathsf{QIL}$. The main work in this section is directed towards familiarizing the reader with some elementary results about $\mathsf{QN4}$ which is the least known of the three logics. To facilitate our main proof, we also need to define a novel sheaf semantics for $\mathsf{QN4}$ and we show its adequacy for the logic.

The propositional fragments of the logics dealt with in Section \ref{S:fo}  will not be introduced separately but will in each case be referred to by omitting the initial $\mathsf{Q}$ in the name of the corresponding logic, so that we will be mentioning $\mathsf{CL}$, $\mathsf{N4}$, $\mathsf{IL}^+$, and $\mathsf{IL}$ below without further explanation. All of the propositional fragments of the first-order logics are assumed to be given over the set $Prop$ to be introduced in Section \ref{S:idea}.

The main task of the latter section, however, is to provide the context for the main result of the paper by recalling several classical definitions and results about the standard translation of modal logics into first-order logics. This is followed by Section \ref{S:conditional} where we introduce the classical conditional logic $\mathsf{CK}$ and show how its properties can be viewed as a natural continuation of the properties of the modal logics laid out in the previous section. After that, Section \ref{S:N4CK} introduces the main system of the paper, $\mathsf{N4CK}$, and proves our main result. The proof procedure explained in Section \ref{S:N4CK} has the additional merit that it allows to improve on some known results about the standard translation embeddings of $\mathsf{N4}$-based modal logics. Finally, Section \ref{S:conclusion} sums up the broader meaning of the results obtained as well as offers concluding remarks; we also chart several avenues for further research.

\section{Preliminaries}\label{S:Prel}
We use this section to fix some notations to be used throughout the paper.

We will use IH as the abbreviation for Induction Hypothesis in the inductive proofs, and we will write $\alpha:=\beta$ to mean that we define $\alpha$ as $\beta$. We will use the usual notations for sets and functions. The set of all subsets of $X$ will be denoted by $\mathcal{P}(X)$. The natural numbers are undrestood as the finite von Neumann ordinals, and $\omega$ as the smallest infinite ordinal. Given an $n \in \omega$ and a tuple $\alpha = (x_1,\ldots,x_n)$ of any sort of objects, we will refer to $\alpha$ by $\bar{x}_n$ and will denote by $init(\alpha)$ and $end(\alpha)$ the initial and final element of $\alpha$, that is to say, $x_1$ and $x_n$, respectively. More generally, given any $i < \omega$ such that $1 \leq i \leq n$, we set that $\pi^i(\alpha):= x_i$, i.e. that $\pi^i(\alpha)$ denotes the $i$-th projection of $\alpha$. Given another tuple $\beta = \bar{y}_m$, we will denote by $(\alpha)^\frown(\beta)$ the concatenation of the two tuples, i.e. the tuple $(x_1,\ldots,x_n,y_1,\ldots,y_m)$. The empty tuple will be denoted by $\Lambda$.

We will extensively use ordered couples of sets which we will call \textit{bi-sets}. Relations are understood as sets of ordered tuples. Given binary relations $R \subseteq X \times Y$ and $S\subseteq Y\times Z$, we denote their composition by $R\circ S:= \{(a,c)\mid\text{ for some }b \in Y,\,(a, b)\in R,\,(b,c)\in S\}$.

We view functions as relations with special properties; we write $f:X \to Y$ to denote a function $f \subseteq X\times Y$ such that its left projection is all of $X$. If  $f: X \to Y$ and $Z\subseteq X$ then we will denote the image of $Z$ under $f$ by $f(Z)$. In view of our previous convention for relations, for any given two functions $f:X\to Y$ and $g:Y\to Z$, we will denote the function $x\mapsto g(f(x))$ by $f\circ g$, even though, in the existing literature, this function is often denoted by $g\circ f$ instead.

Given a set $X$, we will denote by $id[X]$ the identity function on $X$, i.e. the function $f:X\to X$ such that $f(x) = x$ for every $x \in X$. In relation to compositions, functions of the form $id[X]$ have a special importance as a limiting case. More precisely, given a set $X$ and a family $F$ of functions from $X$ to $X$, we will assume that the composition of the empty tuple of functions from $F$ is just $id[X]$.  

Furthermore, if $f:X\to Y$ is any function, $x \in X$ and $y \in Y$, we will denote by $f[x/y]$ the unique function $g:X\to Y$ such that, for a given $z \in X$ we have:
$$
g(z):=\begin{cases}
	y,\text{ if }z = x;\\
	f(z),\text{ otherwise. }
\end{cases}
$$

\section{Logics}\label{S:logics}
We are going to give a notion of logic that is wide enough to cover every formal system to be considered below, without aspiring to give any sort of ultimate generalization of this notion. 

Logics are based on languages, and in this paper we confine ourselves to considering languages of two types: the propositional language with an added conditional or modal operator(s) and the first-order relatinal languages with equality. Speaking generally, a \textit{language} $\mathcal{L}$ is simply a certain set.  The elements of languages are called their \textit{formulas}. The languages are often generated from certain sets of atoms by repeated application of connectives and quantifiers. Every language considered in this paper will include at least the binary connectives $\to$, $\wedge$, and $\vee$. In this paper, we will treat the sets of formulas generated by different sets of atoms over the same set of logical symbols as different languages rather than different versions of the same language.

Although one and the same logic can be formulated over different versions of the same language (or, in our terminology, over different languages), in this paper we will abstract away from such subtleties, and will simply treat a logic as a consequence relation, or, in other words, as a set of \textit{consecutions} over a particular language  $\mathcal{L}$. More precisely, a logic is a set $\mathsf{L} \subseteq \mathcal{P}(\mathcal{L})\times\mathcal{P}(\mathcal{L})$, for some language $\mathcal{L}$, where $(\Gamma, \Delta)\in \mathsf{L}$ iff $\Delta$ $\mathsf{L}$-follows from $\Gamma$ (we will also denote this by $\Gamma\models_\mathsf{L}\Delta$). We will say that $(\Gamma, \Delta)$ is $\mathsf{L}$-satisfiable iff $(\Gamma, \Delta)\notin \mathsf{L}$. Given a $\phi \in \mathcal{L}$, $\phi$ is $\mathsf{L}$-valid or a theorem of $\mathsf{L}$ (we will also write $\phi \in \mathsf{L}$) iff $(\emptyset, \{\phi\}) \in \mathsf{L}$.

Every logic considered in this paper will be introduced either by its intended semantics, or by a complete Hilbert-style axiomatization; for most logics in this paper, we will mention both.

The semantics of logics is going to be laid out according to the following general scheme. Recall that, given a classically flavored logic $\mathsf{L}$, we typically define $\mathsf{L}$ by setting that $(\Gamma, \Delta)\in \mathsf{L}$ iff the truth of every $\phi\in \Gamma$ implies the truth of some $\psi \in \Delta$. In the more general setting of our paper, if $\mathcal{L}$ is a language, then a \textit{semantics} for $\mathcal{L}$ is a pair $\sigma = (EP_\sigma, \models_\sigma)$, where $EP_\sigma$ is a (definable) class called the class of $\sigma$-\textit{evaluation points}. The other element of the semantics, is a (class-)relation $\models_\sigma\subseteq EP_\sigma\times\mathcal{L}$ called the ($\sigma$-)satisfaction relation.

In case $(pt, \phi) \in \models_\sigma$, we write $pt\models_\sigma\phi$, and say that $pt$ $\sigma$\textit{-satisfies} $\phi$. More generally, given any $\Gamma, \Delta \subseteq \mathcal{L}$, and a $pt \in EP_\sigma$, we say that $pt$ $\sigma$\textit{-satisfies} $(\Gamma, \Delta)$  and write $pt\models_\sigma(\Gamma, \Delta)$ iff:
$$
(\forall \phi \in \Gamma)(pt\models_\sigma\phi)\text{ and }(\forall \psi \in \Delta)(pt\not\models_\sigma\psi).
$$
More conventionally, we say that $pt$ satisfies $\Gamma$, and write $pt\models_\sigma\Gamma$, iff $pt\models_\sigma(\Gamma, \emptyset)$. This latter format is in fact sufficient to set up a semantics as long as our logics can express Boolean negation. However, this is not the case for many logics to be considered in this paper. For such logics the ``double-entry'' format for satisfaction relation proves to be more convenient and flexible.

Next, given any $\Gamma, \Delta \cup \{\phi\} \subseteq \mathcal{L}$, we say that $(\Gamma, \Delta)$ (resp. $\Gamma$, $\phi$) is $\sigma$-satisfiable iff, for some $pt \in EP_\sigma$, $pt$ satisfies $(\Gamma, \Delta)$ (resp. $\Gamma$, $\phi$). Finally, $\Delta$ $\sigma$-follows from $\Gamma$ (written $\Gamma \models_\sigma \Delta$) iff $(\Gamma, \Delta)$ is $\sigma$-unsatisfiable, in other words, if every $\sigma$-evaluation point satisfying every formula from $\Gamma$, also satisfies at least one formula from $\Delta$.

We now say that, for a given language $\mathcal{L}$, a semantics $\sigma$ over $\mathcal{L}$ induces the logic $\mathsf{L}$ over $\mathcal{L}$ and write $\mathsf{L} = \mathbb{L}(\sigma)$ iff for all $\Gamma, \Delta \subseteq \mathcal{L}$, we have $(\Gamma, \Delta) \in \mathsf{L}$ iff $(\Gamma, \Delta)$ is $\sigma$-unsatisfiable; in other words, $\mathsf{L} = \mathbb{L}(\sigma)$ means that, for all $\Gamma, \Delta \subseteq \mathcal{L}$, $\Delta$ $\mathsf{L}$-follows from $\Gamma$ iff $\Delta$ $\sigma$-follows from $\Gamma$. In case $\sigma$ is also used to introduce  $\mathsf{L}$ by definition, we write $\mathsf{L} := \mathbb{L}(\sigma)$.

As for the Hilbert-style systems, all of them will be given by a finite number of axiomatic schemas $\alpha_1,\ldots,\alpha_n$ augmented with a finite number of inference rules $\rho_1,\ldots,\rho_m$, so the most general format sufficient for the present paper is $\Sigma(\bar{\alpha}_n; \bar{\rho}_m)$. Every Hilbert-style systems considered below, happens to extend a certain minimal system which we will denote by $\mathfrak{IL}^+$.\footnote{In fact, $\mathfrak{IL}^+$ is the standard axiomatization of  $\mathsf{IL}^+$.} We have $\mathfrak{IL}^+:= \Sigma(\alpha_1-\alpha_8;\eqref{E:mp})$, where:
\begin{align*}
	&\phi \to(\psi\to\phi)\,(\alpha_1),\quad(\phi\to(\psi\to\chi))\to((\phi\to\psi)\to(\phi\to\chi))\,(\alpha_2),\\
	&\quad(\phi\wedge\psi)\to\phi\,(\alpha_3),\quad(\phi\wedge\psi)\to\psi\,(\alpha_4),\quad \phi\to(\psi\to (\phi\wedge\psi))\,(\alpha_5),\\
	&\quad\phi\to(\phi\vee \psi)\,(\alpha_6),\quad\psi\to(\phi\vee \psi)\,(\alpha_7),\quad (\phi\to\chi)\to((\psi \to \chi)\to ((\phi\vee\psi)\to \chi))\,(\alpha_8)
\end{align*}
and:
\begin{align}
	\text{From }\phi, \phi \to \psi&\text{ infer }\psi\label{E:mp}\tag{MP}
\end{align}
It is therefore important for our purposes to be able to refer to Hilbert-style systems as extensions of other systems. If $\mathfrak{Ax} = \Sigma(\bar{\alpha}_n; \bar{\rho}_m)$, and $\beta_1,\ldots,\beta_k$ are some new axiomatic schemes and $\sigma_1,\ldots,\sigma_r$ are some new rules, then we will write $\mathfrak{Ax}+(\bar{\beta}_k;\bar{\sigma}_r)$ to denote the system $\Sigma((\bar{\alpha}_n)^\frown(\bar{\beta}_k);(\bar{\rho}_m)^\frown(\bar{\sigma}_r))$.

Axiomatic systems can be viewed as operators generating logics when applied to languages. More precisely, if $\mathfrak{Ax} = \Sigma(\bar{\alpha}_n; \bar{\rho}_m)$ and $\mathcal{L}$ is a language, then $\mathsf{L} = \mathfrak{Ax}(\mathcal{L})$ can be described as follows. We say that a $\phi \in \mathcal{L}$ is \textit{provable} in $\mathfrak{Ax}(\mathcal{L})$ iff there exists a finite sequence $\bar{\psi}_k\in\mathcal{L}^k$ such that every formula in this sequence is either a substitution instance of one of $\bar{\alpha}_n$ or results from an application of one of $\bar{\rho}_m$ to some earlier formulas in the sequence and $\psi_k = \phi$; we will say that $(\Gamma, \Delta) \in \mathfrak{Ax}(\mathcal{L})$ iff $(\Gamma, \Delta) \in \mathcal{P}(\mathcal{L})\times\mathcal{P}(\mathcal{L})$ and there exists a sequence $\bar{\chi}_r\in \mathcal{L}^r$ such that every formula in it is either in $\Gamma$, or is provable in $\mathfrak{Ax}(\mathcal{L})$ or results from an application of \eqref{E:mp} to a pair of earlier formulas in the sequence, and, for some $\theta_1,\ldots,\theta_s\in \Delta$ we have $\chi_r = \theta_1\vee\ldots\vee\theta_s$. This definition makes sense in the context of our paper, since every language that we are going to consider contains $\vee$, and every axiomatic system that we are going to consider contains \eqref{E:mp}
 We will also express the fact that  $(\Gamma, \Delta) \in \mathfrak{Ax}(\mathcal{L})$ by writing $\Gamma\vdash_{\mathfrak{Ax}(\mathcal{L})}\Delta$. If also $\mathsf{L} = \mathfrak{Ax}(\mathcal{L})$, then we can write $\Gamma\vdash_\mathsf{L}\Delta$ instead of $\Gamma\vdash_{\mathfrak{Ax}(\mathcal{L})}\Delta$. In the latter case we will also have, for every $\phi \in \mathcal{L}$, that $\phi \in \mathsf{L}$ iff $\vdash_{\mathfrak{Ax}(\mathcal{L})}\phi$ iff $\phi$ is provable in $\mathfrak{Ax}(\mathcal{L})$.

\section{The first-order languages and their logics}\label{S:fo}
We start by defining a handful of first-order languages (relational with equality) according to the scheme laid out in the previous section. First, we let $\Pi$ denote the set $\{p^1_n\mid n\in \omega\}\cup \{S^1, O^1, E^2, R^3\}$. In case $\Omega \subseteq \Pi$, we set $\Omega^\pm:=  \{(P_+)^n, (P_-)^n\mid P^n\in \Omega\}$. Next, we define $Sign:= \Pi \cup \Pi^\pm \cup \{\epsilon^2\}$. The elements of $Sign$ will serve as predicate letters; in order to define the first-order atoms, we also need to supply the set $Ind:= \{v_n\mid n \in \omega\}$ of individual variables. 

If now $\Omega \subseteq Sign$, then the set $At(\Omega):= \{x \equiv y, Q(\bar{x}_n)\mid Q^n\in \Omega,\,x,y,\bar{x}_n \in Ind\}$ is called the set of $\Omega$-atoms. The set $Lit(\Omega):= At(\Omega)\cup \{\sim \phi\mid\phi\in At(\Omega)\}$ is called the set of $\Omega$-literals. The \textit{first-order language} $\mathcal{FO}(\Omega)$ is then generated on the basis of $At(\Omega)$ by the following BNF (where $x \in Ind$):
$$
\phi::= At(\Omega)\mid \phi\wedge\phi\mid\phi\vee\phi\mid\phi\to\phi\mid\sim\phi\mid\forall x\phi\mid\exists x\phi.
$$
The \textit{positive first-order language} $\mathcal{FO}^+(\Omega)$ is the ($\sim$)-free subset of $\mathcal{FO}(\Omega)$.

As is usual, the \textit{equivalence} $\phi\leftrightarrow\psi$ is the abbreviation for $(\phi \to \psi)\wedge(\psi\to \phi)$.

Given an $\Omega \subseteq Sign$ and a formula $\phi \in \mathcal{FO}(\Omega)$, we denote by $Sub(\phi)$ the set of subformulas of $\phi$ assuming its standard definition by induction on the construction of $\phi$. Furthermore, we can inductively define for $\phi$ its set of \textit{free} variables in a standard way (see, e.g. \cite[p. 64]{vandalen}). This set, denoted by $FV(\phi)$, is always finite. Given an $n \in \omega$, and a $\bar{x}_n \in Ind^n$, we will denote by $\mathcal{FO}(\Omega)^{\bar{x}_n}$ the set $\{\phi \in  \mathcal{FO}(\Omega)\mid FV(\phi) \subseteq \{\bar{x}_n\}\}$. If $\phi \in \mathcal{FO}(\Omega)^\emptyset$, then $\phi$ is called a $\Omega$-\textit{sentence}. Finally, given some $x,y \in Ind$, we assume a standard definition for the property of $y$ being substitutable for $x$ in $\phi$; in case this property holds, we define the result $\phi[x/y]$ of this substitution simply as the result of replacing all free occurrences of $x$ in $\phi$ with the occurrences of $y$.


We are going to define three first-order logics, the classical logic $\mathsf{QCL}$, the positive intuitionistic logic $\mathsf{QIL}^+$, and, finally, the logic $\mathsf{QN4}$, which represents the paraconsistent variant of Nelson's first-order logic of strong negation. Each of these logics will be defined over a different set of the first-order language variants. Among the three logics, the classical logic has the simplest semantics. We describe it as follows:
\begin{definition}\label{D:classical-fo-model}
	Given an $\Omega \subseteq Sign$, a classical first-order model over $\Omega$ (also called classical first-order $\Omega$-model) is a tuple $\mathcal{M} = (U^\mathcal{M}, \{P^\mathcal{M}\mid P^n \in \Omega,\,n \in \omega\})$, where $U \neq \emptyset$ is called the domain of $\mathcal{M}$ and, for every $P^n \in \Omega$, $P^\mathcal{M} \subseteq U^n$. The class of all classical first-order $\Omega$-models will be denoted by $\mathbb{C}(\Omega)$.  
\end{definition}
We will also need the following notion relating classical models:
\begin{definition}\label{D:homomorphism}
	Let $\Omega \subseteq Sign$, and let $\mathcal{M}, \mathcal{N} \in \mathbb{C}(\Omega)$. Then a function $f:U^\mathcal{M}\to U^\mathcal{N}$ is called a \textit{homomorphism from} $\mathcal{M}$ \textit{to} $\mathcal{N}$ (written $f:\mathcal{M}\to \mathcal{N}$) iff $\bar{a}_n \in P^\mathcal{M}$ implies $f(\bar{a}_n) \in P^\mathcal{N}$
	for every $n \in \omega$, every $\bar{a}_n \in (U^\mathcal{M})^n$, and every $P^n\in \Omega$. The set of all homomorphisms from $\mathcal{M}$ to $\mathcal{N}$ will be denoted by $Hom(\mathcal{M},\mathcal{N})$. 
%
\end{definition}
The class $EP_c(\Omega)$ of \textit{classical} $\Omega$\textit{-evaluation points} is then the class 
$$
\{(\mathcal{M}, f)\mid \mathcal{M}\in \mathbb{C}(\Omega),\,f:Ind\to U^\mathcal{M}\}.
$$

We now define $\mathsf{QCL}(\Omega):=\mathbb{L}(EP_c(\Omega), \models_c)$, where $\models_c$ is the classical first-order satisfaction relation. We will often write $\mathcal{M}\models_{c}\phi[f]$ instead of $(\mathcal{M}, f)\models_{c}\phi$ (also for the non-classical first-order logics).
The relation itself is defined by the following induction on the construction of $\phi$:
\begin{align*}
	\mathcal{M}\models_{c}P(\bar{x}_n)[f] &\text{ iff } f(\bar{x}_n)\in P^\mathcal{M} &&P^n\in \Omega\\
	\mathcal{M}\models_{c}(x\equiv y)[f] &\text{ iff } f(x) = f(y)\\
	\mathcal{M}\models_{c}(\psi\wedge\chi)[f] &\text{ iff } \mathcal{M}\models_{c}\psi[f]\text{ and }\mathcal{M}\models_{c}\chi[f]\\
	\mathcal{M}\models_{c}(\psi\vee\chi)[f] &\text{ iff } \mathcal{M}\models_{c}\psi[f]\text{ or }\mathcal{M}\models_{c}\chi[f]\\
	\mathcal{M}\models_{c}(\psi\to\chi)[f] &\text{ iff } \mathcal{M}\not\models_{c}\psi[f]\text{ or }\mathcal{M}\models_{c}\chi[f]\\
	\mathcal{M}\models_{c}\sim\psi[f] &\text{ iff } \mathcal{M}\not\models_{c}\psi[f]\\
\mathcal{M}\models_{c}(\exists x\psi)[f] &\text{ iff } (\exists a\in U^\mathcal{M})(\mathcal{M}\models_{c}\psi[f[x/a]])\\
	\mathcal{M}\models_{c}(\forall x\psi)[f] &\text{ iff } (\forall a\in U^\mathcal{M})(\mathcal{M}\models_{c}\psi[f[x/a]])
\end{align*}
Our next logic is the positivie intuitionistic logic $\mathsf{QIL}^+$. Its semantics (defined here for every $\Omega\subseteq Sign$ over the language $\mathcal{FO}^+(\Omega)$) is somewhat more involved, and exists in several variants. By far the most popular one is the so-called \textit{Kripke semantics}, see, e.g. \cite[Ch. 3]{gss}. However, the proof of our main result proceeds more conveniently on the basis of a somewhat involved semantics of Kripke sheaves which we define next.
\begin{definition}\label{D:intuitionistic-sheaf}
	Given an $\Omega\subseteq Sign$, an intuitionistic Kripke $\Omega$-sheaf is any structure of the form $\mathcal{S} = (W, \leq, \mathtt{M}, \mathtt{H})$, such that:
	\begin{enumerate}
		\item $W \neq \emptyset$ is the set of worlds, or nodes.
		
		\item $\leq$ is a reflexive and transitive relation (also called a preorder) on $W$.
		
		\item $\mathtt{M}:W\to \mathbb{C}(\Omega)$.
		
		\item $\mathtt{H}:\{(\mathbf{w},\mathbf{v})\in W^2\mid \mathbf{w}\leq \mathbf{v}\}\to Hom(\mathtt{M}(\mathbf{w}), \mathtt{M}(\mathbf{v}))$ such that the following holds:
		\begin{enumerate}
			\item $\mathtt{H}(\mathbf{w},\mathbf{w}) = id[U^{\mathtt{M}(\mathbf{w})}]$ for every $\mathbf{w} \in W$.
			
			\item $\mathtt{H}(\mathbf{w},\mathbf{v})\circ \mathtt{H}(\mathbf{v},\mathbf{u}) = \mathtt{H}(\mathbf{w},\mathbf{u})$ for all $\mathbf{w},\mathbf{v},\mathbf{u} \in W$ such that $\mathbf{w} \leq \mathbf{v} \leq \mathbf{u}$.
		\end{enumerate} 
	\end{enumerate}
	We will often write $\mathtt{M}_\mathbf{w}$, $\mathtt{H}_{\mathbf{w}\mathbf{v}}$ in place of $\mathtt{M}(\mathbf{w})$, $\mathtt{H}(\mathbf{w},\mathbf{v})$, respectively.
	
	The class of all intuitionistic Kripke $\Omega$-sheaves will be denoted by $\mathbb{I}(\Omega)$.
\end{definition}
\begin{remark}\label{Rm:model-notation}
In this paper we will introduce multiple notions of model-like structure; in every case, we will assume that the default representation of the structure is given in the definition and that all decorations applied to the default notation for a structure of a given sort are also inherited by the elements of its default structure, unless explicitly stated otherwise. For example, in the case of intuitionistic sheaves this means that every $\mathcal{S}\in \mathbb{I}(\Omega)$ is given as  $(W, \leq, \mathtt{M}, \mathtt{H})$ and that $\mathcal{S}_n, \mathcal{S}' \in \mathbb{I}(\Omega)$ are always given as  $(W_n, \leq_n, \mathtt{M}_n, \mathtt{H}_n)$ and  $(W', \leq', \mathtt{M}', \mathtt{H}')$, respectively, unless explicitly stated otherwise. 
\end{remark}
For any $\Omega\subseteq Sign$, the class $EP_{i}(\Omega)$ of \textit{intuitionistic evaluation points} is the class
$$
\{(\mathcal{S}, \mathbf{w}, f)\mid \mathcal{S}\in \mathbb{I}(\Omega),\,\mathbf{w} \in W,\,f:Ind \to U^{\mathtt{M}_\mathbf{w}}\}.
$$
The satisfaction relation $\models_{i}$ is then defined by the following induction:
\begin{align*}
	\mathcal{S},\mathbf{w}\models_{i}P(\bar{x}_n)[f] &\text{ iff } \mathtt{M}_\mathbf{w}\models_{c}P(\bar{x}_n)[f] \text{ iff } f(\bar{x}_n)\in P^{\mathtt{M}_\mathbf{w}} &&P^n\in \Omega\\
	\mathcal{S},\mathbf{w}\models_ix \equiv y  &\text{ iff } f(x) = f(y)\\
	\mathcal{S},\mathbf{w}\models_i(\psi\wedge\chi)[f] &\text{ iff } \mathcal{S},\mathbf{w}\models_i\psi[f]\text{ and }\mathcal{S},\mathbf{w}\models_i\chi[f]\\
	\mathcal{S},\mathbf{w}\models_i(\psi\vee\chi)[f] &\text{ iff } \mathcal{S},\mathbf{w}\models_i\psi[f]\text{ or }\mathcal{S},\mathbf{w}\models_i\chi[f]\\
	\mathcal{S},\mathbf{w}\models_i(\psi\to\chi)[f] &\text{ iff } (\forall \mathbf{v}\geq \mathbf{w})(\mathcal{S},\mathbf{v}\not\models_i\psi[f\circ\mathtt{H}_{\mathbf{w}\mathbf{v}}]\text{ or }\mathcal{S}, \mathbf{v}\models_i\chi[f\circ\mathtt{H}_{\mathbf{w}\mathbf{v}}])\\
	\mathcal{S},\mathbf{w}\models_i(\exists x\psi)[f] &\text{ iff } (\exists a\in U^{\mathtt{M}_\mathbf{w}})(\mathcal{S},\mathbf{w}\models_i\psi[f[x/a]])\\
	\mathcal{S},\mathbf{w}\models_i(\forall x\psi)[f] &\text{ iff } (\forall \mathbf{v}\geq \mathbf{w})(\forall a\in U^{\mathtt{M}_\mathbf{v}})(\mathcal{S},\mathbf{v}\models_i\psi[(f\circ\mathtt{H}_{\mathbf{w}\mathbf{v}})[x/a]])
\end{align*}
We now define $\mathsf{QIL}^+(\Omega) := \mathbb{L}(EP_{i}(\Omega), \models_i)$ for any $\Omega\subseteq Sign$. See \cite[Section 3.6 ff]{gss} for a proof that we indeed get a correct semantics for the positive intuitionistic logic in this way, the fact is also mentioned in \cite[Cor. 5.3.16]{vandalen}.

Sheaf semantics is a generalization of Kripke semantics in that the latter can be obtained from sheaf semantics as long as we assume in Definition \ref{D:intuitionistic-sheaf} that $\mathtt{H}_{\mathbf{w}\mathbf{v}} = id[U^{\mathtt{M}_\mathbf{w}}]$ for all $\mathbf{w},\mathbf{v} \in W$ such that $\mathbf{w}\leq \mathbf{v}$. Therefore, most of the properties and constructions available in the usual Kripke semantics have obvious counterparts in the sheaf semantics. The following lemma mentions some of these properties:
\begin{lemma}\label{L:intuitionistic-standard}
	For every $\Omega \subseteq Sign$, $(\mathcal{S}, \mathbf{w}, f)\in EP_{i}(\Omega)$, and $\phi \in \mathcal{FO}^+(\Omega)$, we have:
	\begin{enumerate}
		\item If $\mathbf{v}\geq \mathbf{w}$,  and $\mathcal{S},\mathbf{w}\models_{i}\phi[f]$, then $\mathcal{S},\mathbf{v}\models_{i}\phi[f\circ\mathtt{H}_{\mathbf{w}\mathbf{v}}]$.
		
		\item The \textit{generated sub-sheaf} $\mathcal{S}|_\mathbf{w} = (W|_\mathbf{w}, \leq|_\mathbf{w}, \mathtt{M}|_\mathbf{w}, \mathtt{H}|_\mathbf{w})\in \mathbb{I}(\Omega)$ is defined by $W|_\mathbf{w}:= \{\mathbf{v}\in W\mid \mathbf{v}\geq \mathbf{w}\}$, $\leq|_\mathbf{w}:= \leq \cap (W|_\mathbf{w}\times W|_\mathbf{w})$, $\mathtt{M}|_\mathbf{w}:= \mathtt{M}\upharpoonright(W|_\mathbf{w})$, and $\mathtt{H}|_\mathbf{w}:= \mathtt{H}\upharpoonright(W|_\mathbf{w}\times W|_\mathbf{w})$. With this definition, we get 
		$$
		\mathcal{S}|_\mathbf{w},\mathbf{v}\models_{i}\phi[f]\text{ iff } \mathcal{S},\mathbf{v}\models_{i}\phi[f]
		$$ 
		for every $\mathbf{v}\geq \mathbf{w}$ and every $f:Ind\to U^{\mathtt{M}_\mathbf{v}} =  U^{(\mathtt{M}|_\mathbf{w})_\mathbf{v}}$.
	\end{enumerate}
\end{lemma}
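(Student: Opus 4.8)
The plan is to prove both claims by induction on the construction of $\phi$, with the second claim preceded by a check that $\mathcal{S}|_\mathbf{w}$ is genuinely a member of $\mathbb{I}(\Omega)$. The only ingredients I expect to need are the reversed-composition convention $(f\circ g)(x) = g(f(x))$, the homomorphism clause of Definition \ref{D:homomorphism}, and the functoriality conditions 4(a)--(b) of Definition \ref{D:intuitionistic-sheaf}. For the well-definedness: $W|_\mathbf{w}$ is nonempty since $\mathbf{w}\in W|_\mathbf{w}$ by reflexivity of $\leq$; the restriction $\leq|_\mathbf{w}$ inherits reflexivity and transitivity; $\mathtt{M}|_\mathbf{w}$ still lands in $\mathbb{C}(\Omega)$ as a restriction of $\mathtt{M}$; and $\mathtt{H}|_\mathbf{w}$ inherits conditions 4(a)--(b) verbatim, since these are universally quantified over worlds that all remain available inside the upward cone.

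For the first claim (persistence), I would handle the base cases directly. For an atom $P(\bar{x}_n)$, the assumption $f(\bar{x}_n)\in P^{\mathtt{M}_\mathbf{w}}$ together with $\mathtt{H}_{\mathbf{w}\mathbf{v}}\in Hom(\mathtt{M}_\mathbf{w},\mathtt{M}_\mathbf{v})$ yields $\mathtt{H}_{\mathbf{w}\mathbf{v}}(f(\bar{x}_n))\in P^{\mathtt{M}_\mathbf{v}}$, which is exactly $(f\circ\mathtt{H}_{\mathbf{w}\mathbf{v}})(\bar{x}_n)\in P^{\mathtt{M}_\mathbf{v}}$; the case $x\equiv y$ follows because $\mathtt{H}_{\mathbf{w}\mathbf{v}}$ is a function, so $f(x) = f(y)$ forces $\mathtt{H}_{\mathbf{w}\mathbf{v}}(f(x)) = \mathtt{H}_{\mathbf{w}\mathbf{v}}(f(y))$. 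The $\wedge$ and $\vee$ cases are immediate from the IH. The $\to$ and $\forall$ cases do not even require the IH: those clauses already quantify universally over successors, so given any $\mathbf{u}\geq\mathbf{v}$ I use transitivity to obtain $\mathbf{u}\geq\mathbf{w}$, invoke the hypothesis at $\mathbf{u}$, and rewrite $f\circ\mathtt{H}_{\mathbf{w}\mathbf{u}}$ as $(f\circ\mathtt{H}_{\mathbf{w}\mathbf{v}})\circ\mathtt{H}_{\mathbf{v}\mathbf{u}}$ using condition 4(b). The only case needing genuine work is $\exists x\psi$: from a witness $a\in U^{\mathtt{M}_\mathbf{w}}$ for $\psi$ at $\mathbf{w}$, the IH gives satisfaction of $\psi$ at $\mathbf{v}$ under $(f[x/a])\circ\mathtt{H}_{\mathbf{w}\mathbf{v}}$, and I would verify the pointwise identity $(f[x/a])\circ\mathtt{H}_{\mathbf{w}\mathbf{v}} = (f\circ\mathtt{H}_{\mathbf{w}\mathbf{v}})[x/\mathtt{H}_{\mathbf{w}\mathbf{v}}(a)]$, so that $\mathtt{H}_{\mathbf{w}\mathbf{v}}(a)\in U^{\mathtt{M}_\mathbf{v}}$ serves as the required witness at $\mathbf{v}$.

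For the second claim the guiding observation is that every semantic clause inspects only worlds in the upward cone of the node being evaluated, and for $\mathbf{v}\geq\mathbf{w}$ this cone lies entirely inside $W|_\mathbf{w}$ by transitivity. Concretely, for $\mathbf{u}\geq\mathbf{v}$ the conjunction of $\mathbf{u}\in W|_\mathbf{w}$ and $\mathbf{u}\geq\mathbf{v}$ is equivalent to $\mathbf{u}\geq\mathbf{v}$ alone, so the successor quantifiers in the $\to$ and $\forall$ clauses range over the same worlds in $\mathcal{S}|_\mathbf{w}$ and in $\mathcal{S}$; moreover $(\mathtt{M}|_\mathbf{w})_\mathbf{u} = \mathtt{M}_\mathbf{u}$ and $(\mathtt{H}|_\mathbf{w})_{\mathbf{v}\mathbf{u}} = \mathtt{H}_{\mathbf{v}\mathbf{u}}$ as restrictions. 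The atomic, equality, $\wedge$, and $\vee$ cases then reduce to the fact that satisfaction at a single node $\mathbf{v}$ depends only on $\mathtt{M}_\mathbf{v}$, which is unchanged, while the $\to$, $\exists$, and $\forall$ cases follow by applying the IH at each relevant $\mathbf{u}\geq\mathbf{v}\geq\mathbf{w}$ and noting the coincidence of domains and transition maps.

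The whole argument is a standard structural induction, and I expect no conceptual obstacle. The only place demanding care is the consistent handling of the paper's reversed composition convention together with the functoriality identity $\mathtt{H}_{\mathbf{w}\mathbf{v}}\circ\mathtt{H}_{\mathbf{v}\mathbf{u}} = \mathtt{H}_{\mathbf{w}\mathbf{u}}$, and the bookkeeping of the assignment identity in the $\exists$ case; getting these rewritings exactly right is essentially the entirety of the work.
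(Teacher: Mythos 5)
Your proposal is correct and follows exactly the route the paper takes: the paper's own proof is just the one-line remark that both parts follow by a straightforward structural induction on $\phi$, analogous to the Kripke-semantics case, and your write-up is a faithful elaboration of that induction (including the points the paper leaves implicit: the well-definedness of $\mathcal{S}|_\mathbf{w}$, the identity $(f[x/a])\circ\mathtt{H}_{\mathbf{w}\mathbf{v}} = (f\circ\mathtt{H}_{\mathbf{w}\mathbf{v}})[x/\mathtt{H}_{\mathbf{w}\mathbf{v}}(a)]$ in the $\exists$ case, and the use of functoriality to rewrite $(f\circ\mathtt{H}_{\mathbf{w}\mathbf{v}})\circ\mathtt{H}_{\mathbf{v}\mathbf{u}}$ as $f\circ\mathtt{H}_{\mathbf{w}\mathbf{u}}$).
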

\begin{proof}
	We proceed by a straightforward induction on the construction of $\phi \in \mathcal{FO}^+(\Omega)$ in both cases, the reasoning is quite similar to the case of intuitionistic Kripke semantics.
\end{proof}

Alternatively, one can define $\mathsf{QIL}^+$ by its complete Hilbert-style axiomatization. More precisely, consider the following set of axiomatic schemes:
\begin{align}
	\forall x\phi&\to \phi[x/y]\label{Ax:9}\tag{$\alpha_9$}\\
	\phi[x/y]&\to \exists x\phi\label{Ax:10}\tag{$\alpha_{10}$}\\
	&x \equiv x\label{Ax:11}\tag{$\alpha_{11}$}\\
	y\equiv z &\to (\phi[x/y]\to\phi[x/z])\label{Ax:12}\tag{$\alpha_{12}$}
\end{align}
plus the following rules of inference:
\begin{align}
	\text{From }\psi\to\phi[x/y]&\text{ infer }\psi \to \forall x\phi\label{E:Rall}\tag{R$\forall$}\\
	\text{From }\phi[x/y]\to\psi&\text{ infer }\exists x\phi \to \psi\label{E:Rex}\tag{R$\exists$}	
\end{align}
where $x, z \in Ind$ and $y \in Ind\setminus FV(\psi)$  are such that $z, y$ are substitutable for $x$ in $\phi$. We then let $\mathfrak{QIL}^+:= \mathfrak{IL}^++(\eqref{Ax:9},\ldots,\eqref{Ax:12};\eqref{E:Rall},\eqref{E:Rex})$. It is well-known that for every $\Omega \subseteq Sign$, we have $\mathsf{QIL}^+(\Omega) = \mathfrak{QIL}^+(\mathcal{FO}(\Omega))$.

We now turn to the paraconsistent variant of Nelson's logic of strong negation which we denote by $\mathsf{QN4}$ and which we only define over $\mathcal{FO}(\Omega)$ for $\Omega \subseteq \Pi$. Our main goal in this section is to set up a sheaf semantics also for $\mathsf{QN4}$. Since no variants of sheaf semantics were yet (to the best of our knowledge) proposed for this logic in the existing literature, we cannot use our proposed semantics to \textit{define} $\mathsf{QN4}$. Instead, we extend $\mathfrak{QIL}^+$ with the following axiomatic schemes:
\begin{align}
	\sim\sim\phi &\leftrightarrow \phi\label{E:a1}\tag{An1}\\
	\sim(\phi\wedge \psi) 
	&\leftrightarrow (\sim\phi\vee \sim\psi)\label{E:a2}\tag{An2}\\
	\sim(\phi\vee \psi) &\leftrightarrow (\sim\phi\wedge \sim\psi)\label{E:a3}\tag{An3}\\
	\sim(\phi\to \psi) &\leftrightarrow (\phi\wedge \sim\psi)\label{E:a4}\tag{An4}\\
	\sim\exists x\theta &\leftrightarrow \forall x\sim\theta\label{E:a5}\tag{An5}\\
	\sim\forall x\theta &\leftrightarrow \exists x\sim\theta\label{E:a6}\tag{An6}
\end{align} 
In doing so, we obtain the system $\mathfrak{QN4}:= \mathfrak{QIL}^++(\eqref{E:a1},\ldots,\eqref{E:a6};)$ which represents the most standard way to axiomatize $\mathsf{QN4}$ known in the existing literature on the subject, see, e.g. \cite[p. 313]{odintsovwansing-trends}. We can therefore set $\mathsf{QN4}(\Omega):= \mathfrak{QN4}(\mathcal{FO}(\Omega))$ for every $\Omega\subseteq \Pi$.

This definition makes it obvious that $\mathsf{QN4}$ is a sublogic of $\mathsf{QCL}$ and extends $\mathsf{QIL}^+$. We retain these observations for future reference:
\begin{lemma}\label{L:intuitionistic-inclusion}
	Let $\Omega \subseteq \Pi$. Every substitution instance of an $\mathsf{QIL}^+(\Omega)$-theorem  is a theorem of $\mathsf{QN4}(\Omega)$ and every inference rule that is deducible in $\mathsf{QIL}^+(\Omega)$ is also deducible in $\mathsf{QN4}(\Omega)$. In particular, the following derived rules hold:
	\begin{align}
		\Gamma \cup \{\phi\}\vdash_{\mathsf{QN4}} \psi&\text{ iff }\Gamma\vdash_{\mathsf{QN4}} \phi\to\psi\label{R:DT}\tag{DT}\\
		\Gamma\vdash_{\mathsf{QN4}} \phi\to \psi&\text{ implies }\Gamma\vdash_{\mathsf{QN4}} \phi\to\forall x\psi&&x\notin FV(\Gamma \cup \{\phi\})\label{R:A}\tag{B$\forall$}\\
		\Gamma\vdash_{\mathsf{QN4}} \phi\to \psi&\text{ implies }\Gamma\vdash_{\mathsf{QN4}} \exists x\phi\to\forall x\psi&&x\notin FV(\Gamma \cup \{\psi\})\label{R:E}\tag{B$\exists$}\\
		\Gamma\vdash_{\mathsf{QN4}}\phi&\text{ implies }\Gamma\vdash_{\mathsf{QN4}}\forall x\phi\label{R:Gen} &&x \notin FV(\Gamma)\tag{Gen}
	\end{align}
\end{lemma}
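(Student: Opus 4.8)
The plan is to exploit the fact that, by construction, $\mathfrak{QN4} = \mathfrak{QIL}^++(\eqref{E:a1},\ldots,\eqref{E:a6};)$ extends $\mathfrak{QIL}^+$ by \emph{axiom schemes only}, introducing no new rules of inference. The rules of $\mathfrak{QN4}$ are exactly \eqref{E:mp}, \eqref{E:Rall}, and \eqref{E:Rex}, while its axiom schemes contain $\alpha_1$--$\alpha_{12}$ as a sub-collection. Consequently, every derivation carried out in $\mathfrak{QIL}^+$ over $\mathcal{FO}(\Omega)$ counts verbatim as a derivation in $\mathfrak{QN4}$ over $\mathcal{FO}(\Omega)$: each line is either an instance of a scheme still present in $\mathfrak{QN4}$ or an application of a rule still present in $\mathfrak{QN4}$. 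This single verbatim-transfer observation is the engine of the whole proof.

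First I would establish the substitution-instance claim. Let $\phi$ be a $\mathsf{QIL}^+(\Omega)$-theorem; by the Hilbert-style characterisation recalled just above it has a $\mathfrak{QIL}^+$-proof whose lines are positive ($\sim$-free) formulas. Given a substitution $s$ replacing the atomic letters by arbitrary $\mathcal{FO}(\Omega)$-formulas of matching arity, I would apply $s$ line-by-line to this proof. Because $\alpha_1$--$\alpha_{12}$ are genuine schemes, the image under $s$ of an instance of a scheme is again an instance of the same scheme, now in the full language, and hence an axiom of $\mathfrak{QN4}$; because \eqref{E:mp}, \eqref{E:Rall}, \eqref{E:Rex} are the shared rules, each rule application maps to a legitimate rule application. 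The resulting sequence is therefore a $\mathfrak{QN4}(\mathcal{FO}(\Omega))$-proof of $s(\phi)$, that is, $s(\phi)\in\mathsf{QN4}(\Omega)$.

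The deducible-rule claim is then immediate from the same observation: a rule deducible in $\mathfrak{QIL}^+$ is witnessed by a fixed schematic derivation built from $\alpha_1$--$\alpha_{12}$, \eqref{E:mp}, \eqref{E:Rall}, \eqref{E:Rex}, all of which remain available in $\mathfrak{QN4}$, so the identical derivation witnesses the rule in $\mathfrak{QN4}$. In particular, I would note that \eqref{R:DT}, \eqref{R:A}, \eqref{R:E}, and \eqref{R:Gen} are among the standard deducible rules of positive intuitionistic predicate logic: \eqref{R:DT} follows from $\alpha_1,\alpha_2$ and \eqref{E:mp}, while \eqref{R:A}, \eqref{R:E}, \eqref{R:Gen} follow from \eqref{E:Rall}, \eqref{E:Rex}, $\alpha_9$, $\alpha_{10}$ together with \eqref{R:DT}. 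Hence each of them descends to $\mathsf{QN4}$.

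The main obstacle I anticipate is purely bookkeeping in the first-order layer: I must check that applying the formula-substitution $s$ does not violate the substitutability and freshness side-conditions attached to \eqref{E:Rall} and \eqref{E:Rex} (and to the instances of the substitution schemes $\alpha_9$--$\alpha_{12}$), i.e. that $s$ commutes with object-level substitution $\phi[x/y]$ up to capture-avoiding renaming of bound variables. Handling this cleanly --- fixing the distinguished free variable of each substituend and renaming bound variables as needed so that no free variable produced by $s$ is captured --- is the only genuinely delicate point; everything else is the mechanical propagation of an unchanged proof skeleton from the weaker system into the stronger one.
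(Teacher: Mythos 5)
Your proposal is correct and is exactly the reasoning the paper relies on: the paper states this lemma without proof, treating it as immediate from the fact that $\mathfrak{QN4}$ is obtained from $\mathfrak{QIL}^+$ by adding axiom schemes \eqref{E:a1}--\eqref{E:a6} only, with no new rules, so every $\mathfrak{QIL}^+$-derivation (and every schematic derivation witnessing a derived rule, such as \eqref{R:DT}, \eqref{R:A}, \eqref{R:E}, \eqref{R:Gen}) transfers verbatim, with substitution instances handled because the schemes now range over all of $\mathcal{FO}(\Omega)$. Your additional care about capture-avoiding renaming in the quantifier axioms and rules is the standard bookkeeping implicit in that observation and does not constitute a divergence from the paper.
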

\begin{lemma}\label{L:classical-inclusion}
		Let $\Omega \subseteq \Pi$. Then $\mathsf{QN4}(\Omega)\subseteq \mathsf{QCL}(\Omega)$ and every inference rule that is deducible $\mathsf{QN4}$ is also deducible in $\mathsf{QCL}$.
\end{lemma}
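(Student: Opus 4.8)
The plan is to prove the inclusion by establishing the soundness of the Hilbert-style system $\mathfrak{QN4}$ with respect to the classical satisfaction relation $\models_c$ of Definition \ref{D:classical-fo-model}, and then to lift this first to arbitrary consecutions and finally to derived rules. Recall that $\mathsf{QN4}(\Omega) = \mathfrak{QN4}(\mathcal{FO}(\Omega))$ is given proof-theoretically, whereas $\mathsf{QCL}(\Omega) = \mathbb{L}(EP_c(\Omega), \models_c)$ is given semantically, so that $(\Gamma, \Delta) \in \mathsf{QCL}(\Omega)$ is by definition the same as $\Gamma \models_c \Delta$. The whole argument therefore reduces to two soundness facts: (i) every axiom schema of $\mathfrak{QN4}$ is $\models_c$-valid, i.e. satisfied at every classical evaluation point $(\mathcal{M}, f)$; and (ii) each primitive rule of $\mathfrak{QN4}$ --- namely \eqref{E:mp}, \eqref{E:Rall} and \eqref{E:Rex} --- preserves $\models_c$-validity, with \eqref{E:mp} in fact preserving truth at each fixed point.

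First I would check the axioms. The positive schemes $\alpha_1$--$\alpha_8$ and the quantifier and equality schemes \eqref{Ax:9}--\eqref{Ax:12} are the familiar classically valid schemes, and their validity under $\models_c$ is read off directly from the truth clauses, the side conditions on substitutability guaranteeing the usual correctness of \eqref{Ax:9} and \eqref{Ax:10}. The only genuinely $\mathsf{N4}$-specific schemes are \eqref{E:a1}--\eqref{E:a6}, and here the crucial observation is that under $\models_c$ the connective $\sim$ collapses to classical negation, since $\mathcal{M} \models_c \sim\psi[f]$ iff $\mathcal{M} \not\models_c \psi[f]$. Consequently \eqref{E:a1}--\eqref{E:a6} become, respectively, double-negation elimination and introduction, the two De Morgan laws, the law $\sim(\phi\to\psi)\leftrightarrow(\phi\wedge\sim\psi)$, and the two quantifier-duality laws, all of which are immediate from the clauses for $\wedge$, $\vee$, $\to$, $\exists$, $\forall$ and $\sim$. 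For the rules, \eqref{E:mp} preserves satisfaction at a fixed $(\mathcal{M}, f)$ by the clause for $\to$, while \eqref{E:Rall} and \eqref{E:Rex} preserve $\models_c$-validity by the clauses for the quantifiers together with their eigenvariable conditions $y \notin FV(\psi)$. A routine induction on the length of a proof then yields that every theorem of $\mathsf{QN4}(\Omega)$ is $\models_c$-valid. To pass to arbitrary consecutions, take a witnessing sequence $\bar{\chi}_r$ for $(\Gamma, \Delta) \in \mathfrak{QN4}(\mathcal{FO}(\Omega))$ and fix any $(\mathcal{M}, f) \models_c \Gamma$; an induction along the sequence shows that $(\mathcal{M}, f)$ satisfies every $\chi_i$ --- each $\chi_i$ is either a member of $\Gamma$, or a theorem (hence valid by the previous step), or an \eqref{E:mp}-consequence of earlier entries, and the sequence uses only \eqref{E:mp}. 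In particular $(\mathcal{M}, f)$ satisfies $\chi_r = \theta_1\vee\ldots\vee\theta_s$ and therefore some $\theta_j \in \Delta$. Hence $\Gamma \models_c \Delta$, which is exactly $(\Gamma, \Delta) \in \mathsf{QCL}(\Omega)$.

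The remaining clause --- that every rule deducible in $\mathsf{QN4}$ is deducible in $\mathsf{QCL}$ --- is where I expect the only real subtlety to lie, since it does \emph{not} follow from the set-theoretic inclusion $\mathsf{QN4}(\Omega) \subseteq \mathsf{QCL}(\Omega)$ alone, because the larger logic admits more satisfiable premises. The correct justification is that a deducible rule is witnessed by a single schematic derivation that transforms the premise consecutions using only the axioms of $\mathfrak{QN4}$ and its primitive rules \eqref{E:mp}, \eqref{E:Rall}, \eqref{E:Rex}; since, by the soundness facts (i) and (ii) just established, $\mathsf{QCL}(\Omega)$ proves all those axioms and its consequence relation is closed under all those primitive rules, the very same schematic derivation witnesses deducibility of the rule in $\mathsf{QCL}$. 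Equivalently, one may observe that $\mathsf{QCL}(\Omega)$ is axiomatized by a Hilbert system extending $\mathfrak{QN4}$ (obtained by adjoining suitable classical schemes, such as excluded middle and explosion), so that every $\mathfrak{QN4}$-derivation, and hence every rule it deduces, is automatically available in $\mathsf{QCL}$. This mirrors, for the pair $(\mathsf{QN4}, \mathsf{QCL})$, the transfer of derived rules already recorded for $(\mathsf{QIL}^+, \mathsf{QN4})$ in Lemma \ref{L:intuitionistic-inclusion}.
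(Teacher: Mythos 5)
Your proposal is correct and matches the argument the paper has in mind: the paper states this lemma without proof, as an observation made ``obvious'' by the definition $\mathsf{QN4}(\Omega):= \mathfrak{QN4}(\mathcal{FO}(\Omega))$, and the implicit justification is exactly the classical soundness of $\mathfrak{QN4}$ that you spell out --- under $\models_c$ the connective $\sim$ collapses to Boolean negation, so \eqref{E:a1}--\eqref{E:a6} become classical tautology schemes and the primitive rules preserve classical validity, after which both the inclusion of consecutions and the transfer of derivable (as opposed to merely admissible) rules follow as you describe.
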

We observe, further, that, for every $\Omega \subseteq \Pi$, $\mathsf{QN4}(\Omega)$ is embeddable into $\mathsf{QIL}^+(\Omega^\pm \cup \{\epsilon^2\})$ and that the corresponding embedding $Tr:\mathcal{FO}(\Omega)\to \mathcal{FO}^+(\Omega^\pm \cup \{\epsilon^2\})$ can be defined by the following induction on the construction of $\phi \in \mathcal{FO}(\Pi)$: 
\begin{align*}
	Tr(P(\bar{x}_n))&:= P_+(\bar{x}_n);&&Tr(\sim P(\bar{x}_n)):= P_-(\bar{x}_n);\\
	Tr(x\equiv y)&:= (x\equiv y);&&Tr(\sim(x \equiv y)):= \epsilon(x,y);\\
	Tr(\sim\sim\phi)&:= Tr(\phi);\\
	Tr(\phi\star\psi)&:= Tr(\phi)\star Tr(\psi);&&Tr(\sim(\phi\star\psi)):= Tr(\sim\phi)\ast Tr(\sim\psi);\\
	Tr(\phi\to\psi)&:= Tr(\phi)\to Tr(\psi);&& Tr(\sim(\phi\to\psi)):= Tr(\phi)\wedge Tr(\sim\psi);\\
	Tr(Qx\phi)&:= QxTr(\phi);&& Tr(\sim Qx\phi):= Q'xTr(\sim\phi).
\end{align*} 
for all $n \in \omega$, all $P^n \in \Pi$,  all $\bar{x}_n, x, y \in Ind$, and all $\star, \ast, Q,$ and $Q'$ such that both $\{\star, \ast\} = \{\wedge, \vee\}$ and $\{Q,Q'\} = \{\forall, \exists\}$. More precisely, the following proposition holds:
\begin{proposition}\label{P:intuitionistic-embedding}
	For all  $\Gamma, \Delta \subseteq \mathcal{FO}(\Pi)$, $\Gamma\models_{\mathsf{QN4}}\Delta$ iff $Tr(\Gamma)\models_{\mathsf{QIL}^+}Tr(\Delta)$.
\end{proposition}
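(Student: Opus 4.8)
The plan is to observe first that the statement is really about derivability in the two Hilbert calculi. Since $\mathsf{QN4}(\Pi) := \mathfrak{QN4}(\mathcal{FO}(\Pi))$, and since $\mathsf{QIL}^+(\Omega) = \mathfrak{QIL}^+(\mathcal{FO}(\Omega))$ makes $\models_{\mathsf{QIL}^+}$ coincide with $\vdash_{\mathfrak{QIL}^+}$, the proposition amounts to showing that $\Gamma \vdash_{\mathfrak{QN4}} \Delta$ iff $Tr(\Gamma)\vdash_{\mathfrak{QIL}^+} Tr(\Delta)$, i.e. that $Tr$ is a \emph{faithful} embedding of the one calculus into the other. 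I would first record the routine syntactic lemmas that make $Tr$ well-behaved: by induction on $\phi$ one checks that $Tr$ commutes with the positive connectives and quantifiers, that $FV(Tr(\phi)) = FV(\phi)$, that $Tr$ commutes with substitution (so that $Tr(\phi[x/y]) = Tr(\phi)[x/y]$, with $y$ substitutable for $x$ in $Tr(\phi)$ whenever it is in $\phi$), and in particular that $Tr(\theta_1\vee\dots\vee\theta_s) = Tr(\theta_1)\vee\dots\vee Tr(\theta_s)$.

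For the left-to-right (soundness) direction I would argue by induction on $\mathfrak{QN4}$-derivations. The crux is the theorem-level claim that $Tr$ sends every $\mathfrak{QN4}$-theorem to a $\mathfrak{QIL}^+$-theorem, which reduces to inspecting the axioms. The positive schemes $\alpha_1$--$\alpha_8$ and \eqref{Ax:9}--\eqref{Ax:12} are translated, thanks to the commutation lemmas, into their own instances over the doubled signature, hence into $\mathfrak{QIL}^+$-theorems. The whole point of the definition of $Tr$ is that it already \emph{builds in} the De Morgan behaviour of strong negation, so each of \eqref{E:a1}--\eqref{E:a6} is translated into a formula of the shape $A\leftrightarrow A$: for instance $Tr(\sim(\phi\wedge\psi)) = Tr(\sim\phi)\vee Tr(\sim\psi) = Tr(\sim\phi\vee\sim\psi)$ makes the translation of \eqref{E:a2} provable, and $Tr(\sim\sim\phi) = Tr(\phi)$ does the same for \eqref{E:a1}. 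The rules \eqref{E:mp}, \eqref{E:Rall}, \eqref{E:Rex} are preserved because $Tr$ commutes with $\to$, $\forall$, $\exists$ and substitution and leaves free-variable sets unchanged. Lifting from theorems to consecutions is then immediate: a derivation witnessing $\Gamma\vdash_{\mathfrak{QN4}}\Delta$ ends in some $\theta_1\vee\dots\vee\theta_s$ with each $\theta_i\in\Delta$, and applying $Tr$ to it, using that $Tr$ distributes over $\vee$ and preserves \eqref{E:mp}, yields a derivation witnessing $Tr(\Gamma)\vdash_{\mathfrak{QIL}^+}Tr(\Delta)$.

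The right-to-left (faithfulness) direction is the main obstacle, and I would handle it with an explicit back-translation rather than semantically, no semantics for $\mathsf{QN4}$ being available at this point. Define $Bk:\mathcal{FO}^+(\Pi^\pm\cup\{\epsilon^2\})\to\mathcal{FO}(\Pi)$ on \emph{all} positive formulas over the doubled signature by $Bk(P_+(\bar{x}_n)) := P(\bar{x}_n)$, $Bk(P_-(\bar{x}_n)) := \sim P(\bar{x}_n)$, $Bk(\epsilon(x,y)) := \sim(x\equiv y)$, $Bk(x\equiv y):= x\equiv y$, letting $Bk$ commute with the positive connectives and quantifiers. Two lemmas then do the work. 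First, $Bk$ transforms any $\mathfrak{QIL}^+$-derivation over the doubled signature into a $\mathfrak{QN4}$-derivation: since $Bk$ commutes with the positive connectives, quantifiers and substitution, it maps every instance of $\alpha_1$--$\alpha_8$, \eqref{Ax:9}--\eqref{Ax:12} to an instance of the \emph{same} scheme in $\mathcal{FO}(\Pi)$, now possibly containing $\sim$, which is a $\mathsf{QN4}$-theorem by Lemma \ref{L:intuitionistic-inclusion}, and it preserves \eqref{E:mp}, \eqref{E:Rall}, \eqref{E:Rex}; here it is essential that $Bk$ is total on the doubled language, because the intermediate formulas of a $\mathfrak{QIL}^+$-derivation need not lie in the image of $Tr$. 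Second, $\vdash_{\mathfrak{QN4}}\phi\leftrightarrow Bk(Tr(\phi))$ for every $\phi\in\mathcal{FO}(\Pi)$, proved by induction on the length of $\phi$: the atomic and literal cases give literal equality, the positive cases follow from the congruence of $\wedge,\vee,\to,\forall,\exists$ with respect to $\leftrightarrow$ inherited from $\mathsf{QIL}^+$, and the cases where $\phi$ is a strong negation of a compound are precisely where \eqref{E:a1}--\eqref{E:a6} are invoked to rewrite $\sim(\cdots)$ into a positive combination of strictly shorter strong negations before applying the induction hypothesis. Granting these, from $Tr(\Gamma)\vdash_{\mathfrak{QIL}^+}Tr(\Delta)$ I apply $Bk$ to obtain $Bk(Tr(\Gamma))\vdash_{\mathfrak{QN4}}Bk(Tr(\Delta))$, and then use the equivalences $\vdash_{\mathfrak{QN4}}\gamma\leftrightarrow Bk(Tr(\gamma))$ together with \eqref{R:DT} and the transitivity of $\vdash_{\mathfrak{QN4}}$ to replace each $Bk(Tr(\gamma))$ on the left and each $Bk(Tr(\delta))$ on the right by $\gamma$ and $\delta$, which yields $\Gamma\vdash_{\mathfrak{QN4}}\Delta$.

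The delicate point to watch throughout the faithfulness argument is that strong negation is \emph{not} congruential with respect to ordinary $\leftrightarrow$, so in the equivalence lemma one cannot simply replace provable equivalents underneath a $\sim$; the strong-negation cases must instead be routed through the specific axioms \eqref{E:a1}--\eqref{E:a6}, which is exactly why the induction is organized by the principal connective of the formula \emph{governed} by $\sim$ and carried out on formula length. This, together with verifying that $Bk$ sends scheme instances mentioning the new letters $P_+, P_-, \epsilon$ to genuine $\mathsf{QN4}$-theorems, is where essentially all the content of the proposition resides.
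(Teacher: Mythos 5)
Your proposal is correct and is essentially the paper's own proof: your back-translation $Bk$ is exactly the paper's map $tr^{-1}$ (the inverse of $Tr$ restricted to negation normal forms, which is bijective onto the positive doubled language), your composite $Bk\circ Tr$ coincides with the paper's $NNF(\cdot)$ operator, and your equivalence lemma $\vdash_{\mathsf{QN4}}\phi\leftrightarrow Bk(Tr(\phi))$ is the paper's lemma that $\phi\leftrightarrow NNF(\phi)\in\mathsf{QN4}$, proved there too via \eqref{E:a1}--\eqref{E:a6}. The only cosmetic difference is that the paper factors the argument through the explicit notion of negation normal form, whereas you work with the composite directly while (rightly) stressing that the back-translation must be total on the doubled positive language.
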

Proposition \ref{P:intuitionistic-embedding} is a well-known result about $\mathsf{QN4}$, see e.g. \cite[Proposition 7]{odintsovwansing-trends} for a sketch of a proof. To keep this paper reasonably self-contained, we also give its proof in Appendix \ref{App:intuitionistic-embedding}.

We proceed to define a sheaf semantics for $\mathsf{QN4}$ that we will presently show to be adequate for this logic. We start by defining the structures that we will call Nelsonian sheaves:
\begin{definition}\label{D:nelsonian-sheaf}
	Let $\Omega \subseteq \Pi$. A \textit{Nelsonian  $\Omega$-sheaf} is any structure of the form $\mathcal{S} = (W, \leq, \mathtt{M}^+, \mathtt{M}^-, \mathtt{H})$, such that:
		\begin{enumerate}
		\item $\mathcal{S}^+ = (W, \leq, \mathtt{M}^+, \mathtt{H}) \in \mathbb{I}(\Omega)$.
		
		\item $\mathcal{S}^- =(W, \leq, \mathtt{M}^-, \mathtt{H}) \in \mathbb{I}(\Omega\cup \{\epsilon^2\})$.
		
		The intuitionistic sheaves $\mathcal{S}^+$ and $\mathcal{S}^-$, defined above will be called the positive and the negative component of the Nelsonian sheaf $\mathcal{S}$.
%
	\end{enumerate}
	The class of all Nelsonian $\Omega$-sheaves will be denoted by $\mathbb{N}4(\Omega)$.
\end{definition}
Since the family $\mathtt{H}$ of canonical homomorphisms is shared by $\mathcal{S}^+$ and $\mathcal{S}^-$, it follows that $U^{\mathtt{M}^+_\mathbf{w}} = U^{\mathtt{M}^-_\mathbf{w}}$ for every $\mathbf{w} \in W$; we can therefore set $U_\mathbf{w}:= U^{\mathtt{M}^+_\mathbf{w}} = U^{\mathtt{M}^-_\mathbf{w}}$ for every $\mathbf{w} \in W$. As for the Nelsonian sheaf evaluation points, they are defined similarly to the intuitionistic ones: $EP_{n}(\Omega):= \{(\mathcal{S}, \mathbf{w}, f)\mid \mathcal{M}\in \mathbb{N}4(\Omega),\,(\mathcal{S}^+, \mathbf{w}, f)\in EP_{i}(\Omega)\}$.
 
One peculiarity of $\mathsf{QN4}$ consists in the fact that its semantics is usually constructed on the basis of two satisfaction relations, $\models^+_{n}$ and $\models^-_{n}$, instead of just one.\footnote{This feature is shared by every variant of Nelson's logic of strong negation, cf. the semantics of the conditional logic $\mathsf{N4CK}$ defined in Section \ref{S:N4CK}.} The informal interpretation of the two relations is that whenever $\mathcal{S}, \mathbf{w}\models^+_{n}\phi[f]$ holds, $\phi$ is \textit{verified} at $(\mathcal{S}, \mathbf{w}, f)$, and when $\mathcal{M}, \mathbf{w}\models^-_{n}\phi[f]$ holds, $\phi$ is \textit{falsified} at the same triple. These two relations are defined by simultaneous induction on the construction of $\phi\in \mathcal{FO}(\Omega)$:
\begin{align*}
	\mathcal{S},\mathbf{w}\models^\ast_nP(\bar{x}_n)[f] &\text{ iff } \mathcal{S}^\ast,\mathbf{w}\models_iP(\bar{x}_n)[f]\text{ iff } \mathtt{M}^\ast_\mathbf{w}\models_c P(\bar{x}_n)[f]\text{ iff } f(\bar{x}_n)\in P^{\mathtt{M}^\ast_\mathbf{w}}  &&P^n\in \Omega,\,\ast\in \{+,-\}\\
	\mathcal{S},\mathbf{w}\models^+_n(x \equiv y)[f] &\text{ iff } \mathcal{S}^+,\mathbf{w}\models_i(x \equiv y)[f]\text{ iff } \mathtt{M}^+_\mathbf{w}\models_{c}(x \equiv y)[f]\text{ iff } f(x) = f(y)\\
	\mathcal{S},\mathbf{w}\models^-_n(x \equiv y)[f] &\text{ iff } \mathcal{S}^-,\mathbf{w}\models_i\epsilon(x,y)[f]\text{ iff } \mathtt{M}^-_\mathbf{w}\models_{c}\epsilon(x,y)[f]\\
	\mathcal{S},\mathbf{w}\models^+_n(\psi\wedge\chi)[f] &\text{ iff } \mathcal{S},\mathbf{w}\models^+_n\psi[f]\text{ and }\mathcal{S},\mathbf{w}\models^+_n\chi[f]\\
	\mathcal{S},\mathbf{w}\models^-_n(\psi\wedge\chi)[f] &\text{ iff } \mathcal{S},\mathbf{w}\models^-_n\psi[f]\text{ or }\mathcal{S},\mathbf{w}\models^-_n\chi[f]\\
	\mathcal{S},\mathbf{w}\models^+_n(\psi\vee\chi)[f] &\text{ iff } \mathcal{S},\mathbf{w}\models^+_n\psi[f]\text{ or }\mathcal{S},\mathbf{w}\models^+_n\chi[f]\\
	\mathcal{S},\mathbf{w}\models^-_n(\psi\vee\chi)[f] &\text{ iff } \mathcal{S},\mathbf{w}\models^-_n\psi[f]\text{ and }\mathcal{S},\mathbf{w}\models^-_n\chi[f]\\
	\mathcal{S},\mathbf{w}\models^+_n(\psi\to\chi)[f] &\text{ iff } (\forall \mathbf{v}\geq \mathbf{w})(\mathcal{S},\mathbf{v}\not\models^+_n\psi[f\circ\mathtt{H}_{\mathbf{w}\mathbf{v}}]\text{ or }\mathcal{S}, \mathbf{v}\models^+_n\chi[f\circ\mathtt{H}_{\mathbf{w}\mathbf{v}}])\\
	\mathcal{S},\mathbf{w}\models^-_n(\psi\to\chi)[f] &\text{ iff } \mathcal{S},\mathbf{w}\models^+_n\psi[f]\text{ and }\mathcal{S},\mathbf{w}\models^-_n\chi[f]\\
	\mathcal{S},\mathbf{w}\models^+_n\sim\psi[f] &\text{ iff } \mathcal{S},\mathbf{w}\models^-_n\psi[f]\\
	\mathcal{S},\mathbf{w}\models^-_n\sim\psi[f] &\text{ iff } \mathcal{S},\mathbf{w}\models^+_n\psi[f]\\
	\mathcal{S},\mathbf{w}\models^+_n(\exists x\psi)[f] &\text{ iff } (\exists a\in U_{\mathbf{w}})(\mathcal{S},\mathbf{w}\models^+_n\psi[f[x/a]])\\
	\mathcal{S},\mathbf{w}\models^-_n(\exists x\psi)[f] &\text{ iff } (\forall \mathbf{v}\geq \mathbf{w})(\forall a\in U_{\mathbf{v}})(\mathcal{S},\mathbf{v}\models^-_n\psi[(f\circ\mathtt{H}_{\mathbf{w}\mathbf{v}})[x/a]])\\
	\mathcal{S},\mathbf{w}\models^+_n(\forall x\psi)[f] &\text{ iff } (\forall \mathbf{v}\geq \mathbf{w})(\forall a\in U_{\mathbf{v}})(\mathcal{S},\mathbf{v}\models^+_n\psi[(f\circ\mathtt{H}_{\mathbf{w}\mathbf{v}})[x/a]])\\
	\mathcal{S},\mathbf{w}\models^-_n(\forall x\psi)[f] &\text{ iff } (\exists a\in U_{\mathbf{w}})(\mathcal{S},\mathbf{w}\models^-_n\psi[f[x/a]])
\end{align*}
However, the negative satisfaction relation $\models^-_{n}$ is often viewed within this pair as a subsidiary one, which, among other things, is due to the fact that the satisfaction clauses for negation allow to completely reflect the structure of $\models^-_{n}$ within $\models^+_{n}$. Therefore, one can still capture $\mathsf{QN4}$ according to our usual pattern. In other words, we are going to prove the following:
\begin{proposition}\label{P:nelsonian-sheaves}
		For every $\Omega \subseteq \Pi$, $\mathsf{QN4}(\Omega) = \mathbb{L}(EP_n(\Omega), \models^+_n)$.
\end{proposition}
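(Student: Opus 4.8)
The plan is to reduce the adequacy of the Nelsonian sheaf semantics to the already-available adequacy of the intuitionistic sheaf semantics for $\mathsf{QIL}^+$, using the syntactic translation $Tr$ of Proposition \ref{P:intuitionistic-embedding} as the bridge. The central device is a bijection between Nelsonian $\Omega$-sheaves and intuitionistic $(\Omega^\pm\cup\{\epsilon^2\})$-sheaves. Given $\mathcal{S} = (W, \leq, \mathtt{M}^+, \mathtt{M}^-, \mathtt{H}) \in \mathbb{N}4(\Omega)$, I would define $\mathcal{S}^\circ = (W, \leq, \mathtt{M}^\circ, \mathtt{H}) \in \mathbb{I}(\Omega^\pm\cup\{\epsilon^2\})$ by keeping $W, \leq, \mathtt{H}$ and setting, at each world $\mathbf{w}$ over the common domain $U_\mathbf{w}$, $(P_+)^{\mathtt{M}^\circ_\mathbf{w}} := P^{\mathtt{M}^+_\mathbf{w}}$ and $(P_-)^{\mathtt{M}^\circ_\mathbf{w}} := P^{\mathtt{M}^-_\mathbf{w}}$ for each $P \in \Omega$, together with $\epsilon^{\mathtt{M}^\circ_\mathbf{w}} := \epsilon^{\mathtt{M}^-_\mathbf{w}}$. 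Because $\mathtt{H}$ is simultaneously a family of homomorphisms for $\mathcal{S}^+$ over $\Omega$ and for $\mathcal{S}^-$ over $\Omega\cup\{\epsilon^2\}$, it is automatically a family of homomorphisms for $\mathcal{S}^\circ$ over $\Omega^\pm\cup\{\epsilon^2\}$, and the identity and composition conditions of Definition \ref{D:intuitionistic-sheaf} are inherited verbatim. The assignment is visibly invertible by splitting the signature back, so $\mathcal{S}\mapsto\mathcal{S}^\circ$ is a bijection carrying each $EP_n(\Omega)$-point $(\mathcal{S}, \mathbf{w}, f)$ to an $EP_i(\Omega^\pm\cup\{\epsilon^2\})$-point $(\mathcal{S}^\circ, \mathbf{w}, f)$.

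The heart of the argument, and the step I expect to be the main obstacle, is a correspondence lemma proved by simultaneous induction on the construction of $\phi \in \mathcal{FO}(\Omega)$: for every $(\mathcal{S}, \mathbf{w}, f)\in EP_n(\Omega)$,
\begin{align*}
	\mathcal{S},\mathbf{w}\models^+_n\phi[f] &\text{ iff } \mathcal{S}^\circ,\mathbf{w}\models_i Tr(\phi)[f],\\
	\mathcal{S},\mathbf{w}\models^-_n\phi[f] &\text{ iff } \mathcal{S}^\circ,\mathbf{w}\models_i Tr(\sim\phi)[f].
\end{align*}
The base cases follow from the definition of $\mathcal{S}^\circ$ and the atomic clauses of $Tr$ (e.g. $Tr(P(\bar{x}_n)) = P_+(\bar{x}_n)$, $Tr(\sim P(\bar{x}_n)) = P_-(\bar{x}_n)$, and $Tr(\sim(x\equiv y)) = \epsilon(x,y)$ matched against the $\models^-_n$-clause for $\equiv$, while $\equiv$ itself is interpreted as genuine equality in $\mathcal{S}^\circ$ just as in $\mathcal{S}^+$). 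Each inductive clause is then a direct matching of the verification and falsification clauses for $\models^\pm_n$ against the recursion clauses of $Tr$ and the intuitionistic clauses for $\models_i$: the positive and negative clauses for $\wedge, \vee$ track the De Morgan clauses of $Tr$; the $\models^-_n$-clause for $\to$ matches $Tr(\sim(\psi\to\chi)) = Tr(\psi)\wedge Tr(\sim\chi)$; the shifted-world $\models^-_n$-clause for $\exists$ and the $\models^+_n$/$\models^-_n$-clauses for the quantifiers match the quantifier-swapping clauses of $Tr$ together with the $\models_i$-clauses for the dual quantifier; and the case $\phi = \sim\psi$ is closed by the strong-negation clauses for $\models^\pm_n$ together with $Tr(\sim\sim\psi)=Tr(\psi)$, which is exactly where the double-negation clause of $Tr$ is needed. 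The delicate bookkeeping lies in verifying that the reindexed assignment $f\circ\mathtt{H}_{\mathbf{w}\mathbf{v}}$ is transported correctly across the bijection in these shifted-world clauses; this is routine once $\mathcal{S}\mapsto\mathcal{S}^\circ$ is fixed, but it must be checked clause by clause.

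With the correspondence lemma in hand, the equality of logics follows by chaining three equivalences. First, by the lemma and the bijection $\mathcal{S}\leftrightarrow\mathcal{S}^\circ$, a pair $(\Gamma, \Delta)$ is $(EP_n(\Omega), \models^+_n)$-unsatisfiable iff $(Tr(\Gamma), Tr(\Delta))$ is $(EP_i(\Omega^\pm\cup\{\epsilon^2\}), \models_i)$-unsatisfiable, since ranging over Nelsonian evaluation points on the left is the same as ranging over all intuitionistic evaluation points for the expanded signature on the right. Second, since the intuitionistic sheaf semantics is adequate for $\mathsf{QIL}^+$ (as noted right after its definition), the latter holds iff $Tr(\Gamma)\models_{\mathsf{QIL}^+}Tr(\Delta)$. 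Third, by Proposition \ref{P:intuitionistic-embedding} this is equivalent to $\Gamma\models_{\mathsf{QN4}}\Delta$, i.e. to $(\Gamma, \Delta)\in\mathsf{QN4}(\Omega)$. Combining the three, $(\Gamma, \Delta)\in\mathbb{L}(EP_n(\Omega), \models^+_n)$ iff $(\Gamma, \Delta)\in\mathsf{QN4}(\Omega)$ for all $\Gamma,\Delta\subseteq\mathcal{FO}(\Omega)$, which is exactly $\mathsf{QN4}(\Omega) = \mathbb{L}(EP_n(\Omega), \models^+_n)$.
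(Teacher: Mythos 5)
Your proposal is correct and takes essentially the same approach as the paper: the paper's Appendix B establishes exactly your correspondence lemma, merely split into two lemmas (one for each direction of your bijection $\mathcal{S}\mapsto\mathcal{S}^\circ$, there called $\mathcal{S}^i$ and $\mathcal{S}^n$), each proved by the same simultaneous induction matching the $\models^\pm_n$ clauses against $Tr$ and $\models_i$. The concluding argument is likewise identical, chaining $\Gamma\models_{\mathsf{QN4}}\Delta$ iff $Tr(\Gamma)\models_{\mathsf{QIL}^+}Tr(\Delta)$ (Proposition \ref{P:intuitionistic-embedding}) iff $Tr(\Gamma)\models_i Tr(\Delta)$ (the sheaf-semantic definition of $\mathsf{QIL}^+$) iff $\Gamma\models^+_n\Delta$ (the sheaf correspondence).
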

The proof of this proposition makes a substantial use of the embedding $Tr$ defined above. We sketch it in Appendix \ref{App:nelsonian-sheaves}. Another consequence of the tight relation between $\mathsf{QN4}$ and $\mathsf{QIL}^+$ is that Lemma \ref{L:intuitionistic-standard} carries over to Nelsonian sheaves:
\begin{lemma}\label{L:n4-standard}
Let $\Omega \subseteq \Pi$. For every $\ast\in \{+, -\}$, $(\mathcal{S}, \mathbf{w}, f)\in EP_{n}(\Omega)$, and $\phi \in \mathcal{FO}(\Omega)$, we have:
\begin{enumerate}
	\item If $\mathbf{v}\geq \mathbf{w}$,  and $\mathcal{S},\mathbf{w}\models^\ast_{n}\phi[f]$, then $\mathcal{S},\mathbf{v}\models^\ast_{n}\phi[f\circ\mathtt{H}_{\mathbf{w}\mathbf{v}}]$.
	
	\item The \textit{Nelsonian generated sub-sheaf} $\mathcal{S}|_\mathbf{w} = (W|_\mathbf{w}, \leq|_\mathbf{w}, \mathtt{M}^+|_\mathbf{w}, \mathtt{M}^-|_\mathbf{w}, \mathtt{H}|_\mathbf{w})\in \mathbb{N}4(\Omega)$ is such that both $(\mathcal{S}|_\mathbf{w})^+$ and $(\mathcal{S}|_\mathbf{w})^-$ are the generated sub-sheaves of $\mathcal{S}^+$ and $\mathcal{S}^-$, respectively. Then:
	$$
	\mathcal{S}|_\mathbf{w},\mathbf{v}\models^\ast_{n}\phi[f]\text{ iff } \mathcal{S},\mathbf{v}\models^\ast_{n}\phi[f]
	$$ 
	for every $\mathbf{v}\geq \mathbf{w}$ and every $f:Ind\to U_\mathbf{v}$.
\end{enumerate}	 
\end{lemma}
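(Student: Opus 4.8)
The plan is to prove both parts by a single simultaneous induction on the construction of $\phi \in \mathcal{FO}(\Omega)$, carried out uniformly for both polarities $\ast \in \{+,-\}$; this mirrors the induction behind Lemma \ref{L:intuitionistic-standard}, the only new feature being that the verification and falsification clauses refer to one another through the clauses for $\sim$, so the two polarities must be threaded through the induction together. For the base cases, I would observe that the atomic clauses reduce Nelsonian satisfaction to intuitionistic satisfaction in the components: $\mathcal{S},\mathbf{w}\models^+_n P(\bar{x}_n)[f]$ and $\mathcal{S},\mathbf{w}\models^+_n(x\equiv y)[f]$ are by definition $\mathcal{S}^+,\mathbf{w}\models_i P(\bar{x}_n)[f]$ and $\mathcal{S}^+,\mathbf{w}\models_i(x\equiv y)[f]$, while $\mathcal{S},\mathbf{w}\models^-_n P(\bar{x}_n)[f]$ and $\mathcal{S},\mathbf{w}\models^-_n(x\equiv y)[f]$ are $\mathcal{S}^-,\mathbf{w}\models_i P(\bar{x}_n)[f]$ and $\mathcal{S}^-,\mathbf{w}\models_i\epsilon(x,y)[f]$. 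Since $\mathcal{S}^+\in\mathbb{I}(\Omega)$ and $\mathcal{S}^-\in\mathbb{I}(\Omega\cup\{\epsilon^2\})$, both parts of the lemma for atoms follow at once from the corresponding parts of Lemma \ref{L:intuitionistic-standard} applied to $\mathcal{S}^+$ and $\mathcal{S}^-$, together with the fact that $(\mathcal{S}|_\mathbf{w})^+$ and $(\mathcal{S}|_\mathbf{w})^-$ are the generated sub-sheaves of $\mathcal{S}^+$ and $\mathcal{S}^-$.

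For the inductive step of part 1 the cases split naturally into two kinds. The \emph{local} clauses --- $\models^+_n$ and $\models^-_n$ for $\wedge$ and $\vee$, the $\models^-_n$ clause for $\to$, both clauses for $\sim$, the $\models^+_n$ clause for $\exists$, and the $\models^-_n$ clause for $\forall$ --- evaluate $\phi$ at $\mathbf{w}$ in terms of its immediate subformulas at $\mathbf{w}$ (possibly after a single substitution $f[x/a]$ with $a\in U_\mathbf{w}$), though at a possibly \emph{different} polarity. Monotonicity for these follows by applying the induction hypothesis at the polarity dictated by the clause; in the two existential-type cases ($\models^+_n\exists$ and $\models^-_n\forall$) one additionally witnesses the quantifier at $\mathbf{v}$ by $\mathtt{H}_{\mathbf{w}\mathbf{v}}(a)$ and uses the identity $(f[x/a])\circ\mathtt{H}_{\mathbf{w}\mathbf{v}} = (f\circ\mathtt{H}_{\mathbf{w}\mathbf{v}})[x/\mathtt{H}_{\mathbf{w}\mathbf{v}}(a)]$, exactly as in the intuitionistic $\exists$-case. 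The remaining \emph{upward-closed} clauses --- $\models^+_n$ for $\to$ and $\forall$, and $\models^-_n$ for $\exists$ --- already quantify over all $\mathbf{v}\geq\mathbf{w}$, so monotonicity to any $\mathbf{u}\geq\mathbf{w}$ is immediate: every $\mathbf{v}\geq\mathbf{u}$ satisfies $\mathbf{v}\geq\mathbf{w}$ by transitivity of $\leq$, and the reindexing of the assignment matches because $\mathtt{H}_{\mathbf{w}\mathbf{u}}\circ\mathtt{H}_{\mathbf{u}\mathbf{v}} = \mathtt{H}_{\mathbf{w}\mathbf{v}}$ by the functoriality condition of Definition \ref{D:intuitionistic-sheaf}(4b) together with associativity of composition. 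No case invokes the induction hypothesis beyond the immediate subformulas, so the simultaneous induction closes.

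Part 2 is handled by the same simultaneous induction, now comparing $\mathcal{S}$ with $\mathcal{S}|_\mathbf{w}$ at an arbitrary $\mathbf{v}\geq\mathbf{w}$. The point is that every satisfaction clause evaluated at $\mathbf{v}$ refers only to nodes $\mathbf{u}\geq\mathbf{v}$ and to elements of the domains $U_\mathbf{u}$ at such nodes, and every such $\mathbf{u}$ lies in $W|_\mathbf{w}$ with $\leq$, $\mathtt{M}^+$, $\mathtt{M}^-$, and $\mathtt{H}$ unchanged by the restriction; hence each clause receives the same truth value computed in $\mathcal{S}$ and in $\mathcal{S}|_\mathbf{w}$ once the induction hypothesis is applied to the immediate subformulas, with the atomic cases anchored in Lemma \ref{L:intuitionistic-standard}(2) via $(\mathcal{S}|_\mathbf{w})^\pm = (\mathcal{S}^\pm)|_\mathbf{w}$.

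The only real care needed is bookkeeping: one must track the polarity flips induced by $\sim$ and by $\to$ on the negative side, the De Morgan switches in the $\wedge/\vee$ clauses, and the reindexing $f\mapsto f\circ\mathtt{H}_{\mathbf{w}\mathbf{v}}$ under the two sheaf-theoretic identities above; there is no genuine difficulty beyond this, which is exactly why both parts carry over from Lemma \ref{L:intuitionistic-standard}. An alternative, if one prefers not to re-run the induction, is to read the claims off the embedding $Tr$: building from $\mathcal{S}$ the intuitionistic $(\Omega^\pm\cup\{\epsilon^2\})$-sheaf that interprets $P_+$, $P_-$, and $\epsilon$ by $P^{\mathtt{M}^+}$, $P^{\mathtt{M}^-}$, and $\epsilon^{\mathtt{M}^-}$ respectively, one checks that $\mathcal{S},\mathbf{w}\models^+_n\phi[f]$ iff this sheaf satisfies $Tr(\phi)[f]$ and $\mathcal{S},\mathbf{w}\models^-_n\phi[f]$ iff it satisfies $Tr(\sim\phi)[f]$, and then applies Lemma \ref{L:intuitionistic-standard} to $Tr(\phi)$ and $Tr(\sim\phi)$ directly.
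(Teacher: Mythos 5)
Your proposal is correct, but your \emph{primary} argument takes a genuinely different route from the paper's, while the ``alternative'' you sketch in your final sentences is in fact the paper's actual proof. The paper does not re-run any induction: it invokes Lemma \ref{L:sheaf-embedding1} (already established for Proposition \ref{P:nelsonian-sheaves}), which associates to $\mathcal{S}$ the intuitionistic sheaf $\mathcal{S}^i$ over $\Omega^\pm\cup\{\epsilon^2\}$ with $\mathcal{S},\mathbf{w}\models^+_n\phi[f]$ iff $\mathcal{S}^i,\mathbf{w}\models_i Tr(\phi)[f]$; Part 1 then falls out of Lemma \ref{L:intuitionistic-standard}.1 applied to $Tr(\phi)$, the negative relation being handled by the reduction $\mathcal{S},\mathbf{w}\models^-_n\phi[f]$ iff $\mathcal{S},\mathbf{w}\models^+_n\sim\phi[f]$ (so one applies the already-proved positive case to $\sim\phi$ rather than treating $Tr(\sim\phi)$ separately), and Part 2 follows from the observation $(\mathcal{S}|_\mathbf{w})^i = (\mathcal{S}^i)|_\mathbf{w}$ together with Lemma \ref{L:intuitionistic-standard}.2. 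Your main argument --- the direct simultaneous two-polarity induction --- is sound as well: the split into ``local'' clauses (handled by the IH at the polarity dictated by the clause, with witness $\mathtt{H}_{\mathbf{w}\mathbf{v}}(a)$ and the identity $(f[x/a])\circ\mathtt{H}_{\mathbf{w}\mathbf{v}} = (f\circ\mathtt{H}_{\mathbf{w}\mathbf{v}})[x/\mathtt{H}_{\mathbf{w}\mathbf{v}}(a)]$) and ``upward-closed'' clauses (where transitivity of $\leq$ and the composition law $\mathtt{H}_{\mathbf{w}\mathbf{u}}\circ\mathtt{H}_{\mathbf{u}\mathbf{v}} = \mathtt{H}_{\mathbf{w}\mathbf{v}}$ suffice, with no appeal to the IH) exhausts all connectives and quantifiers at both polarities, and the simultaneity correctly absorbs the polarity flips of $\sim$ and of $\to$ on the negative side. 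The trade-off between the two routes: yours is self-contained and independent of the $Tr$-machinery of Appendix \ref{App:nelsonian-sheaves}, at the cost of repeating case-by-case bookkeeping; the paper's reduction is shorter precisely because that bookkeeping was already done once, inside Lemma \ref{L:sheaf-embedding1} and the intuitionistic Lemma \ref{L:intuitionistic-standard}, and because the $\sim$-clauses of the Nelsonian semantics let the negative relation ride for free on the positive one.
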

Proof of the Lemma is relegated to Appendix \ref{App:n4-standard}.
\begin{remark}\label{Rm:simplified}
	For the rest of the paper, we will be suppressing $\Pi$ in notations like $\mathcal{FO}(\Pi)$, $\mathbb{N}4(\Pi)$, $EP_n(\Pi)$, and $\mathsf{QN4}(\Pi)$.
\end{remark}
In the remaining part of the section we develop $\mathsf{QN4}$ to an extent that is sufficient for the subsequent sections.  As usual, it follows from our semantic definitions that the truth value of a formula $\phi \in \mathcal{FO}$ only depends on the values assigned by $f$ to the values of the variables in $FV(\phi)$. We will therefore write, for any $\ast \in \{+, -\}$, $\mathcal{S}, \mathbf{w}\models^\ast_{n}\phi[x_1/a_1,\ldots,x_n/a_n]$ iff $\mathcal{S} \in \mathbb{N}4$, $\mathbf{w} \in W$, $\phi \in \mathcal{FO}^{\bar{x}_n}$, and $\mathcal{S}, \mathbf{w}\models^\ast_{n}\phi[f]$ for every (equivalently, any) $f$ such that both $(\mathcal{S}, \mathbf{w}, f) \in EP_{n}$ and  $f(x_i) = a_i$ for every $1 \leq i \leq n$. In particular, we will write $\mathcal{S}, \mathbf{w}\models^\ast_{n}\phi$ iff $\phi \in \mathcal{FO}^\emptyset$ and we have $\mathcal{S}, \mathbf{w}\models^\ast_{n}\phi[f]$ for every (equivalently, any) function $f$ such that $(\mathcal{S}, \mathbf{w}, f) \in EP_{n}$; we will write $\mathcal{S}\models^\ast_{n}\phi$ iff  $\mathcal{S}, \mathbf{w}\models^\ast_{n}\phi$ for every $\mathbf{w}\in W$. The conventions of this paragraph also extend to sets and bi-sets of formulas in an obvious way.

Next, we introduce the following abbreviations for all $\phi, \psi\in\mathcal{FO}$:
\begin{itemize}
	\item $\phi \Rightarrow \psi$ (strong implication) for $(\phi \to \psi)\wedge(\sim\psi\to \sim\phi)$.
	
	\item $\phi \Leftrightarrow \psi$ (strong equivalence) for $(\phi \Rightarrow \psi)\wedge(\psi\Rightarrow \phi)$, or, (equivalently, in view of Lemma \ref{L:intuitionistic-inclusion}), for $(\phi\leftrightarrow\psi)\wedge(\sim\phi\leftrightarrow\sim\psi)$.
	
	\item $\phi\,\&\,\psi$ (ampersand) for $\sim(\phi\to\sim\psi)$.
\end{itemize}
The following lemma lists some properties of these derived connectives:
\begin{lemma}\label{L:derived-connectives}
	Let $\phi, \psi, \chi, \theta \in \mathcal{FO}$ be chosen arbitrarily. Then all of the following theorems hold in $\mathsf{QN4}$:
	\begin{align}
		\vdash_{\mathsf{QN4}}(\phi \Rightarrow \psi)&\to(\phi\to\psi)\label{E:T1}\tag{T1}\\
		\vdash_{\mathsf{QN4}}(\phi \Leftrightarrow \psi)&\to(\phi\leftrightarrow\psi)\label{E:T2}\tag{T2}\\
		\vdash_{\mathsf{QN4}}(\phi \Rightarrow \psi)&\Leftrightarrow(\sim\psi\Rightarrow\sim\phi)\label{E:T3}\tag{T3}\\
		\vdash_{\mathsf{QN4}}(\phi \Leftrightarrow \psi)&\Leftrightarrow(\sim\phi \Leftrightarrow\sim\psi)\label{E:T4}\tag{T4}\\
		\vdash_{\mathsf{QN4}}\phi &\Leftrightarrow \phi\label{E:T5}\tag{T5}\\
		\vdash_{\mathsf{QN4}}(\phi \Leftrightarrow \psi)&\Leftrightarrow(\psi \Leftrightarrow\phi)\label{E:T6}\tag{T6}\\
		\vdash_{\mathsf{QN4}}(\phi \Leftrightarrow \psi)&\Leftrightarrow((\psi \Leftrightarrow\chi)\Leftrightarrow(\phi \Leftrightarrow\chi))\label{E:T7}\tag{T7}\\
		\vdash_{\mathsf{QN4}}\sim\sim\phi &\Leftrightarrow \phi\label{E:T8}\tag{T8}\\
		\vdash_{\mathsf{QN4}}\sim(\phi\wedge \psi) 
		&\Leftrightarrow (\sim\phi\vee \sim\psi)\label{E:T9}\tag{T9}\\
		\vdash_{\mathsf{QN4}}\sim(\phi\vee \psi) &\Leftrightarrow (\sim\phi\wedge \sim\psi)\label{E:T10}\tag{T10}\\
		\vdash_{\mathsf{QN4}}\sim\exists x\theta &\Leftrightarrow \forall x\sim\theta\label{E:T11}\tag{T11}\\
		\vdash_{\mathsf{QN4}}\sim\forall x\theta &\Leftrightarrow \exists x\sim\theta\label{E:T12}\tag{T12}\\	
		\vdash_{\mathsf{QN4}}((\phi \Leftrightarrow \psi)  \wedge (\chi \Leftrightarrow \theta))&\to ((\phi\ast\chi)\Leftrightarrow(\psi\ast\theta))\label{E:T13}\tag{T13} &&\ast\in \{\wedge,\vee,\to\}\\
		\vdash_{\mathsf{QN4}}(\phi \Leftrightarrow \psi)&\to(Qx\phi\Leftrightarrow Qx\psi)\label{E:T14}\tag{T14} &&Q\in \{\forall, \exists\}\\
		\vdash_{\mathsf{QN4}}(\phi\,\&\,\psi)&\leftrightarrow (\phi\wedge\psi)\label{E:T15}\tag{T15}\\
		\vdash_{\mathsf{QN4}}\sim(\phi\,\&\,\psi)&\leftrightarrow (\phi\to\sim\psi)\label{E:T16}\tag{T16}
	\end{align}
Observe that \eqref{E:T9}-- \eqref{E:T12} strengthen (in view of \eqref{E:T2}) the axioms \eqref{E:a1}-- \eqref{E:a2} and \eqref{E:a5}-- \eqref{E:a6}, respectively. That this strengthening is non-trivial, follows from the fact that the converses of both \eqref{E:T1} and \eqref{E:T2} fail in general; moreover, one cannot replace $\leftrightarrow$ with $\Leftrightarrow$ in \eqref{E:a4}, \eqref{E:T15}, or \eqref{E:T16}.

Finally, note that the following schemes are not, in general, valid in $\mathsf{QN4}$:
\begin{align*}
(\phi \to \psi)\to(\sim\psi\to\sim\phi),\,
(\phi \leftrightarrow \psi)\to(\sim\phi \leftrightarrow\sim\psi)	
\end{align*}
\end{lemma}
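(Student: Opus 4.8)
The plan is to route every item through the well-understood logic $\mathsf{QIL}^+$ by means of the embedding $Tr$. Instantiating Proposition \ref{P:intuitionistic-embedding} at $(\emptyset,\{\chi\})$ gives $\vdash_{\mathsf{QN4}}\chi$ iff $\vdash_{\mathsf{QIL}^+}Tr(\chi)$ for every $\chi\in\mathcal{FO}$, so it suffices to check (T1)--(T16) after translation. Writing $\phi^+:=Tr(\phi)$ and $\phi^-:=Tr(\sim\phi)$, the clauses defining $Tr$ yield the bookkeeping identities $(\sim\phi)^+=\phi^-$ and $(\sim\phi)^-=\phi^+$ (double negations collapse), together with the De Morgan and quantifier-dual behaviour of $(\cdot)^-$, e.g.\ $(\phi\wedge\psi)^-=\phi^-\vee\psi^-$, $(\phi\to\psi)^-=\phi^+\wedge\psi^-$, and $(\exists x\phi)^-=\forall x\,\phi^-$. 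The single computation that drives everything is that $Tr(\phi\Rightarrow\psi)=(\phi^+\to\psi^+)\wedge(\psi^-\to\phi^-)$, whence $Tr(\phi\Leftrightarrow\psi)$ is $\mathsf{QIL}^+$-equivalent to $(\phi^+\leftrightarrow\psi^+)\wedge(\phi^-\leftrightarrow\psi^-)$, while $Tr(\phi\,\&\,\psi)=\phi^+\wedge\psi^+$ and $Tr(\sim(\phi\,\&\,\psi))=\phi^+\to\psi^-$. In words: a $\Leftrightarrow$-theorem of $\mathsf{QN4}$ is exactly a pair of positive-intuitionistic biconditionals, one between the positive translations and one between the negative ones.

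With this reduction most items are transparent. (T1) and (T2) are the two projections of that conjunction; (T5) and (T8)--(T12) collapse, after the identities for $(\cdot)^+$ and $(\cdot)^-$, to instances of $\alpha\leftrightarrow\alpha$ (for (T8), $(\sim\sim\phi)^+=\phi^+$ and $(\sim\sim\phi)^-=\phi^-$); (T3), (T4), (T6) are mere rearrangements of the two component biconditionals; (T13) is ordinary $\mathsf{QIL}^+$-congruence of $\wedge,\vee,\to$ applied componentwise; and (T15), (T16) are read off the displayed values of $Tr(\phi\,\&\,\psi)$ and $Tr(\sim(\phi\,\&\,\psi))$. The only laborious positive item is (T7), which unwinds to checking that the two components of $Tr(\phi\Leftrightarrow\psi)$ and of $Tr((\psi\Leftrightarrow\chi)\Leftrightarrow(\phi\Leftrightarrow\chi))$ coincide in $\mathsf{QIL}^+$; this requires tracking the falsity component $(\phi^+\wedge\psi^-)\vee(\psi^+\wedge\phi^-)$ of a $\Leftrightarrow$-formula through the nesting, which I would simply grind out by case analysis.

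The delicate step is the quantifier congruence (T14). Its translation asks for $\exists x\,\phi^+\leftrightarrow\exists x\,\psi^+$ (and $\forall x\,\phi^+\leftrightarrow\forall x\,\psi^+$) out of $\phi^+\leftrightarrow\psi^+$, and dually $\forall x\,\phi^-\leftrightarrow\forall x\,\psi^-$ out of $\phi^-\leftrightarrow\psi^-$. Since $x$ may occur free, this congruence cannot be extracted from the equivalence by \eqref{R:DT} alone; it must pass through the generalization-type rules \eqref{R:A}, \eqref{R:E}, \eqref{R:Gen} of Lemma \ref{L:intuitionistic-inclusion}, whose eigenvariable proviso $x\notin FV(\Gamma)$ is precisely what legitimizes the step. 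Concretely I would first establish the replacement rule ``from $\vdash_{\mathsf{QN4}}\phi\Leftrightarrow\psi$ infer $\vdash_{\mathsf{QN4}}Qx\phi\Leftrightarrow Qx\psi$'' by \eqref{R:Gen} and the quantifier-biconditional rules, and only then obtain (T14). I expect reconciling the free variable $x$ with the quantifier congruence to be the main obstacle of the whole lemma.

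Finally, the negative observations call for countermodels, for which I would invoke the sheaf adequacy of Proposition \ref{P:nelsonian-sheaves} rather than $Tr$. The essential feature of $\mathbb{N}4$ is that the positive and negative extensions $P^{\mathtt{M}^+_\mathbf{w}}$ and $P^{\mathtt{M}^-_\mathbf{w}}$ are independent, so even a single-world Nelsonian sheaf separates truth from falsity. Taking one world with atoms $p,q$ where $p$ is neither verified nor falsified while $q$ is falsified but not verified, one gets $\models^+_n(p\to q)$ and indeed $\models^+_n(p\leftrightarrow q)$ yet $\not\models^+_n(\sim q\to\sim p)$, which refutes $(\phi\to\psi)\to(\sim\psi\to\sim\phi)$ and the converse of (T1); a companion world with $p,q$ both unverified, $p$ falsified and $q$ not, refutes $(\phi\leftrightarrow\psi)\to(\sim\phi\leftrightarrow\sim\psi)$ and the converse of (T2). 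For the impossibility of upgrading $\leftrightarrow$ to $\Leftrightarrow$ in \eqref{E:a4}, (T15) and (T16), the same independence shows the negative components diverge: for (T15), $Tr(\sim(\phi\,\&\,\psi))=\phi^+\to\psi^-$ against $Tr(\sim(\phi\wedge\psi))=\phi^-\vee\psi^-$, which are not $\mathsf{QIL}^+$-equivalent, and a world with $p,q$ all unverified and unfalsified exhibits the gap.
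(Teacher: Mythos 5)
Your route is genuinely different from the paper's. The paper proves \eqref{E:T1}--\eqref{E:T16} by direct Hilbert-style derivations inside $\mathfrak{QN4}$ --- implication chains justified by Lemma \ref{L:intuitionistic-inclusion} together with the axioms \eqref{E:a1}--\eqref{E:a6}, with \eqref{E:T13} (for $\to$) and \eqref{E:T14} (for $\exists$) worked as examples --- and it dismisses the non-validity claims with a blanket appeal to sheaf semantics. You instead factor everything through $Tr$ and Proposition \ref{P:intuitionistic-embedding} and check the translations in $\mathsf{QIL}^+$. That reduction is legitimate, your identities for $Tr(\phi\Rightarrow\psi)$, $Tr(\phi\Leftrightarrow\psi)$, $Tr(\phi\,\&\,\psi)$ and $Tr(\sim(\phi\,\&\,\psi))$ are computed correctly, and they do make \eqref{E:T1}--\eqref{E:T6}, \eqref{E:T8}--\eqref{E:T13}, \eqref{E:T15}, \eqref{E:T16} routine: the $\sim$-bookkeeping is done once and for all by $Tr$ instead of being repeated in each derivation. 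Your single-world countermodels for the failure claims are also correct, and more explicit than anything the paper offers.

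The genuine gap is your treatment of \eqref{E:T14}. You rightly flag the free occurrence of $x$ as the crux, but your plan --- first derive the rule ``from $\vdash_{\mathsf{QN4}}\phi\Leftrightarrow\psi$ infer $\vdash_{\mathsf{QN4}}Qx\phi\Leftrightarrow Qx\psi$'' and ``only then obtain \eqref{E:T14}'' --- cannot be completed: passing from that rule to the implication $(\phi\Leftrightarrow\psi)\to(Qx\phi\Leftrightarrow Qx\psi)$ is precisely what the eigenvariable provisos of \eqref{R:Gen}, \eqref{R:A}, \eqref{R:E} block, and no argument can supply it, because the implication schema is invalid when $x$ occurs free in $\phi,\psi$. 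Concretely, take a one-world Nelsonian sheaf with domain $\{a,b\}$, $\phi=p(x)$, $\psi=q(x)$, $p$ verified exactly at $b$, all other positive and negative extensions empty, and $f(x)=a$: then $\phi\Leftrightarrow\psi$ is vacuously verified under $f$, while $\exists x\,p(x)$ is verified and $\exists x\,q(x)$ is not, so the consequent fails; by Proposition \ref{P:nelsonian-sheaves} (or classically, via Lemma \ref{L:classical-inclusion}) the formula is not a theorem. Note that the paper's own sketch of \eqref{E:T14} stumbles at the same point: it cites $(\phi\leftrightarrow\psi)\to(\exists x\phi\leftrightarrow\exists x\psi)$ as provable ``by Lemma \ref{L:intuitionistic-inclusion}'', which it is not, even classically. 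So the defect lies in the statement rather than only in your proof: what is actually available is the rule form you describe, or the closed-antecedent form $\forall x(\phi\Leftrightarrow\psi)\to(Qx\phi\Leftrightarrow Qx\psi)$, and the paper's later appeals to \eqref{E:T14} (and to \eqref{E:T17}, \eqref{E:T18}) are of that closed kind.

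A second, smaller gap: do not expect your ``grind'' for \eqref{E:T7} to terminate in a proof. Its left-to-right half is fine, but the right-to-left half fails already for essentially intuitionistic reasons: in a two-world sheaf $\mathbf{w}\leq\mathbf{v}$ with $\phi$ verified only at $\mathbf{v}$, $\psi$ verified at both worlds, $\chi$ verified nowhere, and all negative extensions empty, the formulas $\psi\Leftrightarrow\chi$ and $\phi\Leftrightarrow\chi$ are neither verified nor falsified anywhere, so $(\psi\Leftrightarrow\chi)\Leftrightarrow(\phi\Leftrightarrow\chi)$ is vacuously verified at $\mathbf{w}$, while $\phi\Leftrightarrow\psi$ is not (its conjunct $\psi\to\phi$ fails at $\mathbf{w}$). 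Thus \eqref{E:T7}, like \eqref{E:T14}, is not valid as displayed. Your method has the virtue of making such failures visible --- the componentwise $\mathsf{QIL}^+$ check simply refuses to close --- whereas the paper's sketch glosses over them; but as written, your proposal asserts it can prove both items, and that is a gap.
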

We sketch the proof in Appendix \ref{App:derived-connectives}.

Thus, even though our defined connectives wouldn't make much sense in $\mathsf{QCL}$ as they are classically equivalent to $\to$, $\leftrightarrow$ and $\wedge$, respectively, we see that the situation is different in the context of $\mathsf{QN4}$. Indeed, whereas $\to$  and $\leftrightarrow$ in $\mathsf{QN4}$ cannot be contraposed, $\Rightarrow$ and $\Leftrightarrow$ define stronger (and contraposable) analogues to $\to$ and $\leftrightarrow$, respectively. Finally, the ampersand is especially convenient for handling the falsity conditions of restricted existential quantification in the context of $\mathsf{QN4}$. We provide a motivation for this use of ampersand in Appendix \ref{App:ampersand}.

Note, furthermore, that the failure of theorems like $(\phi \leftrightarrow \psi)\to(\sim\phi \leftrightarrow\sim\psi)$ implies that provably equivalent formulas in general fail to be substitutable for one another in $\mathsf{QN4}$. However, this failure does not extend to the strong equivalence; to the contrary, one can even prove that a substitution of strongly equivalent formulas results in a formula that is also strongly equivalent to the original one. 

As is well-known, a substitution of one formula for another in a first-order context can lead to rather complicated definitions. For the purposes of our paper it is sufficient to confine ourselves to the following one simple case.

If $\phi, \psi \in \mathcal{FO}$ then we denote by $\phi/\psi$ the result of replacing every occurrence of $v_0\equiv v_0$ in $\phi$ by $\psi$. The replacement operation then commutes with all the connectives and quantifiers, and, for $\phi \in At$, we stipulate that:
$$
\phi/\psi:=\begin{cases}
	\psi,\text{ if }\phi = (v_0\equiv v_0)\\
	\phi,\text{ otherwise.}
\end{cases}
$$
Then the following holds (See Appendix \ref{App:substitution} for a proof):
\begin{lemma}\label{L:substitution}
	For all $\phi, \psi, \chi \in \mathcal{FO}$, we have:
	\begin{equation}\label{E:T17}\tag{T17}
		\vdash_{\mathsf{QN4}}(\phi\Leftrightarrow\psi)\to((\chi/\phi)\Leftrightarrow(\chi/\psi))
	\end{equation}
\end{lemma}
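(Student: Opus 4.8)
The plan is to prove \eqref{E:T17} by induction on the construction of $\chi$, holding $\phi$ and $\psi$ fixed. The engine of the induction is the observation that, since the replacement $\cdot/\cdot$ commutes with every connective and quantifier, the two formulas $\chi/\phi$ and $\chi/\psi$ always carry the same outermost logical structure and differ only in the subformulas substituted into the $v_0\equiv v_0$-slots; this is exactly the shape on which the congruence theorems \eqref{E:T13} and \eqref{E:T14} operate. Throughout I would use freely that $\mathsf{QN4}$ extends $\mathsf{IL}^+$ (Lemma~\ref{L:intuitionistic-inclusion}), so that the routine positive-logic steps — transitivity of $\to$, the deduction theorem \eqref{R:DT}, and the gluing of $\alpha\to\beta$ and $\alpha\to\gamma$ into $\alpha\to(\beta\wedge\gamma)$ — are available without comment.

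For the base case $\chi\in At$ there are two subcases. If $\chi=(v_0\equiv v_0)$, then $\chi/\phi=\phi$ and $\chi/\psi=\psi$, so the goal is the $\mathsf{IL}^+$-provable instance $(\phi\Leftrightarrow\psi)\to(\phi\Leftrightarrow\psi)$. For any other atom, $\chi/\phi=\chi=\chi/\psi$, and $(\phi\Leftrightarrow\psi)\to(\chi\Leftrightarrow\chi)$ follows by weakening (axiom $\alpha_1$ together with \eqref{E:mp}) from the reflexivity theorem \eqref{E:T5}.

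The inductive step splits on the principal symbol of $\chi$. If $\chi=\chi_1\ast\chi_2$ with $\ast\in\{\wedge,\vee,\to\}$, the IH gives $(\phi\Leftrightarrow\psi)\to((\chi_i/\phi)\Leftrightarrow(\chi_i/\psi))$ for $i=1,2$; gluing these into a single implication with conjunctive consequent and composing with the matching instance of \eqref{E:T13} yields the claim, since $\chi/\phi=(\chi_1/\phi)\ast(\chi_2/\phi)$ and likewise for $\psi$. If $\chi=\sim\chi_1$, I would chain the IH with the forward half of \eqref{E:T4}, extracted via \eqref{E:T2} and conjunction elimination, namely $((\chi_1/\phi)\Leftrightarrow(\chi_1/\psi))\to(\sim(\chi_1/\phi)\Leftrightarrow\sim(\chi_1/\psi))$. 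If $\chi=Qx\chi_1$ with $Q\in\{\forall,\exists\}$, I would chain the IH with the instance of \eqref{E:T14} for $Q$ and $x$, once more using that the replacement commutes with the quantifier.

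The step I expect to demand the most care is the quantifier case: applying \eqref{E:T14} to $\chi_1/\phi$, $\chi_1/\psi$ and the bound variable $x$ requires attention to the variable conditions under which strong congruence passes under $Qx$, since the substituted subformulas may introduce free occurrences of $x$ into the slots. The remaining clauses are purely propositional gluing layered over \eqref{E:T13}, \eqref{E:T4}/\eqref{E:T2}, and \eqref{E:T5}, and should present no genuine difficulty.
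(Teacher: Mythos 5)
Your proof follows essentially the same route as the paper's: induction on the construction of $\chi$, with the base case split on whether the atom is $v_0\equiv v_0$ (the other atoms handled via \eqref{E:T5} and weakening), and the inductive step obtained by gluing the induction hypotheses and composing with \eqref{E:T13} for the binary connectives, \eqref{E:T4} (extracted through \eqref{E:T2}) for $\sim$, and \eqref{E:T14} for the quantifiers. The variable-condition worry you flag in the quantifier case does not create any divergence from the paper, since \eqref{E:T14} is stated there as a schema without side conditions on $x$ and the paper's own proof applies it exactly as you do.
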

An immediate consequence of Lemma \ref{L:substitution} is that disjunction can also be understood as defined connective:
\begin{corollary}\label{C:disjunstion}
	For all $\phi, \psi, \chi \in \mathcal{FO}$, we have $\vdash_{\mathsf{QN4}}(\chi/(\phi\vee\psi))\Leftrightarrow(\chi/(\sim(\sim\phi\wedge\sim\psi)))$
\end{corollary}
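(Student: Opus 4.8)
The plan is to obtain the corollary directly from Lemma \ref{L:substitution} by first showing that $\phi\vee\psi$ and $\sim(\sim\phi\wedge\sim\psi)$ are \emph{strongly} equivalent in $\mathsf{QN4}$, and then feeding this strong equivalence into \eqref{E:T17}. The whole argument thus reduces to one preliminary equivalence plus a single application of \eqref{E:mp}.

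First I would establish $\vdash_{\mathsf{QN4}}(\phi\vee\psi)\Leftrightarrow\sim(\sim\phi\wedge\sim\psi)$ by chaining theorems from Lemma \ref{L:derived-connectives}. The instance of \eqref{E:T8} is $\sim\sim(\phi\vee\psi)\Leftrightarrow(\phi\vee\psi)$, which by \eqref{E:T6} we may read as $(\phi\vee\psi)\Leftrightarrow\sim\sim(\phi\vee\psi)$. On the other hand, applying \eqref{E:T4} to \eqref{E:T10} yields $\sim\sim(\phi\vee\psi)\Leftrightarrow\sim(\sim\phi\wedge\sim\psi)$. Finally, transitivity of $\Leftrightarrow$ glues these together into $(\phi\vee\psi)\Leftrightarrow\sim(\sim\phi\wedge\sim\psi)$. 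Transitivity itself is extracted from \eqref{E:T7}: instantiating it with the three formulas in play, converting the outer $\Leftrightarrow$ to $\leftrightarrow$ by \eqref{E:T2}, and then discharging the two established equivalences by \eqref{E:mp} (using $\wedge$-elimination on $\leftrightarrow$) gives the desired conclusion.

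Next I would instantiate \eqref{E:T17} with the compound formulas $\phi\vee\psi$ and $\sim(\sim\phi\wedge\sim\psi)$ in place of its schematic $\phi$ and $\psi$, keeping $\chi$ fixed; this is legitimate since Lemma \ref{L:substitution} is stated for all members of $\mathcal{FO}$. The result is the theorem $((\phi\vee\psi)\Leftrightarrow\sim(\sim\phi\wedge\sim\psi))\to((\chi/(\phi\vee\psi))\Leftrightarrow(\chi/(\sim(\sim\phi\wedge\sim\psi))))$. Discharging its antecedent by the strong equivalence proved in the previous step, via \eqref{E:mp}, delivers exactly the claimed $\vdash_{\mathsf{QN4}}(\chi/(\phi\vee\psi))\Leftrightarrow(\chi/(\sim(\sim\phi\wedge\sim\psi)))$.

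There is no real obstacle here, since all the heavy lifting is already packaged in Lemmas \ref{L:derived-connectives} and \ref{L:substitution}; the only point requiring a little care is bookkeeping around $\Leftrightarrow$. Specifically, \eqref{E:T7} encodes transitivity of strong equivalence only after the passage from $\Leftrightarrow$ to $\leftrightarrow$ supplied by \eqref{E:T2}, and the direction of the \eqref{E:T8}-step must be corrected by \eqref{E:T6} before it can serve as the left leg of the transitivity chain. Both moves are entirely routine given the apparatus developed above.
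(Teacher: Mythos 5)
Your proposal is correct and follows essentially the same route as the paper: the paper's proof is the one-line citation ``By \eqref{E:T8}, \eqref{E:T10}, and Lemma \ref{L:substitution}'', whose intended expansion is exactly your argument — lift \eqref{E:T10} under negation (via \eqref{E:T4}, or equivalently an instance of \eqref{E:T17} itself), combine with \eqref{E:T8} by transitivity of $\Leftrightarrow$ to get $(\phi\vee\psi)\Leftrightarrow\sim(\sim\phi\wedge\sim\psi)$, and then discharge the antecedent of \eqref{E:T17} by \eqref{E:mp}. Your extra bookkeeping with \eqref{E:T6}, \eqref{E:T7}, and \eqref{E:T2} just makes explicit the glue steps the paper leaves implicit.
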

\begin{proof}
	By \eqref{E:T8}, \eqref{E:T10}, and Lemma \ref{L:substitution}.
\end{proof}
The formulas of the form $(\forall x)(\chi \to (\phi\Leftrightarrow \psi))$ will play a prominent role in the subsequent sections. We would like, therefore, also to take a moment to spell out their truth conditions in terms of Nelsonian sheaf semantics:
\begin{corollary}\label{C:set-encoding}
	Let $x,y \in Ind$ be pairwise distinct, let $\phi\in \mathcal{FO}^{x,y}$ and let $\psi, \chi \in \mathcal{FO}^y$. For every $\mathcal{S}\in \mathbb{N}4$, every $\mathbf{w} \in W$, and every $a \in U_\mathbf{w}$, we have
	$$
	\mathcal{S}, \mathbf{w} \models^+_n (\forall y)(\chi(y) \to (\phi(y,x)\Leftrightarrow \psi(y)))[x/a]
	$$ 
	iff, for every $\mathbf{v} \geq \mathbf{w}$ and every $b \in U_\mathbf{v}$ such that $\mathcal{S}, \mathbf{v}\models^+_n \chi[x/b]$ we have both
	$$
	\mathcal{S}, \mathbf{v}\models^+_n \phi[y/b, x/\mathtt{H}_{\mathbf{w}\mathbf{v}}(a)]\text{ iff }\mathcal{S}, \mathbf{v}\models^+_n \psi[y/b]
	$$
	and
	$$
	\mathcal{S}, \mathbf{v}\models^-_n \phi[y/b, x/\mathtt{H}_{\mathbf{w}\mathbf{v}}(a)]\text{ iff }\mathcal{S}, \mathbf{v}\models^-_n \psi[y/b].
	$$
\end{corollary}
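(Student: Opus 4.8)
The plan is to peel off the outer connectives of $(\forall y)(\chi \to (\phi \Leftrightarrow \psi))$ one layer at a time, using the clauses for $\forall$, $\to$, and $\sim$, after first isolating the behaviour of the strong equivalence $\Leftrightarrow$ in a preliminary observation. Throughout I write $h_{\mathbf{v},b}$ for an assignment with $h_{\mathbf{v},b}(x) = \mathtt{H}_{\mathbf{w}\mathbf{v}}(a)$ and $h_{\mathbf{v},b}(y) = b$; since $\phi \in \mathcal{FO}^{x,y}$ and $\psi, \chi \in \mathcal{FO}^y$, only these two values matter, and I read the side condition of the corollary as $\mathcal{S}, \mathbf{v} \models^+_n \chi[y/b]$.

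First I would establish an auxiliary equivalence for arbitrary $\alpha, \beta \in \mathcal{FO}$ and $(\mathcal{S}, \mathbf{v}, g) \in EP_n$: namely that $\mathcal{S}, \mathbf{v} \models^+_n (\alpha \Leftrightarrow \beta)[g]$ holds if and only if, for every $\mathbf{u} \geq \mathbf{v}$, the following two biconditionals hold: $\mathcal{S}, \mathbf{u} \models^+_n \alpha[g \circ \mathtt{H}_{\mathbf{v}\mathbf{u}}]$ iff $\mathcal{S}, \mathbf{u} \models^+_n \beta[g \circ \mathtt{H}_{\mathbf{v}\mathbf{u}}]$, and $\mathcal{S}, \mathbf{u} \models^-_n \alpha[g \circ \mathtt{H}_{\mathbf{v}\mathbf{u}}]$ iff $\mathcal{S}, \mathbf{u} \models^-_n \beta[g \circ \mathtt{H}_{\mathbf{v}\mathbf{u}}]$. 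This is obtained by reading $\Leftrightarrow$ through its equivalent form $(\alpha \leftrightarrow \beta) \wedge (\sim\alpha \leftrightarrow \sim\beta)$, expanding each $\leftrightarrow$ into a pair of implications, applying the $\to$-clause (which universally quantifies over $\mathbf{u} \geq \mathbf{v}$), and finally using that $\mathcal{S}, \mathbf{u} \models^+_n \sim\gamma$ iff $\mathcal{S}, \mathbf{u} \models^-_n \gamma$ to turn the negated halves into statements about $\models^-_n$. The two implications coming from a single $\leftrightarrow$ merge into one biconditional at each $\mathbf{u}$.

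With this in hand I would prove the two directions. Applying the $\forall y$- and $\to$-clauses, $\mathcal{S}, \mathbf{w} \models^+_n (\forall y)(\chi \to (\phi \Leftrightarrow \psi))[x/a]$ says precisely that for all $\mathbf{v} \geq \mathbf{w}$, all $b \in U_\mathbf{v}$, and all $\mathbf{u} \geq \mathbf{v}$, if $\chi$ is verified at $\mathbf{u}$ under the transported assignment then so is $\phi \Leftrightarrow \psi$; here the functoriality $\mathtt{H}_{\mathbf{w}\mathbf{v}} \circ \mathtt{H}_{\mathbf{v}\mathbf{u}} = \mathtt{H}_{\mathbf{w}\mathbf{u}}$ keeps the value of $x$ equal to $\mathtt{H}_{\mathbf{w}\mathbf{u}}(a)$. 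For the forward direction I instantiate at $\mathbf{u} = \mathbf{v}$ (using $\mathtt{H}_{\mathbf{v}\mathbf{v}} = id$): from the hypothesis together with $\mathcal{S}, \mathbf{v} \models^+_n \chi[y/b]$ I obtain $\mathcal{S}, \mathbf{v} \models^+_n (\phi \Leftrightarrow \psi)[h_{\mathbf{v},b}]$, and the auxiliary equivalence taken at $\mathbf{u} = \mathbf{v}$ yields exactly the two biconditionals claimed.

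The backward direction is where the main obstacle lies: the auxiliary equivalence shows that verifying $\phi \Leftrightarrow \psi$ at a world $\mathbf{u}$ demands agreement of $\phi, \psi$ at \emph{all} $\mathbf{u}' \geq \mathbf{u}$, whereas the hypothesis only supplies agreement pointwise at worlds where $\chi$ holds. The resolution is that every such $\mathbf{u}'$ is itself a legitimate instance of the hypothesis: given $\mathbf{v} \geq \mathbf{w}$, $b$, and $\mathbf{u} \geq \mathbf{v}$ with $\chi$ verified at $\mathbf{u}$ under $y \mapsto \mathtt{H}_{\mathbf{v}\mathbf{u}}(b)$, persistence of positive satisfaction (Lemma \ref{L:n4-standard}, part 1) propagates $\chi$ to every $\mathbf{u}' \geq \mathbf{u}$ under $y \mapsto \mathtt{H}_{\mathbf{v}\mathbf{u}'}(b)$, while transitivity gives $\mathbf{u}' \geq \mathbf{w}$; hence the hypothesis applies at the pair $(\mathbf{u}', \mathtt{H}_{\mathbf{v}\mathbf{u}'}(b))$ and furnishes precisely the positive and negative agreement needed at $\mathbf{u}'$. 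Feeding this back through the auxiliary equivalence delivers $\mathcal{S}, \mathbf{u} \models^+_n (\phi \Leftrightarrow \psi)[g]$ for the relevant transported $g$, which discharges the conditional and hence the universal statement. The only remaining bookkeeping is the repeated appeal to functoriality of $\mathtt{H}$ to confirm that the various transported assignments agree on $x$ and $y$ with the corresponding $h_{\mathbf{u}',\cdot}$, which is immediate.
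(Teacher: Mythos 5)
Your proof is correct and takes essentially the same route as the paper, whose entire proof is the single line ``By application of the definitions'': your argument is exactly that unwinding of the clauses for $\forall$, $\to$, and $\sim$ together with the definition of $\Leftrightarrow$, with the auxiliary equivalence making the structure explicit. You also correctly supply the one step the paper leaves implicit — in the backward direction, using persistence (Lemma \ref{L:n4-standard}.1) and transitivity to propagate the $\chi$-hypothesis to all successor worlds — and your reading of the side condition $\chi[x/b]$ as the evident typo for $\chi[y/b]$ is the intended one.
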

\begin{proof}
	By application of the definitions.
\end{proof}
Moreover, we would like to state an application of Lemma \ref{L:substitution} to formulas of this type as a separate corollary:
\begin{corollary}\label{C:substitution}
	For all $\phi, \psi, \chi, \theta \in \mathcal{FO}$, we have
	\begin{equation}\label{E:T18}\tag{T18}
		\vdash_{\mathsf{QN4}}\forall x(\chi\to (\phi\Leftrightarrow\psi))\to(\forall x(\chi \to (\theta/\phi))\leftrightarrow\forall x(\chi \to (\theta/\psi))
	\end{equation} 
\end{corollary}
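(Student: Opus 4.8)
The plan is to prove the displayed implication \eqref{E:T18} directly, by exhibiting three $\mathsf{QN4}$-theorems and chaining them with the transitivity of $\to$ (available since, by Lemma~\ref{L:intuitionistic-inclusion}, every $\mathsf{QIL}^+$-theorem and every $\mathsf{QIL}^+$-deducible rule is inherited by $\mathsf{QN4}$). Throughout, abbreviate $A := \theta/\phi$ and $B := \theta/\psi$. The first step is to upgrade Lemma~\ref{L:substitution}: instantiating \eqref{E:T17} with the context taken to be $\theta$ gives $\vdash_{\mathsf{QN4}}(\phi\Leftrightarrow\psi)\to((\theta/\phi)\Leftrightarrow(\theta/\psi))$, and composing this with \eqref{E:T2}, which converts $\Leftrightarrow$ into $\leftrightarrow$, yields the theorem $\vdash_{\mathsf{QN4}}(\phi\Leftrightarrow\psi)\to(A\leftrightarrow B)$ (call this (I)). This is the only place where strong negation genuinely intervenes: the substitution principle \eqref{E:T17} is available only for the strong equivalence $\Leftrightarrow$, never for $\leftrightarrow$, which is exactly why it is phrased with $\Leftrightarrow$; the weaker $\leftrightarrow$ nevertheless suffices in the conclusion of (I) because we never substitute into the \emph{outer} biconditional that appears in \eqref{E:T18}.

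Next I would push (I) under the guard $\chi$ and under the quantifier $\forall x$. Pure propositional reasoning (via $\alpha_1$ and $\alpha_2$, valid already in $\mathsf{IL}^+$) turns any theorem $\vdash\mu\to\nu$ into $\vdash(\chi\to\mu)\to(\chi\to\nu)$, so from (I) we obtain $\vdash_{\mathsf{QN4}}(\chi\to(\phi\Leftrightarrow\psi))\to(\chi\to(A\leftrightarrow B))$. To prefix $\forall x$ on both sides I would invoke monotonicity of the universal quantifier: if $\vdash_{\mathsf{QN4}}\gamma\to\delta$ then $\vdash_{\mathsf{QN4}}\forall x\gamma\to\forall x\delta$. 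Crucially, this requires no freshness assumption on $x$: one first applies \eqref{Ax:9} with $y:=x$ to get $\vdash\forall x\gamma\to\gamma$, composes with the hypothesis to get $\vdash\forall x\gamma\to\delta$, and then applies the rule \eqref{R:A} — legitimate precisely because $x\notin FV(\forall x\gamma)$ — to reach $\vdash\forall x\gamma\to\forall x\delta$. Taking $\gamma:=\chi\to(\phi\Leftrightarrow\psi)$ and $\delta:=\chi\to(A\leftrightarrow B)$ gives the second theorem $\vdash_{\mathsf{QN4}}\forall x(\chi\to(\phi\Leftrightarrow\psi))\to\forall x(\chi\to(A\leftrightarrow B))$ (call this (II)).

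Finally, the implication $\forall x(\chi\to(A\leftrightarrow B))\to(\forall x(\chi\to A)\leftrightarrow\forall x(\chi\to B))$ is a pure $\mathsf{QIL}^+$-validity, since it involves only $\to$, $\wedge$, and $\forall$; hence by Lemma~\ref{L:intuitionistic-inclusion} it is a $\mathsf{QN4}$-theorem. One establishes it by unfolding $\leftrightarrow$ into its two conjuncts and, for each direction, applying the standard intuitionistic distribution $\forall x(\mu\to\nu)\to(\forall x\mu\to\forall x\nu)$ with $\mu:=\chi\to A$ and $\nu:=\chi\to B$ (and symmetrically). Chaining this with (II) by transitivity of $\to$ delivers \eqref{E:T18}. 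I do not expect a genuine obstacle here, since Lemma~\ref{L:substitution} already does the heavy lifting; the only points demanding care are the $\Leftrightarrow$-to-$\leftrightarrow$ bookkeeping in step (I) and keeping all quantifier manipulations inside $\mathsf{QIL}^+$ so that they transfer wholesale via Lemma~\ref{L:intuitionistic-inclusion} and impose no side condition on the free occurrences of $x$ in $\chi$, $\phi$, $\psi$, or $\theta$.
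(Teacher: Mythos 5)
Your proof is correct and follows essentially the same route as the paper's: both rest on \eqref{E:T17}, the weakening \eqref{E:T2}, and the wholesale transfer of intuitionistic propositional and quantifier reasoning via Lemma~\ref{L:intuitionistic-inclusion}, differing only in bookkeeping order (you compose \eqref{E:T17} with \eqref{E:T2} before pushing under the guard $\chi$, while the paper does these conversions under the guard). Your treatment of the final quantifier steps is in fact more explicit than the paper's, which compresses the prefixing of $\forall x$ and the distribution over $\leftrightarrow$ into a single appeal to Lemma~\ref{L:intuitionistic-inclusion}.
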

\begin{proof}
We reason as follows:
\begin{align}
		\vdash_{\mathsf{QN4}}(\chi\to (\phi\Leftrightarrow\psi))&\to (\chi\to((\theta/\phi)\Leftrightarrow(\theta/\psi)))\label{E:cr1}&&\text{by \eqref{E:T17}, Lm \ref{L:intuitionistic-inclusion}}\\
			\vdash_{\mathsf{QN4}}(\chi\to((\theta/\phi)\Leftrightarrow(\theta/\psi)))&\to(\chi\to((\theta/\phi)\leftrightarrow(\theta/\psi)))\label{E:cr2}&&\text{by \eqref{E:T2}, Lm \ref{L:intuitionistic-inclusion}}\\
			\vdash_{\mathsf{QN4}}(\chi\to((\theta/\phi)\leftrightarrow(\theta/\psi)))&\to((\chi\to(\theta/\phi))\leftrightarrow(\chi\to(\theta/\phi)))\label{E:cr3}&&\text{by Lm \ref{L:intuitionistic-inclusion}}\\
		\vdash_{\mathsf{QN4}}(\chi\to (\phi\Leftrightarrow\psi))&\to((\chi\to(\theta/\phi))\leftrightarrow(\chi\to(\theta/\phi)))\label{E:cr4}&&\text{by \eqref{E:cr1}--\eqref{E:cr3},   Lm\ref{L:intuitionistic-inclusion}}		
\end{align}
Now \eqref{E:T18} follows from \eqref{E:cr3} by Lemma \ref{L:intuitionistic-inclusion}.
\end{proof}

\section{Modal logics and their standard translation embeddings}\label{S:idea}
Although modal logics are not the main subject of this paper, they are so tightly connected to conditional logics based on the so-called Chellas semantics that a review of their standard translation embedding properties provides the best possible introduction to our main result. We start by setting $Prop:= \Pi \setminus \{S^1, O^1, E^2, R^3\} = \{p^1_n\mid n \in \omega\}$. The modal language $\mathcal{MD}$ is generated from $Prop$ by the following BNF:
$$
\phi::= p\mid \phi\wedge\phi\mid\phi\vee\phi\mid\phi\to\phi\mid\sim\phi\mid\Box\phi,
$$ 
where $p^1 \in Prop$. In case we want to have $\Diamond\phi$ as an elementary modality rather than an abbreviation for $\sim\Box\sim\phi$, we obtain the language $\mathcal{MD}^\Diamond$. The Kripke semantics for this language uses the class $pMod$ of pointed Kripke models as evaluation points and the modal  satisfaction relation $\models_m$; the definitions of both can be easily found in any of the numerous textbooks, e.g. in \cite[Ch 3.2]{chagrov}. The minimal normal modal logic $\mathsf{K}$ can then be defined by $\mathsf{K}:= \mathbb{L}(pMod, \models_m)$.

If we now set $\mu:= Prop\cup\{E^2\}$ and assume that, for every $x\in Ind$ we have fixed an $y\in Ind$ which is distinct from $x$, then the mapping $\sigma\tau_x:\mathcal{MD}\to\mathcal{FO}(\mu)^x$ is defined\footnote{One normally denotes the binary relation by $R$ rather than $E$ and writes $ST_x$ instead of $\sigma\tau_x$. However, in our paper we prefer to reserve $R$ and $ST_x$ for the formulation of our main result on the standard translation embedding of the conditional logic.} by induction on the construction of $\phi\in\mathcal{MD}$ and is called the modal standard $x$-translation:
\begin{align*}
	\sigma\tau_x(p)&:= p(x)&&p^1 \in Prop\\
	\sigma\tau_x(\sim\psi)&:= \sim \sigma\tau_x(\psi)\\
	\sigma\tau_x(\psi\ast\chi)&:= \sigma\tau_x(\psi)\ast \sigma\tau_x(\chi)&&\ast\in \{\wedge, \vee, \to\}\\
	\sigma\tau_x(\Box\psi) &:= \forall y(Exy\to \sigma\tau_{y}(\psi))
\end{align*}
The following lemma is then often  presented without any proof in the existing literature:
\begin{lemma}\label{L:K}
	For every $x\in Ind$, every $\phi\in\mathcal{MD}$ and every $(\mathcal{M}, \mathbf{w})\in pMod$, we have $\mathcal{M}, \mathbf{w}\models_m\phi$ iff $\mathcal{M} \models_c\sigma\tau_x(\phi)[x/\mathbf{w}]$.
\end{lemma}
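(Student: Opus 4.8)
The plan is to prove the biconditional by a straightforward induction on the construction of $\phi \in \mathcal{MD}$, keeping the variable $x$ (and the world $\mathbf{w}$) universally quantified throughout the induction, so that the hypothesis is available at whatever variable the translation happens to introduce. Before starting, I would make explicit the identification under which the single symbol $\mathcal{M}$ does double duty: as a pointed Kripke model it supplies a set of worlds, an accessibility relation, and a propositional valuation, and I read it as a classical first-order $\mu$-model by taking $U^{\mathcal{M}}$ to be the set of worlds, letting $E^{\mathcal{M}}$ be the accessibility relation, and letting $p^{\mathcal{M}}$ be the set of worlds verifying $p$ for each $p^1 \in Prop$. I would also recall that since $\sigma\tau_x(\phi) \in \mathcal{FO}(\mu)^x$, the notation $\mathcal{M}\models_c\sigma\tau_x(\phi)[x/\mathbf{w}]$ is unambiguous: it abbreviates $\mathcal{M}\models_c\sigma\tau_x(\phi)[f]$ for any (equivalently, every) $f:Ind\to U^{\mathcal{M}}$ with $f(x) = \mathbf{w}$.

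The base case $\phi = p$ is immediate from the definitions, since $\mathcal{M},\mathbf{w}\models_m p$ iff $\mathbf{w}\in p^{\mathcal{M}}$ iff $\mathcal{M}\models_c p(x)[x/\mathbf{w}]$. The Boolean cases $\sim\psi$ and $\psi\ast\chi$ (for $\ast\in\{\wedge,\vee,\to\}$) are equally routine: on both the modal and the first-order side $\sim$ is read as Boolean negation and the binary connectives by the matching clauses of $\models_m$ and $\models_c$, so the equivalence follows by applying the induction hypothesis to $\psi$ and $\chi$ at the same variable $x$ and world $\mathbf{w}$.

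The only case that requires care is $\phi = \Box\psi$, where $\sigma\tau_x(\Box\psi) = \forall y(Exy \to \sigma\tau_y(\psi))$ for the variable $y \neq x$ fixed for $x$. Unfolding the first-order clauses for $\forall$ and $\to$, the statement $\mathcal{M}\models_c\forall y(Exy\to\sigma\tau_y(\psi))[x/\mathbf{w}]$ holds iff for every $\mathbf{v}\in U^{\mathcal{M}}$ with $(\mathbf{w},\mathbf{v})\in E^{\mathcal{M}}$ we have $\mathcal{M}\models_c\sigma\tau_y(\psi)[x/\mathbf{w},y/\mathbf{v}]$. Here I would invoke the fact that $\sigma\tau_y(\psi)\in\mathcal{FO}(\mu)^y$ has no free occurrence of $x$, so the value assigned to $x$ is irrelevant and this is equivalent to $\mathcal{M}\models_c\sigma\tau_y(\psi)[y/\mathbf{v}]$. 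Applying the induction hypothesis to $\psi$ at the variable $y$ and world $\mathbf{v}$ then yields $\mathcal{M}\models_c\sigma\tau_y(\psi)[y/\mathbf{v}]$ iff $\mathcal{M},\mathbf{v}\models_m\psi$, so the whole right-hand side expresses exactly that $\psi$ holds at every $E^{\mathcal{M}}$-successor of $\mathbf{w}$, which is the modal clause for $\mathcal{M},\mathbf{w}\models_m\Box\psi$.

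The only genuinely delicate point is thus the treatment of the change of translating variable in the $\Box$ case: one must have set up the induction so that the hypothesis is available at $y$ rather than only at $x$, and one must use the single-free-variable property $\sigma\tau_z(\psi)\in\mathcal{FO}(\mu)^z$ to discard the now-irrelevant assignment on $x$. Everything else is a mechanical transcription of the matching semantic clauses, which is presumably why the result is so often stated without proof.
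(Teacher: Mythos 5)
Your proof is correct and is exactly the routine induction that the paper treats as self-evident: the paper states Lemma \ref{L:K} without proof, remarking only that Kripke models for $\mathcal{MD}$ are just the classical first-order $\mu$-models and that $\sigma\tau_x$ directly formalizes the inductive clauses of $\models_m$, which is precisely the identification you make explicit. The one point of substance — strengthening the induction hypothesis so it holds at every variable and every world, then using $\sigma\tau_y(\psi)\in\mathcal{FO}(\mu)^y$ to discard the now-irrelevant assignment to $x$ in the $\Box$ case — is handled correctly in your write-up.
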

Lemma \ref{L:K} comes across as very natural, indeed, almost self-evident: the class of Kripke models for modal logic is just another name for $\mathbb{C}(\mu)$, $x\in Ind$ chooses a point, turning it into a pointed structure, and the definition of $\sigma\tau_x$ is just a direct formalization of the inductive definition of $\models_m$ in first-order logic.

It is then just a small step to get from Lemma \ref{L:K} to the following
\begin{proposition}\label{P:K}
	For all $\Gamma, \Delta \subseteq \mathcal{MD}$ and for every $x \in Ind$, we have $\Gamma \models_{\mathsf{K}} \Delta$ iff $\sigma\tau_x(\Gamma)\models_{\mathsf{QCL}}\sigma\tau_x(\Delta)$.	
\end{proposition}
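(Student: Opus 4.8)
The plan is to read off the proposition from Lemma \ref{L:K} after unwinding both consequence relations in terms of (un)satisfiability of consecutions. By definition, $\Gamma \models_{\mathsf{K}} \Delta$ holds iff the consecution $(\Gamma, \Delta)$ is $\models_m$-unsatisfiable, and $\sigma\tau_x(\Gamma) \models_{\mathsf{QCL}} \sigma\tau_x(\Delta)$ holds iff $(\sigma\tau_x(\Gamma), \sigma\tau_x(\Delta))$ is $\models_c$-unsatisfiable. Negating both sides, it is enough to prove the equivalence: $(\Gamma, \Delta)$ is $\models_m$-satisfiable iff $(\sigma\tau_x(\Gamma), \sigma\tau_x(\Delta))$ is $\models_c$-satisfiable. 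Since $pMod$ is, as the discussion following Lemma \ref{L:K} makes explicit, nothing but $\mathbb{C}(\mu)$ equipped with a distinguished world, a witness on the modal side is a pair $(\mathcal{M}, \mathbf{w})$ with $\mathcal{M} \in \mathbb{C}(\mu)$ and $\mathbf{w} \in U^{\mathcal{M}}$, while a witness on the first-order side is a classical evaluation point $(\mathcal{M}, f) \in EP_c(\mu)$, and the task is to pass between the two.

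For the direction from modal to first-order, suppose $(\mathcal{M}, \mathbf{w})$ verifies every $\phi \in \Gamma$ and falsifies every $\psi \in \Delta$ under $\models_m$. As $U^{\mathcal{M}} \neq \emptyset$, choose any $f : Ind \to U^{\mathcal{M}}$ with $f(x) = \mathbf{w}$. By Lemma \ref{L:K} we have $\mathcal{M}, \mathbf{w} \models_m \chi$ iff $\mathcal{M} \models_c \sigma\tau_x(\chi)[x/\mathbf{w}]$ for every $\chi \in \mathcal{MD}$; and since $\sigma\tau_x(\chi) \in \mathcal{FO}(\mu)^x$ has $x$ as its only free variable, its satisfaction under $f$ depends solely on $f(x) = \mathbf{w}$, so that $\mathcal{M} \models_c \sigma\tau_x(\chi)[x/\mathbf{w}]$ iff $\mathcal{M} \models_c \sigma\tau_x(\chi)[f]$. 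Applying this to each $\phi \in \Gamma$ and each $\psi \in \Delta$ shows that $(\mathcal{M}, f)$ witnesses the $\models_c$-satisfiability of $(\sigma\tau_x(\Gamma), \sigma\tau_x(\Delta))$. The converse direction is symmetric: given a witnessing $(\mathcal{M}, f) \in EP_c(\mu)$, put $\mathbf{w} := f(x)$ and run the same chain of equivalences backwards to see that $(\mathcal{M}, \mathbf{w})$ witnesses the $\models_m$-satisfiability of $(\Gamma, \Delta)$.

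The only genuinely nontrivial point --- and, as the preceding discussion already hints, it is a modest one --- is the discrepancy in the shape of the two kinds of evaluation point: a classical evaluation point carries a total assignment $f$ on the whole of $Ind$, whereas a pointed Kripke model carries merely one distinguished world. This gap is closed precisely by the single-free-variable character of the standard translation, which guarantees that the truth value of $\sigma\tau_x(\chi)$ is insensitive to the values of $f$ on $Ind \setminus \{x\}$; this is what lets one extend $\mathbf{w}$ to an arbitrary total $f$ in one direction and simply read off $f(x)$ in the other, with Lemma \ref{L:K} supplying the required agreement of $\models_m$ and $\models_c$ at the distinguished point.
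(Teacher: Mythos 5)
Your proposal is correct and takes exactly the route the paper intends: the paper gives no detailed proof of Proposition \ref{P:K}, merely remarking that it is ``just a small step'' from Lemma \ref{L:K}, and your argument is precisely that step made explicit --- unwinding both consequence relations into (un)satisfiability of consecutions and transferring witnesses via Lemma \ref{L:K}, with the identification of pointed Kripke models with classical $\mu$-models plus a distinguished point, and the single-free-variable property of $\sigma\tau_x$ bridging total assignments and distinguished worlds. Nothing to correct.
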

In other words, we find that $\sigma\tau_x$ faithfully embeds $\mathsf{K}$ into $\mathsf{QCL}(\mu)$ for every $x\in Ind$. 

The main line of thought presented in the paper can be viewed as a natural continuation of this very idea. However, before we get to it, we need to observe that things become more complicated as soon as we replace the classical propositional basis of $\mathsf{K}$ with some non-classical logic. For example, the basic intuitionistic modal logic $\mathsf{IK}$ (see, e.g. \cite{fischer-servi}) is supplied with a Kripke semantics of bi-relational models over a classical metalanguage. This immediately results in an analogue of Proposition \ref{P:K} saying that a straightforward formalization of this semantics amounts to defining an embedding of $\mathsf{IK}$ into a classical first-order correspondence language based on two binary predicates.\footnote{Note, however, that already in the basic case of $\mathsf{IK}$ this embedding is only faithful modulo a certain first-order theory encoding the interaction conditions between the two accessibility relations in the Kripke models that are assumed in the semantics of $\mathsf{IK}$.}. However, and much more interestingly, $\sigma\tau_x$ turns out to be useful for $\mathsf{IK}$, too, as it happens to embed it into $\mathsf{QIL}(\mu)$. This result, which is far from being trivial (one version of its proof can found in \cite[Ch. 5]{simpson}), suggests that $\mathsf{IK}$ carves out exactly the same fragment of $\mathsf{QIL}(\mu)$ as the one carved out by $\mathsf{K}$ in $\mathsf{QCL}(\mu)$; and the latter circumstance provides a strong argument in favor of viewing $\mathsf{IK}$ as the correct intuitionistic analogue of $\mathsf{K}$.

One may picture this situation as follows: assume that an intuitionist gets interested in modal logic, and asks a classicist colleague to explain them $\mathsf{K}$. It is natural to expect that this explanation, if it is to be given in precise terms, will end up mentioning $\sigma\tau_x$ at some point. Of course, the intuitionist will read the definition of $\sigma\tau_x$ in terms of $\mathsf{QIL}$ rather than $\mathsf{QCL}$ and thus will end up with $\mathsf{IK}$. In a sense then, $\mathsf{IK}$ is nothing but $\mathsf{K}$ \textit{read intuitionistically}.

Moreover, it is easy to see that at most one intuitionistic modal logic can be faithfully embedded by $\sigma\tau_x$ into $\mathsf{QIL}(\mu)$. In this sense, $\mathsf{IK}$ can be called \textit{the} correct intuitionistic counterpart to $\mathsf{K}$. We may even try to elevate this to a general criterion: \textit{given a  non-classical logic $Q\nu$ such that $Q\nu$ is a proper sublogic of $\mathsf{QCL}$ and its propositional fragment $\nu$ is a proper sublogic of $\mathsf{CL}$, a modal extension $\nu\mathsf{K}$ of $\nu$ is the $\nu$-based counterpart of $\mathsf{K}$ iff  $\sigma\tau_x$ faithfully embeds $\nu\mathsf{K}$ into $\mathsf{Q}\nu(\mu)$.}   

Yet, the very deep and enlightening result about $\mathsf{IK}$ is not without its little annoying wrinkles. Observe that $\mathsf{IK}$ is formulated over $\mathcal{MD}^\Diamond$ rather than $\mathcal{MD}$ as the diamond is no longer definable in terms of box. Therefore, the above definition of $\sigma\tau_x$ is no longer sufficient, as it has to include 
\begin{align}
	\sigma\tau^i_x(\Diamond\psi) &:= \exists y(Exy\wedge \sigma\tau^i_{y}(\psi))\label{E:diam1}
\end{align}  
as an additional clause. Of course, this clause is \textit{implied} by the above definition of $\sigma\tau_x$ over $\mathsf{QCL}$ and thus adding it to the definition of $\sigma\tau_x$ results in a classically equivalent reformulation $\sigma\tau^i_x$ of $\sigma\tau_x$. However, other classically equivalent formulations of $\sigma\tau_x$ are also possible, and some of them make the analogue of Proposition \ref{P:K} fail for $\mathsf{IK}$ and $\mathsf{QIL}$. Think, for example, of the extension $\sigma\tau^j_x$ of $\sigma\tau_x$ with the following clause:
\begin{align}
	\sigma\tau^j_x(\Diamond\psi) &:= \sim\forall y(Exy\to \sim\sigma\tau^j_{y}(\psi))\label{E:diam2}
\end{align}
The choice of a right classical reformulation of $\sigma\tau_x$ can therefore affect the truth of our analogue of Proposition \ref{P:K} in the intuitionistic case. Its precise formulation, then, goes as follows
\begin{proposition}\label{P:IK}
	There exists a classically equivalent reformulation $\sigma\tau^i_x$ of $\sigma\tau_x$ such that, for all $\Gamma, \Delta \subseteq \mathcal{MD}^\Diamond$ and for every $x \in Ind$, we have $\Gamma \models_{\mathsf{IK}} \Delta$ iff $\sigma\tau^i_x(\Gamma)\models_{\mathsf{QIL}}\sigma\tau^i_x(\Delta)$.	
\end{proposition}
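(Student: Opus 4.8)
The plan is to follow the same two-step route that yields Proposition \ref{P:K}, but carried out over the birelational Kripke semantics of $\mathsf{IK}$ and the intuitionistic first-order relation $\models_i$. I take the witnessing reformulation $\sigma\tau^i_x$ to be $\sigma\tau_x$ (extended to $\mathcal{MD}^\Diamond$ by reading $\Diamond$ as $\sim\Box\sim$) together with the clause \eqref{E:diam1}, so that $\sigma\tau^i_x(\Diamond\psi) = \exists y(Exy \wedge \sigma\tau^i_y(\psi))$. As already observed in the text, \eqref{E:diam1} is $\mathsf{QCL}$-implied by the old definition, so $\sigma\tau^i_x$ is a classically equivalent reformulation of $\sigma\tau_x$; the verification is a short induction whose only interesting step is the $\mathsf{QCL}$-equivalence of $\exists y(Exy\wedge\alpha)$ with $\sim\forall y(Exy\to\sim\alpha)$. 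Exactly as in the classical case, everything is uniform in $x$, so it suffices to fix one $x$ and read off the general statement by renaming the distinguished variable.

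The crux is a truth lemma pairing birelational $\mathsf{IK}$-models $\mathcal{M} = (W, \leq, R, V)$ with intuitionistic first-order Kripke models over $\mu = Prop \cup \{E^2\}$: the intuitionistic nodes and preorder come from $(W, \leq)$, the binary $E$ is interpreted by the modal accessibility $R$, each unary $p$ by the valuation $V$, and the domains and transition maps are arranged so that the individual representing $x$ at a node $\mathbf{w}$ is the modal point evaluated there, and is carried by each transition $\mathbf{w} \leq \mathbf{v}$ to the point evaluated at $\mathbf{v}$. Granting this correspondence, one proves by induction on $\phi \in \mathcal{MD}^\Diamond$ that $\mathcal{M}, \mathbf{w} \models_{\mathsf{IK}} \phi$ iff the associated first-order model satisfies $\sigma\tau^i_x(\phi)[x/\mathbf{w}]$ at $\mathbf{w}$. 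The clauses for $\to$, $\Box$, and $\forall$ match because the intuitionistic readings of implication and universal quantification quantify over $\geq$-successors, exactly as the $\mathsf{IK}$-clauses do; the decisive step is $\Diamond$, where \eqref{E:diam1} supplies the existential $\exists y(Exy \wedge \cdots)$ faithfully mirroring the modal clause, whereas the double-negation variant \eqref{E:diam2} would only deliver the intuitionistically strictly weaker $\sim\forall y(Exy \to \sim(\cdots))$ — this is precisely why $\sigma\tau^j_x$ cannot be used.

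From the truth lemma the biconditional $\Gamma \models_{\mathsf{IK}} \Delta$ iff $\sigma\tau^i_x(\Gamma) \models_{\mathsf{QIL}} \sigma\tau^i_x(\Delta)$ follows by transferring countermodels. For faithfulness one contraposes: a birelational $\mathsf{IK}$-countermodel to $\Gamma \models_{\mathsf{IK}} \Delta$ — which exists by the very definition of $\models_{\mathsf{IK}}$ — yields, through the construction underlying the truth lemma, a $\mathsf{QIL}(\mu)$-countermodel to $\sigma\tau^i_x(\Gamma) \models_{\mathsf{QIL}} \sigma\tau^i_x(\Delta)$. For soundness the cleanest route is proof-theoretic: check that the $\sigma\tau^i_x$-images of the axioms and rules of $\mathsf{IK}$ are $\mathsf{QIL}$-derivable, whence $\Gamma \vdash_{\mathsf{IK}} \Delta$ implies $\sigma\tau^i_x(\Gamma) \vdash_{\mathsf{QIL}} \sigma\tau^i_x(\Delta)$, and then appeal to completeness of $\mathsf{IK}$ for birelational models. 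I expect the main obstacle to be the model construction underlying the truth lemma: coupling the modal point to the intuitionistic node forces non-trivial (sheaf-style) transition maps and a careful choice of domains, and one must verify that the confluence conditions on birelational frames make $E$ persistent and keep the $\Box/\Diamond$ clauses correct, so that the embedding is faithful with no auxiliary first-order theory — in contrast to the classical bi-relational embedding of $\mathsf{IK}$ noted above. For the full bookkeeping I would follow \cite[Ch. 5]{simpson}.
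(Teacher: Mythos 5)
The paper itself does not prove Proposition \ref{P:IK}: it is quoted as a known result whose proof is explicitly flagged as ``far from being trivial'' and deferred to \cite[Ch. 5]{simpson}. Your choice of reformulation (adding clause \eqref{E:diam1}) matches the paper's intent, and your proof-theoretic treatment of the soundness direction (translate $\mathsf{IK}$-derivations into $\mathsf{QIL}$-derivations, then combine strong completeness of $\mathsf{IK}$ with soundness of $\mathsf{QIL}$) is a workable route. The genuine gap is in the faithfulness direction, where you assert that the birelational countermodel itself, read as a first-order Kripke model --- nodes and preorder $(W,\leq)$, $E$ interpreted by $R$, a designated individual tracking the point of evaluation --- satisfies the truth lemma because the clauses for $\to$ and $\Box$ ``match''. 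They do not, and that construction fails.

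Concretely: take $W=\{w,v,v'\}$ with $\leq$ reflexive plus $v\leq v'$, $R=\{(w,v),(w,v')\}$ (the pair $(w,v')$ is forced by forward confluence), $V(p)=\{v'\}$, $V(q)=\emptyset$. In your first-order model the node $w$ has no proper $\leq$-successors, so evaluation of any translation at node $w$ collapses to classical evaluation; hence $\sigma\tau^i_y((p\to q)\to q)[y/v]$ is false at $w$ (classically $p\to q$ holds at $v$ while $q$ fails there), and therefore $\sigma\tau^i_x(\Box((p\to q)\to q))[x/w]$ is false at $w$. But in the birelational model $p\to q$ is false at both $v$ and $v'$ (because of $v'$), so $(p\to q)\to q$ is vacuously true at both, and $\Box((p\to q)\to q)$ is true at $w$: the two sides disagree at the designated pair. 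The root cause is structural: when a translation is evaluated at a non-designated element $b$ of the domain at node $\mathbf{w}$, the intuitionistic clause for $\to$ quantifies over node-successors of $\mathbf{w}$ and moves $b$ along the transition maps, whereas the modal clause quantifies over the $\leq$-successors of $b$ itself; matching the two requires every $\leq$-successor of $b$ to be realized as $\mathtt{H}_{\mathbf{w}\mathbf{v}}(b)$ for some node $\mathbf{v}\geq\mathbf{w}$, which is impossible when the nodes are just $W$ (in the example $w$ is $\leq$-maximal while its $R$-successor $v$ is not). This is exactly the difficulty that the paper's own machinery for Theorem \ref{T:foil} is built to overcome: there the first-order countermodel is constructed not from an arbitrary modal countermodel but from the canonical model, with standard sequences as domain elements and sequences of global choice functions as nodes (Definitions \ref{D:standard-sequence} and \ref{D:canonical-sheaf}), precisely so that Lemma \ref{L:global-functions}.2 supplies the missing node-successors in the $\to$ and $\boxto$ cases of the truth lemma (Lemma \ref{L:sheaf-truth}). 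Your closing hedge that the construction is ``the main obstacle'' does not repair this: the sketch you give of that construction is not merely incomplete but incorrect, and without an analogue of the sequence/choice-function apparatus (or of Simpson's own argument, to which you ultimately defer) the faithfulness half of your proof does not go through.
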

Of course, the required classical reformulation is pretty clear in the case of $\mathsf{IK}$; in fact, it is so clear that one is tempted to dismiss the existential quantifier in the formulation of Proposition \ref{P:IK} as mere pedantry and to speak of just $\sigma\tau_x$ instead. However, we need to be mindful of the fact that our caveat `up to a classically equivalent reformulation' reflects a very real possibility that (to continue with our metaphorical scenario) in explaining the semantics of $\mathsf{K}$ to an interested intuitionist, the classicist modal logician might slip up and explain the semantics of diamond according to \eqref{E:diam2} rather than \eqref{E:diam1}, which might lead, on the side of the intuitionist colleague, to a logic that is distinct from $\mathsf{IK}$. In other words, our caveat uncovers the fact that $\mathsf{IK}$ is the result of the intuitionistic reading of $\mathsf{K}$ \textit{only up to a certain wording} of $\mathsf{K}$.

Given these considerations, our tentative principle for seeking out non-classical counterparts for $\mathsf{K}$ needs to be corrected as follows: \textit{given a non-classical logic $Q\nu$ such that $Q\nu$ is a proper sublogic of $\mathsf{QCL}$ and its propositional fragment $\nu$ is a proper sublogic of $\mathsf{CL}$, a modal extension $\nu\mathsf{K}$ of $\nu$ is a $\nu$-based counterpart of $\mathsf{K}$ iff some classically equivalent reformulation of $\sigma\tau_x$ faithfully embeds $\nu\mathsf{K}$ into $\mathsf{Q}\nu(\mu)$; the comparative merits of different $\nu$-based counterparts of $\mathsf{K}$ will then have to be judged on the basis of other properties of $Q\nu$.}

Apart from $\mathsf{IL}$, $\mathsf{N4}$ provides another possible instantiation of $\nu$ in the above principle; in fact we are not the first to notice this, as the paper \cite{odintsovwansing} by S. Odintsov and H. Wansing both defines several $\mathsf{N4}$-based analogues of $\mathsf{K}$ and looks into their standard translation embeddings into $\mathsf{QN4}(\mu)$. In the context of this paper, the most interesting of these results is \cite[Proposition 7]{odintsovwansing}, which shows that the $\mathsf{N4}$-based modal logic $\mathsf{FSK}^d$ is faithfully embedded into $\mathsf{QN4}(\mu)$ by a standard translation $T'_x$ which provides yet another classical equivalent to $\sigma\tau_x$. Although \cite{odintsovwansing} defines $\mathsf{FSK}^d$ over $\mathcal{MD}^\Diamond$, the logic also derives the classical definition of $\Diamond$ in terms of $\Box$ which makes it possible to alternatively define $\mathsf{FSK}^d$ and $T'_x$ over $\mathcal{MD}$\footnote{See \cite[Section 5.2]{nelsonian} for details.} The result of \cite{odintsovwansing} can then be reformulated in the terminology of this paper as
\begin{proposition}\label{P:ow}
		There exists a classically equivalent reformulation $T'_x$ of $\sigma\tau_x$ such that, for all $\Gamma, \Delta \subseteq \mathcal{MD}$ and for every $x \in Ind$, we have $\Gamma \models_{\mathsf{FSK}^d} \Delta$ iff $T'_x(\Gamma)\models_{\mathsf{QN4}}T'_x(\Delta)$.
\end{proposition}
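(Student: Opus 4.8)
The plan is to reduce Proposition \ref{P:ow} to the already-known result of Odintsov and Wansing by making the ``classically equivalent reformulation'' explicit and then verifying a semantic bridge lemma analogous to Lemma \ref{L:K}. First I would fix the target translation $T'_x$. Since $\mathsf{FSK}^d$ is formulated over $\mathcal{MD}$ (as noted in the footnote, the classical duality $\Diamond = \sim\Box\sim$ is derivable in $\mathsf{FSK}^d$, so no separate diamond clause is required), the only clauses where $T'_x$ may diverge from $\sigma\tau_x$ are those governing the interaction of the strong negation $\sim$ with the modal operator. The critical point, exactly as in the $\mathsf{IK}$ case discussed around \eqref{E:diam1}--\eqref{E:diam2}, is that $T'_x(\sim\Box\psi)$ must be defined so as to track the \emph{falsity} condition of $\Box\psi$ in the Nelsonian Kripke semantics rather than merely its classical Boolean negation; concretely one expects a clause of the form $T'_x(\sim\Box\psi) := \exists y(Exy\,\&\,T'_y(\sim\psi))$ (or with $\wedge$ in place of $\&$, in view of \eqref{E:T15}), mirroring the $\models^-_n$-clause for the universal quantifier. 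I would write out $T'_x$ by induction on $\phi\in\mathcal{MD}$, defining it on all formulas (not just the positive fragment) and pushing $\sim$ inward, and then record that over $\mathsf{QCL}$ each such clause collapses to the corresponding $\sigma\tau_x$-clause, so that $T'_x$ is indeed a classically equivalent reformulation of $\sigma\tau_x$.

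The heart of the argument is a bridge lemma in the style of Lemma \ref{L:K}: for every $(\mathcal{M},\mathbf{w})$ in the (bi-relational, $\mathsf{N4}$-flavored) Kripke semantics of $\mathsf{FSK}^d$, every $x\in Ind$, and every $\phi\in\mathcal{MD}$, one has $\mathcal{M},\mathbf{w}\models^+_m\phi$ iff $\mathcal{M}'\models^+_n T'_x(\phi)[x/\mathbf{w}]$, and simultaneously $\mathcal{M},\mathbf{w}\models^-_m\phi$ iff $\mathcal{M}'\models^-_n T'_x(\phi)[x/\mathbf{w}]$, where $\mathcal{M}'$ is the Nelsonian sheaf (or, in the special case, Kripke model) obtained by reading the $\mathsf{FSK}^d$-model as a first-order structure over $\mu$, with $E$ interpreting the modal accessibility relation. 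This is proved by a routine simultaneous induction on the construction of $\phi$ using the two satisfaction relations $\models^+_n,\models^-_n$ defined in Section \ref{S:fo}; the propositional and Boolean cases are immediate, the $\Box$-case for $\models^+_n$ matches the $\forall$-clause, and the crucial $\Box$-case for $\models^-_n$ is exactly what forces the particular shape of $T'_x(\sim\Box\psi)$ chosen above. Because $\mathsf{QN4}$ is sound and complete with respect to the Nelsonian sheaf semantics (Proposition \ref{P:nelsonian-sheaves}), and because on the Nelsonian models arising from $\mathsf{FSK}^d$-frames the sheaf transition maps are identities, the bridge lemma specializes to the models actually needed.

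Finally I would pass from the bridge lemma to the proposition. Given the lemma, $\Gamma\models_{\mathsf{FSK}^d}\Delta$ fails iff some pointed $\mathsf{FSK}^d$-model verifies every $\gamma\in\Gamma$ and verifies no $\delta\in\Delta$, which by the bridge lemma holds iff the associated first-order structure (an evaluation point in $EP_n(\mu)$) satisfies the bi-set $(T'_x(\Gamma),T'_x(\Delta))$, i.e.\ iff $T'_x(\Gamma)\models_{\mathsf{QN4}}T'_x(\Delta)$ fails; here I use that $\models^+_n$ is the relation governing $\mathbb{L}(EP_n,\models^+_n)=\mathsf{QN4}$ and that satisfaction of a bi-set is defined through the single positive relation. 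The two directions require checking that every $\mathsf{FSK}^d$-model gives rise to an evaluation point and, conversely, that every relevant first-order structure over $\mu$ (possibly modulo the theory encoding the frame conditions, as in the footnote to Proposition \ref{P:IK}) is induced by some $\mathsf{FSK}^d$-model; since $\mathsf{FSK}^d$ imposes no interaction conditions between the two accessibility relations beyond those inherited from $\mathsf{N4}$, this correspondence should be clean. I expect the main obstacle to be pinning down the exact falsity clause for the modality so that the $\models^-_n$-half of the induction goes through: this is precisely the delicate ``right wording'' issue flagged in the discussion of \eqref{E:diam1}--\eqref{E:diam2}, and getting it wrong (e.g.\ using a clause analogous to \eqref{E:diam2}) would break faithfulness over $\mathsf{QN4}$ while preserving classical equivalence.
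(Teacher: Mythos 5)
You should first know that the paper itself gives no proof of Proposition \ref{P:ow}: it is imported as a terminological reformulation of \cite[Proposition 7]{odintsovwansing}, and the closest the paper comes to proving such a statement is Theorem \ref{T:modal}, obtained by adapting the canonical-model machinery of Section \ref{S:N4CK}. Measured against that machinery, your sketch has a genuine gap in the direction that produces a $\mathsf{QN4}$-countermodel from an $\mathsf{FSK}^d$-countermodel. You propose to obtain the required evaluation point by ``reading the $\mathsf{FSK}^d$-model as a first-order structure over $\mu$'' with identity transition maps, and you call the resulting induction ``routine'', with the Boolean cases ``immediate''. They are not: in such a structure the predicate interpretations do not vary with the sheaf world, hence by induction the satisfaction of every translated formula is independent of the sheaf world, and the $\mathsf{QN4}$ clause for $\to$ (which quantifies over $\geq$-successors) collapses to material implication at the assigned point. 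The $\mathsf{N4}$-style modal clause for $\to$ does not so collapse: if $\mathbf{w}\not\models^+\phi$ while some $\mathbf{w}'\geq\mathbf{w}$ has $\mathbf{w}'\models^+\phi$ and $\mathbf{w}'\not\models^+\psi$, your bridge lemma gives the wrong verdict on $\phi\to\psi$. Repairing this is precisely what the standard-sequence and global-choice-function apparatus of Definitions \ref{D:standard-sequence} and \ref{D:canonical-sheaf} and Lemmas \ref{L:global-functions}--\ref{L:sheaf-truth} exists for: sheaf worlds cannot be the modal worlds themselves but must encode simultaneous advancements of all domain elements along the modal preorder, which is why the paper routes this direction through the canonical model rather than an arbitrary countermodel. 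Your other direction is also misdescribed: an arbitrary sheaf countermodel is not ``induced by'' any $\mathsf{FSK}^d$-model under your reading (sheaves have varying domains and nontrivial transitions); what one must do is build a modal model whose worlds are pairs $(\mathbf{w},a)$, as in Definition \ref{D:n4-mod-n4ck} and Lemma \ref{L:n4-mod-n4ck2} with the $Th$-apparatus stripped out. (Your side remark that $\mathsf{FSK}^d$ imposes no interaction conditions is also doubtful; its bi-relational semantics needs analogues of \eqref{Cond:1}--\eqref{Cond:2}, which the pairs construction must verify, cf. Lemma \ref{L:n4-mod-n4ck1}.)

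A second, smaller problem is your hedge that the clause for $\sim\Box\psi$ may use $\&$ ``or $\wedge$ \ldots{} in view of \eqref{E:T15}''. The two choices are not interchangeable for the bridge lemma you state: \eqref{E:T15} is only a weak equivalence, weak equivalence is not a congruence in $\mathsf{QN4}$, and the two clauses have different falsity conditions. Your simultaneous $\models^+_n$/$\models^-_n$ induction requires the $\&$-clause, since the falsity condition of $\exists y(Exy\wedge\cdots)$ quantifies over the anti-extension of $E$, which has no modal counterpart; with the $\wedge$-clause --- which is the actual Odintsov--Wansing $T'_x$, as witnessed by the paper's remark after Proposition \ref{P:ow} that $T'_x$ fails to commute with $\sim$ up to strong equivalence --- one must instead weaken the bridge lemma to its positive half and let the explicit $\sim$-clauses of the translation carry the falsity conditions. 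You correctly flag this as the delicate point, but a complete proof has to commit to one option and shape the induction accordingly.
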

Unfortunately, $T'_x$ looks a bit clumsy in that it fails to commute (up to a strong equivalence) with the propositional connectives of $\mathsf{N4}$. For example, given a $p^1 \in Prop$, we have $T'_x(\Box p) = \forall y(Rxy \to py)$,
but also $T'_x(\sim\Box p) = \exists y(Rxy \wedge \sim py)$,
whereas $\sim T'_x(\Box p) = \sim\forall y(Rxy \to py)$,
and the latter formula is clearly equivalent to $\exists y(Rxy\,\&\,\sim py)$ rather than $T'_x(\sim\Box p)$. Whereas \eqref{E:T15} ensures that $\exists y(Rxy\,\&\,\sim py)\leftrightarrow \exists y(Rxy \wedge \sim py)$, we fail to get the strong equivalence between the two formulas.

Still, Proposition \ref{P:ow} shows that some sort of an $\mathsf{N4}$-based analogue to Propositions \ref{P:K} and \ref{P:IK} is possible and that $\mathsf{FSK}^d$, at least to some extent, can be viewed as an $\mathsf{N4}$-based counterpart of $\mathsf{K}$.

\section{$\mathsf{CK}$ the minimal classical logic of conditionals}\label{S:conditional}
In this section we will introduce the language of conditional propositional logic (\textit{conditional language} for short) which we will denote by $\mathcal{CN}$. The language is generated from $Prop$ by the following BNF:
$$
\phi::= p\mid \phi\wedge\phi\mid\phi\vee\phi\mid\phi\to\phi\mid\sim\phi\mid\phi\boxto\phi,
$$ 
where $p^1 \in Prop$. The new connective $\boxto$ will be referred to as \textit{would-conditional} or simply \textit{conditional}; the elements of $\mathcal{CN}$ will be called \textit{conditional formulas}. We will apply to the conditional formulas all of the abbreviations introduced earlier for the connectives in $\mathcal{FO}$, plus the following new one:
\begin{itemize}
	\item $\phi\diamondto\psi$ (might-conditional) for $\sim(\phi\boxto\sim\psi)$. 
\end{itemize}
One relatively popular and well-researched semantics for $\mathcal{CN}$ is the so-called Chellas semantics. In this paper, we will use Chellas semantics to define two logics over $\mathcal{CN}$, denoted by $\mathsf{CK}$ and $\mathsf{N4CK}$, respectively. We begin by defining our models:
\begin{definition}\label{D:ccmodel}
		A \textit{classical conditional model} is a structure of the form $\mathcal{M} = (W, R, V)$, such that:
	\begin{enumerate}
		\item $W \neq \emptyset$ is a set of worlds, or nodes.
		
		\item $R \subseteq W \times \mathcal{P}(W)\times W$ is the accessibility relation. Thus, for every $X \subseteq W$, $R$ induces a binary relation $R_X$ on $W$ such that, for all $\mathbf{w},\mathbf{v} \in W$, $R_X(\mathbf{w},\mathbf{v})$ iff $R(\mathbf{w}, X, \mathbf{v})$.
		
		\item $V:Prop\to \mathcal{P}(W)$, called evaluation function.
	\end{enumerate}
The class of all classical conditional models will be denoted by $\mathbb{CK}$.
\end{definition}
The absence of bound variables in $\mathcal{CN}$ allows for a shorter definition of evaluation points compared to the one we had in the first-order case: we simply set
$$
EP_{ck}:= \{(\mathcal{M}, \mathbf{w})\mid \mathcal{M} \in \mathbb{CK},\,\mathbf{w} \in W\}.
$$
The satisfaction relation (denoted by $\models_{ck}$) is then supplied by the following definition:
\begin{align*}
	\mathcal{M}, \mathbf{w}&\models_{ck} p \text{ iff } \mathbf{w} \in V(p)&& p^1 \in Prop\\
	\mathcal{M}, \mathbf{w}&\models_{ck} \sim\psi \text{ iff } \mathcal{M}, \mathbf{w}\not\models_{ck} \psi\\
	\mathcal{M}, \mathbf{w}&\models_{ck} \psi \wedge \chi \text{ iff } \mathcal{M}, \mathbf{w}\models_{ck} \psi\text{ and }\mathcal{M}, \mathbf{w}\models_{ck} \chi\\
	\mathcal{M}, \mathbf{w}&\models_{ck} \psi \vee \chi \text{ iff } \mathcal{M}, \mathbf{w}\models_{ck} \psi\text{ or }\mathcal{M}, \mathbf{w}\models_{ck} \chi\\
	\mathcal{M}, \mathbf{w}&\models_{ck} \psi \to \chi \text{ iff } \mathcal{M}, \mathbf{v}\not\models_{ck} \psi\text{ or }\mathcal{M}, \mathbf{v}\models_{ck} \chi\\
	\mathcal{M}, \mathbf{w}&\models_{ck}\psi \boxto \chi \text{ iff } (\forall \mathbf{v} \in W)(R_{\|\psi\|^{ck}_\mathcal{M}}(\mathbf{w},\mathbf{v})\text{ implies }\mathcal{M}, \mathbf{v}\models_{ck} \chi)
\end{align*}
where $\|\psi\|^{ck}_\mathcal{M} := \{\mathbf{w}\in W\mid 	\mathcal{M}, \mathbf{w}\models_{ck}\psi\}$. Having thus defined the intended semantics for the classical conditional logic, we set $\mathsf{CK} = \mathbb{L}(EP_{ck}, \models_{ck})$.

It is easy to see that $\mathsf{CK}$ extends $\mathsf{CL}$. Furthermore, our semantics represents conditionals as a sort of modal sentences where the box is indexed by the antecedent; in other words, it generates a binary $\phi$-accessibility relation for every $\phi\in \mathcal{CN}$ and reads a conditional `if $\phi$ then $\psi$' as something like `it is $\phi$-necessary that $\psi$'. This connection leads to some obvious correspondences between normal modal logics and conditional logics based on Chellas semantics: indeed, one can meaningfully ask \textit{what kind of modalities} are  we dealing with when we view the conditionals of a given logic as nothing but formula-indexed modal boxes, and these considerations lead us to the notion of a modal companion of a given conditional logic. For example, the formula-indexed boxes assumed by the conditionals in $\mathsf{CK}$ can be viewed as $\mathsf{K}$-boxes, which, for every $\phi \in \mathcal{CN}$, is attested by the existence of the faithful embedding $\eta_\phi$ of $\mathsf{K}$ into $\mathsf{CK}$ which commutes with the propositional connectives and reads modal boxes as conditionals with $\phi$ as the antecedent.


Thus the idea of standard translation, at least for the conditional logics based on Chellas semantics, readily suggests itself. Of course, our accessibility relation is now ternary, so we have to use $R^3$ instead of $E^2$ in the correspondence language. An additional difficulty is that our binary accessibility relations are generated from $R^3$ by truth-sets of the conditional formulas, therefore, our semantics has to say something not only about particular nodes in a Kripke model, but also about certain subsets of this model. Thus we must allow for two kinds of items in the first-order correspondence language, the nodes of the corresponding Kripke model and the subsets thereof; we need, therefore, to add two additional unary predicates to our correspondence language. In our paper we denote them  $S^1$ and $O^1$ (for `sets' and `objects'). Next, we must be able to express that some sets are in fact truth sets of certain conditional formulas, that is to say that an item of the object kind is their element iff this item verifies a given conditional formula, which further means that we have to allow the elementhood relation into the correspondence language. In what follows, we will denote this relation by $E^2$. To sum up, our correspondence language turns out to be just $\mathcal{FO}(\Pi) = \mathcal{FO}$. 

Given this informal interpretation of the predicates in $\Pi$, the following definition is just a straightforward first-order formalization of the inductive definition of $\models_{ck}$. For every $x \in Ind$, let us fix $y,z,w \in Ind$ such that $x,y,z,w$ are pairwise distinct. Moreover, for any given $\phi \in \mathcal{FO}$ let $(\forall x)_O\phi$ abbreviate $\forall x(Ox\to \phi)$. The \textit{classical standard} $x$-\textit{translation} $st_x:\mathcal{CN} \to \mathcal{FO}^x$ is given by induction on the construction of a $\phi \in \mathcal{CN}$:
\begin{align*}
	st_x(p)&:= p(x)&&p^1 \in Prop\\
	st_x(\sim\psi)&:= \sim st_x(\psi)\\
	st_x(\psi\ast\chi)&:= st_x(\psi)\ast st_x(\chi)&&\ast\in \{\wedge, \vee, \to\}\\
	st_x(\psi\boxto\chi) &:= \exists y(Sy \wedge (\forall z)_O (Ezy\leftrightarrow st_z(\psi))\wedge\forall w(Rxyw\to st_{w}(\chi)))
\end{align*}
Of course, in order for $st_x$ to work correctly, the interaction between $S$, $O$ and $E$ must in fact resemble the interaction between sets of worlds and worlds in a classical conditional model. Although $E$ does not need to be the `real' elementhood verifying anything like ZFC, we can at least expect that $E$ satisfies a variant of the extensionality axiom in that we do not have multiple copies of the same truth-set; moreover, sufficiently many instances of set-theoretic comprehension must hold in order to guarantee that at least one copy of a truth-set for any given $\phi  \in \mathcal{CN}$ is contained in the extension of the $S$ predicate. The following first-order theory $Th_{ck}\subseteq \mathcal{FO}^\emptyset$ sums up these requirements:
\begin{align}
	&\forall x\forall y(Sx\wedge Sy \wedge (\forall z)_O(Ezx\leftrightarrow Ezy) \to x \equiv y)\label{E:thc5}\tag{Thc1}\\
	&\exists x(Sx\wedge (\forall y)_O(Eyx\leftrightarrow py))\qquad\qquad\qquad\qquad\qquad\qquad\qquad\qquad(p^1\in Prop)\label{E:thc6}\tag{Thc2}\\
	&\forall x(Sx\rightarrow\exists y(Sy\wedge (\forall z)_O(Ezy\leftrightarrow \sim Ezx))\label{E:thc7}\tag{Thc3}\\
	&\forall x\forall y((Sx\wedge Sy)\rightarrow\exists z(Sz\wedge (\forall w)_O(Ewz\leftrightarrow (Ewx\wedge Ewy))))\label{E:thc8}\tag{Thc4}\\
	&\forall x\forall y((Sx\wedge Sy)\rightarrow\exists z(Sz\wedge (\forall w)_O(Ewz\leftrightarrow \forall u(Rwxu\to Euy))))\label{E:thc9}\tag{Thc5}
\end{align}
It is easy to show that this, relatively lightweight, encoding of the subset structure of a classical conditional model is sufficient to turn $st_x$ into a faithful embedding of $\mathsf{CK}$ into $\mathsf{QCL}$. Although the proof of the fact that $st_x$ faithfully embeds $\mathsf{CK}$  into $\mathsf{QCL}$ under the assumption of $Th_{ck}$ provides no principal difficulty, we could not find it in the existing literature, so we devote the rest of this section to sketching it. In other words,\footnote{That Proposition \ref{P:ck-into-cl} only shows the faithfulness of the standard translation embedding modulo a certain set of first-order assumptions reflects the complexity of the semantics of conditionals as compared to modal semantics. However, this type of complications is also well-known in the modal case. For example $\sigma\tau_x$ faithfully embeds the classical modal logic $\mathsf{S4}$ into $\mathsf{QCL}(\mu)$ only modulo the assumption that $\forall xExx \wedge \forall xyz(Exy\wedge Eyz \to Exz)$.} we claim that:
\begin{proposition}\label{P:ck-into-cl}
	For all $\Gamma, \Delta \subseteq \mathcal{CN}$ and for every $x \in Ind$, $\Gamma\models_{\mathsf{CK}}\Delta$ iff $Th_{ck}\cup st_x(\Gamma)\models_{\mathsf{QCL}}st_x(\Delta)$.
\end{proposition}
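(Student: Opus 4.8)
The plan is to prove the proposition by establishing a tight correspondence between pointed classical conditional models $(\mathcal{M}, \mathbf{w}) \in EP_{ck}$ and classical first-order models $\mathcal{N} \in \mathbb{C}(\Pi)$ that satisfy $Th_{ck}$, in such a way that satisfaction of a conditional formula $\phi$ at $\mathbf{w}$ corresponds to satisfaction of $st_x(\phi)$ at the image of $\mathbf{w}$. Since the logics here are classical, faithfulness amounts to two directions: soundness (if $\Gamma \models_{\mathsf{CK}} \Delta$, the translated consecution holds modulo $Th_{ck}$) and completeness (the converse). I would reduce both to a pair of model-transformation lemmas, one in each direction, plus a single local lemma of the form ``$\mathcal{M}, \mathbf{w} \models_{ck} \phi$ iff $\mathcal{N} \models_c st_x(\phi)[x/\mathbf{w}]$'' analogous to Lemma \ref{L:K}.

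First I would handle the direction from conditional models to first-order models. Given $\mathcal{M} = (W, R, V)$, I would build a first-order model $\mathcal{N}$ whose domain consists of the worlds in $W$ together with a supply of ``set'' objects. The object predicate $O^\mathcal{N}$ is interpreted as $W$; the set predicate $S^\mathcal{N}$ picks out the chosen copies of truth-sets; the elementhood predicate $E^\mathcal{N}$ relates a world to a set-object exactly when the world lies in the corresponding subset of $W$; the relation $R^\mathcal{N}$ is read off from the ternary $R$ via the indexing subset; and each $p^\mathcal{N}$ is $V(p)$. The delicate point is that $st_x(\psi\boxto\chi)$ existentially quantifies over a set-object witnessing the truth-set $\|\psi\|^{ck}_\mathcal{M}$, so the domain of $\mathcal{N}$ must contain enough set-objects to realize the truth-set of every conditional subformula that can arise. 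I would therefore take the set-objects to be (copies of) a sufficiently rich family of subsets of $W$ — at minimum all subsets definable by $\mathcal{CN}$-formulas, closed under complement, intersection, and the conditional-box operation $X \mapsto \{\mathbf{w} : (\forall \mathbf{v})(R_X(\mathbf{w},\mathbf{v}) \to \mathbf{v} \in Y)\}$ — so that the comprehension-style axioms \eqref{E:thc6}--\eqref{E:thc9} are satisfied, while extensionality \eqref{E:thc5} forces me to keep only one copy per subset. With $\mathcal{N}$ so defined, a routine induction on $\phi \in \mathcal{CN}$ establishes the local correspondence lemma, the only interesting case being the conditional, where the existential witness is precisely the set-object coding $\|\psi\|^{ck}_\mathcal{M}$ and the uniqueness guaranteed by \eqref{E:thc5} makes the biconditional $(\forall z)_O(Ezy \leftrightarrow st_z(\psi))$ pin it down.

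For the converse direction, starting from an arbitrary $\mathcal{N} \in \mathbb{C}(\Pi)$ with $\mathcal{N} \models_c Th_{ck}$, I would extract a classical conditional model $\mathcal{M}$ by taking $W := O^\mathcal{N}$ as the worlds, defining $V(p) := \{\mathbf{w} \in O^\mathcal{N} : \mathcal{N} \models_c p(x)[x/\mathbf{w}]\}$, and defining the ternary $R$ on $\mathcal{M}$ by declaring $R(\mathbf{w}, X, \mathbf{v})$ to hold whenever there is a set-object $s$ with $S^\mathcal{N}(s)$ whose $E$-extension (restricted to objects) equals $X$ and with $R^\mathcal{N}(\mathbf{w}, s, \mathbf{v})$. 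Here extensionality \eqref{E:thc5} is what makes this well-defined, guaranteeing that the choice of representative set-object $s$ does not matter, so that $R_X$ depends only on the subset $X$ and not on its coding. The same inductive correspondence lemma then goes through, with the comprehension axioms ensuring that the witness demanded by the translation clause for $\boxto$ always exists. Combining the two constructions with the local lemma yields that the $st_x$-translations of a satisfiable (resp. unsatisfiable) conditional consecution are satisfiable (resp. unsatisfiable) relative to $Th_{ck}$, which is exactly the claimed equivalence.

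I expect the main obstacle to be the bookkeeping around the set-objects in the first direction: ensuring the domain carries enough subsets of $W$ to witness every conditional subformula, while simultaneously respecting extensionality, requires choosing the family of coded subsets with some care, and one must verify that this family is closed under exactly the operations encoded by \eqref{E:thc7}--\eqref{E:thc9}. A clean way to finesse this is to code \emph{all} subsets of $W$ as distinct set-objects, which trivially validates comprehension and extensionality at the cost of a larger model; since $\mathsf{CK}$ and $\mathsf{QCL}$ impose no cardinality constraints, this is harmless and sidesteps the closure argument entirely. The remaining inductions are then genuinely routine, following the template of the proof of Lemma \ref{L:K} and Proposition \ref{P:K} one step up in the operator arity.
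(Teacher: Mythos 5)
Your first direction is essentially the paper's own construction: after your ``finesse'' of coding \emph{all} subsets of $W$ as set-objects, your model $\mathcal{N}$ coincides with the paper's $\mathcal{M}^{cl}$ (domain $W\cup\mathcal{P}(W)$, $O$ interpreted as $W$, $S$ as $\mathcal{P}(W)$, $E$ as genuine membership), and the induction goes through exactly as you describe. The converse direction, however, contains a genuine gap: you take the worlds of the extracted conditional model to be $W:=O^{\mathcal{N}}$, whereas the paper (Lemma \ref{L:cl-mod-ck}) takes $W^{ck}:=U^{\mathcal{M}}$, the \emph{entire} first-order domain, and this difference is not cosmetic. The clause $st_x(\psi\boxto\chi)=\exists y(Sy\wedge(\forall z)_O(Ezy\leftrightarrow st_z(\psi))\wedge\forall w(Rxyw\to st_w(\chi)))$ has an \emph{unrestricted} quantifier $\forall w$: the truth of the translation at a point depends on all $R^{\mathcal{N}}$-successors in the whole domain, including elements outside $O^{\mathcal{N}}$. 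Nothing in $Th_{ck}$ forces $R^{\mathcal{N}}$-successors to lie in $O^{\mathcal{N}}$ (axioms such as $\forall x\forall y\forall z(Rxyz\to(Ox\wedge Sy\wedge Oz))$ belong to the extended theory $Th^i$ of Section \ref{S:conclusion}, not to $Th_{ck}$). So with your definition the correspondence lemma fails in the left-to-right direction: take $\mathcal{N}$ with $O^{\mathcal{N}}=\{w_0\}$, $p^{\mathcal{N}}=\{w_0\}$, $q^{\mathcal{N}}=\emptyset$, a non-object element $d$, and $R^{\mathcal{N}}(w_0,s,d)$ where $s$ is the set-object coding $\{w_0\}$; then your $\mathcal{M}$ satisfies $p\boxto q$ at $w_0$ vacuously (the only $R_{\|p\|}$-successors counted are objects, and there are none), while $\mathcal{N}\not\models_c st_x(p\boxto q)[x/w_0]$ because of the bad successor $d$. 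Your induction would break exactly at the box case.

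Two further symptoms of the same mistake: the evaluation point $f(x)$ witnessing $Th_{ck}\cup st_x(\Gamma)\not\models_{\mathsf{QCL}}st_x(\Delta)$ need not satisfy $O$ at all (the translation never asserts $Ox$), so it may fail to be a world of your $\mathcal{M}$, making the final ``combining'' step inapplicable; and $O^{\mathcal{N}}$ may even be empty, in which case your $\mathcal{M}$ is not a conditional model, since Definition \ref{D:ccmodel} requires $W\neq\emptyset$ (note that $(\forall y)_O$-clauses in $Th_{ck}$ are vacuously satisfiable with empty $O$). All three problems disappear once you set $W:=U^{\mathcal{N}}$ and define $R(\mathbf{w},X,\mathbf{v})$ to hold iff some $a\in S^{\mathcal{N}}$ satisfies $X\cap O^{\mathcal{N}}=\{b\in O^{\mathcal{N}}\mid E^{\mathcal{N}}(b,a)\}$ and $R^{\mathcal{N}}(\mathbf{w},a,\mathbf{v})$, which is exactly the paper's repair: the intersection with $O^{\mathcal{N}}$ is what the biconditional $(\forall z)_O(Ezy\leftrightarrow st_z(\psi))$ actually pins down, while the worlds and the successor relation range over everything.
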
 
The proof of this proposition employs the following technical lemmas:
\begin{lemma}\label{L:cl-comprehension}
For every $\phi \in \mathcal{CN}$, we have $Th_{ck}\models_{\mathsf{QCL}}\exists y(Sy\wedge (\forall x)_O(Exy \leftrightarrow st_y(\phi)))$.	
\end{lemma}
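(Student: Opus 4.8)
The plan is to argue by induction on the construction of $\phi\in\mathcal{CN}$, constructing in each case a witnessing set from the sets delivered by the induction hypothesis (IH) together with the comprehension axioms of $Th_{ck}$; all reasoning takes place in $\mathsf{QCL}$. Call a set $y$ (i.e.\ one with $Sy$) a \emph{$\phi$-set} if $(\forall x)_O(Exy\leftrightarrow st_x(\phi))$, so that the goal is exactly to derive the existence of a $\phi$-set. The base case $\phi=p$ with $p^1\in Prop$ is immediate: since $st_x(p)=p(x)$, a $p$-set is precisely what \eqref{E:thc6} guarantees.

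For the Boolean steps I feed the sets from the IH into the remaining comprehension axioms. Given a $\psi$-set $a$, axiom \eqref{E:thc7} applied to $a$ yields a set whose objects are those $x$ with $\sim Exa$, hence with $\sim st_x(\psi)=st_x(\sim\psi)$, i.e.\ a $(\sim\psi)$-set. Given a $\psi$-set $a$ and a $\chi$-set $b$, axiom \eqref{E:thc8} applied to $a,b$ produces a $(\psi\wedge\chi)$-set, since its objects $x$ satisfy $Exa\wedge Exb\leftrightarrow st_x(\psi)\wedge st_x(\chi)=st_x(\psi\wedge\chi)$. The cases $\vee$ and $\to$ require no new axioms: reasoning classically, I first rewrite $st_x(\psi\vee\chi)$ and $st_x(\psi\to\chi)$ as $\sim(\sim st_x(\psi)\wedge\sim st_x(\chi))$ and $\sim(st_x(\psi)\wedge\sim st_x(\chi))$, and then iterate the constructions already obtained for $\sim$ and $\wedge$.

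The one genuinely new case is the conditional $\phi=\psi\boxto\chi$, and it is here that \eqref{E:thc9} and \eqref{E:thc5} come into play. The IH furnishes a $\psi$-set $a$ and a $\chi$-set $b$; applying \eqref{E:thc9} to the pair $a,b$ I obtain a set $z$ with $(\forall x)_O(Exz\leftrightarrow\forall u(Rxau\to Eub))$, and I claim $z$ is a $(\psi\boxto\chi)$-set. Fix an object $x$ and recall that $st_x(\psi\boxto\chi)=\exists y(Sy\wedge(\forall t)_O(Ety\leftrightarrow st_t(\psi))\wedge\forall u(Rxyu\to st_u(\chi)))$. From left to right one takes the witness $y:=a$: the first two conjuncts hold because $a$ is a $\psi$-set, while $Exz$ turns $Rxau$ into $Eub$ and hence into $st_u(\chi)$. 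From right to left, any witness $y$ is itself a $\psi$-set, so extensionality \eqref{E:thc5} forces $y\equiv a$, after which the clause $\forall u(Rxyu\to st_u(\chi))$ collapses back to $\forall u(Rxau\to Eub)$, that is, to $Exz$.

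The main obstacle is the alignment, in the conditional case, of the two unrestricted quantifiers over accessible worlds $u$: passing between $Eub$ and $st_u(\chi)$ invokes the IH for $\chi$, which is stated only for objects. The argument therefore hinges on the worlds reached from $x$ through the antecedent set being objects — secured by reading the third argument place of $R$ as $O$-sorted under $Th_{ck}$ — after which the identification of the existential witness via \eqref{E:thc5} is routine and the induction closes.
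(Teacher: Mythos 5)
Your strategy is essentially the one the paper itself uses: the paper proves this lemma by ``repeating'' the inductive derivation given for Lemma \ref{L:n4-comprehension} (atoms from \eqref{E:thc6}, $\sim$ from \eqref{E:thc7}, $\wedge$ from \eqref{E:thc8}, $\vee$ and $\to$ by classical rewriting, and the conditional case from \eqref{E:thc9} together with extensionality \eqref{E:thc5}), and then transfers the result to $\mathsf{QCL}$ via Lemma \ref{L:classical-inclusion}. Up to and including the use of \eqref{E:thc5} to force any existential witness in $st_x(\psi\boxto\chi)$ to coincide with the $\psi$-set $a$, your conditional case is exactly that argument. The gap is the point you flag in your final paragraph and then wave away: you claim that the third argument place of $R$ is ``$O$-sorted under $Th_{ck}$.'' It is not. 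None of \eqref{E:thc5}--\eqref{E:thc9} constrains the sorts of the arguments of $R$ (or of $E$); sort axioms such as $\forall x\forall y\forall z(Rxyz\to(Ox\wedge Sy\wedge Oz))$ occur in this paper only in the strictly stronger theory $Th^i$ of Section \ref{S:conclusion}. Consequently your passage between $Eub$ and $st_u(\chi)$ at the $R$-successors $u$ of $x$ is unjustified whenever $Ou$ fails, since the induction hypothesis for $\chi$ is relativized to objects.

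This is not a removable blemish, because over $Th_{ck}$ as actually given the set produced by \eqref{E:thc9} can genuinely differ from the truth-set of $st_x(\psi\boxto\chi)$, and the comprehension instance being proved can itself fail. Consider the classical model with domain $\{w_1,w_2,e,f\}$, $O=\{w_1,w_2\}$, $S=\{e,f\}$, $E=\{(w_1,f),(w_2,f),(e,e),(e,f)\}$, $R=\{(w_1,f,e)\}$, and $p^{\mathcal{M}}=\{w_1,w_2\}$ for every $p^1\in Prop$. All of \eqref{E:thc5}--\eqref{E:thc9} hold (the ``junk'' memberships $(e,e),(e,f)\in E$ are precisely what keeps \eqref{E:thc9} satisfied, with $f$ as witness), yet the set of objects satisfying $st_x(p\boxto p)$ is exactly $\{w_2\}$: the only $R$-successor of $w_1$ through the unique $p$-set $f$ is the non-object $e$, at which $p$ fails. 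Since $\{w_2\}$ is the object-extension of neither $e$ nor $f$, the instance of the lemma for $\phi=p\boxto p$ is false in this model of $Th_{ck}$. You have in fact located a soft spot of the paper's own argument: the step from \eqref{E:ffo15} to \eqref{E:ffo16} in deduction D4 of Appendix \ref{App:n4-comprehension} applies \eqref{E:T18} to substitute $ST_w(\chi)$ for $Ewz$ inside $\forall w(Rxy'w\to\cdot)$, although the equivalence supplied by \eqref{E:ffo13} is relativized to $Ow$, not to $Rxy'w$. Any repair, of your proof or of the paper's, must either strengthen the theory by an $R$-sort axiom of the $Th^i$ kind or restrict the last universal quantifier in $st_x(\psi\boxto\chi)$ and in \eqref{E:thc9} to objects; neither move is available from $Th_{ck}$ as it stands.
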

\begin{proof}
We can simply repeat the proof that we give for Lemma \ref{L:n4-comprehension} below, and observe that $Th$ is clearly equivalent to $Th_{ck}$ over $\mathsf{QCL}$. Our Lemma then follows by Lemma \ref{L:classical-inclusion}.
\end{proof}

\begin{lemma}\label{L:ck-mod-cl}
	Let $\mathcal{M}\in \mathbb{CK}$ and let $\mathcal{M}^{cl} \in \mathbb{C}$ be such that:
	\begin{itemize}
		\item $U^{\mathcal{M}^{cl}} := W \cup \mathcal{P}(W)$.
		
		\item  For every $p^1 \in Prop$, $p^{\mathcal{M}^{cl}} := V(p)$.
		
		\item $O^{\mathcal{M}^{cl}}:= W$.
		
		\item $S^{\mathcal{M}^{cl}}:= \mathcal{P}(W)$.
		
		\item $E^{\mathcal{M}^{cl}}:= \{(w, X)\in W\times\mathcal{P}(W)\mid w \in X\}$.
		
		\item $R^{\mathcal{M}^{cl}}:= \{(w, X,v)\in W\times\mathcal{P}(W)\times W\mid R(w, X, v)\}$.
	\end{itemize}
Then the following statements hold:
\begin{enumerate}
	\item $\mathcal{M}^{cl}\models_{c} Th_{ck}$.
	
	\item For every $\phi\in \mathcal{CN}$, every $x \in Ind$, and every $\mathbf{w} \in W$, we have $\mathcal{M}, \mathbf{w} \models_{ck} \phi$ iff $\mathcal{M}^{cl}\models_{c}st_x(\phi)[x/\mathbf{w}]$.
\end{enumerate}
\end{lemma}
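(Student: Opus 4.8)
The plan is to establish the two numbered statements of Lemma \ref{L:ck-mod-cl} separately, with the first being a routine verification and the second being the inductive heart of the matter. For statement (1), I would simply check that $\mathcal{M}^{cl}$ satisfies each of the five axioms \eqref{E:thc5}--\eqref{E:thc9} in turn. By construction, $O^{\mathcal{M}^{cl}} = W$ and $S^{\mathcal{M}^{cl}} = \mathcal{P}(W)$ partition the domain into the ``object'' and ``set'' sorts, while $E^{\mathcal{M}^{cl}}$ is genuine membership restricted to $W\times\mathcal{P}(W)$. Extensionality \eqref{E:thc5} then holds because two subsets of $W$ with the same elements are equal; the comprehension-style axioms \eqref{E:thc6}, \eqref{E:thc7}, \eqref{E:thc8} hold because for each $p^1\in Prop$ the set $V(p)$ is in $\mathcal{P}(W)$, and $\mathcal{P}(W)$ is closed under complement and intersection; finally \eqref{E:thc9} holds because for any $X\in\mathcal{P}(W)$ the set $\{w\mid \forall u\,(R(w,X,u)\to u\in Y)\}$ is again a subset of $W$ and hence a witness in $S^{\mathcal{M}^{cl}}$. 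Each verification is a direct unwinding of the membership clause.

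The substance is statement (2), which I would prove by induction on the construction of $\phi\in\mathcal{CN}$, with $x$ (and the auxiliary $\mathbf{w}$) allowed to vary. The atomic case $\phi = p$ is immediate from $p^{\mathcal{M}^{cl}} = V(p)$ and the atomic clause of $st_x$. The Boolean cases $\sim$, $\wedge$, $\vee$, $\to$ follow mechanically because $st_x$ commutes with these connectives and the classical satisfaction clauses for $\models_c$ match the conditional-model clauses for $\models_{ck}$ one-for-one; here I would cite the IH for the immediate subformulas. The only genuinely interesting case is the conditional $\phi = \psi\boxto\chi$, and this is where I expect the main obstacle to lie.

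For the conditional case, recall $st_x(\psi\boxto\chi) = \exists y(Sy \wedge (\forall z)_O(Ezy\leftrightarrow st_z(\psi))\wedge\forall w(Rxyw\to st_w(\chi)))$. The key observation is that, by the IH applied to $\psi$, an object $\mathbf{u}\in W$ satisfies $st_z(\psi)$ in $\mathcal{M}^{cl}$ exactly when $\mathbf{u}\in\|\psi\|^{ck}_\mathcal{M}$; hence the unique (by \eqref{E:thc5}, using extensionality) witness $y$ ranging over $S^{\mathcal{M}^{cl}} = \mathcal{P}(W)$ for which $(\forall z)_O(Ezy\leftrightarrow st_z(\psi))$ holds is precisely the truth-set $\|\psi\|^{ck}_\mathcal{M}\in\mathcal{P}(W)$. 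The existence of such a $y$ is guaranteed because $\|\psi\|^{ck}_\mathcal{M}$ is a genuine element of $\mathcal{P}(W)$ and so lies in $S^{\mathcal{M}^{cl}}$ (this is the semantic counterpart of Lemma \ref{L:cl-comprehension}). Once $y$ is pinned down to be $\|\psi\|^{ck}_\mathcal{M}$, the clause $\forall w(Rxyw\to st_w(\chi))$ reads, via $R^{\mathcal{M}^{cl}} = R$ and the IH for $\chi$, exactly as $(\forall\mathbf{v}\in W)(R_{\|\psi\|^{ck}_\mathcal{M}}(\mathbf{w},\mathbf{v})\text{ implies }\mathcal{M},\mathbf{v}\models_{ck}\chi)$, which is the defining clause for $\mathcal{M},\mathbf{w}\models_{ck}\psi\boxto\chi$.

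The main obstacle, then, is the bookkeeping around the existential witness $y$: I must argue both that a suitable witness exists (so that the biconditional can be driven from right to left) and that any witness must coincide with the genuine truth-set $\|\psi\|^{ck}_\mathcal{M}$ (so that the inner universally quantified clause transfers correctly). The first point uses the fact that $\|\psi\|^{ck}_\mathcal{M}$ is literally an element of the set-sort, and the second uses extensionality \eqref{E:thc5} together with the IH for $\psi$ to show the witness is unique and equal to this truth-set. With these two facts in hand the conditional case closes, completing the induction and hence the lemma.
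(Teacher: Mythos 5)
Your proposal is correct and follows essentially the same route as the paper's proof: Part 1 by direct (tedious but routine) verification, and Part 2 by induction on $\phi$, where the conditional case is settled by observing that $\|\psi\|^{ck}_\mathcal{M}$ itself lies in $S^{\mathcal{M}^{cl}}$ and thus serves as the existential witness, while extensionality \eqref{E:thc5} together with the IH for $\psi$ forces any witness to equal $\|\psi\|^{ck}_\mathcal{M}$, so the inner clause transfers via the IH for $\chi$. The only slip is your parenthetical direction labels, which are swapped: existence of the witness is what drives the left-to-right direction (from $\mathcal{M},\mathbf{w}\models_{ck}\phi$ to the translation), whereas uniqueness of the witness drives right-to-left.
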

\begin{proof}[Proof (a sketch)]
	Part 1 is straightforward (even though somewhat tedious) to check. As for Part 2, we argue by induction on the construction of  $\phi\in \mathcal{CN}$ in which we only consider a couple of cases.
	
	\textit{Basis}. Assume that $\phi = p$ where $p^1 \in Prop$. Then we have:
	$$
	\mathcal{M}, \mathbf{w} \models_{ck} \phi \text{ iff } \mathbf{w} \in V(p) \text{ iff } \mathbf{w} \in p^{\mathcal{M}^{cl}} \text{ iff } \mathcal{M}^{cl}\models_{c} p(x) = st_x(\phi)[x/\mathbf{w}].
	$$
	
	\textit{Induction step}. The Boolean cases are straightforward. In case $\phi = (\psi \boxto \chi)$ for some $\psi, \chi \in \mathcal{CN}$, and  $x,y,z, w \in Ind$ are pairwise distinct, we begin by noting that IH for $\psi$ implies that:
	\begin{equation}\label{E:ckcl1}
		\mathcal{M}^{cl}\models_{c} Sy \wedge (\forall z)_O (Ezy\leftrightarrow st_z(\psi))[y/\|\psi\|^{ck}_\mathcal{M}].
	\end{equation}
We now reason as follows:
\begin{align*}
	\mathcal{M}, \mathbf{w} \models_{ck} \phi &\text{ iff } (\forall \mathbf{v} \in W)(R_{\|\psi\|^{ck}_\mathcal{M}}(\mathbf{w},\mathbf{v})\text{ implies }\mathcal{M}, \mathbf{v}\models_{ck} \chi)\\
	&\text{ iff } (\forall \mathbf{v} \in W)(R^{\mathcal{M}^{cl}}(\mathbf{w},\|\psi\|^{ck}_\mathcal{M},\mathbf{v})\text{ implies }\mathcal{M}^{cl}\models_{c} st_{w}(\chi)[w/\mathbf{v}])&&\text{by IH}\\
	&\text{ iff } (\forall \mathbf{v} \in W\ \cup \mathcal{P}(W))(R^{\mathcal{M}^{cl}}(\mathbf{w},\|\psi\|^{ck}_\mathcal{M},\mathbf{v})\text{ implies }\mathcal{M}^{cl}\models_{c} st_{w}(\chi)[w/\mathbf{v}])&&\text{by def. of $R^{\mathcal{M}^{cl}}$}\\
	&\text{ iff } (\forall \mathbf{v} \in U^{\mathcal{M}^{cl}})(R^{\mathcal{M}^{cl}}(\mathbf{w},\|\psi\|^{ck}_\mathcal{M},\mathbf{v})\text{ implies }\mathcal{M}^{cl}\models_{c} st_{w}(\chi)[w/\mathbf{v}])\\
	&\text{ iff } \mathcal{M}^{cl}\models_{c} \forall w(Rxyw \to  st_{w}(\chi))[x/\mathbf{w}, y/\|\psi\|^{ck}_\mathcal{M}]
\end{align*}
The latter implies, by \eqref{E:ckcl1}, that
\begin{equation}\label{E:extras1}
	\mathcal{M}^{cl}\models_{c} \exists y(Sy \wedge (\forall z)_O (Ezy\leftrightarrow st_z(\psi))\wedge\forall w(Rxyw\to st_{w}(\chi)))[x/\mathbf{w}]
\end{equation}
in other words, that $\mathcal{M}^{cl}\models_{c} st_x(\psi\boxto\chi)[x/\mathbf{w}]$. Conversely, assuming \eqref{E:extras1}, we can find an $X \in U^{\mathcal{M}^{cl}}$ such that $X \in S^{\mathcal{M}^{cl}} = \mathcal{P}(W)$ and we have
\begin{equation}\label{E:extras2}
	\mathcal{M}^{cl}\models_{c} (\forall z)_O (Ezy\leftrightarrow st_z(\psi))\wedge\forall w(Rxyw\to st_{w}(\chi)))[x/\mathbf{w}, y/X]
\end{equation}
But then, by \eqref{E:ckcl1} and Part 1, it follows that $X = \|\psi\|^{ck}_\mathcal{M}$ so that $ \mathcal{M}^{cl}\models_{c} \forall w(Rxyw \to  st_{w}(\chi))[x/\mathbf{w}, y/\|\psi\|^{ck}_\mathcal{M}]$ follows. 	  
\end{proof}
Yet another lemma provides for the converse direction of Proposition \ref{P:ck-into-cl}:
\begin{lemma}\label{L:cl-mod-ck}
Let $\mathcal{M}\in \mathbb{C}$ be such that $\mathcal{M}\models_{c} Th_{ck}$, and let $\mathcal{M}^{ck} \in \mathbb{CK}$ be such that:
\begin{itemize}
	\item $W^{ck} := U^{\mathcal{M}}$.
	
	\item $R^{ck}(\mathbf{w}, X, \mathbf{v})\text{ iff } (\exists a \in S^{\mathcal{M}})(X \cap O^{\mathcal{M}} = \{b \in O^{\mathcal{M}}\mid E^{\mathcal{M}}(b,a)\}\text{ and }R^{\mathcal{M}}(\mathbf{w}, a, \mathbf{v}))$.
	
	\item For every $p^1 \in Prop$, $V^{ck}(p) := p^{\mathcal{M}}$.	
\end{itemize}
Then, for every $\phi\in \mathcal{CN}$, every $x \in Ind$, and every $\mathbf{w} \in W^{ck} = U^{\mathcal{M}}$, we have $\mathcal{M}^{ck}, \mathbf{w} \models_{ck} \phi$ iff $\mathcal{M}\models_{c}st_x(\phi)[x/\mathbf{w}]$.
\end{lemma}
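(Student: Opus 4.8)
The plan is to argue by induction on the construction of $\phi \in \mathcal{CN}$, running the correspondence of Lemma \ref{L:ck-mod-cl} in the opposite direction. The basis and the Boolean steps are routine: for $\phi = p$ with $p^1 \in Prop$ we have $\mathcal{M}^{ck}, \mathbf{w}\models_{ck}p$ iff $\mathbf{w}\in V^{ck}(p) = p^\mathcal{M}$ iff $\mathcal{M}\models_c p(x)[x/\mathbf{w}] = st_x(p)[x/\mathbf{w}]$; and since $st_x$ commutes with $\sim, \wedge, \vee, \to$ while the satisfaction clauses for these connectives coincide in $\models_{ck}$ and $\models_c$ (both being classical), the inductive steps for the Boolean connectives follow at once from the IH. Throughout, I write $\|\psi\|$ as shorthand for $\|\psi\|^{ck}_{\mathcal{M}^{ck}}$.

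The entire weight of the argument rests on the case $\phi = (\psi\boxto\chi)$, and the first move would be to manufacture a canonical witness for the set-variable. Since $\mathcal{M}\models_c Th_{ck}$, Lemma \ref{L:cl-comprehension} supplies an $a \in S^\mathcal{M}$ with $\mathcal{M}\models_c (Sy\wedge(\forall z)_O(Ezy\leftrightarrow st_z(\psi)))[y/a]$; in particular $\{b\in O^\mathcal{M}\mid E^\mathcal{M}(b,a)\} = \{b\in O^\mathcal{M}\mid \mathcal{M}\models_c st_z(\psi)[z/b]\}$, which by the IH for $\psi$ equals $\|\psi\|\cap O^\mathcal{M}$.

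The key step is to promote this to an identity of accessibility relations: for every $\mathbf{v}\in U^\mathcal{M}$, $R^{ck}_{\|\psi\|}(\mathbf{w},\mathbf{v})$ iff $R^\mathcal{M}(\mathbf{w}, a, \mathbf{v})$. The implication from $R^\mathcal{M}(\mathbf{w},a,\mathbf{v})$ is immediate from the definition of $R^{ck}$, since $a$ is exactly a required witness. For the converse I would unpack the existential in that definition: if $R^{ck}(\mathbf{w},\|\psi\|,\mathbf{v})$ holds, some $a'\in S^\mathcal{M}$ witnesses it, so $\{b\in O^\mathcal{M}\mid E^\mathcal{M}(b,a')\} = \|\psi\|\cap O^\mathcal{M} = \{b\in O^\mathcal{M}\mid E^\mathcal{M}(b,a)\}$, and the extensionality axiom \eqref{E:thc5} then forces $a' = a$, whence $R^\mathcal{M}(\mathbf{w},a,\mathbf{v})$. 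This is where $Th_{ck}$ does its essential work and is the main obstacle I anticipate: the definition of $R^{ck}$ quantifies existentially over witnessing sets, and without extensionality there is no guarantee that such a witness coincides with the comprehension-supplied $a$.

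With this identity in hand the remaining chain of equivalences is mechanical:
\begin{align*}
\mathcal{M}^{ck},\mathbf{w}\models_{ck}(\psi\boxto\chi) &\text{ iff } (\forall\mathbf{v}\in U^\mathcal{M})(R^{ck}_{\|\psi\|}(\mathbf{w},\mathbf{v})\text{ implies }\mathcal{M}^{ck},\mathbf{v}\models_{ck}\chi)\\
&\text{ iff } (\forall\mathbf{v}\in U^\mathcal{M})(R^\mathcal{M}(\mathbf{w},a,\mathbf{v})\text{ implies }\mathcal{M}\models_c st_w(\chi)[w/\mathbf{v}])\\
&\text{ iff } \mathcal{M}\models_c \forall w(Rxyw\to st_w(\chi))[x/\mathbf{w}, y/a],
\end{align*}
where the middle step invokes the IH for $\chi$. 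Combining the last line with the properties of $a$ recorded above, the element $a$ witnesses all three conjuncts of $st_x(\psi\boxto\chi)$, giving $\mathcal{M}\models_c st_x(\psi\boxto\chi)[x/\mathbf{w}]$ and hence one direction. For the other direction, any witness $X\in S^\mathcal{M}$ to $st_x(\psi\boxto\chi)[x/\mathbf{w}]$ satisfies $\{b\in O^\mathcal{M}\mid E^\mathcal{M}(b,X)\} = \|\psi\|\cap O^\mathcal{M}$ by its first conjunct and the IH, so $X$ is an admissible witness for $R^{ck}$; extensionality again identifies it with the witness of any instance of $R^{ck}_{\|\psi\|}(\mathbf{w},\mathbf{v})$, and the third conjunct together with the IH for $\chi$ then delivers $\mathcal{M}^{ck},\mathbf{w}\models_{ck}(\psi\boxto\chi)$. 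This closes the induction.
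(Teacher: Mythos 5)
Your proof is correct and follows essentially the same route as the paper's: induction on $\phi$, Lemma \ref{L:cl-comprehension} to produce the comprehension witness $a$, the induction hypothesis to identify its $E$-extension with $\|\psi\|^{ck}_{\mathcal{M}^{ck}}\cap O^{\mathcal{M}}$, and the extensionality axiom \eqref{E:thc5} to collapse the existential witnesses in the definition of $R^{ck}$. The only difference is presentational: you isolate the identity $R^{ck}_{\|\psi\|^{ck}_{\mathcal{M}^{ck}}}(\mathbf{w},\mathbf{v})\Leftrightarrow R^{\mathcal{M}}(\mathbf{w},a,\mathbf{v})$ as a standalone step, making explicit the appeal to \eqref{E:thc5} that the paper's sketch leaves implicit, after which both directions of the $\boxto$-case follow mechanically.
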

\begin{proof}[Proof (a sketch)]
	It is clear that $\mathcal{M}^{ck} \in \mathbb{CK}$; as for the main statement of the Lemma, we proceed by induction on the construction of $\phi\in \mathcal{CN}$.
	
	\textit{Basis}. Assume that $\phi = p$ where $p^1 \in Prop$. Then we have:
	$$
	\mathcal{M}^{ck}, w \models_{ck} \phi = p \text{ iff } w \in V(p) \text{ iff } w \in p^{\mathcal{M}} \text{ iff } \mathcal{M}\models_{c} p(x) = st_x(\phi)[x/\mathbf{w}].
	$$
	\textit{Induction step}. The Boolean cases are straightforward by IH. If $\phi = (\psi \boxto \chi)$, assume  $x,y,z, w \in Ind$ to be pairwise distinct. By Lemma \ref{L:cl-comprehension}, we know that for some $a \in S^\mathcal{M}$ we have:
\begin{equation}\label{E:clck1}
	\mathcal{M}\models_{c} Sy \wedge (\forall z)_O (Ezy\leftrightarrow st_z(\psi))[y/a].
\end{equation}
For the right-to-left half of the Lemma, assume that $\mathcal{M}^{ck}, \mathbf{w} \models_{ck} \phi$. Then $(\forall \mathbf{v} \in W^{ck})(R^{ck}_{\|\psi\|^{ck}_{\mathcal{M}^{ck}}}(\mathbf{w},\mathbf{v})\text{ implies }\mathcal{M}^{ck}, \mathbf{v}\models_{ck} \chi)$. If now $\mathbf{v} \in U^\mathcal{M} = W^{ck}$ is such that $R^\mathcal{M}(\mathbf{w}, a, \mathbf{v})$, then, by IH and the choice of $a$, we must have $R^{ck}_{\|\psi\|^{ck}_{\mathcal{M}^{ck}}}(\mathbf{w},\mathbf{v})$, whence also $\mathcal{M}^{ck}, \mathbf{v}\models_{ck} \chi$, and, by IH, $\mathcal{M}\models_{c} st_{w}(\chi)[w/\mathbf{v}]$.
Thus we have shown that $\mathcal{M}\models_{c}\forall w(Rxyw\to st_{w}(\chi)))[x/\mathbf{w}, y/a]$, which, together with \eqref{E:clck1}, yields that $\mathcal{M}\models_{c} st_x(\psi\boxto\chi)[x/\mathbf{w}]$.

For the converse, assume that $\mathcal{M}\models_{c} st_x(\psi\boxto\chi)[x/w]$. Then there must exist a $b \in S^\mathcal{M}$ such that  both
\begin{equation}\label{E:clck2}
	\mathcal{M}\models_{c} Sy \wedge (\forall z)_O (Ezy\leftrightarrow st_z(\psi))[y/b]
\end{equation}
and
\begin{equation}\label{E:clck3}
	\mathcal{M}\models_{c} \forall w(Rxyw\to st_{w}(\chi))[x/\mathbf{w}, y/b]
\end{equation}
If now $\mathbf{v} \in W^{ck} = U^\mathcal{M}$ is such that $R^{ck}_{\|\psi\|^{ck}_{\mathcal{M}^{ck}}}(\mathbf{w},\mathbf{v})$, then we must have $R^\mathcal{M}(\mathbf{w},b,\mathbf{v})$ by IH and the choice of $a$, whence it follows that $\mathcal{M}\models_{c} st_{w}(\chi)[w/\mathbf{v}]$ by \eqref{E:clck3}. By IH, we must have $\mathcal{M}^{ck}, \mathbf{v}\models_{ck} \chi$. Thus we have shown that $\mathcal{M}^{ck}, \mathbf{w}\models_{ck} \psi\boxto\chi$.

\end{proof}
If we attempt to adapt the general criterion for seeking out non-classical counterparts to $\mathsf{K}$ that was formulated in the previous section, to the context of conditional logic, the following principle suggests itself: \textit{given a non-classical logic $Q\nu$ such that $Q\nu$ is a proper sublogic of $\mathsf{QCL}$ and its propositional fragment $\nu$ is a proper sublogic of $\mathsf{CL}$, a conditional extension $\nu\mathsf{CK}$ of $\nu$ is a $\nu$-based counterpart of $\mathsf{CK}$ iff some classically equivalent reformulation of $\sigma\tau_x$ faithfully embeds $\nu\mathsf{CK}$ into $\mathsf{Q}\nu(\Pi)$ modulo the assumption of some classically equivalent reformulation of $Th_{ck}$; the comparative merits of different $\nu$-based counterparts of $\mathsf{CK}$ will then have to be judged on the basis of other properties of $Q\nu$.}

Approaching with this principle to $\mathsf{QIL}$, we have defined an intuitionistic counterpart $\mathsf{IntCK}$ of $\mathsf{CK}$ in \cite{olkhovikov}, and we have shown, in \cite[Theorem 2]{olkhovikov}, that $\mathsf{IntCK}$ is an intuitionistic counterpart to $\mathsf{CK}$ according to the above criterion.\footnote{The theory that that is used in \cite{olkhovikov} is not classically equivalent to $Th_{ck}$; however, the proof given in \cite{olkhovikov} allows to cut this theory down to a subtheory that is classically equivalent to $Th_{ck}$. This discrepancy is just another example of polymorphism of first-order conditional theories that is further discussed in Section \ref{S:conclusion} below; cf. especially Corollaries \ref{C:foil} and \ref{C:ck-into-cl}.} Very conveniently, the reformulation $st^i_x$ of $st_x$ used in \cite{olkhovikov} is similar to the reformulation $\sigma\tau^i_x$ of $\sigma\tau_x$, described in the previous section in that the change consists in adding the standard translation of $\Diamond$ as a separate clause. Moreover, \cite[Proposition 4]{olkhovikov} shows that, if we also add a clause for diamonds to the mapping $\eta_\phi$ mentioned in the beginning of this section, then we get an embedding of $\mathsf{IK}$ into $\mathsf{IntCK}$. Thus, $\mathsf{K}$, $\mathsf{CK}$, $\mathsf{IK}$, and $\mathsf{IntCK}$ are tied to one another by natural embeddings forming, as it were, a tightly connected system. The main aim of the next section, and of this paper in general, is to extend this system with $\mathsf{N4}$-based modal and conditional logics.

\section{$\mathsf{N4CK}$, the basic Nelsonian conditional logic}\label{S:N4CK}
Another logic based on $\mathcal{CN}$ extends $\mathsf{N4}$ rather than $\mathsf{CL}$. It was introduced in \cite{nelsonian}, and its semantics is based on the following notion:
\begin{definition}\label{D:model}
	A \textit{Nelsonian conditional model} is a structure of the form $\mathcal{M} = (W, \leq, R, V^+, V^-)$ such that:
	\begin{enumerate}
		\item $W \neq \emptyset$ is the set of worlds, or nodes.
		
		\item $\leq$ is a pre-order on $W$.
		
		\item $V^+, V^-:Prop \to \mathcal{P}^\leq(W) = \{X\subseteq W\mid (\forall\mathbf{w}\in X)(\forall\mathbf{v}\geq\mathbf{w})(\mathbf{v}\in X)\}$.
		
		\item $R \subseteq W \times (\mathcal{P}(W)\times \mathcal{P}(W))\times W$ is the conditional accessibility relation. Thus, for all $X,Y \subseteq W$, $R$ induces a binary relation $R_{(X,Y)}$ on $W$ such that, for all $\mathbf{w},\mathbf{v} \in W$, $R_{(X,Y)}(\mathbf{w},\mathbf{v})$ iff $R(\mathbf{w}, (X,Y),\mathbf{v})$. Moreover, the following conditions must be satisfied for all $X, Y \subseteq W$:
		\begin{align}
			(\leq^{-1}\circ R_{(X,Y)}) &\subseteq (R_{(X,Y)}\circ\leq^{-1})\label{Cond:1}\tag{c1}\\
			(R_{(X,Y)}\circ\leq) &\subseteq (\leq\circ R_{(X,Y)})\label{Cond:2}\tag{c2}
		\end{align}	
	\end{enumerate}
The class of all Nelsonian conditional models will be denoted by $\mathbb{NC}$. 
\end{definition}
Conditions \eqref{Cond:1} and \eqref{Cond:2} can be reformulated as requirements to complete the dotted parts of each of the two diagrams given in Figure \ref{Fig:completion-patterns} once the respective straight-line part is given.
\begin{figure}
	\begin{center}
		\begin{tikzpicture}[scale=.7]
			\node (w1) at (-4,-1) {$w$};
			\node (w'1) at (-4,2) {$w'$};
			\node (v1) at (-1,-1) {$v$};
			\node (v'1) at (-1,2) {$v'$};
			\draw[-latex] (w1) to  node[midway, below] {$R_{(X,Y)}$} (v1);
			\draw[-latex] (w1) to node[midway, left] {$\leq$} (w'1);
			\draw[dotted, -latex] (w'1) to  node[midway, above] {$R_{(X,Y)}$} (v'1);
			\draw[dotted, -latex] (v1) to  node[midway, right] {$\leq$} (v'1);
			\node (w2) at (2,-1) {$w$};
			\node (w'2) at (2,2) {$w'$};
			\node (v2) at (5,-1) {$v$};
			\node (v'2) at (5,2) {$v'$};
			\draw[-latex] (w2) to  node[midway, below] {$R_{(X,Y)}$} (v2);
			\draw[dotted, -latex] (w2) to node[midway, left] {$\leq$} (w'2);
			\draw[dotted, -latex] (w'2) to  node[midway, above] {$R_{(X,Y)}$} (v'2);
			\draw[-latex] (v2) to  node[midway, right] {$\leq$} (v'2);
		\end{tikzpicture}
		\caption{Conditions \eqref{Cond:1} and \eqref{Cond:2}}\label{Fig:completion-patterns}
	\end{center}
\end{figure}
The evaluation points for $\mathsf{N4CK}$ are then defined as in $\mathsf{CK}$, in other words, we set $
EP:= \{(\mathcal{M}, \mathbf{w})\mid \mathcal{M}\in \mathbb{NC},\,\mathbf{w} \in W\}$.

Just as in other Nelsonian logics, we find in $\mathsf{N4CK}$ two satisfaction relations $\models^+$ and $\models^-$, representing verifications and falsifications of the conditional formulas, respectively. Their inductive definition runs as follows:
\begin{align*}
	\mathcal{M}, \mathbf{w}&\models^\star p \text{ iff } w \in V^\star(p)\qquad\qquad\text{for $p^1 \in Prop$ and $\star\in \{+, -\}$}\\
	\mathcal{M}, \mathbf{w}&\models^+ \sim\psi \text{ iff } \mathcal{M}, \mathbf{w}\models^- \psi\\
	\mathcal{M}, \mathbf{w}&\models^- \sim\psi \text{ iff } \mathcal{M}, \mathbf{w}\models^+ \psi\\
	\mathcal{M}, \mathbf{w}&\models^+ \psi \wedge \chi \text{ iff } \mathcal{M}, \mathbf{w}\models^+ \psi\text{ and }\mathcal{M}, \mathbf{w}\models^+ \chi\\
	\mathcal{M}, \mathbf{w}&\models^- \psi \wedge \chi \text{ iff } \mathcal{M}, \mathbf{w}\models^- \psi\text{ or }\mathcal{M}, \mathbf{w}\models^- \chi\\
	\mathcal{M}, \mathbf{w}&\models^+ \psi \vee \chi \text{ iff } \mathcal{M}, \mathbf{w}\models^+ \psi\text{ or }\mathcal{M}, \mathbf{w}\models^+ \chi\\
	\mathcal{M}, \mathbf{w}&\models^- \psi \vee \chi \text{ iff } \mathcal{M}, \mathbf{w}\models^- \psi\text{ and }\mathcal{M}, \mathbf{w}\models^- \chi\\
	\mathcal{M}, \mathbf{w}&\models^+ \psi \to \chi \text{ iff } (\forall \mathbf{v} \geq \mathbf{w})(\mathcal{M}, \mathbf{v}\models^+ \psi\Rightarrow\mathcal{M}, \mathbf{v}\models^+ \chi)\\
	\mathcal{M}, \mathbf{w}&\models^- \psi \to \chi \text{ iff } \mathcal{M}, \mathbf{w}\models^+ \psi\text{ and }\mathcal{M}, \mathbf{w}\models^- \chi\\
	\mathcal{M}, \mathbf{w}&\models^+ \psi \boxto \chi \text{ iff } (\forall \mathbf{v} \geq \mathbf{w})(\forall \mathbf{u} \in W)(R_{\|\psi\|_\mathcal{M}}(\mathbf{w},\mathbf{u}) \Rightarrow\mathcal{M}, \mathbf{u}\models^+ \chi)\\
	\mathcal{M}, \mathbf{w}&\models^- \psi \boxto \chi \text{ iff } (\exists \mathbf{u} \in W)(R_{\|\psi\|_\mathcal{M}}(\mathbf{w},\mathbf{u})\text{ and }\mathcal{M}, \mathbf{u}\models^- \chi)	
\end{align*}
where we assume, for any given $\phi \in \mathcal{CN}$, that:
$$
\|\phi\|_\mathcal{M}: = (\|\phi\|^+_\mathcal{M}, \|\phi\|^-_\mathcal{M}) = (\{\mathbf{w} \in W\mid \mathcal{M}, \mathbf{w}\models^+\phi\}, \{\mathbf{w} \in W\mid \mathcal{M}, \mathbf{w}\models^-\phi\}).
$$
To set up the semantics for $\mathsf{N4CK}$ it only remains to define that $\mathsf{N4CK}:= \mathbb{L}(EP, \models^+)$. 

The following Lemma is then straightforward to prove:
\begin{lemma}\label{L:n4ck-monotonicity}
	For every $\mathcal{M}\in \mathbb{N}4$, all $\mathbf{w},\mathbf{v}\in W$ such that $\mathbf{w}\leq\mathbf{v}$, for every $\ast\in \{+, -\}$, and for every $\phi \in \mathcal{CN}$, $\mathbf{w}\in \|\phi\|^\ast_\mathcal{M}$ implies $\mathbf{v}\in \|\phi\|^\ast_\mathcal{M}$.
\end{lemma}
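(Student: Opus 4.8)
The plan is to run a single \emph{simultaneous} induction on the construction of $\phi \in \mathcal{CN}$, establishing the claim for both polarities $\ast = +$ and $\ast = -$ at the same time and for an arbitrary pair $\mathbf{w} \leq \mathbf{v}$. A simultaneous induction is forced on us because the satisfaction clauses for $\sim$ swap $\models^+$ and $\models^-$, so neither polarity can be carried through in isolation.

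For the basis, with $\phi = p$ a propositional atom, the statement is immediate: $\mathbf{w} \in \|p\|^\ast_\mathcal{M}$ means $\mathbf{w} \in V^\ast(p)$, and since Definition \ref{D:model} requires $V^+(p), V^-(p) \in \mathcal{P}^\leq(W)$, i.e. both valuations return upward-closed sets, $\mathbf{w} \leq \mathbf{v}$ yields $\mathbf{v} \in V^\ast(p)$. For the propositional connectives $\sim, \wedge, \vee$ the inductive step is routine: the negation clauses transport a fact of one polarity to the opposite polarity, which the simultaneous hypothesis covers, while $\wedge$ and $\vee$ distribute over the induction hypothesis in the obvious way. The implication $\to$ is equally direct: the negative clause reduces to $\mathbf{w}\models^+\psi$ together with $\mathbf{w}\models^-\chi$, both of which persist by the hypothesis, and the positive clause is already phrased with a universal quantifier over all $\mathbf{v}'\geq\mathbf{w}$, so persistence follows from the transitivity of $\leq$.

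This leaves the two clauses for the conditional $\boxto$, which is where the real content lies. The positive clause $\mathcal{M},\mathbf{w}\models^+\psi\boxto\chi$ is, like $\to$, universally quantified over the $\leq$-successors of $\mathbf{w}$, so transitivity of $\leq$ again suffices to pass from $\mathbf{w}$ to $\mathbf{v}$. The hard case, and the main obstacle, is the negative clause $\mathcal{M},\mathbf{w}\models^-\psi\boxto\chi$, which asserts the existence of a \emph{single} $\mathbf{u}$ with $R_{\|\psi\|_\mathcal{M}}(\mathbf{w},\mathbf{u})$ and $\mathcal{M},\mathbf{u}\models^-\chi$. Here the witness is anchored at $\mathbf{w}$, so moving up to $\mathbf{v}$ requires producing a fresh witness accessible from $\mathbf{v}$. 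This is exactly what condition \eqref{Cond:1} supplies: from $\mathbf{w}\leq\mathbf{v}$ and $R_{\|\psi\|_\mathcal{M}}(\mathbf{w},\mathbf{u})$ — which is an instance of $\leq^{-1}\circ R_{\|\psi\|_\mathcal{M}}$ relating $\mathbf{v}$ to $\mathbf{u}$ — we obtain, via the inclusion into $R_{\|\psi\|_\mathcal{M}}\circ\leq^{-1}$, some $\mathbf{u}'$ with $R_{\|\psi\|_\mathcal{M}}(\mathbf{v},\mathbf{u}')$ and $\mathbf{u}\leq\mathbf{u}'$. Applying the induction hypothesis for $\chi$ in the negative polarity along $\mathbf{u}\leq\mathbf{u}'$ upgrades $\mathcal{M},\mathbf{u}\models^-\chi$ to $\mathcal{M},\mathbf{u}'\models^-\chi$, and $\mathbf{u}'$ then witnesses $\mathcal{M},\mathbf{v}\models^-\psi\boxto\chi$.

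I expect no subtleties beyond reading \eqref{Cond:1} correctly through the composition convention of Section \ref{S:Prel}, so that the completion it performs matches the left diagram of Figure \ref{Fig:completion-patterns}; the symmetric condition \eqref{Cond:2} is not needed for monotonicity itself. Since the positive conditional uses only transitivity and the negative conditional uses only \eqref{Cond:1}, the lemma is indeed, as stated, straightforward once the inductive bookkeeping for the two polarities is set up.
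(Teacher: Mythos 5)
Your proof is correct, and it is exactly the argument the paper has in mind: the paper states this lemma without proof (merely calling it straightforward), and your simultaneous induction on the two polarities --- upward-closedness of $V^{+},V^{-}$ at the base, transitivity of $\leq$ for the universally quantified positive clauses of $\to$ and $\boxto$, the swap of polarities for $\sim$, and condition \eqref{Cond:1} to relocate the existential witness in the negative $\boxto$ clause --- is the standard persistence argument, with \eqref{Cond:1} parsed correctly through the paper's composition convention. One remark: your reading of the positive $\boxto$ clause, with the accessibility condition $R_{\|\psi\|_\mathcal{M}}(\mathbf{v},\mathbf{u})$ ranging over the successors $\mathbf{v}\geq\mathbf{w}$, is the intended one (it matches the original presentation of $\mathsf{N4CK}$); as literally printed in this paper the clause has $R_{\|\psi\|_\mathcal{M}}(\mathbf{w},\mathbf{u})$, which would make the quantifier over $\mathbf{v}$ vacuous and would break your ``transitivity suffices'' step, so the clause should indeed be read (or corrected) as you read it.
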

Among other things, \cite{nelsonian} considered the question of a complete Hilbert-style axiomatization of $\mathsf{N4CK}$. It turns out (see \cite[Theorem 1]{nelsonian}) that we have $\mathsf{N4CK} = \mathfrak{N4CK}(\mathcal{CN})$ for $\mathfrak{N4CK} = \mathfrak{IL}^++(\eqref{E:a1}-\eqref{E:a6},\eqref{E:ax1}-\eqref{E:ax4};\eqref{E:RAbox},\eqref{E:RCbox1},\eqref{E:RCbox2})$, where we assume that:
\begin{align}
	((\phi \boxto \psi)\wedge(\phi \boxto \chi))&\Leftrightarrow(\phi \boxto (\psi \wedge \chi))\label{E:ax1}\tag{Ax1}\\
	(\sim(\phi\boxto\psi)\wedge (\phi \boxto \chi))&\to\sim(\phi \boxto(\psi\vee\sim\chi))\label{E:ax2}\tag{Ax2}\\
	((\phi \diamondto \psi)\to(\phi \boxto \chi))&\to(\phi \boxto (\psi \to \chi))\label{E:ax3}\tag{Ax3}\\
	\phi\boxto (\psi&\to\psi)\label{E:ax4}\tag{Ax4}\\
	\text{From }\phi\Leftrightarrow\psi &\text{ infer } (\phi\boxto\chi)\Leftrightarrow(\psi\boxto\chi)\label{E:RAbox}\tag{RA$\Box$}\\
	\text{From }\phi\leftrightarrow\psi &\text{ infer } (\chi\boxto\phi)\leftrightarrow(\chi\boxto\psi)\label{E:RCbox1}\tag{RC$\Box$1}\\
	\text{From }\sim\phi\leftrightarrow\sim\psi &\text{ infer } \sim(\chi\boxto\phi)\leftrightarrow\sim(\chi\boxto\psi)\label{E:RCbox2}\tag{RC$\Box$2}	
\end{align}
In what follows, we will write $\vdash$ to denote $\vdash_{\mathfrak{N4CK}}$.

The completeness of $\mathfrak{N4CK}$ relative to $\mathsf{N4CK}$ was shown in \cite{nelsonian} by the rather standard method of constructing a universal model; we would like to recall this construction in some detail as we plan to use it below. More precisely,  we say that, for any given $\Gamma, \Delta \subseteq \mathcal{CN}$, the pair $(\Gamma, \Delta)$ is maximal iff $\Gamma \not\vdash \Delta$ and $\Gamma \cup \Delta = \mathcal{CN}$. Our universal model can now be defined as follows
\begin{definition}\label{D:canonical-model}
	The structure $\mathcal{M}_c$ is the tuple $(W_c, \leq_c, R_c, V^+_c, V^-_c)$ is such that:
	\begin{itemize}
		\item $W_c:=\{(\Gamma, \Delta)\in \mathcal{P}(\mathcal{CN})\times\mathcal{P}(\mathcal{CN})\mid (\Gamma, \Delta)\text{ is maximal}\}$.
		
		\item $(\Gamma_0,\Delta_0)\leq_c(\Gamma_1,\Delta_1)$ iff $\Gamma_0\subseteq\Gamma_1$ for all $(\Gamma_0,\Delta_0),(\Gamma_1,\Delta_1)\in W_c$.
		
		\item For all $(\Gamma_0,\Delta_0),(\Gamma_1,\Delta_1)\in W_c$ and $X, Y \subseteq W_c$, we have $((\Gamma_0,\Delta_0),(X,Y),(\Gamma_1,\Delta_1)) \in R_c$ iff there exists a $\phi\in\mathcal{CN}$, such that all of the following holds:
		\begin{itemize}
			\item $X = \{(\Gamma,\Delta)\in W_c\mid\phi\in\Gamma\}$.
			
			\item $Y = \{(\Gamma,\Delta)\in W_c\mid\sim\phi\in\Gamma\}$
			
			\item $\{\psi\mid\phi\boxto\psi\in \Gamma_0\}\subseteq \Gamma_1$.
			
			\item $\{\sim(\phi\boxto\psi)\mid\sim\psi\in \Gamma_1\}\subseteq \Gamma_0$.
		\end{itemize}
		
		\item $V^+_c(p):=\{(\Gamma,\Delta)\in W_c\mid p\in\Gamma\}$ for every $p^1 \in Prop$.
		
		\item $V^-_c(p):=\{(\Gamma,\Delta)\in W_c\mid \sim p\in\Gamma\}$ for every $p^1 \in Prop$.	
	\end{itemize}
\end{definition}
It follows from \cite[Proposition 7]{nelsonian} that:
\begin{proposition}\label{P:truth-lemma}
	For every  $\phi\in\mathcal{CN}$, the following statements are true:
	\begin{enumerate}
		\item $\mathcal{M}_c \in \mathbb{NC}$.
		
		\item For every $(\Gamma, \Delta)\in W_c$, we have:
		\begin{enumerate}
			\item $
			\mathcal{M}_c,(\Gamma,\Delta)\models^+\phi$ iff $\phi \in \Gamma$.
			
			\item $
			\mathcal{M}_c,(\Gamma,\Delta)\models^-\phi$ iff $\sim\phi \in \Gamma$. 
		\end{enumerate}
	\end{enumerate}
\end{proposition}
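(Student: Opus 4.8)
The plan is to follow the standard canonical-model route: establish a Lindenbaum lemma, read off the elementary closure properties of maximal pairs, and then prove the two truth-lemma clauses (2a) and (2b) by a \emph{single simultaneous} induction on the construction of $\phi$, handling Part 1 separately as a structural check on $\mathcal{M}_c$. First I would prove the Lindenbaum lemma: whenever $\Gamma\not\vdash\Delta$ there is a maximal $(\Gamma',\Delta')$ with $\Gamma\subseteq\Gamma'$ and $\Delta\subseteq\Delta'$; the usual enumeration-and-extension argument works because $\vdash$ only ever appeals to finite disjunctions of right-hand-side formulas. From maximality I would extract the facts that drive every non-conditional case: $\Gamma$ and $\Delta$ partition $\mathcal{CN}$, $\Gamma$ is closed under single-formula consequence, $\Gamma$ is prime ($\phi\vee\psi\in\Gamma$ iff $\phi\in\Gamma$ or $\psi\in\Gamma$) and closed under conjunction. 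The clauses for $\sim$ in front of $\wedge,\vee,\to$ are then immediate from \eqref{E:a1}--\eqref{E:a4} together with primeness and closure, so the atomic, Boolean, and $\sim$-Boolean cases of the induction are routine; the persistence built into $\leq_c$ matches Lemma \ref{L:n4ck-monotonicity}.

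The heart of the argument is the conditional case: proving, under the inductive hypothesis for $\psi$ and $\chi$, that $\psi\boxto\chi\in\Gamma$ iff $\mathcal{M}_c,(\Gamma,\Delta)\models^+\psi\boxto\chi$, and the dual for $\sim(\psi\boxto\chi)$ and $\models^-$. By the IH for $\psi$, the bi-set $\|\psi\|_{\mathcal{M}_c}$ is exactly $(\{(\Gamma',\Delta')\mid\psi\in\Gamma'\},\{(\Gamma',\Delta')\mid\sim\psi\in\Gamma'\})$, so the relevant relation is $R_c$ taken at that bi-set. A preliminary witness-independence lemma is needed here: if some formula $\phi$ has the same positive and negative truth-sets as $\psi$ in $\mathcal{M}_c$, then $\vdash\phi\Leftrightarrow\psi$. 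This follows from Lindenbaum and \eqref{R:DT} alone (equal positive truth-sets force $\vdash\phi\leftrightarrow\psi$, equal negative truth-sets force $\vdash\sim\phi\leftrightarrow\sim\psi$), so that \eqref{E:RAbox} lets me pass freely between $\psi$ and \emph{any} witness of an $R_c$-edge at $\|\psi\|_{\mathcal{M}_c}$. The easy halves are then bookkeeping: for the positive left-to-right direction I take any $\mathbf{u}=(\Gamma_1,\Delta_1)$ with $R_c(\cdot,\|\psi\|_{\mathcal{M}_c},\mathbf{u})$ witnessed by some $\phi$, use \eqref{E:RAbox} to turn $\psi\boxto\chi\in\Gamma$ into $\phi\boxto\chi\in\Gamma$, read off $\chi\in\Gamma_1$ from the third clause of the $R_c$-definition, and apply the IH; the negative right-to-left direction uses the fourth clause symmetrically.

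The genuinely hard step is the existence lemma that supplies the witnessing worlds for the remaining directions. For the positive right-to-left direction I must show that $\psi\boxto\chi\notin\Gamma$ produces a maximal $(\Gamma_1,\Delta_1)$ with $\{\alpha\mid\psi\boxto\alpha\in\Gamma\}\subseteq\Gamma_1$, with $\chi\in\Delta_1$, and respecting the negative coherence clause $\{\sim(\psi\boxto\alpha)\mid\sim\alpha\in\Gamma_1\}\subseteq\Gamma$; by Lindenbaum this reduces to a single non-derivability statement, and it is precisely here that the conditional axioms do their work. I expect to use \eqref{E:ax1} (with \eqref{E:RCbox1}) to collapse finitely many conditionals in $\Gamma$ into one and to obtain monotonicity of $\boxto$ in its consequent, and \eqref{E:ax2}, \eqref{E:ax3}, \eqref{E:ax4} (with \eqref{E:RCbox2}) to control the interaction between the positive demand on $\Gamma_1$ and the negative side-condition recorded in $\Gamma$. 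The dual existence lemma for $\sim(\psi\boxto\chi)\in\Gamma$ forces $\sim\chi$ into some $R_c$-related $\Gamma_1$ and is where \eqref{E:ax2} and \eqref{E:ax3} appear indispensable. Orchestrating these derivations into the two required consistency claims --- while taking care that, since $\mathsf{N4CK}$ is paraconsistent, the simultaneous presence of a formula and its strong negation never forces an unwanted collapse --- is the main obstacle; everything else is routine.

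Finally, for Part 1 I would check that $\mathcal{M}_c\in\mathbb{NC}$. That $\leq_c$ is a preorder and that $V^+_c,V^-_c$ take values in $\mathcal{P}^\leq(W_c)$ is immediate from $\subseteq$, and $W_c\neq\emptyset$ follows from consistency of $\mathfrak{N4CK}$ via Lindenbaum. The only real content is verifying the confluence conditions \eqref{Cond:1} and \eqref{Cond:2} for $R_c$: each unfolds, as in Figure \ref{Fig:completion-patterns}, to completing the dotted part of a square once its straight-line part is given. Keeping the same witnessing $\phi$ and running a smaller instance of the existence-lemma argument --- now driven by monotonicity of the sets $\{\alpha\mid\phi\boxto\alpha\in\Gamma_0\}$ along $\leq_c$ together with the congruence rules \eqref{E:RAbox}--\eqref{E:RCbox2} --- produces the missing world. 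I expect this step to be technical but strictly easier than the main existence lemma, and it does not rely on the truth lemma, so it can be discharged independently first.
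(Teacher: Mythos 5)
You should first note what the comparison target actually is: the paper does not prove Proposition \ref{P:truth-lemma} at all, but imports it wholesale from Proposition~7 of \cite{nelsonian}, remarking only that the method there is ``the rather standard method of constructing a universal model.'' Your strategy --- Lindenbaum lemma, closure/primeness of maximal pairs, a simultaneous induction for clauses 2(a) and 2(b), witness-independence of $R_c$-edges via Lindenbaum plus \eqref{E:RAbox}, and a separate structural check of \eqref{Cond:1}/\eqref{Cond:2} for Part 1 --- is exactly that standard route, and those ingredients of your outline are sound as far as they go.

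However, the one step you isolate as ``the heart of the argument'' is false as you state it. Your existence lemma claims that $\psi\boxto\chi\notin\Gamma$ yields a maximal $(\Gamma_1,\Delta_1)$ with $\{\alpha\mid\psi\boxto\alpha\in\Gamma\}\subseteq\Gamma_1$, $\chi\in\Delta_1$, and $\{\sim(\psi\boxto\alpha)\mid\sim\alpha\in\Gamma_1\}\subseteq\Gamma$, i.e.\ an $R_c$-successor of $(\Gamma,\Delta)$ \emph{itself} refuting $\chi$, and that this ``reduces to a single non-derivability statement.'' That statement is not a theorem of $\mathsf{N4CK}$. Concretely, $\{p\boxto(q\vee\sim r)\}\not\models_{\mathsf{N4CK}}\{(p\boxto q)\vee\sim(p\boxto r)\}$: take $W=\{\mathbf{w},\mathbf{v},\mathbf{u}\}$ with $\leq$ the preorder generated by $\mathbf{w}\leq\mathbf{v}$, let the only accessibility edge (for every index bi-set) run from $\mathbf{v}$ to $\mathbf{u}$, and let $\mathbf{u}$ falsify $r$ and not verify $q$ (valuations are trivially persistent, and \eqref{Cond:1}, \eqref{Cond:2} are easily checked). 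Then $\mathbf{w}$ verifies $p\boxto(q\vee\sim r)$, fails to verify $p\boxto q$ (the failure is witnessed at $\mathbf{v}$), and fails to verify $\sim(p\boxto r)$ because $\mathbf{w}$ has no $R$-successors of its own. By completeness and Lindenbaum there is therefore a maximal $(\Gamma,\Delta)\in W_c$ with $p\boxto(q\vee\sim r)\in\Gamma$, $p\boxto q\notin\Gamma$, and $\sim(p\boxto r)\notin\Gamma$; for this $\Gamma$ your pair-to-be-extended is already inconsistent, since $q\vee\sim r$ lies in its left component while both $q$ and $\sim r$ lie in its right component. (Indeed, one can check that every $R_c$-successor of this $(\Gamma,\Delta)$ at $\|p\|_{\mathcal{M}_c}$ must contain $q$, by primeness and the fourth clause of the definition of $R_c$.) What your plan overlooks is the prefix $(\forall\mathbf{v}\geq\mathbf{w})$ in the satisfaction clause for $\models^+$ of $\boxto$ (read, as intended and as in \cite{nelsonian}, with $R_{\|\psi\|_\mathcal{M}}(\mathbf{v},\mathbf{u})$): refuting $\psi\boxto\chi$ at $(\Gamma,\Delta)$ only requires an $R_c$-successor of \emph{some} $\leq_c$-extension of $(\Gamma,\Delta)$. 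The correct argument is two-staged: first extend $\Gamma$ to a maximal $\Gamma'$ that settles the negated conditionals $\sim(\psi\boxto\beta)$ favourably (in the example, adds $\sim(p\boxto r)$) while keeping $\psi\boxto\chi$ out --- this is where \eqref{E:ax2} and \eqref{E:ax3} do their real work --- and only then run your successor construction from $(\Gamma',\Delta')$. Without this stage the induction collapses at the conditional case. Your negative-direction existence lemma is correctly targeted (the clause for $\models^-$ is existential over successors of $(\Gamma,\Delta)$ itself), but your Part 1 transfer lemmas need the same care.
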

The main result of this paper is that an analogue of Proposition \ref{P:ck-into-cl} can be proven for $\mathsf{N4CK}$ and $\mathsf{QN4}$, respectively, which considerably strengthens the claim that $\mathsf{N4CK}$ is the correct minimal conditional logic on the Nelsonian propositional basis. In order to do that, we first need to define the right classical equivalents for  $Th_{ck}$ and $st_x$, respectively. Since these equivalents are at least syntactically different from  $Th_{ck}$ and $st_x$, we will introduce for them separate notations, namely, $Th$ and $ST_x$.

As for the first component in this pair, we assume that $Th \subseteq \mathcal{FO}^\emptyset$ contains all and only the following first-order sentences:
\begin{align}
	&\forall x\forall y(Sx\wedge Sy \wedge (\forall z)_O(Ezx\Leftrightarrow Ezy) \to x \equiv y)\label{E:th5}\tag{Th1}\\
	&\exists x(Sx\wedge (\forall y)_O(Eyx\Leftrightarrow py))\qquad\qquad\qquad\qquad\qquad\qquad\qquad\qquad(p^1\in Prop)\label{E:th6}\tag{Th2}\\
	&\forall x(Sx\rightarrow\exists y(Sy\wedge (\forall z)_O(Ezy\Leftrightarrow \sim Ezx))\label{E:th7}\tag{Th3}\\
	&\forall x\forall y((Sx\wedge Sy)\rightarrow\exists z(Sz\wedge (\forall w)_O(Ewz\Leftrightarrow (Ewx\ast Ewy))))\qquad(\ast\in\{\wedge, \to\})\label{E:th8}\tag{Th4}\\
	&\forall x\forall y((Sx\wedge Sy)\rightarrow\exists z(Sz\wedge (\forall w)_O(Ewz\Leftrightarrow \forall u(Rwxu\to Euy))))\label{E:th9}\tag{Th5}
\end{align}
It is easy to see that $Th$ is classically equivalent to $Th_{ck}$. The only difference between the two theories is the replacement of equivalences encoding the set extensions in $Th_{ck}$ with the strong equivalences which allows for a replacement of a definable set with its definition in $\mathsf{QN4}$. In view of the considerations given in Appendix \ref{App:ampersand}, one could also argue for replacing conjunctions with ampersands in \eqref{E:th6}--\eqref{E:th9}; however, this is not necessary, since $Th$ only imposes the truth of its components and never says anything about their falsity conditions. We have seen, however, that the differences between $\wedge$ and $\&$ only become apparent when the treatment of the falsity component of a restricted existential quantification comes into question. Therefore, the distinction between $\wedge$ and $\&$ can no longer be overlooked when it comes to the definition of the standard translation which is supposed to faithfully represent both the truth and the falsity conditions of a translated formula.  As a result, we obtain, for every given $x \in Ind$, the standard translation $ST_x:\mathcal{CN}\to \mathcal{FO}^x$ defined by the following induction on the construction of a $\phi \in \mathcal{CN}$ (assuming the same choice of $x,y,z,w$ as in $st_x$):
\begin{align*}
	ST_x(p)&:= p(x)&&p^1 \in Prop\\
	ST_x(\sim\psi)&:= \sim ST_x(\psi)\\
	ST_x(\psi\ast\chi)&:= ST_x(\psi)\ast ST_x(\chi)&&\ast\in \{\wedge, \vee, \to\}\\
	ST_x(\psi\boxto\chi) &:= \exists y(Sy \wedge (\forall z)_O (Ezy\Leftrightarrow ST_z(\psi))\,\&\,\forall w(Rxyw\to ST_{w}(\chi)))
\end{align*}
The following proposition provides for the `easy' direction of the faithfulness claim:
\begin{proposition}\label{P:easy}
	Let $\Gamma, \Delta \subseteq \mathcal{CN}$, and let $x \in Ind$. If $\Gamma\models_{\mathsf{N4CK}} \Delta$, then $Th\cup ST_x(\Gamma)\models_{\mathsf{QN4}} ST_x(\Delta)$.
\end{proposition}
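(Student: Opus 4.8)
The plan is to prove the proof-theoretic form of the statement. Since $\mathsf{N4CK} = \mathfrak{N4CK}(\mathcal{CN})$ and, by definition, $\mathsf{QN4} = \mathfrak{QN4}(\mathcal{FO})$, the claim reduces to showing that $ST_x$ transforms any $\mathfrak{N4CK}$-derivation of $\Delta$ from $\Gamma$ into a $\mathsf{QN4}$-derivation of $ST_x(\Delta)$ from $Th \cup ST_x(\Gamma)$. The crucial structural fact is that $ST_x$ commutes with $\sim$, $\wedge$, $\vee$, and $\to$ by definition; consequently it sends the propositional instances of the $\mathfrak{IL}^+$-axioms and of the Nelson negation axioms \eqref{E:a1}--\eqref{E:a4} (the only base axioms with instances in the quantifier-free $\mathcal{CN}$) to instances of the same schemes in $\mathsf{QN4}$, and it commutes with \eqref{E:mp}. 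It also distributes over the disjunction $\theta_1 \vee \cdots \vee \theta_s$ terminating a consecution-derivation, so that $ST_x$ of this disjunction is a disjunction of members of $ST_x(\Delta)$, hence a legitimate witness on the $\mathsf{QN4}$-side. Thus everything reduces to verifying that the genuinely conditional ingredients of $\mathfrak{N4CK}$ survive translation: the axioms \eqref{E:ax1}--\eqref{E:ax4} must become $\mathsf{QN4}$-consequences of $Th$, and the rules \eqref{E:RAbox}, \eqref{E:RCbox1}, \eqref{E:RCbox2} must be admissible after translation.

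For the axioms I would argue semantically, which is legitimate by the adequacy result Proposition \ref{P:nelsonian-sheaves}: it suffices to check that every Nelsonian sheaf point $(\mathcal{S}, \mathbf{w}, f)$ with $(\mathcal{S},\mathbf{w},f) \models^+_n Th$ also satisfies $ST_x(\alpha)$. The extensionality axiom \eqref{E:th5} together with the comprehension axioms \eqref{E:th6}--\eqref{E:th9} (which, by an easy induction on $\phi$, provide for every $\phi \in \mathcal{CN}$ a set whose $E$-extension is $\{z \mid ST_z(\phi)\}$) guarantee that at each node there is a unique-up-to-$\equiv$ code $y_\phi$ with $(\forall z)_O(Ezy_\phi \Leftrightarrow ST_z(\phi))$; Corollary \ref{C:set-encoding} tells us precisely what this code means. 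Feeding this into the clause $ST_x(\psi\boxto\chi) = \exists y(Sy \wedge (\forall z)_O(Ezy\Leftrightarrow ST_z(\psi)) \,\&\, \forall w(Rxyw \to ST_w(\chi)))$ and computing the ampersand's truth and falsity behaviour via \eqref{E:T15}--\eqref{E:T16}, the positive satisfaction of $ST_x(\psi\boxto\chi)$ collapses to ``every $R_{y_\psi}$-accessible world verifies $\chi$'' and its falsity to ``some $R_{y_\psi}$-accessible world falsifies $\chi$''. These mirror exactly the $\mathsf{N4CK}$-clauses for $\boxto$, whence \eqref{E:ax1}--\eqref{E:ax4} reduce to the elementary facts that make them valid over $\mathbb{NC}$ (distribution of $\forall$ over $\wedge$ and of $\exists$ over $\vee$, plus the truth/falsity interplay needed for \eqref{E:ax2} and \eqref{E:ax3}).

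The rules would be handled by the substitution machinery. For \eqref{E:RAbox} the antecedent $\psi$ enters $ST_x(\psi\boxto\chi)$ only through the code formula, and a translated premise $Th \models_{\mathsf{QN4}} ST_z(\psi)\Leftrightarrow ST_z(\psi')$ forces, via \eqref{E:T17}--\eqref{E:T18} (Lemma \ref{L:substitution} and Corollary \ref{C:substitution}) and extensionality \eqref{E:th5}, that $\psi$ and $\psi'$ receive the same code, so the two translations come out strongly equivalent. For \eqref{E:RCbox1} and \eqref{E:RCbox2} the consequent $\phi$ enters only as $ST_w(\phi)$ inside $\forall w(Rxyw\to ST_w(\phi))$ (governing truth) and, under $\sim$, as $\sim ST_w(\phi)$ (governing falsity); replacing $\phi$ by a weakly equivalent formula preserves the former and replacing it by a negatively equivalent one preserves the latter, which is exactly why the single $\mathsf{N4CK}$-congruence here splits into two weaker rules. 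A routine induction on $\mathfrak{N4CK}$-proofs then upgrades these checks to ``$ST_x$ of every $\mathfrak{N4CK}$-theorem is a $Th$-consequence in $\mathsf{QN4}$'', and a second induction along the consecution-derivation (now using only \eqref{E:mp}, the formulas of $ST_x(\Gamma)$, and the already-translated theorems) yields the proposition.

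The step I expect to be the main obstacle is the falsity bookkeeping in the axiom verification, and this is where the design choices of $ST_x$ and $Th$ earn their keep. First, the ampersand rather than a plain conjunction in $ST_x(\psi\boxto\chi)$ is indispensable: by \eqref{E:T16} the negation of a $C \,\&\, D$ is $C \to \sim D$ rather than $\sim C \vee \sim D$, so the falsity of the translated conditional is driven solely by the falsity of the consequent part on nodes where the code part $C$ actually holds, and is not spuriously triggered by the anti-extensions of $S$, $O$, $E$, $R$, about which $Th$ is silent. Second, and more subtly, the $\exists$-flavoured falsity clause must be shown persistent on the sheaf side even though $Th$ contains no analogue of the frame conditions \eqref{Cond:1}--\eqref{Cond:2}; here I would use that in a Nelsonian sheaf the canonical homomorphisms preserve the interpretation of $R$ and, by \eqref{E:th5}, carry $y_\psi$ to the code at every later node, while both $\models^+_n$ and $\models^-_n$ are monotone (Lemma \ref{L:n4-standard}, cf.\ Lemma \ref{L:n4ck-monotonicity}). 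Hence a single $R_{y_\psi}$-accessible falsifier of $\chi$ at a node propagates, together with its accessibility, to all later nodes, so the persistent reading of the falsity clause coincides with its local reading and $\exists$ distributes correctly over the disjunctions in \eqref{E:ax1}--\eqref{E:ax3}. Establishing this coincidence uniformly for all four axioms is the crux of the argument.
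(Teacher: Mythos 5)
Your proposal is correct in outline, but it takes a genuinely different route from the paper's. The paper argues by contraposition and pure model transfer: from a Nelsonian-sheaf countermodel of the translated consecution (available by Proposition \ref{P:nelsonian-sheaves}) it builds an induced conditional model $\mathcal{M}_\mathcal{S}$ (Definition \ref{D:n4-mod-n4ck}), verifies the frame conditions \eqref{Cond:1}--\eqref{Cond:2} (Lemma \ref{L:n4-mod-n4ck1}), and proves a full truth/falsity preservation lemma by induction on conditional formulas (Lemma \ref{L:n4-mod-n4ck2}); no property of the Hilbert system $\mathfrak{N4CK}$ is ever used in this direction. You instead go through the axiomatization: you import the completeness theorem $\mathsf{N4CK}=\mathfrak{N4CK}(\mathcal{CN})$ from \cite{nelsonian} to replace $\Gamma\models_{\mathsf{N4CK}}\Delta$ by a derivation, then push $ST_x$ through it, checking the translated axioms \eqref{E:ax1}--\eqref{E:ax4} semantically over $Th$-sheaves (legitimate by Proposition \ref{P:nelsonian-sheaves}) and the rules \eqref{E:RAbox}, \eqref{E:RCbox1}, \eqref{E:RCbox2} via the substitution machinery and extensionality \eqref{E:th5}. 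What your route buys is modularity: finitely many schemes and three rules, each a local verification. What it costs is the reliance on $\mathfrak{N4CK}$-completeness, a heavy external result the paper's proof of this half does not need; and in the end little labor is saved, since your axiom checks still require the comprehension fact (the paper's Lemma \ref{L:n4-comprehension}), the collapse of $ST_x(\psi\boxto\chi)$ to $\forall w(Rxyw\to ST_w(\chi))$ at the unique code for $\psi$, and the falsity-propagation argument, which are exactly the ingredients of Case 5 of Lemma \ref{L:n4-mod-n4ck2}.

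Two points of precision, neither fatal. First, the ``easy induction'' producing codes for all $ST_z(\phi)$ is the paper's Lemma \ref{L:n4-comprehension}, and its $\boxto$ case is genuinely involved (it needs \eqref{E:th5}, \eqref{E:th9}, \eqref{E:T18}, and the collapse equivalence, cf.\ Appendix \ref{App:n4-comprehension}). Second, in handling \eqref{E:RAbox} state the inductive hypothesis for all translation variables simultaneously and universally close the derived equivalence $ST_z(\phi)\Leftrightarrow ST_z(\psi)$ (via \eqref{R:Gen}) before substituting it under $(\forall z)_O(Ezy\Leftrightarrow\cdot)$: the implicational forms \eqref{E:T14}/\eqref{E:T17} are not sound when a bound variable of the template captures a free variable of the substituted formulas, whereas the closed-premise form \eqref{E:T18} is designed for exactly this situation. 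Your identification of the crux --- the ampersand's falsity behaviour and the coincidence of the persistent and local readings of the existential falsity clause, secured by monotonicity, preservation of $R$ under the canonical homomorphisms, and uniqueness of codes --- is accurate; it is precisely what the chain \eqref{E:hyp1}--\eqref{E:hyp19} establishes in the paper.
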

Its proof is analogous to the proof of the corresponding half of Proposition \ref{P:ck-into-cl} given in the previous section. We start by proving the following technical lemmas:
\begin{lemma}\label{L:n4-comprehension}
	For every $\phi \in \mathcal{CN}$, we have $Th\models_{\mathsf{QN4}}\exists y(Sy\wedge (\forall x)_O(Exy \Leftrightarrow ST_x(\phi)))$.	
\end{lemma}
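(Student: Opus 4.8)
The plan is to prove the statement by induction on the construction of $\phi\in\mathcal{CN}$, at each stage using the comprehension axiom of $Th$ appropriate to the outermost connective of $\phi$ to obtain the witnessing set, and then reconciling the membership condition that this axiom delivers with $ST_x(\phi)$ by means of the strong-equivalence calculus of Lemma~\ref{L:derived-connectives} together with the substitution Lemma~\ref{L:substitution}. For the basis $\phi=p$ with $p^1\in Prop$ the required set is produced verbatim by \eqref{E:th6}, since $ST_x(p)=p(x)$.

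For the induction step, suppose first that $\phi$ is Boolean. If $\phi=\sim\psi$, the inductive hypothesis yields a set $a$ with $Sa$ and $(\forall z)_O(Eza\Leftrightarrow ST_z(\psi))$; feeding $a$ into \eqref{E:th7} produces a set whose membership is strongly equivalent to $\sim Eza$, hence, by transitivity \eqref{E:T7} of $\Leftrightarrow$ together with the congruence \eqref{E:T4} of $\sim$, to $\sim ST_z(\psi)=ST_z(\sim\psi)$. The cases $\phi=\psi\wedge\chi$ and $\phi=\psi\to\chi$ are analogous: one takes the two sets supplied by the inductive hypothesis, applies the relevant instance of \eqref{E:th8}, and then uses \eqref{E:T7} and the congruence \eqref{E:T13} of $\wedge$ (resp.\ $\to$) with respect to $\Leftrightarrow$. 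The case $\phi=\psi\vee\chi$ does not correspond to an axiom of $Th$, but it reduces to the previous two: by \eqref{E:T8}, \eqref{E:T10} and \eqref{E:T13} (cf.\ Corollary~\ref{C:disjunstion}) the translation $ST_x(\psi)\vee ST_x(\chi)$ is strongly equivalent to $\sim(\sim ST_x(\psi)\wedge\sim ST_x(\chi))$, for which the $\sim$- and $\wedge$-cases already furnish a set. Throughout, the substitution Lemma~\ref{L:substitution} (via \eqref{E:T17}) is what legitimises replacing a subformula by a strongly equivalent one inside the membership condition.

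The decisive case is $\phi=\psi\boxto\chi$. Here the inductive hypothesis provides sets $a$ and $b$ with $Sa$, $Sb$, $(\forall z)_O(Eza\Leftrightarrow ST_z(\psi))$ and $(\forall w)_O(Ewb\Leftrightarrow ST_w(\chi))$. Instantiating \eqref{E:th9} at $a,b$ yields a set $c$ with $Sc$ and $(\forall x)_O(Exc\Leftrightarrow\forall u(Rxau\to Eub))$. I would then claim that $c$ witnesses the lemma for $\psi\boxto\chi$, so that, after fixing an object $x$ and applying transitivity \eqref{E:T7}, the whole matter reduces to the single strong equivalence
$$\forall u(Rxau\to Eub)\ \Leftrightarrow\ \exists y\bigl(Sy\wedge(\forall z)_O(Ezy\Leftrightarrow ST_z(\psi))\,\&\,\forall w(Rxyw\to ST_w(\chi))\bigr),$$
whose right-hand side is exactly $ST_x(\psi\boxto\chi)$.

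Establishing this last equivalence is the hard part, and it is where the specifically Nelsonian features concentrate. In the left-to-right direction I would take $a$ itself as the existential witness $y$, so that the antecedent clause $(\forall z)_O(Ezy\Leftrightarrow ST_z(\psi))$ holds by the choice of $a$, while the succedent clause $\forall w(Rxaw\to ST_w(\chi))$ is obtained from $\forall u(Rxau\to Eub)$ by replacing $Eub$ with $ST_u(\chi)$ along the successor relation, as licensed by the inductive hypothesis for $\chi$; conversely, extensionality \eqref{E:th5} forces any witness $y$ of the right-hand side to satisfy $y\equiv a$ (both $y$ and $a$ collect exactly the objects verifying $ST_z(\psi)$), after which the same replacement runs backwards. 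Two points require genuine care and I expect them to be the main obstacle. First, the translation glues its two comprehension clauses with the ampersand $\&$ rather than with $\wedge$, so the equivalence must be checked for falsification as well as for verification: on the negative side one unfolds $\sim(\cdot\,\&\,\cdot)$ by \eqref{E:T16}, pushes $\sim$ past the quantifier and implication as in \eqref{E:T11}, \eqref{E:T12} and \eqref{E:a4}, and matches the result against the negative clause of the left-hand universal formula, with \eqref{E:T15} and \eqref{E:T16} doing the essential bookkeeping. Second, pushing the strong equivalence through the existential quantifier $\exists y$ is legitimate only because extensionality collapses every admissible witness to the canonical set $a$; this collapse, together with the substitution Corollary~\ref{C:substitution} (via \eqref{E:T18}) used to move $ST_w(\chi)$ and $Ewb$ past one another inside the succedent universal, is the technical heart of the argument.
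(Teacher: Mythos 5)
Your proposal is correct and follows essentially the same route as the paper's proof: the same induction with \eqref{E:th6} for atoms, \eqref{E:th7}/\eqref{E:th8} plus the strong-equivalence and substitution machinery (\eqref{E:T17}, \eqref{E:T18}) for the Boolean cases, Corollary \ref{C:disjunstion} for disjunction, and for $\boxto$ the application of \eqref{E:th9} combined with the key strong equivalence between $ST_x(\psi\boxto\chi)$ and $\forall w(Rxy'w\to ST_w(\chi))$ relative to the comprehension set for $\psi$, established exactly as in the paper via the extensionality collapse \eqref{E:th5} of existential witnesses and the ampersand falsity bookkeeping with \eqref{E:T15}, \eqref{E:T16}, \eqref{E:a5}. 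The only difference is one of presentation (you apply \eqref{E:th9} before proving the equivalence, the paper after, and the paper carries out the formal Hilbert-style discharge of premises in detail), not of mathematical substance.
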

The proof of the Lemma is relegated to Appendix \ref{App:n4-comprehension}.

\begin{definition}\label{D:n4-mod-n4ck}
	Given an $\mathcal{S}\in \mathbb{N}4$, we define $\mathcal{M}_{\mathcal{S}}$ as follows:
	\begin{enumerate}
		\item $W_{\mathcal{S}} := \{(\mathbf{w}, a)\mid \mathbf{w} \in W,\,a\in U_\mathbf{w}\}$.
		
		\item $(\mathbf{w},a)\leq_{\mathcal{S}}(\mathbf{v},b)$ iff $\mathbf{w}\leq \mathbf{v}$ and $\mathtt{H}_{\mathbf{w}\mathbf{v}}(a) = b$.
		
		\item $R_{\mathcal{S}}((\mathbf{w},a), (X, Y), (\mathbf{v},b))$ iff $\mathbf{w} = \mathbf{v}$ and 
		\begin{align*}
				(\exists c \in S^{\mathtt{M}^+_\mathbf{w}})&(\forall \mathbf{u} \geq \mathbf{w})(\forall d \in O^{\mathtt{M}^+_\mathbf{u}})(((\mathbf{u},d)\in X\text{ iff }E^{\mathtt{M}^+_\mathbf{u}}(d,\mathtt{H}_{\mathbf{w}\mathbf{u}}(c)))\\
				&\text{ and }((\mathbf{u},d)\in Y\text{ iff }E^{\mathtt{M}^-_\mathbf{u}}(d,\mathtt{H}_{\mathbf{w}\mathbf{u}}(c)))\text{ and }R^{\mathtt{M}^+_\mathbf{w}}(a,c,b)).
		\end{align*}
	\item For every $p^1 \in Prop$, $V^+_\mathcal{S}(p) := \{(\mathbf{w},a)\in W_\mathcal{S}\mid a \in p^{\mathtt{M}^+_\mathbf{w}}\}$ and $V^-_\mathcal{S}(p) := \{(\mathbf{w},a)\in W_\mathcal{S}\mid a \in p^{\mathtt{M}^-_\mathbf{w}}\}$.
	\end{enumerate}
\end{definition}
In order to avoid the clutter, we introduce the notation $\Xi((\mathbf{w},c), (X,Y))$ for  the main bi-conditional in Definition \ref{D:n4-mod-n4ck}.3; this part of the definition can then be re-written as
$$
R_{\mathcal{S}}((\mathbf{w},a), X, Y, (\mathbf{v},b))\text{ iff }\mathbf{w} = \mathbf{v}\text{ and }(\exists c \in S^{\mathtt{M}^+_\mathbf{w}})(\Xi((\mathbf{w},c), (X,Y))\text{ and }R^{\mathtt{M}^+_\mathbf{w}}(a,c,b)).
$$
\begin{lemma}\label{L:n4-mod-n4ck1}
	Let $\mathcal{S}\in \mathbb{N}4$ be such that $\mathcal{S}\models^+_{n} Th$, and let $\mathcal{M}_{\mathcal{S}}$ be given according to Definition \ref{D:n4-mod-n4ck}. Then  $\mathcal{M}_{\mathcal{S}}\in \mathbb{NC}$.
\end{lemma}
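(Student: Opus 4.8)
The plan is to verify, one clause at a time, the four requirements of Definition \ref{D:model} for $\mathcal{M}_{\mathcal{S}}$. Clauses 1--3 are essentially bookkeeping. Since $W\neq\emptyset$ and every domain $U_\mathbf{w}$ is non-empty, $W_\mathcal{S}\neq\emptyset$, giving clause 1. For clause 2, reflexivity of $\leq_\mathcal{S}$ follows from reflexivity of $\leq$ together with $\mathtt{H}_{\mathbf{w}\mathbf{w}} = id[U^{\mathtt{M}^+_\mathbf{w}}]$ (Definition \ref{D:intuitionistic-sheaf}.4(a)), while transitivity follows from transitivity of $\leq$ and the composition law $\mathtt{H}_{\mathbf{w}\mathbf{v}}\circ\mathtt{H}_{\mathbf{v}\mathbf{u}} = \mathtt{H}_{\mathbf{w}\mathbf{u}}$ (Definition \ref{D:intuitionistic-sheaf}.4(b)): if $\mathtt{H}_{\mathbf{w}\mathbf{v}}(a) = b$ and $\mathtt{H}_{\mathbf{v}\mathbf{u}}(b) = d$, then, reading $\circ$ in the paper's convention, $\mathtt{H}_{\mathbf{w}\mathbf{u}}(a) = \mathtt{H}_{\mathbf{v}\mathbf{u}}(\mathtt{H}_{\mathbf{w}\mathbf{v}}(a)) = d$. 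For clause 3 I must check that each $V^\ast_\mathcal{S}(p)$ is upward closed; this is immediate from the fact that every $\mathtt{H}_{\mathbf{w}\mathbf{v}}$ is a homomorphism $\mathtt{M}^\ast_\mathbf{w}\to\mathtt{M}^\ast_\mathbf{v}$ (recall $\mathcal{S}^+\in\mathbb{I}(\Pi)$ and $\mathcal{S}^-\in\mathbb{I}(\Pi\cup\{\epsilon^2\})$ by Definition \ref{D:nelsonian-sheaf}), so that $a\in p^{\mathtt{M}^\ast_\mathbf{w}}$ and $(\mathbf{w},a)\leq_\mathcal{S}(\mathbf{v},b)$ force $b = \mathtt{H}_{\mathbf{w}\mathbf{v}}(a)\in p^{\mathtt{M}^\ast_\mathbf{v}}$.

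The real content is clause 4, i.e. conditions \eqref{Cond:1} and \eqref{Cond:2}, which I read off the two diagrams of Figure \ref{Fig:completion-patterns}. In each diagram the two $R_{(X,Y)}$-edges share the first coordinate of their endpoints, because $R_\mathcal{S}$ keeps the world fixed, while the two $\leq_\mathcal{S}$-edges pass from a world $\mathbf{w}$ to a world $\mathbf{w}'\geq\mathbf{w}$. The uniform strategy is this: whenever $R_\mathcal{S}((\mathbf{w},a),(X,Y),(\mathbf{w},b))$ holds, witnessed by some $c\in S^{\mathtt{M}^+_\mathbf{w}}$ with $\Xi((\mathbf{w},c),(X,Y))$ and $R^{\mathtt{M}^+_\mathbf{w}}(a,c,b)$, and I must produce an $R_\mathcal{S}$-edge at $\mathbf{w}'$, I push the witness forward and set $c' := \mathtt{H}_{\mathbf{w}\mathbf{w}'}(c)$. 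Because $\mathtt{H}_{\mathbf{w}\mathbf{w}'}$ is a homomorphism $\mathtt{M}^+_\mathbf{w}\to\mathtt{M}^+_{\mathbf{w}'}$ preserving the $\Pi$-predicates $S^1$ and $R^3$, the membership $c\in S^{\mathtt{M}^+_\mathbf{w}}$ yields $c'\in S^{\mathtt{M}^+_{\mathbf{w}'}}$ and $R^{\mathtt{M}^+_\mathbf{w}}(a,c,b)$ yields $R^{\mathtt{M}^+_{\mathbf{w}'}}(\mathtt{H}_{\mathbf{w}\mathbf{w}'}(a),c',\mathtt{H}_{\mathbf{w}\mathbf{w}'}(b))$; these supply exactly the $S$- and $R^{\mathtt{M}^+}$-conjuncts of the required edge, with $\mathtt{H}_{\mathbf{w}\mathbf{w}'}(a)$ and $\mathtt{H}_{\mathbf{w}\mathbf{w}'}(b)$ as the new individuals, which is precisely what the accompanying $\leq_\mathcal{S}$-edges demand.

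It remains to transport the coding condition, and this is the one step deserving care: I must derive $\Xi((\mathbf{w}',c'),(X,Y))$ from $\Xi((\mathbf{w},c),(X,Y))$. Fixing $\mathbf{u}\geq\mathbf{w}'$ and $d\in O^{\mathtt{M}^+_\mathbf{u}}$, we have $\mathbf{u}\geq\mathbf{w}$, and the composition law gives
$$
\mathtt{H}_{\mathbf{w}'\mathbf{u}}(c') = \mathtt{H}_{\mathbf{w}'\mathbf{u}}(\mathtt{H}_{\mathbf{w}\mathbf{w}'}(c)) = \mathtt{H}_{\mathbf{w}\mathbf{u}}(c).
$$
Hence the two biconditionals constituting $\Xi((\mathbf{w}',c'),(X,Y))$ at $(\mathbf{u},d)$ coincide verbatim with those of $\Xi((\mathbf{w},c),(X,Y))$ at $(\mathbf{u},d)$, and the latter hold because $\Xi((\mathbf{w},c),(X,Y))$ quantifies over \emph{all} $\mathbf{u}\geq\mathbf{w}$, a fortiori over all $\mathbf{u}\geq\mathbf{w}'$. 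This establishes $\Xi((\mathbf{w}',c'),(X,Y))$ and hence the missing $R_\mathcal{S}$-edge in both diagrams, completing \eqref{Cond:1} and \eqref{Cond:2}. I expect this forward transport of the set-code $c$ along $\mathtt{H}_{\mathbf{w}\mathbf{w}'}$ — in particular the observation that the range of the quantifier $\forall\mathbf{u}\geq(\cdot)$ only \emph{shrinks}, which is what lets the implication go through — to be the sole genuinely delicate point. Note finally that the hypothesis $\mathcal{S}\models^+_n Th$ is never used: membership in $\mathbb{NC}$ is a purely structural, monotonicity-type property of the sheaf, and $Th$ will only become relevant later, for the truth-lemma verifying that $ST_x$ embeds $\mathsf{N4CK}$.
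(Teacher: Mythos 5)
Your proposal is correct and follows essentially the same route as the paper's proof: the preorder and valuation-monotonicity clauses are discharged via the identity/composition laws and the homomorphism property of $\mathtt{H}$, and both \eqref{Cond:1} and \eqref{Cond:2} are obtained by pushing the set-code witness $c$ forward to $\mathtt{H}_{\mathbf{w}\mathbf{w}'}(c)$, using preservation of $S^1$ and $R^3$ together with the composition law $\mathtt{H}_{\mathbf{w}'\mathbf{u}}\circ$-identity $\mathtt{H}_{\mathbf{w}'\mathbf{u}}(\mathtt{H}_{\mathbf{w}\mathbf{w}'}(c)) = \mathtt{H}_{\mathbf{w}\mathbf{u}}(c)$ and the shrinking of the quantifier range to transport the $\Xi$-condition (the paper spells this out for \eqref{Cond:1} and declares \eqref{Cond:2} similar, which your uniform strategy makes explicit). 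Your side remark that the hypothesis $\mathcal{S}\models^+_n Th$ is never needed is also accurate — the paper's proof does not use it either.
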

\begin{proof}
	The transitivity and reflexivity of $\leq_{\mathcal{S}}$ easily follow from the same properties of $\leq$. As for the monotonicity of $V^+_\mathcal{S}$ relative to $\leq_\mathcal{S}$, assume that $p^1 \in Prop$, and that $(\mathbf{w},a), (\mathbf{v},b) \in W_\mathcal{S}$ are such that $(\mathbf{w},a)\leq_{\mathcal{S}}(\mathbf{v},b)$. Then $\mathbf{w} \leq \mathbf{v}$ and  $\mathtt{H}_{\mathbf{w}\mathbf{v}}(a) = b$. If now $(\mathbf{w},a) \in V^+_\mathcal{S}(p)$, then $a \in p^{\mathtt{M}^+_\mathbf{w}}$, and, since $\mathtt{H}_{\mathbf{w}\mathbf{v}}\in Hom(\mathtt{M}^+_\mathbf{w}, \mathtt{M}^+_\mathbf{v})$, also $b = \mathtt{H}_{\mathbf{w}\mathbf{v}}(a) \in p^{\mathtt{M}^+_\mathbf{v}}$, which, in turn, means that $(\mathbf{v},b) \in V^+_\mathcal{S}(p)$. We argue similarly for the monotonicity of $V^-_\mathcal{S}$.
	
	It only remains to check the conditions imposed on $R$. As for \eqref{Cond:1}, assume that $(\mathbf{w},a), (\mathbf{v},b), (\mathbf{u},c) \in W_\mathcal{S}$ and $X, Y \subseteq  W_\mathcal{S}$ are such that both $(\mathbf{w},a)\leq_{\mathcal{S}}(\mathbf{v},b)$ and $R_{\mathcal{S}}((\mathbf{w},a), X, Y, (\mathbf{u},c))$. Then $\mathbf{w} = \mathbf{u}$ and $\mathbf{w} \leq \mathbf{v}$, whence $(\mathbf{u},c) = (\mathbf{w},c)\leq_{\mathcal{S}}(\mathbf{v},\mathtt{H}_{\mathbf{w}\mathbf{v}}(c))\in  W_\mathcal{S}$. On the other hand, $\mathtt{H}_{\mathbf{w}\mathbf{v}}(a) = b$, and there exists a $d \in S^{\mathtt{M}^+_\mathbf{w}}$ such that both $R^{\mathtt{M}^+_\mathbf{w}}(a,d,c)$ and, for all $\mathbf{w}'\geq \mathbf{w}$ and all $g \in  O^{\mathtt{M}^+_{\mathbf{w}'}}$ we have:
	\begin{equation}\label{E:n4tock1}
		((\mathbf{w}',g)\in X\text{ iff }E^{\mathtt{M}^+_{\mathbf{w}'}}(g,\mathtt{H}_{\mathbf{w}\mathbf{w}'}(d)))
		\text{ and }((\mathbf{w}',g)\in Y\text{ iff }E^{\mathtt{M}^-_{\mathbf{w}'}}(g,\mathtt{H}_{\mathbf{w}\mathbf{w}'}(d)))
	\end{equation}
	Now, consider $\mathtt{H}_{\mathbf{w}\mathbf{v}}(d)\in S^{\mathtt{M}^+_\mathbf{v}}$. Since $\mathtt{H}_{\mathbf{w}\mathbf{v}}\in Hom(\mathtt{M}^+_\mathbf{w}, \mathtt{M}^+_\mathbf{v})$, we must have that $R^{\mathtt{M}^+_\mathbf{v}}(\mathtt{H}_{\mathbf{w}\mathbf{v}}(a),\mathtt{H}_{\mathbf{w}\mathbf{v}}(d),\mathtt{H}_{\mathbf{w}\mathbf{v}}(c))$, in other words, that $R^{\mathtt{M}^+_\mathbf{v}}(b,\mathtt{H}_{\mathbf{w}\mathbf{v}}(d),\mathtt{H}_{\mathbf{w}\mathbf{v}}(c))$. Moreover, if $\mathbf{v}'\geq \mathbf{v}$ and $k\in O^{\mathtt{M}^+_{\mathbf{v}'}}$, then transitivity implies $\mathbf{v}'\geq \mathbf{w}$, and Definition \ref{D:nelsonian-sheaf} implies that $\mathtt{H}_{\mathbf{v}\mathbf{v}'}(\mathtt{H}_{\mathbf{w}\mathbf{v}}(d)) = \mathtt{H}_{\mathbf{w}\mathbf{v}'}(d)$, whence, by \eqref{E:n4tock1}, we must have:
	\begin{equation*}
		((\mathbf{v}',k)\in X\text{ iff }E^{\mathtt{M}^+_{\mathbf{v}'}}(k,\mathtt{H}_{\mathbf{v}\mathbf{v}'}(\mathtt{H}_{\mathbf{w}\mathbf{v}}(d))))
		\text{ and }((\mathbf{v}',k)\in Y\text{ iff }E^{\mathtt{M}^-_{\mathbf{v}'}}(k,\mathtt{H}_{\mathbf{v}\mathbf{v}'}(\mathtt{H}_{\mathbf{w}\mathbf{v}}(d))))
	\end{equation*}
	Therefore, we must also have $R_{\mathcal{S}}((\mathbf{v},b), X, Y, (\mathbf{v}, \mathtt{H}_{\mathbf{w}\mathbf{v}}(c)))$, and \eqref{Cond:1} is shown to hold.
	
	The argument for \eqref{Cond:2} is similar.
\end{proof}
Since Definition \ref{D:n4-mod-n4ck}.3 is relatively involved, we look a bit deeper into its consequences, before proceeding further:
\begin{lemma}\label{L:n4-mod-n4ck-aux}
	Let $\mathcal{S}\in \mathbb{N}4$ and let $\phi \in \mathcal{CN}$. Then the following statements hold:
\begin{enumerate}
	\item Assume that, for every $\star \in \{+, -\}$, every $x \in Ind$, and every $(\mathbf{w},a) \in W_\mathcal{S}$, we have $\mathcal{M}_\mathcal{S}, (\mathbf{w},a) \models^\star \phi$ iff $\mathcal{S}, \mathbf{w}\models^\star_{n}ST_x(\phi)[x/a]$. Then, for every $(\mathbf{v}, b)\in W_\mathcal{S}$ and all distinct $y,z \in Ind$ we have 
	$$
	\mathcal{S}, \mathbf{v}\models^+_n (\forall z)_O(Ezy \Leftrightarrow ST_z(\phi))[y/b] \text{ iff }\Xi((\mathbf{v},b), \|\phi\|_{\mathcal{M}_\mathcal{S}}).
	$$
	\item For all $(\mathbf{w},a), (\mathbf{v}, b) \in W_\mathcal{S}$, if $(\mathbf{w},a)\leq_{\mathcal{S}}(\mathbf{v}, b)$ and $\Xi((\mathbf{w},a), \|\phi\|_{\mathcal{M}_\mathcal{S}})$, then  $\Xi((\mathbf{v},b), \|\phi\|_{\mathcal{M}_\mathcal{S}})$.
\end{enumerate}	
\begin{proof}
	(Part 1) Assume the premises. We have $\mathcal{S}, \mathbf{v}\models^+_n (\forall z)_O(Ezy \Leftrightarrow ST_z(\phi))[y/b]$ iff, by Corollary \ref{C:set-encoding}, we have:
	\begin{align*}
		(\forall \mathbf{u} \geq \mathbf{v})(\forall d \in O^{\mathtt{M}^+_\mathbf{u}})&((\mathcal{S},\mathbf{u}\models^+_n ST_z(\phi)[z/d]\text{ iff }\mathcal{S},\mathbf{u}\models^+_n Ezy[y/\mathtt{H}_{\mathbf{w}\mathbf{u}}(b), z/d])\notag\\
		&\text{ and }(\mathcal{S},\mathbf{u}\models^-_n ST_z(\phi)[z/d]\text{ iff }\mathcal{S},\mathbf{u}\models^-_n Ezy[y/\mathtt{H}_{\mathbf{w}\mathbf{u}}(b), z/d]))
	\end{align*}
	By our hypothesis about $\phi$, the latter equation is equivalent to 
	\begin{align*}
		(\forall \mathbf{u} \geq \mathbf{v})(\forall d \in O^{\mathtt{M}^+_\mathbf{u}})(((\mathbf{u}, z)&\in \|\phi\|^+_{\mathcal{M}_\mathcal{S}}\text{ iff } E^{\mathtt{M}^+_\mathbf{u}}(d,\mathtt{H}_{\mathbf{w}\mathbf{u}}(b)))\notag\\
		&\text{ and }((\mathbf{u}, z)\in \|\phi\|^-_{\mathcal{M}_\mathcal{S}}\text{ iff }E^{\mathtt{M}^-_\mathbf{u}}(d,\mathtt{H}_{\mathbf{w}\mathbf{u}}(b))))
	\end{align*}
that is to say, to $\Xi((\mathbf{v},b), \|\phi\|_{\mathcal{M}_\mathcal{S}})$. 

(Part 2) Assume that $(\mathbf{w},a)\leq_{\mathcal{S}}(\mathbf{v}, b)$; then $\mathbf{w} \leq \mathbf{v}$ and $\mathtt{H}_{\mathbf{w}\mathbf{v}}(a) = b$. If now $\Xi((\mathbf{w},a), \|\phi\|_{\mathcal{M}_\mathcal{S}})$, this means, by Part 1, that $\mathcal{S}, \mathbf{w}\models^+_n (\forall z)_O(Ezy \Leftrightarrow ST_z(\phi))[y/a]$, whence, by Lemma \ref{L:n4-standard}.1, $\mathcal{S}, \mathbf{v}\models^+_n (\forall z)_O(Ezy \Leftrightarrow ST_z(\phi))[y/\mathtt{H}_{\mathbf{w}\mathbf{v}}(a)]$, whence, again by Part 1, $\Xi((\mathbf{v},b), \|\phi\|_{\mathcal{M}_\mathcal{S}})$. 
\end{proof}	
\end{lemma}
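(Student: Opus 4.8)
The plan is to reduce both parts to the encoding of restricted-quantifier formulas already established in Corollary \ref{C:set-encoding}, combined with the inductive hypothesis on $\phi$ that is supplied in the antecedent of Part~1.

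For Part~1, I would first unfold the abbreviation, writing $(\forall z)_O(Ezy \Leftrightarrow ST_z(\phi))$ as $\forall z(Oz \to (Ezy \Leftrightarrow ST_z(\phi)))$, and then apply Corollary \ref{C:set-encoding}, matching its bound variable with $z$, its free parameter (the corollary's ``$x$'') with $y$ assigned $b$, its guard with $Oz$, the left component of its strong biconditional with the atom $Ezy$, and the right component with $ST_z(\phi)$. This rewrites the left-hand side of the target biconditional as the assertion that, for every $\mathbf{u} \geq \mathbf{v}$ and every $d \in O^{\mathtt{M}^+_\mathbf{u}}$, the positive and the negative satisfaction of $Ezy[z/d, y/\mathtt{H}_{\mathbf{v}\mathbf{u}}(b)]$ agree respectively with those of $ST_z(\phi)[z/d]$. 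Evaluating the atom $Ezy$ by the Nelsonian clauses for predicate letters turns these into $E^{\mathtt{M}^+_\mathbf{u}}(d, \mathtt{H}_{\mathbf{v}\mathbf{u}}(b))$ and $E^{\mathtt{M}^-_\mathbf{u}}(d, \mathtt{H}_{\mathbf{v}\mathbf{u}}(b))$. Finally, invoking the hypothesis on $\phi$ to replace $\mathcal{S}, \mathbf{u} \models^\star_n ST_z(\phi)[z/d]$ by $(\mathbf{u}, d) \in \|\phi\|^\star_{\mathcal{M}_\mathcal{S}}$ for each $\star \in \{+, -\}$ yields exactly the pair of biconditionals constituting $\Xi((\mathbf{v}, b), \|\phi\|_{\mathcal{M}_\mathcal{S}})$ per Definition \ref{D:n4-mod-n4ck}.3.

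For Part~2, I would use Part~1 as a bridge in both directions. From $(\mathbf{w}, a) \leq_\mathcal{S} (\mathbf{v}, b)$ one extracts $\mathbf{w} \leq \mathbf{v}$ and $\mathtt{H}_{\mathbf{w}\mathbf{v}}(a) = b$. Given $\Xi((\mathbf{w}, a), \|\phi\|_{\mathcal{M}_\mathcal{S}})$, Part~1 applied at $(\mathbf{w},a)$ yields $\mathcal{S}, \mathbf{w} \models^+_n (\forall z)_O(Ezy \Leftrightarrow ST_z(\phi))[y/a]$. As this is a positive satisfaction statement, the upward persistence of Lemma \ref{L:n4-standard}.1 propagates it to $\mathcal{S}, \mathbf{v} \models^+_n (\forall z)_O(Ezy \Leftrightarrow ST_z(\phi))[y/\mathtt{H}_{\mathbf{w}\mathbf{v}}(a)]$, i.e. with $y$ assigned $b$. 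A second application of Part~1, now at $(\mathbf{v}, b)$, delivers $\Xi((\mathbf{v}, b), \|\phi\|_{\mathcal{M}_\mathcal{S}})$, as required.

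I expect no deep obstacle; the work is essentially bookkeeping. The one point demanding care is the correct instantiation of Corollary \ref{C:set-encoding}: its bound-variable slot must be identified with $z$ and its free parameter with $y$, and the homomorphism arising there must be tracked as $\mathtt{H}_{\mathbf{v}\mathbf{u}}$, relative to the base world $\mathbf{v}$ of Part~1. A secondary subtlety is that Part~2 tacitly relies on the antecedent of Part~1, namely the inductive hypothesis on $\phi$; this is legitimate because the lemma is meant to be invoked inside an induction on the construction of $\phi$, where precisely that hypothesis is available.
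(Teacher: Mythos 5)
Your proposal is correct and follows essentially the same route as the paper's own proof: Part 1 is exactly Corollary \ref{C:set-encoding} plus evaluation of the atom $Ezy$ and the substitution licensed by the hypothesis on $\phi$, and Part 2 is the same two-way bridge through Part 1 via the persistence property of Lemma \ref{L:n4-standard}.1. Your two cautionary remarks are also well taken: the homomorphism must indeed be tracked as $\mathtt{H}_{\mathbf{v}\mathbf{u}}$ (the paper's proof of Part 1 contains a slip, writing $\mathtt{H}_{\mathbf{w}\mathbf{u}}$), and Part 2 does tacitly presuppose the antecedent of Part 1, exactly as in the paper, which is harmless since the lemma is only invoked inside the induction of Lemma \ref{L:n4-mod-n4ck2} where that hypothesis is available.
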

\begin{lemma}\label{L:n4-mod-n4ck2}
	Let $\mathcal{S}\in \mathbb{N}4$ be such that $\mathcal{S}\models^+_{n} Th$, and let $\mathcal{M}_{\mathcal{S}}\in \mathbb{NC}$ be given according to Definition \ref{D:n4-mod-n4ck}. Then, for every $\phi\in \mathcal{CN}$, $\star \in \{+, -\}$, $x \in Ind$, and $(\mathbf{w},a) \in W_\mathcal{S}$, we have $\mathcal{M}_\mathcal{S}, (\mathbf{w},a) \models^\star \phi$ iff $\mathcal{S}, \mathbf{w}\models^\star_{n}ST_x(\phi)[x/a]$.	
\end{lemma}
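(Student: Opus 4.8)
The plan is to argue by induction on the construction of $\phi \in \mathcal{CN}$, proving the biconditional \emph{simultaneously} for both polarities $\star \in \{+,-\}$, since the clause for $\sim$, the falsity clause for $\to$, and both clauses for $\boxto$ feed the induction hypothesis (IH) for subformulas back in the opposite or in both polarities. The base case $\phi = p$ is immediate from $V^+_\mathcal{S}$, $V^-_\mathcal{S}$ and the atomic clauses of $\models^\star_n$ together with $ST_x(p)=p(x)$. The Boolean cases are routine once one records the correspondence built into Definition \ref{D:n4-mod-n4ck}.2: a $\leq_\mathcal{S}$-successor of $(\mathbf{w},a)$ is exactly a pair $(\mathbf{v},\mathtt{H}_{\mathbf{w}\mathbf{v}}(a))$ with $\mathbf{v}\geq\mathbf{w}$, so that evaluating $ST_x(\cdot)$ along $f\circ\mathtt{H}_{\mathbf{w}\mathbf{v}}$ in $\mathcal{S}$ matches moving to $(\mathbf{v},\mathtt{H}_{\mathbf{w}\mathbf{v}}(a))$ in $\mathcal{M}_\mathcal{S}$. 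The verification clause for $\to$ uses this correspondence and the IH for $\psi,\chi$ in polarity $+$, while its falsity clause and the two clauses for $\sim$ merely recombine the IH in the two polarities.

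The crux is the case $\phi = \psi\boxto\chi$. Here I first unfold $ST_x(\psi\boxto\chi)=\exists y(A \,\&\, B)$ with $A = Sy\wedge(\forall z)_O(Ezy\Leftrightarrow ST_z(\psi))$ and $B=\forall w(Rxyw\to ST_w(\chi))$. For verification I read $\,\&\,$ as $\wedge$ (the content of \eqref{E:T15}), so that $\mathcal{S},\mathbf{w}\models^+_n ST_x(\psi\boxto\chi)[x/a]$ amounts to the existence of a code $c\in S^{\mathtt{M}^+_\mathbf{w}}$ with $\Xi((\mathbf{w},c),\|\psi\|_{\mathcal{M}_\mathcal{S}})$ (by Lemma \ref{L:n4-mod-n4ck-aux}.1 applied to the IH for $\psi$) and, via Corollary \ref{C:set-encoding} and the IH for $\chi$, with $\mathcal{M}_\mathcal{S},(\mathbf{v},e)\models^+\chi$ whenever $\mathbf{v}\geq\mathbf{w}$ and $R^{\mathtt{M}^+_\mathbf{v}}(\mathtt{H}_{\mathbf{w}\mathbf{v}}(a),\mathtt{H}_{\mathbf{w}\mathbf{v}}(c),e)$. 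To match this against the verification clause for $\boxto$ in $\mathcal{M}_\mathcal{S}$, which ranges over all $\leq_\mathcal{S}$-successors $(\mathbf{v},b)$ and their $R_{\|\psi\|}$-successors, I transport $c$ upward by $\mathtt{H}$ (it remains a code by Lemma \ref{L:n4-mod-n4ck-aux}.2 and remains in $S$ since $\mathtt{H}$ is a homomorphism) and invoke the extensionality sentence \eqref{E:th5}: any code $c'$ witnessing such an $R_\mathcal{S}$-edge, having the same $\Leftrightarrow$-extension as $\mathtt{H}_{\mathbf{w}\mathbf{v}}(c)$, must be \emph{equal} to it. This identification of codes, together with the homomorphism property of $\mathtt{H}$ on $R^{\mathtt{M}^+}$, makes the two conditions coincide; for the converse direction the required code $c$ at $\mathbf{w}$ is supplied by the comprehension Lemma \ref{L:n4-comprehension} (applicable since $\mathcal{S}\models^+_n Th$, through the adequacy of Proposition \ref{P:nelsonian-sheaves}).

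The falsity clause for $\boxto$ is the main obstacle, because on its face $\mathcal{S},\mathbf{w}\models^-_n ST_x(\psi\boxto\chi)[x/a]$ is a \emph{universal} statement: the negative $\exists$-clause together with the definition of $\,\&\,$ (equivalently \eqref{E:T16}, which turns $\sim(A\,\&\,B)$ into $A\to\sim B$) unwinds it to ``for every $\leq$-successor $\mathbf{v}'$ and every code $d'$ of $\|\psi\|_{\mathcal{M}_\mathcal{S}}$ at $\mathbf{v}'$, the formula $B$ is falsified'', whereas the clause for $\mathcal{M}_\mathcal{S},(\mathbf{w},a)\models^-(\psi\boxto\chi)$ is the \emph{existential} ``there is an $R_{\|\psi\|}$-successor $(\mathbf{w},e)$ with $\mathcal{M}_\mathcal{S},(\mathbf{w},e)\models^-\chi$''. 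The two are reconciled precisely by the comprehension/extensionality pair: comprehension (Lemma \ref{L:n4-comprehension}) makes the code nonvacuously exist at $\mathbf{w}$, and extensionality \eqref{E:th5} forces the code at each world to be unique, so that ``for all codes'' collapses to ``for the code''. Instantiating the universal at $\mathbf{v}'=\mathbf{w}$ with this unique code yields the existential witness $e$ (with $\mathcal{M}_\mathcal{S},(\mathbf{w},e)\models^-\chi$ by the IH for $\chi$ in polarity $-$); conversely, given a falsifying $e$ at $\mathbf{w}$, I push both $e$ and the code upward by $\mathtt{H}$, use Lemma \ref{L:n4-mod-n4ck-aux}.2 and \eqref{E:th5} to see that the pushed code is \emph{the} code at $\mathbf{v}'$, and use the monotonicity of $\models^-$ in Nelsonian conditional models (Lemma \ref{L:n4ck-monotonicity}) to carry the falsification of $\chi$ to $\mathbf{v}'$.

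I expect the only genuinely delicate points to be the simplification of the nested prefix $\forall\mathbf{v}\,\forall d\,\forall\mathbf{v}'$ arising from $\models^-_n$ of the $\exists$ to its world-wise form, and the repeated appeal to uniqueness of codes to convert between the universal shape of the standard translation's negative clause and the existential shape of the falsity clause for $\boxto$; everything else is a transcription of matching clauses through the IH and the successor correspondence of Definition \ref{D:n4-mod-n4ck}.
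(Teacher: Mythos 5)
Your proposal is correct and follows essentially the same route as the paper's own proof: induction on $\phi$ simultaneously in both polarities, with the $\boxto$-case resolved by exactly the same ingredients — the comprehension Lemma \ref{L:n4-comprehension} to supply codes, the extensionality sentence \eqref{E:th5} to force their uniqueness, Lemma \ref{L:n4-mod-n4ck-aux} to mediate between $\Xi$ and the $\Leftrightarrow$-formulas, and the world-wise reading of the negative $\exists$-clause to reconcile the universal shape of $\models^-_n ST_x(\psi\boxto\chi)$ with the existential falsity clause for $\boxto$. The only cosmetic deviations are that you treat the falsity biconditional directly in both directions where the paper argues one direction by contraposition, and that you transport falsifications of $\chi$ upward via Lemma \ref{L:n4ck-monotonicity} in $\mathcal{M}_\mathcal{S}$ rather than via Lemma \ref{L:n4-standard}.1 in $\mathcal{S}$ — both work equally well.
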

\begin{proof}
 We argue by induction on the construction of $\phi\in \mathcal{CN}$. 

\textit{Basis}. If $\phi = p$ for some $p^1 \in Prop$, then we have, for every $\star \in \{+, -\}$ and $(\mathbf{w},a) \in W_\mathcal{S}$, that
$\mathcal{M}_\mathcal{S}, (\mathbf{w},a) \models^\star p$ iff $(\mathbf{w},a)\in V^\star_\mathcal{S}(p)$ iff $a \in p^{\mathtt{M}^\star_\mathbf{w}}$ iff $\mathcal{S}, \mathbf{w}\models^\star_n p(x)[x/a]$.

\textit{Induction step}. The following cases arise:

\textit{Case 1}. If $\phi = \psi \wedge \chi$, then we have, on the one hand:
\begin{align*}
	\mathcal{M}_\mathcal{S}, (\mathbf{w},a) \models^+ \psi \wedge \chi &\text{ iff } \mathcal{M}_\mathcal{S}, (\mathbf{w},a) \models^+ \psi\text{ and }\mathcal{M}_\mathcal{S}, (\mathbf{w},a) \models^+\chi&&\text{ by IH}\\
	&\text{ iff }  \mathcal{S}, \mathbf{w}\models^+_n ST_x(\psi)[x/a]\text{ and } \mathcal{S}, \mathbf{w}\models^+_n ST_x(\chi)[x/a]\\
	&\text{ iff }  \mathcal{S}, \mathbf{w}\models^+_n ST_x(\psi)\wedge 
	ST_x(\chi)[x/a]\\
	&\text{ iff }  \mathcal{S}, \mathbf{w}\models^+_n ST_x(\psi\wedge\chi)[x/a]
\end{align*}
On the other hand, we have:
\begin{align*}
	\mathcal{M}_\mathcal{S}, (\mathbf{w},a) \models^- \psi \wedge \chi &\text{ iff } \mathcal{M}_\mathcal{S}, (\mathbf{w},a) \models^- \psi\text{ or }\mathcal{M}_\mathcal{S}, (\mathbf{w},a) \models^-\chi\\
	&\text{ iff }  \mathcal{S}, \mathbf{w}\models^-_n ST_x(\psi)[x/a]\text{ or } \mathcal{S}, \mathbf{w}\models^-_n ST_x(\chi)[x/a]&&\text{ by IH}\\
	&\text{ iff }  \mathcal{S}, \mathbf{w}\models^-_n ST_x(\psi)\wedge 
	ST_x(\chi)[x/a]\\
	&\text{ iff }  \mathcal{S}, \mathbf{w}\models^-_n ST_x(\psi\wedge\chi)[x/a]
\end{align*}

\textit{Case 2} and \textit{Case 3}, where we assume that $\phi = \psi \vee \chi$ and $\phi = \sim\psi$, respectively, are solved similarly to Case 1.

\textit{Case 4}. If $\phi = \psi \to \chi$, then assume that $	\mathcal{M}_\mathcal{S}, (\mathbf{w},a) \models^+ \psi \to \chi$, and let $\mathbf{v}\geq\mathbf{w}$ be such that $\mathcal{S}, \mathbf{v}\models^+_n ST_x(\psi)[x/\mathtt{H}_{\mathbf{w}\mathbf{v}}(a)]$. Then, by IH, $\mathcal{M}_\mathcal{S}, (\mathbf{v},\mathtt{H}_{\mathbf{w}\mathbf{v}}(a)) \models^+ \psi$, and we also have $(\mathbf{w},a)\leq_{\mathcal{S}}(\mathbf{v},\mathtt{H}_{\mathbf{w}\mathbf{v}}(a))$, whence, by our assumption, $\mathcal{M}_\mathcal{S}, (\mathbf{v},\mathtt{H}_{\mathbf{w}\mathbf{v}}(a)) \models^+ \chi$. Again by IH, $\mathcal{S}, \mathbf{v}\models^+_n ST_x(\chi)[x/\mathtt{H}_{\mathbf{w}\mathbf{v}}(a)]$. Since $\mathbf{v}\geq\mathbf{w}$ was chosen arbitrarily, we conclude that
\begin{equation}\label{E:assumption}
	\mathcal{S}, \mathbf{w}\models^+_n (ST_x(\psi)\to ST_x(\chi)) = ST_x(\psi\to\chi)[x/a]
\end{equation}

In the other direction, assume that \eqref{E:assumption} holds. If $(\mathbf{v},b) \in W_\mathcal{S}$ is such that $(\mathbf{w},a)\leq_{\mathcal{S}}(\mathbf{v},b)$, then $\mathbf{w} \leq \mathbf{v}$ and $\mathtt{H}_{\mathbf{w}\mathbf{v}}(a) = b$. If, moreover, we have $\mathcal{M}_\mathcal{S}, (\mathbf{v},b) \models^+ \psi$, then, by IH, $\mathcal{S}, \mathbf{v}\models^+_n ST_x(\psi)[x/\mathtt{H}_{\mathbf{w}\mathbf{v}}(a)]$, whence, by \eqref{E:assumption}, $\mathcal{S}, \mathbf{v}\models^+_n ST_x(\chi)[x/\mathtt{H}_{\mathbf{w}\mathbf{v}}(a)]$. Applying IH again, we get that $\mathcal{M}_\mathcal{S}, (\mathbf{v},\mathtt{H}_{\mathbf{w}\mathbf{v}}(a)) \models^+ \psi$, in other words, that $\mathcal{M}_\mathcal{S}, (\mathbf{v},b) \models^+ \psi$. By the choice of $(\mathbf{v},b) \in W_\mathcal{S}$, we obtain that $	\mathcal{M}_\mathcal{S}, (\mathbf{w},a) \models^+ \psi \to \chi$, as desired.

Finally, observe that we have:
\begin{align*}
	\mathcal{M}_\mathcal{S}, (\mathbf{w},a) \models^- \psi \to \chi &\text{ iff } \mathcal{M}_\mathcal{S}, (\mathbf{w},a) \models^+ \psi\text{ and }\mathcal{M}_\mathcal{S}, (\mathbf{w},a) \models^-\chi\\
	&\text{ iff }  \mathcal{S}, \mathbf{w}\models^+_n ST_x(\psi)[x/a]\text{ and } \mathcal{S}, \mathbf{w}\models^-_n ST_x(\chi)[x/a]&&\text{ by IH}\\
	&\text{ iff }  \mathcal{S}, \mathbf{w}\models^-_n ST_x(\psi)\to 
	ST_x(\chi)[x/a]\\
	&\text{ iff }  \mathcal{S}, \mathbf{w}\models^-_n ST_x(\psi\to\chi)[x/a]
\end{align*}

\textit{Case 5}. If $\phi = \psi \boxto \chi$, then assume that
\begin{equation}\label{E:assum1}
	\mathcal{M}_\mathcal{S}, (\mathbf{w},a) \models^+ \psi \boxto \chi
\end{equation}
By Lemma \ref{L:n4-comprehension}, we can choose a $b \in U_\mathbf{w}$ such that:
\begin{equation}\label{E:assum2}
	\mathcal{S}, \mathbf{w}\models^+_n Sy\wedge (\forall z)_O(Ezy \Leftrightarrow ST_z(\psi))[y/b]
\end{equation}
Moreover, choose any $\mathbf{v} \in W$ and any $c \in U_\mathbf{v}$ such that:
\begin{align}
	&\mathbf{v}\geq\mathbf{w}\label{E:asm0}\\
	&\mathcal{S},\mathbf{v}\models^+_n R(x,y,w)[x/\mathtt{H}_{\mathbf{w}\mathbf{v}}(a), y/\mathtt{H}_{\mathbf{w}\mathbf{v}}(b), w/c]\label{E:asm0a}
\end{align}
We now reason as follows:
\begin{align}
	&(\mathbf{w},b)\leq_\mathcal{S}(\mathbf{v},\mathtt{H}_{\mathbf{w}\mathbf{v}}(b))\label{E:asm1a}&&\text{ by }\eqref{E:asm0}\\
	&b \in S^{\mathtt{M}^+_\mathbf{w}}\label{E:asm1}&&\text{ by }\eqref{E:assum2}\\
	&\Xi((\mathbf{w},b), \|\psi\|_{\mathcal{M}_\mathcal{S}})\label{E:asm2}&&\text{ by }\eqref{E:assum2}, Lm. \ref{L:n4-mod-n4ck-aux}.1,\text{ IH for }\psi\\
	&\Xi((\mathbf{v},\mathtt{H}_{\mathbf{w}\mathbf{v}}(b)), \|\psi\|_{\mathcal{M}_\mathcal{S}})\label{E:asm3}&&\text{ by }\eqref{E:asm0},\eqref{E:asm2}, Lm. \ref{L:n4-mod-n4ck-aux}.2\\
	&\mathtt{H}_{\mathbf{w}\mathbf{v}}(b)\in S^{\mathtt{M}^+_\mathbf{v}}\label{E:asm4}&&\text{ by }\eqref{E:asm1}, \mathtt{H}_{\mathbf{w}\mathbf{v}}\in Hom(\mathtt{M}^+_\mathbf{w}, \mathtt{M}^+_\mathbf{v})\\
	&R_\mathcal{S}((\mathbf{v},\mathtt{H}_{\mathbf{w}\mathbf{v}}(a)), \|\psi\|_{\mathcal{M}_\mathcal{S}}, (\mathbf{v},c))\label{E:asm6}&&\text{ by }\eqref{E:asm3}, \eqref{E:asm4}, \eqref{E:asm0a}\\
	&(\mathbf{w},a)\leq_\mathcal{S}(\mathbf{v},\mathtt{H}_{\mathbf{w}\mathbf{v}}(a))\label{E:asm7}&&\text{ by }\eqref{E:asm0}\\
	&\mathcal{M}_\mathcal{S}, (\mathbf{v},c)\models^+ \chi\label{E:asm8}&&\text{ by }\eqref{E:assum1}, \eqref{E:asm6}, \eqref{E:asm7}\\
	&\mathcal{S}, \mathbf{v}\models^+_n ST_w(\chi)[w/c]\label{E:asm9}&&\text{ by }\eqref{E:asm8},\text{ IH for }\chi
\end{align}
By the choice of $\mathbf{v}$ and $c$, we obtain that:
\begin{equation}\label{E:assum6}
	\mathcal{S}, \mathbf{w}\models^+_n \forall w(R(x,y,w)\to ST_w(\chi))[x/a, y/b]
\end{equation}
The latter equation, together with \eqref{E:assum2}, allows us to conclude that
\begin{equation}\label{E:assum7}
		\mathcal{S}, \mathbf{w}\models^+_n ST_x(\psi\boxto\chi)[x/a]
\end{equation}
For the converse, assume \eqref{E:assum7} and choose any $b \in U_\mathbf{w}$ such that both \eqref{E:assum2} and \eqref{E:assum6} hold. Consider any 
 $(\mathbf{v},c),(\mathbf{u},d) \in W_\mathcal{S}$ such that
\begin{align}
	&(\mathbf{w},a)\leq_\mathcal{S}(\mathbf{v},c)\label{E:asmp1}\\
	&R_\mathcal{S}((\mathbf{v},c,), \|\psi\|_{\mathcal{M}_\mathcal{S}}, (\mathbf{u},d))\label{E:asmp2}
\end{align} 
By \eqref{E:asmp2} and Definition \ref{D:n4-mod-n4ck}.3, there must be a $e \in U_\mathbf{v}$ such that all of the following holds
\begin{align}
&e \in S^{\mathtt{M}^+_\mathbf{v}}\label{E:asmp3}\\
&\Xi((\mathbf{v}, e), \|\psi\|_{\mathcal{M}_\mathcal{S}})\label{E:asmp4}\\
&R^{\mathtt{M}^+_\mathbf{v}}(c, e, d)\label{E:asmp5}
\end{align}
We now reason as follows:
\begin{align}
	&\mathcal{S}, \mathbf{v}\models^+_n  (\forall z)_O(Ezw \Leftrightarrow ST_z(\psi))[w/e]\label{E:asmp6}&&\text{ by \eqref{E:asmp4}, Lm \ref{L:n4-mod-n4ck-aux}.1, IH}\\
	&\mathcal{S}, \mathbf{v}\models^+_n  (\forall z)_O(Ezy \Leftrightarrow ST_z(\psi))[y/\mathtt{H}_{\mathbf{w}\mathbf{v}}(b)]\label{E:asmp7}&&\text{ by Lm \ref{L:n4-standard}.1, \eqref{E:assum2}}\\
	&\mathcal{S}, \mathbf{v}\models^+_n  (\forall z)_O(Ezw \Leftrightarrow Ezy)[y/\mathtt{H}_{\mathbf{w}\mathbf{v}}(b), w/e]\label{E:asmp7a}&&\text{ by \eqref{E:asmp6}, \eqref{E:asmp7}, \eqref{E:T18}}\\
	&e = \mathtt{H}_{\mathbf{w}\mathbf{v}}(b)\label{E:asmp8}&&\text{ by }\eqref{E:th5}, \eqref{E:asmp7a}\\
	&c = \mathtt{H}_{\mathbf{w}\mathbf{v}}(a)\label{E:asmp9}&&\text{ by }\eqref{E:asmp1}\\
	&\mathcal{S}, \mathbf{v}\models^+_n R(x,y,w)[x/\mathtt{H}_{\mathbf{w}\mathbf{v}}(a), y/\mathtt{H}_{\mathbf{w}\mathbf{v}}(b), w/d]\label{E:asmp10}&&\text{ by }\eqref{E:asmp5}, \eqref{E:asmp8}, \eqref{E:asmp9}\\
	&\mathcal{S}, \mathbf{v}\models^+_n ST_w(\chi)[w/d]\label{E:asmp12}&&\text{ by }\eqref{E:asmp10}, \eqref{E:assum6}\\
	&\mathcal{M}_\mathcal{S}, (\mathbf{v}, d)\models^+ \chi\label{E:asmp13}&&\text{ by IH, }\eqref{E:asmp12}\\
	&\mathbf{v} = \mathbf{u}\label{E:asmp14}&&\text{ by \eqref{E:asmp2}, Df \ref{D:n4-mod-n4ck}.3}\\
	&\mathcal{M}_\mathcal{S}, (\mathbf{u}, d)\models^+ \chi\label{E:asmp15}&&\text{ by }\eqref{E:asmp13}, \eqref{E:asmp14}
\end{align}
 Since $(\mathbf{v},c),(\mathbf{u},d) \in W_\mathcal{S}$ were chosen arbitrarily under the conditions \eqref{E:asmp1} and \eqref{E:asmp2}, it follows that $\mathcal{M}_\mathcal{S}, (\mathbf{w},a) \models^+ \psi \boxto \chi$, as desired.
 
 Next, assume that 
 \begin{equation}\label{E:hyp1}
 	\mathcal{M}_\mathcal{S}, (\mathbf{w},a) \models^- \psi \boxto \chi
 \end{equation}
This means that, for some $(\mathbf{v}, b) \in W_\mathcal{S}$, we have:
\begin{align}
	&R_\mathcal{S}((\mathbf{w},a), \|\psi\|_{\mathcal{M}_\mathcal{S}}, (\mathbf{v},b))\label{E:hyp2}\\
	&\mathcal{M}_\mathcal{S}, (\mathbf{v},b) \models^- \chi\label{E:hyp3}
\end{align}
By \eqref{E:hyp2}, we can choose a $c \in U_\mathbf{w}$ such that all of the following holds
\begin{align}
	&c \in S^{\mathtt{M}^+_\mathbf{w}}\label{E:hyp4}\\
	&\Xi((\mathbf{w}, c), \|\psi\|_{\mathcal{M}_\mathcal{S}})\label{E:hyp5}\\
	&R^{\mathtt{M}^+_\mathbf{w}}(a, c, b)\label{E:hyp6}
\end{align}
Assume, next, that some $\mathbf{u}'\geq\mathbf{u}\geq \mathbf{w}$ and some $d\in U_\mathbf{u}$ are such that
\begin{equation}\label{E:hyp7}
	\mathcal{S}, \mathbf{u}\models^+_n Sy\wedge (\forall z)_O(Ezy \Leftrightarrow ST_z(\psi))[y/d]
\end{equation}
We now reason as follows:
\begin{align}
	&\mathbf{w} = \mathbf{v}\label{E:hyp8}&&\text{ by }\eqref{E:hyp2}\\
	&\mathcal{S}, \mathbf{w}\models^-_n ST_w(\chi)[w/b]\label{E:hyp9}&&\text{ by IH for $\chi$, }\eqref{E:hyp3},\eqref{E:hyp8}\\
	&\mathcal{S}, \mathbf{u}'\models^-_n ST_w(\chi)[w/\mathtt{H}_{\mathbf{w}\mathbf{u}'}(b)]\label{E:hyp10}&&\text{ by Lm \ref{L:n4-standard}.1, }\eqref{E:hyp9}\\
	&\mathcal{S}, \mathbf{u}'\models^+_n	Rxyw[x/\mathtt{H}_{\mathbf{w}\mathbf{u}'}(a), y/\mathtt{H}_{\mathbf{w}\mathbf{u}'}(c), w/\mathtt{H}_{\mathbf{w}\mathbf{u}'}(b)]\label{E:hyp11}&&\text{ by Lm \ref{L:n4-standard}.1, }\eqref{E:hyp6}\\
	&\mathcal{S}, \mathbf{u}'\models^-_n Rxyw \to ST_w(\chi)[x/\mathtt{H}_{\mathbf{w}\mathbf{u}'}(a), y/\mathtt{H}_{\mathbf{w}\mathbf{u}'}(c), w/\mathtt{H}_{\mathbf{w}\mathbf{u}'}(b)]\label{E:hyp12}&&\text{ by }\eqref{E:hyp10}, \eqref{E:hyp11}\\
	&\mathcal{S}, \mathbf{u}'\models^-_n \forall w(Rxyw \to ST_w(\chi))[x/\mathtt{H}_{\mathbf{w}\mathbf{u}'}(a), y/\mathtt{H}_{\mathbf{w}\mathbf{u}'}(c)]\label{E:hyp13}&&\text{ by }\eqref{E:hyp12}\\
	&\mathcal{S}, \mathbf{w}\models^+_n Sx\wedge (\forall z)_O(Ezx \Leftrightarrow ST_z(\psi))[x/c]\label{E:hyp14}&&\text{ by Lm \ref{L:n4-mod-n4ck-aux}.1, }\eqref{E:hyp4}, \eqref{E:hyp5}\\
	&\mathcal{S}, \mathbf{u}'\models^+_n Sx\wedge (\forall z)_O(Ezx \Leftrightarrow ST_z(\psi))[x/\mathtt{H}_{\mathbf{w}\mathbf{u}'}(c)]\label{E:hyp15}&&\text{ by Lm \ref{L:n4-standard}.1, }\eqref{E:hyp14}\\
	&\mathcal{S}, \mathbf{u}'\models^+_n Sy\wedge (\forall z)_O(Ezy \Leftrightarrow ST_z(\psi))[y/\mathtt{H}_{\mathbf{u}\mathbf{u}'}(d)]\label{E:hyp15a}&&\text{ by Lm \ref{L:n4-standard}.1, }\eqref{E:hyp7}\\
	&\mathcal{S}, \mathbf{u}'\models^+_n Sx\wedge Sy\wedge (\forall z)_O(Ezx \Leftrightarrow Ezy)[x/\mathtt{H}_{\mathbf{w}\mathbf{u}'}(c), y/\mathtt{H}_{\mathbf{u}\mathbf{u}'}(d)]\label{E:hyp16}&&\text{ by }\eqref{E:T18}, \eqref{E:hyp15}, \eqref{E:hyp15a}\\
	&\mathtt{H}_{\mathbf{u}\mathbf{u}'}(d) = \mathtt{H}_{\mathbf{w}\mathbf{u}'}(c)\label{E:hyp17}&&\text{ by }\eqref{E:th5}, \eqref{E:hyp16}\\
	&\mathcal{S}, \mathbf{u}\models^-_n \forall w(Rxyw \to ST_w(\chi))[x/\mathtt{H}_{\mathbf{w}\mathbf{u}'}(a), y/\mathtt{H}_{\mathbf{u}\mathbf{u}'}(d)]\label{E:hyp18}&&\text{ by }\eqref{E:hyp13}, \eqref{E:hyp17}\\
	&\mathcal{S}, \mathbf{u}\models^+_n \sim\forall w(Rxyw \to ST_w(\chi))[x/\mathtt{H}_{\mathbf{u}\mathbf{u}'}(\mathtt{H}_{\mathbf{w}\mathbf{u}}(a)), y/\mathtt{H}_{\mathbf{u}\mathbf{u}'}(d)]\label{E:hyp19}&&\text{ by Df. \ref{D:nelsonian-sheaf}, }\eqref{E:hyp18}
\end{align}
It follows from the above reasoning, by the choice of $\mathbf{u}'\geq \mathbf{u}$, that we must have 
$$
\mathcal{S}, \mathbf{u}\models^+_n (Sy\wedge (\forall z)_O(Ezy \Leftrightarrow ST_z(\psi)))\to \sim\forall w(Rxyw \to ST_w(\chi))[x/\mathtt{H}_{\mathbf{w}\mathbf{u}}(a), y/d];
$$
the latter implies, by the definition of $\&$, that
$$
\mathcal{S}, \mathbf{u}\models^-_n (Sy\wedge (\forall z)_O(Ezy \Leftrightarrow ST_z(\psi)))\,\&\,\forall w(Rxyw \to ST_w(\chi))[x/\mathtt{H}_{\mathbf{w}\mathbf{u}}(a), y/d],
$$
whence, by the choice of $\mathbf{u}$ and $d$, it follows that 
$$
\mathcal{S}, \mathbf{u}\models^-_n \exists y((Sy\wedge (\forall z)_O(Ezy \Leftrightarrow ST_z(\psi)))\,\&\,\forall w(Rxyw \to ST_w(\chi)))[x/a],
$$
or, in other words, that $\mathcal{S}, \mathbf{w}\models^-_n ST_x(\psi\boxto\chi)[x/a]$, as desired.

On the other hand, if we have
\begin{equation}\label{E:hh1}
	\mathcal{M}_\mathcal{S}, (\mathbf{w},a) \not\models^- \psi \boxto \chi
\end{equation}
this means that, for every  $(\mathbf{v}, b) \in W_\mathcal{S}$, we have:
\begin{align}
	&R_\mathcal{S}((\mathbf{w},a), \|\psi\|_{\mathcal{M}_\mathcal{S}}, (\mathbf{v},b))\text{ implies }\mathcal{M}_\mathcal{S}, (\mathbf{v},b) \not\models^- \chi\label{E:hh2}
\end{align}
By Lemma \ref{L:n4-comprehension}, we can choose a $c \in U_\mathbf{w}$, such that:
\begin{equation}\label{E:hh3}
	\mathcal{S}, \mathbf{w}\models^+_n Sy\wedge (\forall z)_O(Ezy \Leftrightarrow ST_z(\psi))[y/c]
\end{equation}
We choose, next, any $d \in U_\mathbf{w}$ and reason as follows:
\begin{align}
	&R^{\mathtt{M}^+_\mathbf{w}}(a, c, d)\label{E:hh4}&&\text{premise}\\
	&\Xi((\mathbf{w}, c), \|\psi\|_{\mathcal{M}_\mathcal{S}})\label{E:hh5}&&\text{ by Lm \ref{L:n4-mod-n4ck-aux}.1, }\eqref{E:hh3}\\
	&R_\mathcal{S}((\mathbf{w},a), \|\psi\|_{\mathcal{M}_\mathcal{S}}, (\mathbf{w},d))\label{E:hh6}&&\text{ by }\eqref{E:hh4}, \eqref{E:hh5}\\
	&\mathcal{M}_\mathcal{S}, (\mathbf{w},d) \not\models^- \chi\label{E:hh7}&&\text{ by }\eqref{E:hh2}, \eqref{E:hh6}\\
	&\mathcal{S}, \mathbf{w}\not\models^-_n ST_w(\chi)[w/d]\label{E:hh8}&&\text{ by IH for }\chi, \eqref{E:hh7}\\
	&\mathcal{S}, \mathbf{w}\not\models^-_n Rxyw\to ST_w(\chi)[x/a, y/c, w/d]\label{E:hh9}&&\text{ by }\eqref{E:hh4}, \eqref{E:hh8}
\end{align}
This reasoning, by the choice of $d \in U_\mathbf{w}$, shows that we have:
$$
\mathcal{S}, \mathbf{w}\not\models^-_n \forall w(Rxyw\to ST_w(\chi))[x/a, y/c],
$$
which, together with \eqref{E:hh3}, implies that
$$
\mathcal{S}, \mathbf{w}\not\models^+_n (Sy\wedge (\forall z)_O(Ezy \Leftrightarrow ST_z(\psi))) \to \sim\forall w(Rxyw\to ST_w(\chi))[x/a, y/c],
$$
whence we get, by definition of $\&$, that
$$
\mathcal{S}, \mathbf{w}\not\models^-_n ((Sy\wedge (\forall z)_O(Ezy \Leftrightarrow ST_z(\psi)))\,\&\,\forall w(Rxyw\to ST_w(\chi)))[x/a, y/c];
$$
the latter implies that
$$
\mathcal{S}, \mathbf{w}\not\models^-_n \exists y((Sy\wedge (\forall z)_O(Ezy \Leftrightarrow ST_z(\psi)))\,\&\,\forall w(Rxyw\to ST_w(\chi)))[x/a],
$$
or, in other words, that $\mathcal{S}, \mathbf{w}\not\models^-_n ST_x(\psi\boxto\chi)[x/a]$, as desired.
\end{proof}
We are now in a position to prove Proposition \ref{P:easy}:
\begin{proof}[Proof of Proposition \ref{P:easy}]
	We argue by contraposition. Assume that $Th\cup ST_x(\Gamma)\not\models_{\mathsf{QN4}}ST_x(\Delta)$. Then there must exist an $\mathcal{S}\in \mathbb{N}4$, a $\mathbf{w} \in W$, and an $a\in U_\mathbf{w}$ such that $\mathcal{S}, \mathbf{w}\models^+_n(Th\cup ST_x(\Gamma), ST_x(\Delta))[x/a]$. By Lemma \ref{L:n4-standard}.2, we have $\mathcal{S}_\mathbf{w}\models^+_n Th$ and $\mathcal{S}_\mathbf{w}, \mathbf{w}\models^+_n(ST_x(\Gamma), ST_x(\Delta))[x/a]$. By Lemma \ref{L:n4-mod-n4ck1}, the structure $\mathcal{M}_{\mathcal{S}_\mathbf{w}}$ given as in Definition \ref{D:n4-mod-n4ck}, is a Nelsonian conditional model, and, by Lemma \ref{L:n4-mod-n4ck2}, we have $\mathcal{M}_{\mathcal{S}_\mathbf{w}},(\mathbf{w}, a)\models^+ (\Gamma, \Delta)$. But then also $\Gamma \not\models_{\mathsf{N4CK}}\Delta$.
\end{proof}
We now wish to prove a converse of Proposition \ref{P:easy}. This task also requires a series of preliminary constructions that develop the potential of the universal model $\mathcal{M}_c$ of $\mathsf{N4CK}$ given in Definition \ref{D:canonical-model}.

\begin{definition}\label{D:standard-sequence}
	For any $n \in \omega$, a sequence $\alpha = ((\Gamma_0,\Delta_0),\phi_1,\ldots,\phi_n, (\Gamma_n, \Delta_n))$ is called a $(\Gamma_0,\Delta_0)$-standard sequence of length $n + 1$ iff $(\Gamma_0,\Delta_0),\ldots,(\Gamma_n, \Delta_n)\in W_c$, $\phi_1,\ldots,\phi_n \in \mathcal{CN}$, and we have:
	$$
	(\forall i < n)(R_c((\Gamma_i,\Delta_i),\|\phi_{i + 1}\|_{\mathcal{M}_c}, (\Gamma_{i+1}, \Delta_{i+1}))).
	$$
	Given a $(\Gamma, \Delta)\in W_c$, the set of all $(\Gamma,\Delta)$-standard sequences will be denoted by $Seq(\Gamma,\Delta)$. The set $Seq$ of all standard sequences is then given by $
	Seq := \bigcup\{Seq(\Gamma,\Delta)\mid (\Gamma, \Delta)\in W_c\}$.
	
	Finally, given a $\beta = ((\Xi_0,\Theta_0),\psi_1,\ldots,\psi_m, (\Xi_m, \Theta_m))\in Seq$, we say that $\beta$ extends $\alpha$ and will write $
	\alpha \mathrel{\prec}\beta$ 
	iff (1) $m = n$, (2) $\phi_1 = \psi_1,\ldots, \phi_n = \psi_n (= \psi_m)$, and
	
	\noindent(3) $(\Gamma_0,\Delta_0)\leq_c (\Xi_0,\Theta_0),\ldots,  (\Gamma_n,\Delta_n)\leq_c (\Xi_n,\Theta_n)(=  (\Xi_m,\Theta_m))$. 
\end{definition}
Given any $(\Gamma, \Delta), (\Xi, \Theta)\in W_c$ such that $(\Gamma,\Delta)\leq_c (\Xi,\Theta)$, a function $f:Seq(\Gamma,\Delta)\to Seq(\Xi,\Theta)$ is called a \textit{local} $((\Gamma,\Delta),(\Xi,\Theta))$-\textit{choice function} iff $(\forall \alpha \in Seq(\Gamma,\Delta))(\alpha\mathrel{\prec}f(\alpha))$. The set of all local $((\Gamma,\Delta),(\Xi,\Theta))$-choice functions will be denoted by $\mathfrak{F}((\Gamma,\Delta),(\Xi,\Theta))$.

However, $((\Gamma,\Delta),(\Xi,\Theta))$-choice functions will mostly interest us as restrictions of \textit{global choice functions}. More precisely, $F: Seq \to Seq$ is a global choice function iff for every $(\Gamma, \Delta) \in W_c$ there exists a $(\Xi, \Theta)\in W_c$ such that $(\Gamma,\Delta)\leq_c (\Xi,\Theta)$ and $F\upharpoonright Seq(\Gamma,\Delta) \in \mathfrak{F}((\Gamma,\Delta),(\Xi,\Theta))$. The set of all global choice functions will be denoted by $\mathfrak{G}$. The following lemma sums up some useful properties of global choice functions:
\begin{lemma}\label{L:global-functions}
	Let  $(\Gamma, \Delta) \in W_c$. Then the following statements hold:
	\begin{enumerate}
		\item For every $(\Xi, \Theta)\in W_c$ such that $(\Gamma,\Delta)\leq_c (\Xi,\Theta)$ and for every $f \in \mathfrak{F}((\Gamma,\Delta),(\Xi,\Theta))$, there exists an $F \in \mathfrak{G}$ such that $F\upharpoonright Seq(\Gamma,\Delta) = f$.
		
		\item For every $\alpha \in Seq(\Gamma,\Delta)$ and every $(\Xi, \Theta)\in W_c$ such that $end(\alpha) \leq_c (\Xi, \Theta)$, there exists a $\beta \in Seq$ such that $end(\beta) = (\Xi, \Theta)$, and an $F\in\mathfrak{G}$ such that $F(\alpha) = \beta$.
		
		\item $id[Seq]\in \mathfrak{G}$.
		
		\item Given any $n \in \omega$ and any $F_1,\ldots,F_n \in \mathfrak{G}$ we also have that $F_1\circ\ldots\circ F_n \in \mathfrak{G}$.
		
		\item For every $\alpha \in Seq$ and every $F \in \mathfrak{G}$, we have $\alpha\mathrel{\prec}F(\alpha)$.
	\end{enumerate} 
\end{lemma}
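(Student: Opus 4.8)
The plan is to dispose of the routine parts (3), (5), (1), (4) quickly and to concentrate the real work on part (2). I would begin with two structural observations. First, since $init(\alpha)$ is exactly the starting world of a standard sequence, the sets $Seq(\Gamma,\Delta)$ partition $Seq$; hence a function $F:Seq\to Seq$ lies in $\mathfrak{G}$ precisely when its restriction to each block $Seq(\Gamma,\Delta)$ is a local choice function into some block $Seq(\Xi,\Theta)$ with $(\Gamma,\Delta)\leq_c(\Xi,\Theta)$. Second, $\prec$ is a preorder: reflexivity and transitivity follow immediately from the corresponding properties of $\leq_c$ together with the fact that the length- and label-matching conditions in Definition \ref{D:standard-sequence} are themselves reflexive and transitive. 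From reflexivity of $\prec$ we obtain part (3), taking $(\Xi,\Theta)=(\Gamma,\Delta)$ on each block to see $id[Seq]\in\mathfrak{G}$; and part (5) is immediate, since any $\alpha$ lies in a unique block $Seq(\Gamma,\Delta)$ and $F\upharpoonright Seq(\Gamma,\Delta)$ being a local choice function yields $\alpha\prec F(\alpha)$.

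For part (1), given $f\in\mathfrak{F}((\Gamma,\Delta),(\Xi,\Theta))$ I would let $F$ agree with $f$ on $Seq(\Gamma,\Delta)$ and with the identity on every other block; this is well defined by the partition observation, and each restriction is a local choice function (into $(\Xi,\Theta)$ on the distinguished block, into the block itself elsewhere, by reflexivity of $\prec$), so $F\in\mathfrak{G}$ and $F\upharpoonright Seq(\Gamma,\Delta)=f$. Part (4) follows by induction on $n$: the case $n=0$ is the empty composition $id[Seq]\in\mathfrak{G}$ from part (3), so it suffices to show closure under binary composition. Given $F,G\in\mathfrak{G}$ and a block $Seq(\Gamma,\Delta)$, pick $(\Xi_1,\Theta_1)$ witnessing globality of $F$ there and then $(\Xi_2,\Theta_2)$ witnessing globality of $G$ on $Seq(\Xi_1,\Theta_1)$; transitivity of $\leq_c$ gives $(\Gamma,\Delta)\leq_c(\Xi_2,\Theta_2)$, and transitivity of $\prec$ applied to $\alpha\prec F(\alpha)\prec G(F(\alpha))$ shows $(F\circ G)\upharpoonright Seq(\Gamma,\Delta)\in\mathfrak{F}((\Gamma,\Delta),(\Xi_2,\Theta_2))$.

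The heart of the argument is part (2), and the key is to read conditions \eqref{Cond:1} and \eqref{Cond:2} (which hold in $\mathcal{M}_c$ by Proposition \ref{P:truth-lemma}.1) as lifting principles for the fixed bi-set $\|\phi\|_{\mathcal{M}_c}$. Unwinding the compositions, \eqref{Cond:1} says that if $\mathbf{w}\leq_c\mathbf{w}'$ and $R_{(X,Y)}(\mathbf{w},\mathbf{v})$ then some $\mathbf{v}'\geq_c\mathbf{v}$ satisfies $R_{(X,Y)}(\mathbf{w}',\mathbf{v}')$ (a \emph{forward} lift), while \eqref{Cond:2} says that if $R_{(X,Y)}(\mathbf{w},\mathbf{v})$ and $\mathbf{v}\leq_c\mathbf{v}'$ then some $\mathbf{w}'\geq_c\mathbf{w}$ satisfies $R_{(X,Y)}(\mathbf{w}',\mathbf{v}')$ (a \emph{backward} lift, raising the source to match a raised target). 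Writing $\alpha=((\Gamma_0,\Delta_0),\phi_1,\ldots,\phi_n,(\Gamma_n,\Delta_n))$ and given $(\Xi,\Theta)\geq_c(\Gamma_n,\Delta_n)$, I would set $(\Xi_n,\Theta_n):=(\Xi,\Theta)$ and run a downward induction $i=n,\ldots,1$, at each step applying \eqref{Cond:2} to the link $R_c((\Gamma_{i-1},\Delta_{i-1}),\|\phi_i\|_{\mathcal{M}_c},(\Gamma_i,\Delta_i))$ and to $(\Gamma_i,\Delta_i)\leq_c(\Xi_i,\Theta_i)$ to produce $(\Xi_{i-1},\Theta_{i-1})\geq_c(\Gamma_{i-1},\Delta_{i-1})$ with $R_c((\Xi_{i-1},\Theta_{i-1}),\|\phi_i\|_{\mathcal{M}_c},(\Xi_i,\Theta_i))$. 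The resulting $\beta:=((\Xi_0,\Theta_0),\phi_1,\ldots,\phi_n,(\Xi_n,\Theta_n))$ is then a standard sequence with $\alpha\prec\beta$ and $end(\beta)=(\Xi,\Theta)$.

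It remains to realize $\beta$ as $F(\alpha)$ for some $F\in\mathfrak{G}$, and by part (1) it is enough to build one local choice function $f\in\mathfrak{F}((\Gamma_0,\Delta_0),(\Xi_0,\Theta_0))$ with $f(\alpha)=\beta$. I would put $f(\alpha):=\beta$ and, for any other $\gamma\in Seq(\Gamma_0,\Delta_0)$, define $f(\gamma)$ by an \emph{upward} induction along $\gamma$ that lifts its initial world to $(\Xi_0,\Theta_0)$ and propagates the lift through each link using \eqref{Cond:1}, yielding $f(\gamma)\in Seq(\Xi_0,\Theta_0)$ with $\gamma\prec f(\gamma)$; extending $f$ to a global $F$ via part (1) then gives $F(\alpha)=\beta$. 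The main obstacle is precisely this orchestration in part (2): recognizing that the two diagram-completion axioms supply, respectively, the forward and backward lifts, that the \emph{backward} lift \eqref{Cond:2} is what hits the prescribed endpoint of $\alpha$ while the \emph{forward} lift \eqref{Cond:1} is what defines the ambient local choice function on the rest of the block, and that the two lifts need not cohere, which is harmless since a local choice function is only required to send each sequence to \emph{some} extension.
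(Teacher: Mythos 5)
Your proposal is correct and follows essentially the same route as the paper: the paper factors the identical constructions through two appendix lemmas --- forward lifting along \eqref{Cond:1} to build local choice functions, backward lifting along \eqref{Cond:2} to reach a prescribed endpoint, and patching a local choice function with identities on the remaining blocks of the partition of $Seq$ to globalize --- whereas you inline these steps. Your only substantive deviation, lifting each $\gamma \neq \alpha$ independently rather than extending prefix-coherently as the paper's appendix proof does, is a harmless simplification, since (as you note) local choice functions carry no coherence requirement.
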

The proof of Lemma \ref{L:global-functions} repeats the proof of \cite[Lemma 19]{olkhovikov} almost word-for-word; however, for the sake of completeness, we also include this proof in Appendix \ref{App:seq}.

The global choice functions are the basis for another type of sequences, that, along with the standard sequences, is necessary for the main model-theoretic construction of the present section. We will call them \textit{global sequences}. A global sequence is any sequence of the form $(F_1,\ldots, F_n) \in \mathfrak{G}^n$ where $n \in \omega$ (thus $\Lambda$ is also a global sequence with $n = 0$). Given two global sequences $(F_1,\ldots, F_k)$ and $(G_1,\ldots, G_m)$, we say that $(G_1,\ldots, G_m)$ \textit{extends} $(F_1,\ldots, F_k)$ and write $(F_1,\ldots, F_k)\mathrel{\sqsubseteq}(G_1,\ldots, G_m)$ iff $k \leq m$ and $F_1 = G_1,\ldots, F_k = G_k$. Furthermore, we will denote by $Glob$ the set $\bigcup_{n \in \omega}\mathfrak{G}^n$, that is to say, the set of all global sequences.

The final item in this series of preliminary model-theoretic constructions is a certain equivalence relation on $\mathcal{CN}$. Namely, given any $\phi,\psi \in \mathcal{CN}$, we define that:
$$
\phi\simeq\psi\text{ iff } (\phi \Leftrightarrow \psi \in \mathsf{N4CK}).
$$
For any $\phi \in \mathcal{CN}$, we will denote its equivalence class relative to $\simeq$ by $[\phi]_\simeq$.

We now proceed to define an $\mathcal{S}_c \in \mathbb{N}4$ induced by  $\mathcal{M}_c\in \mathbb{NC}$ of Definition \ref{D:canonical-model}.
\begin{definition}\label{D:canonical-sheaf}
	We set $\mathcal{S}_c:= (Glob, \sqsubseteq, \mathbb{M}^+, \mathbb{M}^-,\mathbb{F})$, where:
	\begin{itemize}
		\item For every $\bar{F} \in Glob$, $\mathbb{M}^+_{\bar{F}} := \mathbb{M}^+$, $\mathbb{M}^-_{\bar{F}} := \mathbb{M}^-$ i.e. every global sequence gets assigned the same positive model $\mathbb{M}^+\in \mathbb{C}(\Pi)$ (resp. the same negative model $\mathbb{M}^-\in \mathbb{C}(\Pi\cup\{\epsilon^2\})$). As for the components of these models, we set:
		\begin{itemize}
			\item $U^{\mathbb{M}^+} = U^{\mathbb{M}^-} = U := Seq \cup \{[\phi]_{\simeq}\mid \phi \in \mathcal{CN}\}$. 
			
			\item $p^{\mathbb{M}^+} := \{\beta \in Seq\mid p\in \pi^1(end(\beta))\}$ and $p^{\mathbb{M}^-} := \{\beta \in Seq\mid \sim p\in \pi^1(end(\beta))\}$ for every $p^1 \in Prop$.
			
			\item $S^{\mathbb{M}^+} :=\{[\phi]_{\simeq}\mid \phi \in \mathcal{CN}\}$ and $S^{\mathbb{M}^-} :=Seq$.
			
			\item $O^{\mathbb{M}^+} :=Seq$ and $O^{\mathbb{M}^-} := \{[\phi]_{\simeq}\mid \phi \in \mathcal{CN}\}$.
			
			\item $E^{\mathbb{M}^+} := \{(\beta, [\phi]_{\simeq})\in O^{\mathbb{M}^+}\times S^{\mathbb{M}^+}\mid \phi \in \pi^1(end(\beta))\}$ and $E^{\mathbb{M}^-} := \{(\beta, [\phi]_{\simeq})\in O^{\mathbb{M}^+}\times S^{\mathbb{M}^+}\mid \sim\phi \in \pi^1(end(\beta))\}$. 
			
			\item $R^{\mathbb{M}^+} := \{(\beta, [\phi]_{\simeq}, \gamma)\in  O^{\mathbb{M}^+}\times S^{\mathbb{M}^+}\times O^{\mathbb{M}^+}\mid (\exists (\Xi, \Theta) \in W_c)(\exists \psi \in [\phi]_{\simeq})(\gamma = \beta^\frown(\psi, (\Xi, \Theta)))\}$ and $R^{\mathbb{M}^-} := \emptyset$.
			
			\item $\epsilon^{\mathbb{M}^-}:= \{([\phi]_{\simeq},[\psi]_{\simeq})\in S^{\mathbb{M}^+}\times S^{\mathbb{M}^+}\mid [\phi]_{\simeq}\neq[\psi]_{\simeq}\}$.
		\end{itemize} 
		\item For  any $\bar{F}, \bar{G} \in Glob$ such that, for some $k \leq m$ we have  $\bar{F} = (F_1,\ldots, F_k)$ and  $\bar{G} = (F_1,\ldots, F_m)$ we have $\mathbb{F}_{\bar{F}\bar{G}}:U\to U$, where we set:
		$$
		\mathbb{F}_{\bar{F}\bar{G}}(\gamma):= \begin{cases}
			(F_{k + 1}\circ\ldots\circ F_m)(\gamma),\text{ if }\gamma \in  Seq;\\
			\gamma,\text{ otherwise. }
		\end{cases}
		$$
	\end{itemize}
\end{definition}
The first thing to show is that we, indeed, have  $\mathcal{S}_c\in \mathbb{N}4$. Again, we begin by establishing another technical fact:
\begin{lemma}\label{L:technical}
	For all $\bar{F}, \bar{G}\in  Glob$ such that $\bar{F}\mathrel{\sqsubseteq}\bar{G}$, it is true that:
	\begin{enumerate}
		\item $\mathbb{F}_{\bar{F}\bar{G}}\upharpoonright Seq \in \mathfrak{G}$.
		
		\item For every $\alpha \in Seq$ we have $\alpha\mathrel{\prec}\mathbb{F}_{\bar{F}\bar{G}}(\alpha)$; in particular, we have $end(\alpha)\leq_c end(\mathbb{F}_{\bar{F}\bar{G}}(\alpha))$, or, equivalently, $\pi^1(end(\alpha))\subseteq \pi^1(end(\mathbb{F}_{\bar{F}\bar{G}}(\alpha)))$.
	\end{enumerate} 
\end{lemma}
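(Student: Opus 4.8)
The plan is to reduce both parts of the Lemma directly to Lemma \ref{L:global-functions}, since the map $\mathbb{F}_{\bar{F}\bar{G}}$ was rigged precisely so that its restriction to $Seq$ is a composition of global choice functions. Write $\bar{F} = (F_1,\ldots,F_k)$ and $\bar{G} = (F_1,\ldots,F_m)$ with $k \leq m$, as permitted by the assumption $\bar{F}\mathrel{\sqsubseteq}\bar{G}$. The central observation I would record first is that, for $\gamma \in Seq$, the definition of $\mathbb{F}_{\bar{F}\bar{G}}$ gives $\mathbb{F}_{\bar{F}\bar{G}}(\gamma) = (F_{k+1}\circ\ldots\circ F_m)(\gamma)$, and since each $F_i \in \mathfrak{G}$ is in particular a function from $Seq$ to $Seq$, this composition maps $Seq$ into $Seq$. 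Hence $\mathbb{F}_{\bar{F}\bar{G}}\upharpoonright Seq$ is literally the function $F_{k+1}\circ\ldots\circ F_m$ (the clause sending $\gamma \notin Seq$ to itself simply does not interfere with the restriction).

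For Part 1 I would then split on whether the composition is empty. If $k < m$, the restriction is a composition of the $m - k \geq 1$ global choice functions $F_{k+1},\ldots,F_m$, so Lemma \ref{L:global-functions}.4 yields $\mathbb{F}_{\bar{F}\bar{G}}\upharpoonright Seq \in \mathfrak{G}$. If $k = m$, then $F_{k+1}\circ\ldots\circ F_m$ is the composition of the empty tuple, which by the convention fixed in Section \ref{S:Prel} equals $id[Seq]$; and Lemma \ref{L:global-functions}.3 guarantees $id[Seq] \in \mathfrak{G}$. In either case $\mathbb{F}_{\bar{F}\bar{G}}\upharpoonright Seq \in \mathfrak{G}$, which is Part 1.

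For Part 2 I would feed the result of Part 1 into Lemma \ref{L:global-functions}.5: applying that lemma to the global choice function $\mathbb{F}_{\bar{F}\bar{G}}\upharpoonright Seq$ gives $\alpha \mathrel{\prec} \mathbb{F}_{\bar{F}\bar{G}}(\alpha)$ for every $\alpha \in Seq$, which is the main assertion. The \emph{in particular} claim is then pure unwinding of Definition \ref{D:standard-sequence}: clause (3) of $\alpha \mathrel{\prec} \beta$ includes, among the componentwise inequalities, the relation between the final worlds, i.e. $end(\alpha) \leq_c end(\mathbb{F}_{\bar{F}\bar{G}}(\alpha))$; and since $\leq_c$ on $W_c$ is defined in Definition \ref{D:canonical-model} as inclusion of first projections, this is equivalent to $\pi^1(end(\alpha)) \subseteq \pi^1(end(\mathbb{F}_{\bar{F}\bar{G}}(\alpha)))$.

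I do not expect a genuine obstacle here: the content of the Lemma is entirely absorbed by Lemma \ref{L:global-functions}, and the work is bookkeeping. The only two points requiring care are the degenerate case $k = m$, where one must invoke the empty-composition convention together with Lemma \ref{L:global-functions}.3 rather than the closure clause, and the verification that restricting $\mathbb{F}_{\bar{F}\bar{G}}$ to $Seq$ really strips away the identity-on-non-sequences branch so that the restriction coincides on the nose with $F_{k+1}\circ\ldots\circ F_m$.
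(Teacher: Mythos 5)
Your proof is correct and takes essentially the same route as the paper's: both reduce everything to Lemma \ref{L:global-functions}, representing $\mathbb{F}_{\bar{F}\bar{G}}\upharpoonright Seq$ as the composition $F_{k+1}\circ\ldots\circ F_m$ of global choice functions, getting Part 1 from parts 3--4 of that lemma and Part 2 from part 5 together with Definition \ref{D:standard-sequence} (and Definition \ref{D:canonical-model} for the reformulation via first projections). The only cosmetic difference is that you treat the degenerate case $k=m$ by an explicit case split, while the paper absorbs it by prefixing the composition with an identity map; with the empty-composition convention this is the same bookkeeping.
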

\begin{proof}
	Assume the hypothesis; we may also assume, wlog, that, for some $k \leq m < \omega$, $\bar{F}$ and $\bar{G}$ are given in the following form:
	\begin{align}
		\bar{F} &= (F_1,\ldots, F_k)\label{E:barf}\tag{def-$\bar{F}$}\\
		\bar{G} &= (F_1,\ldots, F_m)\label{E:barg}\tag{def-$\bar{G}$}
	\end{align}
	In this case, we also get the following representation for $\mathbb{F}_{\bar{F}\bar{G}}$:
	\begin{align}
		\mathbb{F}_{\bar{F}\bar{G}}\upharpoonright Seq = id[U]\circ F_{k + 1}\circ\ldots\circ F_m\label{E:fab1}\tag{def1-$\mathbb{F}_{\bar{F}\bar{G}}$}\\
		\mathbb{F}_{\bar{F}\bar{G}}\upharpoonright \{[\phi]_{\simeq}\mid \phi \in \mathcal{CN}\} = id[\{[\phi]_{\simeq}\mid \phi \in \mathcal{CN}\}]\label{E:fab2}\tag{def2-$\mathbb{F}_{\bar{F}\bar{G}}$}
	\end{align} 
	Part 1 now easily follows from \eqref{E:fab1} and Lemma \ref{L:global-functions}.4. As for Part 2, we observe that, if $\alpha \in Seq$, then $
	\mathbb{F}_{\bar{F}\bar{G}}(\alpha) =  (id[U]\circ F_{k + 1}\circ\ldots\circ F_m)(\alpha)\in Seq$.
	Now Part 1 and Lemma \ref{L:global-functions}.5 together imply that $\alpha\mathrel{\prec}\mathbb{F}_{\bar{F}\bar{G}}(\alpha)$. By Definition \ref{D:standard-sequence}, this means that $end(\alpha)\leq_c end(\mathbb{F}_{\bar{F}\bar{G}}(\alpha))$, or, equivalently, that $\pi^1(end(\alpha))\subseteq \pi^1(end(\mathbb{F}_{\bar{F}\bar{G}}(\alpha)))$. 	
\end{proof}

\begin{lemma}\label{L:canonical-sheaf}
	$\mathcal{S}_c$ is a Nelsonian sheaf.
\end{lemma}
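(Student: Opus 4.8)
The plan is to unfold Definition \ref{D:nelsonian-sheaf}: showing that $\mathcal{S}_c$ is a Nelsonian sheaf amounts to verifying that its positive component $\mathcal{S}_c^+ = (Glob, \sqsubseteq, \mathbb{M}^+, \mathbb{F})$ lies in $\mathbb{I}(\Pi)$ and its negative component $\mathcal{S}_c^- = (Glob, \sqsubseteq, \mathbb{M}^-, \mathbb{F})$ lies in $\mathbb{I}(\Pi \cup \{\epsilon^2\})$. I would therefore run through the four clauses of Definition \ref{D:intuitionistic-sheaf} for each component. The structural bookkeeping is routine: $Glob = \bigcup_{n \in \omega}\mathfrak{G}^n$ is nonempty because it contains $\Lambda$; the prefix relation $\sqsubseteq$ is plainly reflexive and transitive; and $\mathbb{M}^+ \in \mathbb{C}(\Pi)$, $\mathbb{M}^- \in \mathbb{C}(\Pi \cup \{\epsilon^2\})$ hold by inspection, since $U \neq \emptyset$ and every listed interpretation is a subset of the appropriate power of $U$ (e.g. $E^{\mathbb{M}^+} \subseteq U^2$, $R^{\mathbb{M}^+} \subseteq U^3$, $\epsilon^{\mathbb{M}^-} \subseteq U^2$).

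For the family $\mathbb{F}$ I would first check the functoriality clauses 4a and 4b of Definition \ref{D:intuitionistic-sheaf}. Clause 4a holds because when $\bar{F} = \bar{G}$ the map $\mathbb{F}_{\bar{F}\bar{F}}$ is an empty composition on $Seq$, hence $id[U]$ by the convention on empty compositions fixed in Section \ref{S:Prel}, together with its identity action on the equivalence classes; thus $\mathbb{F}_{\bar{F}\bar{F}} = id[U^{\mathbb{M}^+}]$. Clause 4b is a direct telescoping: for $\bar{F} \sqsubseteq \bar{G} \sqsubseteq \bar{H}$ with index ranges $k \leq m \leq l$, the composition $\mathbb{F}_{\bar{F}\bar{G}} \circ \mathbb{F}_{\bar{G}\bar{H}}$ restricts on $Seq$ to $F_{k+1}\circ\cdots\circ F_m \circ F_{m+1}\circ\cdots\circ F_l = F_{k+1}\circ\cdots\circ F_l = \mathbb{F}_{\bar{F}\bar{H}}$ under the paper's composition convention, and acts as the identity on the classes, matching $\mathbb{F}_{\bar{F}\bar{H}}$.

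The substantive content is that each $\mathbb{F}_{\bar{F}\bar{G}}$ is genuinely a homomorphism of the relevant classical model into itself, i.e. preserves every predicate. For the unary $p, S, O$, the binary $E$ in either component, and $\epsilon^{\mathbb{M}^-}$, this drops out of Lemma \ref{L:technical}.2: $\mathbb{F}_{\bar{F}\bar{G}}$ fixes every class $[\phi]_{\simeq}$ and maps $Seq$ into $Seq$, while the monotonicity $\pi^1(end(\beta)) \subseteq \pi^1(end(\mathbb{F}_{\bar{F}\bar{G}}(\beta)))$ preserves precisely the membership conditions $p \in \pi^1(end(\beta))$, $\sim p \in \pi^1(end(\beta))$, $\phi \in \pi^1(end(\beta))$ and $\sim\phi \in \pi^1(end(\beta))$ defining $p^{\mathbb{M}^\pm}$ and $E^{\mathbb{M}^\pm}$; preservation of $\epsilon^{\mathbb{M}^-}$ is immediate from fixing the classes, and $R^{\mathbb{M}^-} = \emptyset$ is preserved vacuously. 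So the entire negative component and all but one predicate of the positive component are handled purely by Lemma \ref{L:technical}.2.

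The one genuinely delicate case, which I expect to be the main obstacle, is preservation of $R^{\mathbb{M}^+}$. A triple $(\beta, [\phi]_{\simeq}, \gamma) \in R^{\mathbb{M}^+}$ means $\gamma = \beta^\frown(\psi, (\Xi, \Theta))$ for some $\psi \in [\phi]_{\simeq}$ and $(\Xi, \Theta) \in W_c$, and I must produce $\mathbb{F}_{\bar{F}\bar{G}}(\gamma) = \mathbb{F}_{\bar{F}\bar{G}}(\beta)^\frown(\psi', (\Xi', \Theta'))$ with $\psi' \in [\phi]_{\simeq}$. By Lemma \ref{L:technical}.1 the map $G := \mathbb{F}_{\bar{F}\bar{G}}\upharpoonright Seq$ is a global choice function, so everything reduces to showing that a global choice function sends a one-step $R^{\mathbb{M}^+}$-extension to a one-step extension of the image. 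The retention of the indexing formula is free: since $\gamma \mathrel{\prec} G(\gamma)$ by Lemma \ref{L:global-functions}.5, the whole formula sequence of $G(\gamma)$ coincides with that of $\gamma$, so its last formula is exactly $\psi \in [\phi]_{\simeq}$, and its final node is some $(\Xi', \Theta') \geq_c (\Xi, \Theta)$ in $W_c$. The crux is the \emph{prefix-compatibility} of global choice functions, namely that the initial segment of $G(\gamma)$ of the same length as $G(\beta)$ is $G(\beta)$ itself — this is what upgrades "$G(\gamma)$ is an extension of $\gamma$" to "$G(\gamma)$ is a one-step extension of $G(\beta)$". I would extract this coherence from the properties of $\mathfrak{G}$ collected in Lemma \ref{L:global-functions}, following the construction of \cite[Lemma 19]{olkhovikov}; once it is in hand, the homomorphism property for $R^{\mathbb{M}^+}$, and with it the whole lemma, follows.
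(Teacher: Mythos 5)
Your proposal follows the paper's proof essentially step for step: the same structural bookkeeping, the same appeal to the empty-composition convention for clause 4a and to telescoping for clause 4b of Definition \ref{D:intuitionistic-sheaf}, and the same predicate-by-predicate case analysis in which Lemma \ref{L:technical} disposes of $p$, $S$, $O$, $E$, $\epsilon$ and the vacuously preserved $R^{\mathbb{M}^-}$, leaving $R^{\mathbb{M}^+}$ as the only substantive case.

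The gap sits exactly where you placed your deferral. You are right that the crux is \emph{prefix-compatibility} of global choice functions, but your claim that it can be ``extracted from the properties of $\mathfrak{G}$ collected in Lemma \ref{L:global-functions}'' does not hold up: none of the five clauses of that lemma constrains how a member of $\mathfrak{G}$ acts on a sequence \emph{together with} its one-step extension, and the definition of a local choice function is purely pointwise (each $\alpha$ must satisfy $\alpha\mathrel{\prec}f(\alpha)$, with all of $Seq(\Gamma,\Delta)$ landing in a common $Seq(\Xi,\Theta)$), so nothing prevents $f(\alpha)$ and $f(\alpha^\frown(\psi,(\Xi,\Theta)))$ from disagreeing at intermediate nodes; for such an $f$, extended to a global choice function via Lemma \ref{L:global-functions}.1, the induced $\mathbb{F}_{\bar{F}\bar{G}}$ would \emph{not} preserve $R^{\mathbb{M}^+}$. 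For what it is worth, the paper's own Case 4 is no more rigorous at this precise point: it obtains $\mathbb{F}_{\bar{F}\bar{G}}(\gamma)=\mathbb{F}_{\bar{F}\bar{G}}(\alpha)^\frown(\psi,(\Xi',\Theta'))$ by a bare appeal to Definition \ref{D:standard-sequence}, although that definition only yields $\gamma\mathrel{\prec}\mathbb{F}_{\bar{F}\bar{G}}(\gamma)$, i.e. that $\mathbb{F}_{\bar{F}\bar{G}}(\gamma)=\delta^\frown(\psi,(\Xi',\Theta'))$ for \emph{some} $\delta$ with $\alpha\mathrel{\prec}\delta$, not that $\delta=\mathbb{F}_{\bar{F}\bar{G}}(\alpha)$. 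So you have isolated a real subtlety that the paper glosses over; the honest repair is to obtain coherence from the way the choice functions are \emph{constructed} — the explicit constructions in Lemma \ref{L:choice-functions}.2 (induction on length via Lemma \ref{L:standard-sequence}.1) and in Lemma \ref{L:global-functions}.1 (identity outside $Seq(\Gamma,\Delta)$) do produce prefix-coherent functions, and coherence survives composition — which amounts to strengthening the definition of $\mathfrak{F}$ and $\mathfrak{G}$ (or restricting to their coherent members), not to citing Lemma \ref{L:global-functions} as it stands.
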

\begin{proof}
	It is clear that $Glob$ is non-empty and that $\sqsubseteq$ defines a pre-order on $Glob$. It is also clear that $\mathbb{M}^+\in \mathbb{C}(\Theta)$, $\mathbb{M}^-\in \mathbb{C}(\Theta\cup \{\epsilon^2\})$, and that, for $\bar{F}, \bar{G}\in  Glob$, $\bar{F}\mathrel{\sqsubseteq}\bar{G}$ implies $\mathbb{F}_{\bar{F}\bar{G}}:U\to U$. We need to show that $\mathcal{S}^+_c = (Glob, \sqsubseteq, \mathbb{M}^+, \mathbb{F})\in \mathbb{I}(\Theta)$ and that $\mathcal{S}^-_c =(Glob, \sqsubseteq, \mathbb{M}^+, \mathbb{F})\in \mathbb{I}(\Theta\cup\{\epsilon^2\})$.
	
	As for the conditions imposed by Definition \ref{D:intuitionistic-sheaf}.4  on the functions of the form $\mathbb{F}_{\bar{F}\bar{G}}$, it is clear from Definition \ref{D:canonical-sheaf} and from our convention on compositions of empty families of functions that (1) for any $\bar{F}\in  Glob$ we will have in $\mathbb{F}_{\bar{F}\bar{F}} = id[U]$ and that (2) if $\bar{F}, \bar{G}, \bar{H}\in  Glob$ are such that $\bar{F}\mathrel{\sqsubseteq}\bar{G}\mathrel{\sqsubseteq}\bar{H}$, then $\mathbb{F}_{\bar{F}\bar{G}}\circ\mathbb{F}_{\bar{G}\bar{H}} = \mathbb{F}_{\bar{F}\bar{H}}$.
	
	It remains to establish that, for each pair $\bar{F}, \bar{G}\in  Glob$ such that  $\bar{F}\mathrel{\sqsubseteq}\bar{G}$, we have  $\mathbb{F}_{\bar{F}\bar{G}}\in Hom(\mathbb{M}^+,\mathbb{M}^+)\cap Hom(\mathbb{M}^-,\mathbb{M}^-)$. The latter claim boils down to showing that the $\mathbb{M}^+$-extension of every predicate in $\Pi$ and the $\mathbb{M}^-$-extension of every predicate in $\Pi\cup\{\epsilon^2\}$ is preserved by $\mathbb{F}_{\bar{F}\bar{G}}$. In doing so, we will assume that, for some appropriate $k \leq m < \omega$, $\bar{F}$, $\bar{G}$, and $\mathbb{F}_{\bar{F}\bar{G}}$ are given in a form that satisfies \eqref{E:barf}, \eqref{E:barg}, \eqref{E:fab1}, and \eqref{E:fab2}. So let $\mathbb{X} \in \Pi\cup\{\epsilon^2\}$; the following cases have to be considered:
	
	\textit{Case 1}. $\mathbb{X} \in \{O^1, S^1, \epsilon^2\}$. Trivial by \eqref{E:fab1} and \eqref{E:fab2}.
	
	\textit{Case 2}. $\mathbb{X} = p^1 \in Prop$. If $\alpha \in U$ is such that $\alpha \in p^{\mathbb{M}^+}$, then, by Definition \ref{D:canonical-sheaf} and Lemma \ref{L:technical}, we must have all of the following: (1) $\alpha, \mathbb{F}_{\bar{F}\bar{G}}(\alpha) \in Seq$, (2) $\pi^1(end(\alpha))\subseteq \pi^1(end(\mathbb{F}_{\bar{F}\bar{G}}(\alpha)))$, and (3) $p\in \pi^1(end(\alpha))$. But then clearly also $p \in \pi^1(end(\mathbb{F}_{\bar{F}\bar{G}}(\alpha)))$, whence, further, $\mathbb{F}_{\bar{F}\bar{G}}(\alpha) \in p^{\mathbb{M}^+}$, as desired. Similarly, if  $\alpha \in U$ is such that $\alpha \in p^{\mathbb{M}^-}$, then, by Definition \ref{D:canonical-sheaf} and Lemma \ref{L:technical}, we must have (1) and (2) as given above, plus $\sim p\in \pi^1(end(\alpha))$. But then clearly also $\sim p \in \pi^1(end(\mathbb{F}_{\bar{F}\bar{G}}(\alpha)))$, whence, further, $\mathbb{F}_{\bar{F}\bar{G}}(\alpha) \in p^{\mathbb{M}^-}$, as desired
	
	\textit{Case 3}. $\mathbb{X} = E^2$. If $\alpha, \beta \in U$ are such that $(\alpha, \beta) \in E^{\mathbb{M}^+}$, then, arguing as in Case 2, we must have: (1) $\alpha, \mathbb{F}_{\bar{F}\bar{G}}(\alpha) \in Seq$, (2) $\beta \in S^{\mathbb{M}^+}$, in other words, $\beta = [\phi]_{\simeq}$ for some $\phi \in \mathcal{CN}$, (3) $\pi^1(end(\alpha))\subseteq \pi^1(end(\mathbb{F}_{\bar{F}\bar{G}}(\alpha)))$, and (4) $\phi \in \pi^1(end(\alpha))\subseteq \pi^1(end(\mathbb{F}_{\bar{F}\bar{G}}(\alpha)))$, so that we also have, by \eqref{E:fab2}, that $
	(\mathbb{F}_{\bar{F}\bar{G}}(\alpha),\mathbb{F}_{\bar{F}\bar{G}}(\beta)) = (\mathbb{F}_{\bar{F}\bar{G}}(\alpha),\mathbb{F}_{\bar{F}\bar{G}}([\phi]_{\simeq})) = (\mathbb{F}_{\bar{F}\bar{G}}(\alpha),[\phi]_{\simeq}) \in E^{\mathbb{M}^+}$. On the other hand, if $(\alpha, \beta) \in E^{\mathbb{M}^-}$, then we must have (1)--(3) as above plus $\sim\phi \in \pi^1(end(\alpha))\subseteq \pi^1(end(\mathbb{F}_{\bar{F}\bar{G}}(\alpha)))$, so that we also have, by \eqref{E:fab2}, that $
	(\mathbb{F}_{\bar{F}\bar{G}}(\alpha),\mathbb{F}_{\bar{F}\bar{G}}(\beta)) = (\mathbb{F}_{\bar{F}\bar{G}}(\alpha),\mathbb{F}_{\bar{F}\bar{G}}([\phi]_{\simeq})) = (\mathbb{F}_{\bar{F}\bar{G}}(\alpha),[\phi]_{\simeq}) \in E^{\mathbb{M}^-}$.
	
	\textit{Case 4}. $\mathbb{X} = R^3$. Then for no $\alpha, \beta, \gamma \in U$ can we have $(\alpha, \beta, \gamma) \in R^{\mathbb{M}^-}$. If  $\alpha, \beta, \gamma \in U$ are such that $(\alpha, \beta, \gamma) \in R^{\mathbb{M}^+}$, then, arguing as in Case 2, we must have: (1) $\alpha, \gamma, \mathbb{F}_{\bar{F}\bar{G}}(\alpha), \mathbb{F}_{\bar{F}\bar{G}}(\gamma)\in Seq$, (2) $\beta \in S^{\mathbb{M}^+}$, in other words, $\beta = [\phi]_{\simeq}$ for some $\phi \in \mathcal{CN}$, (3) $\alpha\mathrel{\prec}\mathbb{F}_{\bar{F}\bar{G}}(\alpha)$, and $\gamma\mathrel{\prec}\mathbb{F}_{\bar{F}\bar{G}}(\gamma)$, and, finally, (4) for some $(\Xi,\Theta)\in W_c$ and some $\psi \in [\phi]_{\simeq}$, we must have $\gamma = \alpha^\frown(\psi, (\Xi, \Theta))$. Now, Definition \ref{D:standard-sequence} implies that, for some $(\Xi',\Theta')\in W_c$ such that $(\Xi,\Theta)\leq_c(\Xi',\Theta')$ we must have $\mathbb{F}_{\bar{F}\bar{G}}(\gamma) = \mathbb{F}_{\bar{F}\bar{G}}(\alpha)^\frown(\psi, (\Xi', \Theta'))$. Therefore, by Definition \ref{D:canonical-sheaf}, we must have
	$$
	(\mathbb{F}_{\bar{F}\bar{G}}(\alpha), \mathbb{F}_{\bar{F}\bar{G}}(\beta), \mathbb{F}_{\bar{F}\bar{G}}(\gamma)) =  (\mathbb{F}_{\bar{F}\bar{G}}(\alpha), [\phi]_{\simeq}, \mathbb{F}_{\bar{F}\bar{G}}(\gamma))\in R^{\mathbb{M}^+}.
	$$  
\end{proof}
We will eventually have to show that the Nelsonian sheaf $\mathcal{S}_c$ satisfies \eqref{E:th5}--\eqref{E:th9}. The  following lemma shows the satisfaction of \eqref{E:th5}:
\begin{lemma}\label{L:indiscernibles}
	$\mathcal{S}_c \models^+_{n}  \eqref{E:th5}$.
\end{lemma}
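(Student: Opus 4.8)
The plan is to unwind the positive satisfaction of \eqref{E:th5} at an arbitrary node of $\mathcal{S}_c$ and to reduce the conclusion $x\equiv y$ to the coincidence of two $\simeq$-classes, which I will then read off from the canonical model $\mathcal{M}_c$ via the truth lemma. Since \eqref{E:th5} is a sentence, $\mathcal{S}_c\models^+_n\eqref{E:th5}$ amounts to showing, for every $\bar{F}\in Glob$ and all $a,b\in U$, that whenever $\mathcal{S}_c,\bar{F}\models^+_n(Sx\wedge Sy\wedge(\forall z)_O(Ezx\Leftrightarrow Ezy))[x/a,y/b]$, then $a=b$. (This is the standard reduction of a universally quantified positive implication; the successor quantifications built into $\forall$ and into $\models^+_n$ of $\to$ are harmless because $\bar F$ ranges over all nodes.) The two $S$-conjuncts force $a,b\in S^{\mathbb{M}^+}=\{[\chi]_\simeq\mid\chi\in\mathcal{CN}\}$, so $a=[\phi]_\simeq$ and $b=[\psi]_\simeq$ for some $\phi,\psi\in\mathcal{CN}$. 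The crucial structural facts I will exploit are that, by Definition \ref{D:canonical-sheaf}, every $\mathbb{F}_{\bar{F}\bar{G}}$ fixes each class $[\chi]_\simeq$, that $O^{\mathbb{M}^+}=Seq$, and that the extensions $E^{\mathbb{M}^+}$ and $E^{\mathbb{M}^-}$ are the same at every node; consequently the parameters $a,b$ are never moved and the evaluation of $E$ is node-independent.

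Next I would unwind the strong-equivalence conjunct. Reasoning exactly as in Corollary \ref{C:set-encoding} (using that $a,b$ are $\mathbb{F}$-fixed and that $E^{\mathbb{M}^\pm}$ is constant), the assumption $\mathcal{S}_c,\bar{F}\models^+_n(\forall z)_O(Ezx\Leftrightarrow Ezy)[x/a,y/b]$ is equivalent to the statement that for every $d\in Seq$ we have both $(d,[\phi]_\simeq)\in E^{\mathbb{M}^+}\Leftrightarrow(d,[\psi]_\simeq)\in E^{\mathbb{M}^+}$ and $(d,[\phi]_\simeq)\in E^{\mathbb{M}^-}\Leftrightarrow(d,[\psi]_\simeq)\in E^{\mathbb{M}^-}$; by the definitions of $E^{\mathbb{M}^+}$ and $E^{\mathbb{M}^-}$ this reads: for every $d\in Seq$, $\phi\in\pi^1(end(d))\Leftrightarrow\psi\in\pi^1(end(d))$ and $\sim\phi\in\pi^1(end(d))\Leftrightarrow\sim\psi\in\pi^1(end(d))$. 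I would then specialize $d$ to the length-one standard sequences: for any maximal $(\Gamma,\Delta)\in W_c$, the sequence $((\Gamma,\Delta))$ lies in $Seq(\Gamma,\Delta)\subseteq Seq$ and has $\pi^1(end(((\Gamma,\Delta))))=\Gamma$ (Definition \ref{D:standard-sequence}). Since these sequences realize every maximal pair, the displayed co-extensionality yields $\phi\in\Gamma\Leftrightarrow\psi\in\Gamma$ and $\sim\phi\in\Gamma\Leftrightarrow\sim\psi\in\Gamma$ for every maximal $(\Gamma,\Delta)\in W_c$.

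Finally I would translate this into an identity of $\simeq$-classes. By the truth lemma (Proposition \ref{P:truth-lemma}), the two equivalences just obtained say precisely that $\|\phi\|^+_{\mathcal{M}_c}=\|\psi\|^+_{\mathcal{M}_c}$ and $\|\phi\|^-_{\mathcal{M}_c}=\|\psi\|^-_{\mathcal{M}_c}$. Unwinding the four implicational conjuncts of $\phi\Leftrightarrow\psi$ (namely $\phi\to\psi$, $\psi\to\phi$, $\sim\psi\to\sim\phi$, $\sim\phi\to\sim\psi$) and using that $\models^+\sim\theta$ iff $\models^-\theta$, the coincidence of both extensions at all $\leq_c$-successors gives $\mathcal{M}_c,(\Gamma,\Delta)\models^+\phi\Leftrightarrow\psi$ at every maximal pair; hence, again by the truth lemma, $\phi\Leftrightarrow\psi\in\Gamma$ for every maximal $(\Gamma,\Delta)$, and so $\phi\Leftrightarrow\psi\in\mathsf{N4CK}$ by the completeness of $\mathfrak{N4CK}$ (a theorem is exactly a formula lying in every such $\Gamma$). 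Thus $\phi\simeq\psi$, i.e.\ $a=[\phi]_\simeq=[\psi]_\simeq=b$, which is $\mathcal{S}_c,\bar{F}\models^+_n(x\equiv y)[x/a,y/b]$. I expect the main obstacle to be the careful bookkeeping of the first two paragraphs: verifying that the nested successor quantifiers collapse because $E$, $O$, $S$ have node-independent extensions and the homomorphisms fix the parameter classes, and then recognizing that the length-one standard sequences suffice to recover full co-extensionality over $W_c$; the passage from co-extensionality to $\simeq$ via the truth lemma and completeness is then routine.
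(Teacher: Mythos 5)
Your proof is correct and follows essentially the same route as the paper's: identify $a,b$ as classes $[\phi]_\simeq,[\psi]_\simeq$ via $S^{\mathbb{M}^+}$, test the strong co-extensionality of $E$ against the length-one standard sequences $(\Gamma,\Delta)\in W_c$, and convert the result into $\phi\Leftrightarrow\psi\in\mathsf{N4CK}$ via Proposition \ref{P:truth-lemma} and the definition of $\simeq$. The only difference is presentational: the paper argues contrapositively (assuming $[\phi]_\simeq\neq[\psi]_\simeq$ and exhibiting a maximal pair that violates the assumed equivalence), whereas you argue directly from full co-extensionality to theoremhood of $\phi\Leftrightarrow\psi$.
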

\begin{proof}
	Let $\bar{F} \in Glob$ and let $a, b \in U$ be such that $
	\mathcal{S}_c,\bar{F} \models^+_n Sx\wedge Sz\wedge (\forall y)_O (Eyx\Leftrightarrow Eyz)[x/a, z/b]$.
	Then $a,b \in S^{\mathbb{M}^+}$, that is to say, for some $\phi, \psi \in \mathcal{CN}$, we must have $a = [\phi]_{\simeq}$ and $b = [\psi]_{\simeq}$. Assume, towards contradiction, that $a = [\phi]_{\simeq}\neq [\psi]_{\simeq} = b$, then we must have $(\phi\Leftrightarrow \psi)\notin \mathsf{N4CK}$; in view of Proposition \ref{P:truth-lemma}, we can suppose, wlog, that for some $(\Gamma, \Delta) \in W_c$ we either have $\phi \in \Gamma$ but $\psi \notin \Gamma$ or we have $\sim\phi \in \Gamma$ but $\sim\psi \notin \Gamma$. Since we clearly have $(\Gamma, \Delta) \in Seq \subseteq U$, it follows from Definition \ref{D:canonical-sheaf} that in the former case we must have both $
	\mathcal{S}_c,\bar{F} \models^+_{n} Oy\wedge Eyx[x/a, y/(\Gamma, \Delta)]$,
	and $\mathcal{S}_c,\bar{F} \not\models^+_{n} Eyz[y/(\Gamma, \Delta), z/b]$, which contradicts our assumption. Similarly, in the latter case we must have both $
	\mathcal{S}_c,\bar{F} \models^+_{n} Oy\wedge \sim Eyx[x/a, y/(\Gamma, \Delta)]$,
	and $\mathcal{S}_c,\bar{F} \not\models^+_{n} \sim Eyz[y/(\Gamma, \Delta), z/b]$, which, again, contradicts our assumption. Therefore, we must have $a = b$.
\end{proof} 
We will also need the following corollary to Lemma \ref{L:indiscernibles}
\begin{corollary}\label{C:indiscernibles}
	Let $x,y,z \in Ind$ be pairwise distinct, let $\phi \in \mathcal{FO}^y$,  $\bar{F}\in Glob$, and $a,b \in U$ be such that both $\mathcal{S}_c,\bar{F}\models^+_{n} Sx\wedge (\forall y)_O(Eyx\Leftrightarrow \phi)[x/a]$ and $\mathcal{S}_c,\bar{F}\models^+_{n} Sx\wedge (\forall y)_O(Eyx\Leftrightarrow \phi)[x/b]$. Then $a = b$.
\end{corollary}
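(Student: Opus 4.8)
The plan is to reduce everything to the extensionality property established in Lemma~\ref{L:indiscernibles}, namely $\mathcal{S}_c \models^+_n \eqref{E:th5}$. The idea is that each of the two hypotheses pins down the $E$-extension of the corresponding ``set'' ($a$, resp.\ $b$) so that it coincides with the truth/falsity behaviour of the single formula $\phi$; hence $a$ and $b$ have identical $E$-extensions, which is precisely the antecedent of \eqref{E:th5} instantiated at the pair $(a,b)$. Applying \eqref{E:th5} then forces $a\equiv b$, and since $\equiv$ is interpreted by genuine equality in $\mathbb{M}^+$, this means $a=b$.

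First I would decode the two hypotheses. Both formulas have $Sx$ as a conjunct, so $a, b \in S^{\mathbb{M}^+} = \{[\phi]_\simeq \mid \phi \in \mathcal{CN}\}$; in particular $a$ and $b$ are equivalence classes and are therefore fixed by every transition map, i.e.\ $\mathbb{F}_{\bar{F}\bar{G}}(a) = a$ and $\mathbb{F}_{\bar{F}\bar{G}}(b) = b$ for all $\bar{G} \sqsupseteq \bar{F}$. Applying Corollary~\ref{C:set-encoding} (taking its inner formula to be $Eyx$, its $\psi$ to be the given $\phi$, and $\chi := Oy$) to the hypothesis about $a$ yields that for every $\bar{G} \sqsupseteq \bar{F}$ and every $d \in O^{\mathbb{M}^+} = Seq$ we have $(d,a) \in E^{\mathbb{M}^+}$ iff $\mathcal{S}_c, \bar{G} \models^+_n \phi[y/d]$, and $(d,a) \in E^{\mathbb{M}^-}$ iff $\mathcal{S}_c, \bar{G} \models^-_n \phi[y/d]$; the same holds with $b$ in place of $a$. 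Chaining the two families of biconditionals through $\phi$, I obtain that for every $\bar{G} \sqsupseteq \bar{F}$ and every $d \in Seq$, both $(d,a) \in E^{\mathbb{M}^+}$ iff $(d,b) \in E^{\mathbb{M}^+}$, and $(d,a) \in E^{\mathbb{M}^-}$ iff $(d,b) \in E^{\mathbb{M}^-}$.

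It remains to re-encode this as $\mathcal{S}_c, \bar{F} \models^+_n Sx \wedge Sz \wedge (\forall y)_O(Eyx \Leftrightarrow Eyz)[x/a, z/b]$ and then to apply Lemma~\ref{L:indiscernibles}. Since $Eyz \notin \mathcal{FO}^y$, I cannot cite Corollary~\ref{C:set-encoding} verbatim for this direction and would instead unfold the clauses for $\forall$, $\to$, and the defined connective $\Leftrightarrow$ directly; the crucial observation is that the two families of biconditionals above were obtained uniformly at \emph{all} nodes $\bar{G} \sqsupseteq \bar{F}$, so the persistence (future-node) conditions hidden inside $\to$ and $\Leftrightarrow$ are automatically discharged, while the fact that $a,b$ are fixed by every $\mathbb{F}_{\bar{G}\bar{H}}$ keeps the relevant assignments on target. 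Together with $Sa$ and $Sb$ this supplies the full antecedent of \eqref{E:th5} at $(a,b)$. Instantiating the outer $\forall x\forall y$ of \eqref{E:th5} at node $\bar{F}$ by $x \mapsto a$, $y \mapsto b$ (via the $\forall$-clause), Lemma~\ref{L:indiscernibles} together with modus ponens for $\to$ at $\bar{F}$ yields $\mathcal{S}_c, \bar{F} \models^+_n (x \equiv y)[x/a, y/b]$, i.e.\ $a = b$. The main obstacle is the bookkeeping in this last re-encoding step: one must verify that the \emph{strong} equivalence $Eyx \Leftrightarrow Eyz$ genuinely holds at $\bar{F}$ — not merely the plain biconditionals at each separate node — and this is exactly where the uniformity over all $\bar{G} \sqsupseteq \bar{F}$ and the invariance of $a,b$ under the transition maps are needed.
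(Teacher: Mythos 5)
Your proposal is correct, and it shares the paper's overall skeleton: both proofs reduce the claim to Lemma \ref{L:indiscernibles} by establishing $\mathcal{S}_c,\bar{F}\models^+_{n} Sx \wedge Sz\wedge (\forall y)_O(Eyx\Leftrightarrow Eyz)[x/a,z/b]$, and your final step (instantiating at $\bar{F}$ and using modus ponens, with $\equiv$ read as genuine identity in $\mathbb{M}^+$) is exactly how the paper concludes. The difference lies in how that intermediate fact is obtained. The paper gets it in one line: rename the variable in the second hypothesis and apply the substitution theorem \eqref{E:T18} (Corollary \ref{C:substitution}), which is valid over Nelsonian sheaves by Proposition \ref{P:nelsonian-sheaves} and licenses replacing the strongly equivalent subformula $\phi$ by $Eyx$ inside $(\forall y)_O(Eyz\Leftrightarrow \phi)$. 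You instead argue semantically: decode both hypotheses via Corollary \ref{C:set-encoding}, chain the resulting biconditionals about $E^{\mathbb{M}^+}$ and $E^{\mathbb{M}^-}$ through $\phi$, and re-encode by unfolding the clauses for $\forall$, $\to$, and $\Leftrightarrow$ by hand. This route does go through: your two bookkeeping observations --- that the biconditionals hold uniformly at every $\bar{G}\sqsupseteq\bar{F}$, and that $a,b$, being equivalence classes, are fixed by all transition maps (Definition \ref{D:canonical-sheaf}) --- are precisely what discharges the future-node conditions hidden in the connectives, and your remark that Corollary \ref{C:set-encoding} cannot be cited verbatim for the re-encoding direction (since $Eyz\notin\mathcal{FO}^y$) is accurate. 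The trade-off is that your argument essentially re-proves, in this special case, the content of \eqref{E:T18} at the semantic level, whereas the paper buys a two-line proof by exploiting the fact that \eqref{E:T18} was established precisely to license such replacements of strongly equivalent formulas.
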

\begin{proof}
	Assume the premises. Renaming the variables, we get that $\mathcal{S}_c,\bar{F}\models^+_{n} Sz\wedge (\forall y)_O(Ezx\Leftrightarrow \phi)[z/b]$, whence, by \eqref{E:T18} we obtain that $\mathcal{S}_c,\bar{F}\models^+_{n} Sx \wedge Sz\wedge (\forall y)_O(Eyx\Leftrightarrow Eyz)[x/a,z/b]$. It follows, by Lemma \ref{L:indiscernibles}, that $a = b$.
\end{proof}
The next lemma can be seen as a version of a `truth lemma' for $\mathcal{S}_c$.
\begin{lemma}\label{L:sheaf-truth}
	Let $\bar{F} \in Glob$, $\alpha \in Seq$, $x \in Ind$, and let $\phi\in \mathcal{CN}$. Then the following statements hold:
	\begin{enumerate}
		\item $\mathcal{S}_c,\bar{F} \models^+_n ST_x(\phi)[x/\alpha] \text{ iff } \phi \in \pi^1(end(\alpha))$.
		
		\item $\mathcal{S}_c,\bar{F} \models^-_n ST_x(\phi)[x/\alpha] \text{ iff } \sim\phi \in \pi^1(end(\alpha))$.
		
		\item $\mathcal{S}_c,\bar{F}\models^+_{n} Sx\wedge (\forall y)_O(Eyx\Leftrightarrow ST_y(\phi))[x/[\phi]_{\simeq}]$.
	\end{enumerate}		
\end{lemma}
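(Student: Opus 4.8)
The plan is to prove all three statements by a single simultaneous induction on the construction of $\phi \in \mathcal{CN}$, establishing Parts 1 and 2 for \emph{every} $\bar{F} \in Glob$ and \emph{every} $\alpha \in Seq$ at each inductive stage, and then reading off Part 3 from Parts 1 and 2 for the same $\phi$. Indeed, once Parts 1 and 2 hold for $\phi$ at every node and every sequence, Part 3 is immediate from Corollary \ref{C:set-encoding}: unfolding $(\forall y)_O(Eyx \Leftrightarrow ST_y(\phi))[x/[\phi]_\simeq]$ there, and using $\mathbb{F}_{\bar{F}\bar{G}}([\phi]_\simeq) = [\phi]_\simeq$ together with the definitions of $E^{\mathbb{M}^+}$ and $E^{\mathbb{M}^-}$ in Definition \ref{D:canonical-sheaf}, both required biconditionals reduce exactly to the instances of Parts 1 and 2 for $\phi$ at the relevant $\bar{G}$ and $b \in Seq = O^{\mathbb{M}^+}$; that $[\phi]_\simeq \in S^{\mathbb{M}^+}$ disposes of the conjunct $Sx$. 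Throughout I will freely pass between membership of $\phi$ (resp.\ $\sim\phi$) in $\pi^1(end(\alpha))$ and positive (resp.\ negative) satisfaction of $\phi$ at $end(\alpha)$ in $\mathcal{M}_c$ by way of Proposition \ref{P:truth-lemma}, which makes the closure properties of the maximal pairs in $W_c$ available in semantic form.

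The base case $\phi = p$ and the cases $\phi \in \{\psi \wedge \chi, \psi \vee \chi, \sim\psi\}$ are routine: $ST_x$ commutes with the connectives, the Nelsonian clauses for $\models^+_n$ and $\models^-_n$ mirror the clauses of $\models^+$ and $\models^-$ in $\mathcal{M}_c$, and the matching closure properties of maximal pairs (e.g.\ $\sim(\psi\wedge\chi) \in \Gamma$ iff $\sim\psi \in \Gamma$ or $\sim\chi \in \Gamma$, and $\sim\sim\psi \in \Gamma$ iff $\psi \in \Gamma$) follow from Proposition \ref{P:truth-lemma}. For $\phi = \psi \to \chi$, Part 2 is again routine, the negative clause splitting $ST_x(\psi\to\chi)$ into $ST_x(\psi)$ verified and $ST_x(\chi)$ falsified, which matches $\sim(\psi\to\chi)\in\Gamma$. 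For Part 1 the intuitionistic clause forces a quantification over $\sqsubseteq$-extensions: the right-to-left direction uses $\alpha \mathrel{\prec} \mathbb{F}_{\bar{F}\bar{G}}(\alpha)$, hence $\pi^1(end(\alpha)) \subseteq \pi^1(end(\mathbb{F}_{\bar{F}\bar{G}}(\alpha)))$ (Lemma \ref{L:technical}.2), together with modus-ponens closure of maximal pairs; the left-to-right direction, given $\psi \to \chi \notin \pi^1(end(\alpha))$, produces via Proposition \ref{P:truth-lemma} a $\leq_c$-successor $(\Xi,\Theta)$ of $end(\alpha)$ with $\psi \in \Xi$ and $\chi \notin \Xi$, then realizes $(\Xi, \Theta)$ as $end(\mathbb{F}_{\bar{F}\bar{G}}(\alpha))$ for a one-step extension $\bar{G} = \bar{F}^{\frown}(F)$ of $\bar{F}$ supplied by Lemma \ref{L:global-functions}.2, so that the IH for $\psi$ and $\chi$ refutes positive satisfaction of the implication.

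The conditional case $\phi = \psi \boxto \chi$ is the heart of the argument and the main obstacle. I would first tame the existential quantifier in $ST_x(\psi\boxto\chi)$: by the IH Part 3 for the antecedent $\psi$ the class $[\psi]_\simeq$ witnesses $Sy \wedge (\forall z)_O(Ezy \Leftrightarrow ST_z(\psi))$ at every node, and by Corollary \ref{C:indiscernibles} it is the \emph{only} such witness, so both the positive and the negative evaluation of $ST_x(\psi\boxto\chi)$ collapse to statements about $\forall w(Rxyw \to ST_w(\chi))$ with $y$ bound to $[\psi]_\simeq$, the ampersand being unfolded through its definition $\phi\,\&\,\psi = \sim(\phi\to\sim\psi)$, so that its positive reading coincides with $\wedge$ and its negative reading with $\phi\to\sim\psi$ (cf.\ \eqref{E:T16}). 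The whole case then turns on reading off the atomic relation $R^{\mathbb{M}^+}$ from Definition \ref{D:canonical-sheaf}: $R^{\mathbb{M}^+}(\alpha', [\psi]_\simeq, \gamma)$ holds exactly when $\gamma = (\alpha')^{\frown}(\psi', (\Xi,\Theta))$ for some $\psi' \simeq \psi$ and some $(\Xi,\Theta)$, which by Definition \ref{D:standard-sequence} forces $R_c(end(\alpha'), \|\psi\|_{\mathcal{M}_c}, (\Xi,\Theta))$ and $end(\gamma) = (\Xi, \Theta)$ (here $\psi' \simeq \psi$ gives $\|\psi'\|_{\mathcal{M}_c} = \|\psi\|_{\mathcal{M}_c}$). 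Thus the one-step successors of $\alpha'$ along $R^{\mathbb{M}^+}$ indexed by $[\psi]_\simeq$ correspond precisely to the $R_c$-successors of $end(\alpha')$ indexed by $\|\psi\|_{\mathcal{M}_c}$, and the clauses of Definition \ref{D:canonical-model} defining $R_c$ (namely $\{\theta \mid \psi \boxto \theta \in \Gamma_0\} \subseteq \Gamma_1$ for Part 1, and $\{\sim(\psi\boxto\theta) \mid \sim\theta \in \Gamma_1\}\subseteq\Gamma_0$ for Part 2) let me match membership of $\psi \boxto \chi$ (resp.\ $\sim(\psi\boxto\chi)$) in $\pi^1(end(\alpha))$ with the required quantifier pattern, applying the IH Parts 1 and 2 for $\chi$ to the end-pairs of the successor sequences. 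For Part 1 the forward direction takes $\bar{G} = \bar{F}$ and the one-step extension $\gamma = \alpha^{\frown}(\psi, (\Xi,\Theta))$ built from a failing successor furnished by Proposition \ref{P:truth-lemma}; for Part 2 the delicate point is the backward direction, where a falsification $\sim(\psi\boxto\chi) \in \pi^1(end(\alpha))$ must be propagated to every $\sqsubseteq$-extension $\bar{G}$, which I handle by persistence ($\alpha \mathrel{\prec} \mathbb{F}_{\bar{F}\bar{G}}(\alpha)$, whence $\sim(\psi\boxto\chi) \in \pi^1(end(\mathbb{F}_{\bar{F}\bar{G}}(\alpha)))$) followed by a fresh appeal to the $\models^-$-clause for $\boxto$ in $\mathcal{M}_c$ via Proposition \ref{P:truth-lemma}. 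Correctly threading the three layers of order --- the sheaf preorder $\sqsubseteq$ on $Glob$, the concatenation structure of standard sequences encoded in $R^{\mathbb{M}^+}$, and the model order $\leq_c$ with its relation $R_c$ --- while keeping the ampersand's falsity behaviour under control is the step I expect to demand the most care.
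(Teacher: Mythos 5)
Your overall strategy coincides with the paper's own proof: the same simultaneous induction with Part 3 extracted from Parts 1 and 2 via Corollary \ref{C:set-encoding}, the same use of Corollary \ref{C:indiscernibles} together with the inductive Part 3 for the antecedent to tame the existential in $ST_x(\psi\boxto\chi)$, the same unfolding of the ampersand, and the same correspondence between one-step $R^{\mathbb{M}^+}$-successors and $R_c$-successors. The propositional and implication cases are handled exactly as in the paper, and your treatment of Part 2 of the conditional case (propagating $\sim(\psi\boxto\chi)$ along $\mathrel{\prec}$ and re-appealing to the $\models^-$-clause at each $\sqsubseteq$-extension) is a sound minor variant of the paper's argument, which instead transports a single falsifying witness via $\mathbb{F}_{\bar{F}\bar{G}}$ and Lemma \ref{L:n4-standard}.1.

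There is, however, one genuine gap: in Part 1 of the conditional case you claim that the satisfaction-to-membership direction ``takes $\bar{G} = \bar{F}$'' and a one-step extension $\gamma = \alpha^\frown(\psi,(\Xi,\Theta))$ built from a failing successor furnished by Proposition \ref{P:truth-lemma}. But when $\psi\boxto\chi\notin\pi^1(end(\alpha))$, what Proposition \ref{P:truth-lemma} actually furnishes (this is \eqref{E:boxto}, reflecting the $\forall\mathbf{v}\geq\mathbf{w}$ quantifier in the $\models^+$-clause for $\boxto$) is a pair $(\Xi,\Theta),(\Xi',\Theta')$ with $end(\alpha)\leq_c(\Xi,\Theta)$, $R_c((\Xi,\Theta),\|\psi\|_{\mathcal{M}_c},(\Xi',\Theta'))$ and $\chi\notin\Xi'$: the failing $R_c$-transition issues from a $\leq_c$-\emph{extension} of $end(\alpha)$, not from $end(\alpha)$ itself. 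Conditions \eqref{Cond:1} and \eqref{Cond:2} only lift transitions upward along $\leq_c$ and never pull one back down, and no ``local existence lemma'' at $end(\alpha)$ is derivable from the results stated in the paper. Hence, with $\bar{G}=\bar{F}$, the only $R^{\mathbb{M}^+}$-successors of $\mathbb{F}_{\bar{F}\bar{F}}(\alpha)=\alpha$ indexed by $[\psi]_\simeq$ end at $R_c$-successors of $end(\alpha)$, and none of these need falsify $ST_w(\chi)$, so your refutation does not get off the ground. The correct move --- the one the paper makes here, and the one you yourself make in the implication case --- is to invoke Lemma \ref{L:global-functions}.2 to obtain $\beta\in Seq$ with $end(\beta)=(\Xi,\Theta)$ and $F\in\mathfrak{G}$ with $F(\alpha)=\beta$, pass to the proper extension $\bar{G}=\bar{F}^\frown(F)$, and only then form $\gamma=\beta^\frown(\psi,(\Xi',\Theta'))$, which refutes the universally quantified part of the translation at $(\bar{G},[\psi]_\simeq)$; Corollary \ref{C:indiscernibles} then spreads the failure to every potential witness of the existential. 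With this repair your argument goes through and is essentially the paper's.
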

\begin{proof}
	We observe, first, that, for any given $\phi \in \mathcal{CN}$, Parts 1 and 2 together clearly imply Part 3. Indeed, we must have $\mathcal{S}_c,\bar{F}\models^+_{n} Sx[x/[\phi]_{\simeq}]$. As for the other conjunct, if Parts 1 and 2 hold for a given $\phi$ and for all instantiations of $\bar{F}$, $\alpha$, and $x$, then assume that a $\bar{G} \in Glob$ is such that $\bar{F}\mathrel{\sqsubseteq}\bar{G}$. Then, by Part 1, we must have, for every $\beta \in Seq$:
	$$
	E^{\mathbb{M}^+}(\beta,[\phi]_{\simeq}) \text{ iff } \phi \in \pi^1(end(\beta)) \text{ iff } \mathcal{S}_c,\bar{G}\models^+_{n} ST_x(\phi)[x/\beta],
	$$
	and, by Part 2, we will have, for every  $\beta \in Seq$:
	$$
	E^{\mathbb{M}^-}(\beta,[\phi]_{\simeq}) \text{ iff } \sim\phi \in \pi^1(end(\beta)) \text{ iff } \mathcal{S}_c,\bar{G}\models^+_{n} ST_x(\phi)[x/\beta].
	$$
	Since we have $[\phi]_{\simeq} = \mathbb{F}_{\bar{F}\bar{G}}([\phi]_{\simeq})$, Corollary \ref{C:set-encoding} allows us to conclude that $
	\mathcal{S}_c,\bar{F}\models^+_{n} (\forall y)_O(Eyx\Leftrightarrow ST_y(\phi)))[x/[\phi]_{\simeq}]$.
	
	We will therefore prove all the three parts simultaneously by induction on the construction of $\phi\in \mathcal{CN}$; but, in view of the foregoing observation, we will only argue for Parts 1 and 2.
	
	\textit{Basis}. Assume that $\phi = p$ for some $p^1 \in Prop$. Then $ST_x(\phi) = px$ and Definition \ref{D:canonical-sheaf} implies that we have (for Part 1) $\mathcal{S}_c,\bar{F}\models^+_{n} px[x/\alpha]$ iff $\alpha \in p^{\mathbb{M}^+}$ iff $p \in \pi^1(end(\alpha))$,
	and (for Part 2) $\mathcal{S}_c,\bar{F}\models^-_{n} px[x/\alpha]$ iff $\alpha \in p^{\mathbb{M}^-}$ iff $\sim p \in \pi^1(end(\alpha))$.
	
	\textit{Induction step}. Again, several cases are possible:
	
	\textit{Case 1}. $\phi = \psi \wedge \chi$. Then $ST_x(\phi) = ST_x(\psi)\wedge ST_x(\chi)$ and we have, for Part 1, by IH and Proposition \ref{P:truth-lemma}:
	\begin{align*}
		\mathcal{S}_c,\bar{F}\models^+_{n} (ST_x(\psi)\wedge ST_x(\chi))[x/\alpha] &\text{ iff } \mathcal{S}_c,\bar{F}\models^+_{n} ST_x(\psi)[x/\alpha]\text{ and }\mathcal{S}_c,\bar{F}\models^+_{n} ST_x(\chi)[x/\alpha]\\
		&\text{ iff }  \psi\in \pi^1(end(\alpha))\text{ and }\chi\in \pi^1(end(\alpha))\\
		&\text{ iff }  \mathcal{M}_c, end(\alpha)\models^+ \psi\text{ and } \mathcal{M}_c, end(\alpha)\models^+ \chi\\
		&\text{ iff }  \mathcal{M}_c, end(\alpha)\models^+ \psi\wedge\chi\\
		&\text{ iff }  \psi\wedge\chi\in \pi^1(end(\alpha)) 
	\end{align*}
and, for Part 2:
\begin{align*}
	\mathcal{S}_c,\bar{F}\models^-_{n} (ST_x(\psi)\wedge ST_x(\chi))[x/\alpha] &\text{ iff } \mathcal{S}_c,\bar{F}\models^-_{n} ST_x(\psi)[x/\alpha]\text{ or }\mathcal{S}_c,\bar{F}\models^-_{n} ST_x(\chi)[x/\alpha]\\
	&\text{ iff }  \sim\psi\in \pi^1(end(\alpha))\text{ or }\sim\chi\in \pi^1(end(\alpha))\\
	&\text{ iff }  \mathcal{M}_c, end(\alpha)\models^- \psi\text{ or } \mathcal{M}_c, end(\alpha)\models^- \chi\\
	&\text{ iff }  \mathcal{M}_c, end(\alpha)\models^- \psi\wedge\chi\\
	&\text{ iff }  \sim(\psi\wedge\chi)\in \pi^1(end(\alpha)) 
\end{align*}
	
	\textit{Case 2} $\phi = \psi \vee \chi$, and \textit{Case 3} $\phi = \sim\psi$ are similar to Case 1.
	
	\textit{Case 4}. $\phi = \psi \to \chi$. By Proposition \ref{P:truth-lemma}, we know that:
	\begin{align}
		\psi\to&\chi\in \pi^1(end(\alpha)) \Leftrightarrow \mathcal{M}_c, end(\alpha)\models^+ \psi\to\chi\notag\\
		&\text{ iff }(\forall(\Xi,\Theta)\in W_c)(end(\alpha)\leq_c(\Xi,\Theta)\text{ and }\mathcal{M}_c, (\Xi,\Theta)\models^+ \psi\text{ implies }\mathcal{M}_c, (\Xi,\Theta)\models^+ \chi)\notag\\
		&\text{ iff }(\forall(\Xi,\Theta)\in W_c)(end(\alpha)\leq_c(\Xi,\Theta)\text{ and }\psi \in \Xi \text{ implies }\chi\in \Xi)\label{E:to}\tag{$\to$}
	\end{align}
	We have $ST_x(\phi) = ST_x(\psi)\to ST_x(\chi)$, and we argue as follows for Part 1.
	
	First, assume that $\psi\to\chi\in \pi^1(end(\alpha))$, and let
	$\bar{G} \in Glob$ be such that $\bar{F}\mathrel{\sqsubseteq}\bar{G}$. Then, by Lemma \ref{L:technical}.2, $end(\alpha)\leq_c end(\mathbb{F}_{\bar{F}\bar{G}}(\alpha))$, so that, by \eqref{E:to}, $
	\psi \in \pi^1(end(\mathbb{F}_{\bar{F}\bar{G}}(\alpha)))$ entails $\chi\in \pi^1(end(\mathbb{F}_{\bar{F}\bar{G}}(\alpha)))$,
	whence, by IH, it follows that
	$$
	\mathcal{S}_c,\bar{G}\models^+_{n} ST_x(\psi)[x/\mathbb{F}_{\bar{F}\bar{G}}(\alpha)]\text{ implies } \mathcal{S}_c,\bar{G}\models^+_{n} ST_x(\chi)[x/\mathbb{F}_{\bar{F}\bar{G}}(\alpha)].
	$$
	Since $\bar{G}\sqsupseteq\bar{F}$ was chosen in $Glob$ arbitrarily, we have shown that $\mathcal{S}_c,\bar{F}\models^+_{n} (ST_x(\psi)\to ST_x(\chi))[x/\alpha]$, or, equivalently, that $\mathcal{S}_c,\bar{F}\models^+_{n} ST_x(\phi)[x/\alpha]$.
	
	For the converse, we argue by contraposition. Assume that $\psi\to\chi\notin \pi^1(end(\alpha))$. By \eqref{E:to}, there must be a  $(\Xi,\Theta)\in W_c$ such that $end(\alpha)\leq_c(\Xi,\Theta)$, $\psi \in \Xi$, and $\chi\notin \Xi$. By Lemma \ref{L:global-functions}.2, we can choose a $\beta \in Seq$ such that $end(\beta) = (\Xi,\Theta)$, and a $F \in \mathfrak{G}$ such that $F(\alpha) = \beta$. But then we can set $\bar{G} := \bar{F}^\frown(F) \in Glob$; we clearly have $\bar{G}\sqsupseteq\bar{F}$, $\mathbb{F}_{\bar{F}\bar{G}}(\alpha) = F(\alpha) = \beta$, and $\pi^1(end(\beta)) = \Xi$. Therefore, under these settings, we also get that $\psi \in \pi^1(end(\mathbb{F}_{\bar{F}\bar{G}}(\alpha)))$, but $\chi\notin \pi^1(end(\mathbb{F}_{\bar{F}\bar{G}}(\alpha)))$. Thus we must have, by IH, that $\mathcal{S}_c,\bar{G}\models^+_n ST_x(\psi)[x/\mathbb{F}_{\bar{F}\bar{G}}(\alpha)]$ but  $\mathcal{S}_c,\bar{G}\not\models^+_n ST_x(\chi)[x/\mathbb{F}_{\bar{F}\bar{G}}(\alpha)]$. The latter means that $\mathcal{S}_c,\bar{F}\not\models^+_n (ST_x(\psi)\to ST_x(\chi))[x/\alpha]$, or, equivalently, that $\mathcal{S}_c,\bar{F}\not\models^+_n ST_x(\phi)[x/\alpha]$. 
	
	As for Part 2, we have, by IH and Proposition \ref{P:truth-lemma}:
	\begin{align*}
		\mathcal{S}_c,\bar{F}\models^-_{n} (ST_x(\psi)\to ST_x(\chi))[x/\alpha] &\text{ iff } \mathcal{S}_c,\bar{F}\models^+_{n} ST_x(\psi)[x/\alpha]\text{ and }\mathcal{S}_c,\bar{F}\models^-_{n} ST_x(\chi)[x/\alpha]\\
		&\text{ iff }  \psi\in \pi^1(end(\alpha))\text{ and }\sim\chi\in \pi^1(end(\alpha))\\
		&\text{ iff }  \mathcal{M}_c, end(\alpha)\models^+ \psi\text{ and } \mathcal{M}_c, end(\alpha)\models^- \chi\\
		&\text{ iff }  \mathcal{M}_c, end(\alpha)\models^- \psi\to\chi\\
		&\text{ iff }  \sim(\psi\to\chi)\in \pi^1(end(\alpha)) 
	\end{align*}
	
	\textit{Case 5}. $\phi = \psi\boxto\chi$. Then we have 
	\begin{align*}
		ST_x(\phi) &:= \exists y((Sy \wedge (\forall z)_O(Ezy\Leftrightarrow ST_z(\psi)))\,\&\,\forall w(Rxyw\to ST_w(\chi))).
	\end{align*}
We argue for Part 1 first. By Proposition \ref{P:truth-lemma}, we know that:
	\begin{align}
		\psi\boxto&\chi\in \pi^1(end(\alpha)) \text{ iff } \mathcal{M}_c, end(\alpha)\models^+ \psi\boxto\chi\notag\\
		&\text{ iff }(\forall(\Xi,\Theta),(\Xi',\Theta')\in W_c)\notag\\
		&\qquad\quad(end(\alpha)\leq_c(\Xi,\Theta)\text{ and }R_c((\Xi,\Theta), \|\psi\|_{\mathcal{M}_c}, (\Xi',\Theta')) \text{ implies }\mathcal{M}_c, (\Xi',\Theta')\models^+ \chi)\notag\\
	&\text{ iff }(\forall(\Xi,\Theta),(\Xi',\Theta')\in W_c)\notag\\
		&\qquad(end(\alpha)\leq_c(\Xi,\Theta)\text{ and }R_c((\Xi,\Theta), \|\psi\|_{\mathcal{M}_c}, (\Xi',\Theta')) \text{ implies }\chi\in \Xi')\label{E:boxto}\tag{$\boxto$}
	\end{align}
	Assume first that $\psi\boxto\chi\in \pi^1(end(\alpha))$; let $\beta \in Seq$, and let
	$\bar{G} \in Glob$ be such that $\bar{F}\mathrel{\sqsubseteq}\bar{G}$. If $(\mathbb{F}_{\bar{F}\bar{G}}(\alpha), [\psi]_{\simeq},\beta) \in R^{\mathbb{M}^+}$, then we must have, by Definition \ref{D:canonical-sheaf}, that, for some $(\Gamma, \Delta)\in W_c$ and some $\theta\in [\psi]_{\simeq}$, we have $\beta = \mathbb{F}_{\bar{F}\bar{G}}(\alpha)^\frown(\theta, (\Gamma, \Delta))$. But then, by Definition \ref{D:standard-sequence}, also $R_c(end(\mathbb{F}_{\bar{F}\bar{G}}(\alpha)), \|\theta\|_{\mathcal{M}_c}, (\Gamma, \Delta))$; and, since $\theta\in [\psi]_{\simeq}$, Proposition \ref{P:truth-lemma} implies that $\|\theta\|_{\mathcal{M}_c} = \|\psi\|_{\mathcal{M}_c}$, so that $R_c(end(\mathbb{F}_{\bar{F}\bar{G}}(\alpha)), \|\psi\|_{\mathcal{M}_c}, (\Gamma, \Delta))$. By Lemma \ref{L:technical}.2, we must further have $end(\alpha)\leq_c end(\mathbb{F}_{\bar{F}\bar{G}}(\alpha))$, and now \eqref{E:boxto} yields that
	$\chi \in \Gamma = \pi^1(end(\beta))$. Therefore, it follows by IH that $\mathcal{S}_c,\bar{G}\models^+_{n} ST_w(\chi)[w/\beta]$.
	
	Since the choice of $\beta \in Seq$ and 
	$\bar{G} \in Glob$ such that $\bar{F}\mathrel{\sqsubseteq}\bar{G}$ was made arbitrarily, we have shown that $
	\mathcal{S}_c,\bar{F}\models^+_n \forall w(Rxyw\to ST_w(\chi))[x/\alpha, y/[\psi]_{\simeq}]$.
	Moreover, IH for Part 3 implies that $
	\mathcal{S}_c,\bar{F}\models^+_n Sy\wedge (\forall z)_O(Ezy\Leftrightarrow ST_z(\psi)))[y/[\psi]_{\simeq}]$. Summing up the two conjuncts and applying \eqref{Ax:10} and \eqref{E:T15}, we obtain that:
	\begin{align*}
		\mathcal{S}_c,\bar{F}\models^+_n \exists y((Sy \wedge (\forall z)_O(Ezy\Leftrightarrow ST_z(\psi)))\,\&\,\forall w(Rxyw\to ST_w(\chi)))[x/\alpha],
	\end{align*}
	or, in other words, that $\mathcal{S}_c,\bar{F}\models^+_n ST_x(\phi)[x/\alpha]$, as desired.
	
	For the converse, we argue by contraposition. Assume that $\psi\boxto\chi\notin \pi^1(end(\alpha))$. In this case, \eqref{E:boxto} implies the existence of $(\Xi,\Theta),(\Xi',\Theta')\in W_c$ such that $end(\alpha)\leq_c(\Xi,\Theta)$, $R_c((\Xi,\Theta), \|\psi\|_{\mathcal{M}_c}, (\Xi',\Theta'))$, and $\chi\notin \Xi'$. By Lemma \ref{L:global-functions}.2, there exists a $\beta \in Seq$ such that $end(\beta) = (\Xi,\Theta)$ and an $F \in \mathfrak{G}$ such that $F(\alpha) = \beta$. But then clearly $\bar{G} = \bar{F}^\frown(F) \in Glob$ and $\bar{F}\mathrel{\sqsubseteq}\bar{G}$. Moreover, Definition \ref{D:canonical-sheaf} implies that $\mathbb{F}_{\bar{F}\bar{G}}(\alpha) = \beta$. Next, $R_c((\Xi,\Theta), \|\psi\|_{\mathcal{M}_c}, (\Xi',\Theta'))$ implies that $\gamma = \beta^\frown(\psi, (\Xi',\Theta')) \in Seq$, whence, by Definition \ref{D:canonical-sheaf}, $(\beta, [\psi]_\simeq, \gamma)\in R^{\mathbb{M}^+}$. So, all in all we get that $
	\mathcal{S}_c,\bar{G}\models^+_n Rxyw[x/\mathbb{F}_{\bar{F}\bar{G}}(\alpha), y/[\psi]_\simeq, w/\gamma]$,
	and, on the other hand, since $\chi\notin \Xi' = \pi^1(end(\gamma))$, IH for Part 1 implies that $
	\mathcal{S}_c,\bar{G}\not\models^+_n ST_w(\chi)[w/\gamma]$.
	Therefore, given that $\bar{F}\mathrel{\sqsubseteq}\bar{G}$, it follows that:
	\begin{equation}\label{E:dagger}\tag{$\dagger$}
		\mathcal{S}_c,\bar{F}\not\models^+_{n} \forall w(Rxyw\to ST_w(\chi))[x/\alpha, y/[\psi]_\simeq]
	\end{equation}
	If now $a \in U$ is such that we have:
	\begin{equation}\label{E:ddagger}\tag{$\ddagger$}
		\mathcal{S}_c,\bar{F}\models^+_n Sy\wedge (\forall z)_O(Ezy\Leftrightarrow ST_z(\psi))[y/a]	
	\end{equation}
	then note that, by IH for Part 3 we also have $
	\mathcal{S}_c,\bar{F}\models^+_nSy\wedge(\forall z)_O(Ezy\Leftrightarrow ST_z(\psi))[y/[\psi]_\simeq]$. By Corollary \ref{C:indiscernibles} and \eqref{E:ddagger}, it follows that we must have $a = [\psi]_\simeq$ in this case. But then \eqref{E:dagger} implies that we must have
	$\mathcal{S}_c,\bar{F}\not\models^+_n \forall w(Rxyw\to ST_w(\chi))[x/\alpha, y/a]$.
	Since $a\in U$ was chosen arbitrarily under the condition given by \eqref{E:ddagger}, we conclude that
	\begin{align*}
		\mathcal{S}_c,\bar{F}\not\models^+_n \exists y(Sy \wedge (\forall z)_O(Ezy\Leftrightarrow ST_z(\psi))\wedge\forall w(Rxyw\to ST_w(\chi)))[x/\alpha],
	\end{align*}
	whence, by \eqref{E:T2}, \eqref{E:T14}, and \eqref{E:T15}, it follows that $\mathcal{S}_c,\bar{F}\not\models^+_n ST_x(\phi)[x/\alpha]$, as desired.
	
	It remains to consider Part 2. By Proposition \ref{P:truth-lemma}, we know that:
	\begin{align}
		\sim(\psi\boxto&\chi)\in \pi^1(end(\alpha)) \text{ iff } \mathcal{M}_c, end(\alpha)\models^- \psi\boxto\chi\notag\\
		&\text{ iff }(\exists(\Xi,\Theta)\in W_c)(R_c(end(\alpha), \|\psi\|_{\mathcal{M}_c}, (\Xi,\Theta))\text{ and }\mathcal{M}_c, (\Xi,\Theta)\models^- \chi)\notag\\
		&\text{ iff }(\exists(\Xi,\Theta)\in W_c)(R_c(end(\alpha), \|\psi\|_{\mathcal{M}_c}, (\Xi,\Theta))\text{ and }\sim\chi\in \Xi)\label{E:diamondto}\tag{$\boxto^-$}
	\end{align}
 Assume, first, that $\sim\psi\boxto\chi\in \pi^1(end(\alpha))$. Then, by \eqref{E:diamondto}, we can choose a $(\Xi,\Theta)\in W_c$ such that both $R_c(end(\alpha), \|\psi\|_{\mathcal{M}_c}, (\Xi,\Theta))$ and $\sim\chi\in \Xi$. The former means, by Definition \ref{D:standard-sequence}, that $\beta = \alpha^\frown(\psi, (\Xi,\Theta)) \in Seq$, and the latter means that $\sim\chi \in \pi^1(end(\beta))$ whence, by IH for Part 2, it follows that $\mathcal{S}_c,\bar{F}\models^-_n ST_w(\chi)[w/\beta]$.
	
Next, Definition \ref{D:canonical-sheaf} implies that $\mathcal{S}_c,\bar{F}\models^+_n Rxyw[x/\alpha, y/[\psi]_\simeq, w/\beta]$ and that, by IH for Part 3, we have $\mathcal{S}_c,\bar{F}\models^+_n Sy\wedge(\forall z)_O (Ezy\Leftrightarrow ST_z(\psi))[y/[\psi]_\simeq]$. 

If now $\bar{G}\in Glob$ is such that $\bar{G}\sqsupseteq\bar{F}$ and $a \in U$ is such that $\mathcal{S}_c,\bar{G}\models^+_n Sy\wedge(\forall z)_O (Ezy\Leftrightarrow ST_z(\psi))[y/a]$, then, by Lemma \ref{L:n4-standard}.1, we must have  $\mathcal{S}_c,\bar{G}\models^+_n Sy\wedge(\forall z)_O (Ezy\Leftrightarrow ST_z(\psi))[y/[\psi]_\simeq]$. By Corollary \ref{C:indiscernibles}, we get that $a = [\psi]_\simeq$. Moreover, Lemma \ref{L:n4-standard}.1 also implies that $\mathcal{S}_c,\bar{G}\models^+_n Rxyw[x/\mathbb{F}_{\bar{F}\bar{G}}(\alpha), y/a, w/\mathbb{F}_{\bar{F}\bar{G}}(\beta)]$ and $\mathcal{S}_c,\bar{G}\models^-_n ST_w(\chi)[w/\mathbb{F}_{\bar{F}\bar{G}}(\beta)]$. It follows that $
\mathcal{S}_c,\bar{G}\models^-_n \forall w(Rxyw\to ST_w(\chi))[x/\mathbb{F}_{\bar{F}\bar{G}}(\alpha), y/a]$, in other words, that:
$$
\mathcal{S}_c,\bar{G}\models^+_n \sim\forall w(Rxyw\to ST_w(\chi))[x/\mathbb{F}_{\bar{F}\bar{G}}(\alpha), y/a].
$$
In view of the choice of $\bar{G}\in Glob$ and $a \in U$, we have shown that:
$$
\mathcal{S}_c,\bar{F}\models^+_n \forall y(Sy\wedge(\forall z)_O (Ezy\Leftrightarrow ST_z(\psi)) \to \sim\forall w(Rxyw\to ST_w(\chi)))[x/\alpha],
$$
which, by \eqref{E:T8} and \eqref{E:T17} is the same as
$$
\mathcal{S}_c,\bar{F}\models^+_n \forall y\sim\sim(Sy\wedge(\forall z)_O (Ezy\Leftrightarrow ST_z(\psi)) \to \sim\forall w(Rxyw\to ST_w(\chi)))[x/\alpha],
$$
It remains to apply the definition of ampersand and \eqref{E:a5} to get that 
 $\mathcal{S}_c,\bar{F}\models^+_n \sim ST_x(\phi)[x/\alpha]$, or, equivalently that $\mathcal{S}_c,\bar{F}\models^-_n ST_x(\phi)[x/\alpha]$, as desired.
		
As for the converse of Part 2, we again argue by contraposition. Assume that $\sim(\psi\boxto\chi)\notin \pi^1(end(\alpha))$. In this case, \eqref{E:diamondto} implies that for every $(\Xi,\Theta)\in W_c$ such that $R_c(end(\alpha), \|\psi\|_{\mathcal{M}_c}, (\Xi,\Theta))$, we must have $\sim\chi\notin \Xi$. Now, if $\beta \in Seq$ is such that $(\alpha, [\psi]_\simeq, \beta) \in R^{\mathbb{M}^+}$, then, by Definition \ref{D:canonical-sheaf}, there must exist a $(\Gamma, \Delta) \in W_c$ and a $\theta \in [\psi]_\simeq$ such that $\beta = \alpha^\frown(\theta, (\Gamma, \Delta))$. Since $\beta \in Seq$, Definition \ref{D:standard-sequence} implies that $R_c(end(\alpha), \|\theta\|_{\mathcal{M}_c}, (\Gamma, \Delta))$; but, since $\theta \in [\psi]_\simeq$, the latter means that we must have $R_c(end(\alpha), \|\psi\|_{\mathcal{M}_c}, (\Gamma, \Delta))$. But then our assumption implies that $\sim\chi\notin \Gamma = \pi^1(end(\beta))$, whence, by IH for Part 2, we must have $\mathcal{S}_c,\bar{F}\not\models^-_n ST_w(\chi)[w/\beta]$. Thus we have shown that $\mathcal{S}_c,\bar{F}\not\models^+_n Rxyw\wedge\sim ST_w(\chi)[x/\alpha, y/[\psi]_\simeq, w/\beta]$ for every $\beta \in Seq$, which, by definition of $R^{\mathbb{M}^+}$, implies that 
$\mathcal{S}_c,\bar{F}\not\models^-_n Rxyw\to ST_w(\chi)[x/\alpha, y/[\psi]_\simeq, w/b]$ for every $b \in U$. In view of the $\mathsf{QN4}$ semantics of $\forall$ and $\sim$, the latter can be equivalently reformulated as $\mathcal{S}_c,\bar{F}\not\models^+_n \sim\forall w(Rxyw\to ST_w(\chi))[x/\alpha, y/[\psi]_\simeq]$.
Since, by IH for Part 3, we also have $\mathcal{S}_c,\bar{F}\models^+_n Sy\wedge(\forall z)_O (Ezy\Leftrightarrow ST_z(\psi))[y/[\psi]_\simeq]$, we have shown that
$$
\mathcal{S}_c,\bar{F}\not\models^+_n (Sy\wedge(\forall z)_O (Ezy\Leftrightarrow ST_z(\psi)))\to \sim\forall w(Rxyw\to ST_w(\chi))[x/\alpha, y/[\psi]_\simeq],
$$
which, in view of the definition of ampersand, is the same as
$$
\mathcal{S}_c,\bar{F}\not\models^-_n (Sy\wedge(\forall z)_O (Ezy\Leftrightarrow ST_z(\psi)))\,\&\,\forall w(Rxyw\to ST_w(\chi))[x/\alpha, y/[\psi]_\simeq].
$$ 
The latter, by the $\mathsf{QN4}$ semantics of the existential quantifier entails that 
$$
\mathcal{S}_c,\bar{F}\not\models^-_n \exists y(Sy\wedge(\forall z)_O (Ezy\Leftrightarrow ST_z(\psi)))\,\&\,\forall w(Rxyw\to ST_w(\chi))[x/\alpha],
$$ 
in other words, that $\mathcal{S}_c,\bar{F}\not\models^-_n ST_x(\phi)[x/\alpha]$, as desired.
\end{proof}
Before we move on, we would like to draw a corollary from our lemma:
\begin{corollary}\label{C:simplified}
	Let $\bar{F} \in Glob$, $\alpha \in Seq$, $x \in Ind$, and let $\psi,\chi\in \mathcal{CN}$. Then:
	$$
	\mathcal{S}_c,\bar{F} \models^+_n ST_x(\psi\boxto\chi) \Leftrightarrow\forall w(Rxyw\to ST_w(\chi))[x/\alpha, y/[\psi]_\simeq].
	$$			
\end{corollary}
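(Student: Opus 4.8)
The plan is to reduce the claimed strong equivalence to a pair of purely local coincidence conditions and then dispatch each using the witness-uniqueness machinery already in place. Abbreviate by $\vartheta(y)$ the set-encoding formula $Sy\wedge (\forall z)_O(Ezy\Leftrightarrow ST_z(\psi))$ and by $\rho$ the formula $\forall w(Rxyw\to ST_w(\chi))$, so that $ST_x(\psi\boxto\chi) = \exists y(\vartheta(y)\,\&\,\rho)$. Since $\phi\Leftrightarrow\theta$ is provably, hence (by Proposition \ref{P:nelsonian-sheaves}) semantically, equivalent to $(\phi\leftrightarrow\theta)\wedge(\sim\phi\leftrightarrow\sim\theta)$, unwinding the positive satisfaction of this conjunction through the $\to$- and $\sim$-clauses shows that it suffices to prove, for every $\bar{G}\sqsupseteq\bar{F}$ and $\beta := \mathbb{F}_{\bar{F}\bar{G}}(\alpha)$ (noting that $\mathbb{F}_{\bar{F}\bar{G}}$ fixes the equivalence class $[\psi]_\simeq$), that $\mathcal{S}_c,\bar{G}\models^+_n \exists y(\vartheta(y)\,\&\,\rho)[x/\beta]$ holds iff $\mathcal{S}_c,\bar{G}\models^+_n\rho[x/\beta, y/[\psi]_\simeq]$, together with the corresponding statement for $\models^-_n$. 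As $\bar{G}$ and $\beta$ range only over $Glob$ and $Seq$, it is enough to establish both equivalences for arbitrary $\bar{G}\in Glob$ and $\beta\in Seq$. The fact driving everything is that $[\psi]_\simeq$ is the \emph{unique} witness of $\vartheta$: by Lemma \ref{L:sheaf-truth}.3 we have $\mathcal{S}_c,\bar{G}\models^+_n\vartheta(y)[y/[\psi]_\simeq]$ at every $\bar{G}$, while Corollary \ref{C:indiscernibles} forces any $a\in U$ with $\mathcal{S}_c,\bar{G}\models^+_n\vartheta(y)[y/a]$ to equal $[\psi]_\simeq$.

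For the positive equivalence the witness fact suffices almost immediately. If the left side holds, choose a witness $a$ with $\mathcal{S}_c,\bar{G}\models^+_n(\vartheta(y)\,\&\,\rho)[x/\beta, y/a]$; by \eqref{E:T15} this $\&$ may be read positively as $\wedge$, so $\mathcal{S}_c,\bar{G}\models^+_n\vartheta(y)[y/a]$ forces $a = [\psi]_\simeq$, and hence $\mathcal{S}_c,\bar{G}\models^+_n\rho[x/\beta, y/[\psi]_\simeq]$. Conversely, from the right side Lemma \ref{L:sheaf-truth}.3 supplies $\mathcal{S}_c,\bar{G}\models^+_n\vartheta(y)[y/[\psi]_\simeq]$, so $\mathcal{S}_c,\bar{G}\models^+_n(\vartheta(y)\wedge\rho)[x/\beta, y/[\psi]_\simeq]$, whence $(\vartheta(y)\,\&\,\rho)$ follows by \eqref{E:T15}, and $[\psi]_\simeq$ witnesses the existential.

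The negative equivalence is the delicate step, and the main obstacle, because the $\mathsf{QN4}$ falsity clause for $\exists y$ quantifies universally over all $\sqsubseteq$-extensions and all elements of $U$, while $\models^-_n(\vartheta(y)\,\&\,\rho)$ unfolds via \eqref{E:T16} into $\models^+_n(\vartheta(y)\to\sim\rho)$. The key observation is that every non-witness contributes trivially: if $a\in Seq$ then $\mathbb{F}$ keeps it in $Seq$ (Lemma \ref{L:technical}), so $a$ never lands in $S^{\mathbb{M}^+}$ and $\vartheta$ fails at it throughout; and if $a = [\phi_0]_\simeq\neq[\psi]_\simeq$, the uniqueness fact makes $\vartheta$ fail at $a$ at every node. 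In both cases $\models^+_n(\vartheta(y)\to\sim\rho)[y/a]$ holds vacuously, i.e. $\models^-_n(\vartheta(y)\,\&\,\rho)[y/a]$ holds automatically. Thus the nested universal falsity condition for $\exists y(\vartheta(y)\,\&\,\rho)$ collapses to the single requirement that $\mathcal{S}_c,\bar{H}\models^-_n\rho[x/\mathbb{F}_{\bar{G}\bar{H}}(\beta), y/[\psi]_\simeq]$ for all $\bar{H}\sqsupseteq\bar{G}$ (using that $\vartheta$ holds at $[\psi]_\simeq$ at every node to strip the implication), and upward persistence of $\models^-_n$ (Lemma \ref{L:n4-standard}.1) contracts this to its instance at $\bar{G}$, namely $\mathcal{S}_c,\bar{G}\models^-_n\rho[x/\beta, y/[\psi]_\simeq]$. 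This yields the negative equivalence and completes the reduction; the whole weight of the argument lies in this bookkeeping for the negative existential clause, verifying that spurious witnesses drop out and that the quantification over extensions can be contracted by persistence.
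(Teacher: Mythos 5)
Your proof is correct and takes essentially the same route as the paper's: both split the strong equivalence into its positive and negated halves, pin down $[\psi]_\simeq$ as the unique witness of the set-encoding formula via Lemma \ref{L:sheaf-truth}.3 and Corollary \ref{C:indiscernibles}, and close the negative half with persistence (Lemma \ref{L:n4-standard}.1) together with the ampersand laws \eqref{E:T15} and \eqref{E:T16}. Your explicit check that non-witnesses satisfy the falsity clause of the existential vacuously is precisely the semantic content of the paper's passage through the formula $\forall y((Sy\wedge(\forall z)_O (Ezy\Leftrightarrow ST_z(\psi)))\to \sim\forall w(Rxyw\to ST_w(\chi)))$.
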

\begin{proof}
	We show, first, that
	$$
	\mathcal{S}_c,\bar{F} \models^+_n ST_x(\psi\boxto\chi) \leftrightarrow\forall w(Rxyw\to ST_w(\chi))[x/\alpha, y/[\psi]_\simeq].
	$$
	Right-to-left direction is trivial by Lemma \ref{L:sheaf-truth}.3. As for the converse, if $\mathcal{S}_c,\bar{F} \models^+_n ST_x(\psi\boxto\chi)[x/\alpha]$, then we can choose an $a \in U$ such that:
	\begin{equation*}
		\mathcal{S}_c,\bar{F}\models^+_n Sy \wedge (\forall z)_O(Ezy\Leftrightarrow ST_z(\psi))\,\&\,\forall w(Rxyw\to ST_w(\chi))[x/\alpha, y/a]
	\end{equation*}
	But then, by Corollary \ref{C:indiscernibles} and Lemma \ref{L:sheaf-truth}.3, we must have $a = [\psi]_\simeq$, so that
	
	\noindent $\mathcal{S}_c,\bar{F} \models^+_n \forall w(R(x,y,w)\to ST_w(\chi))[x/\alpha, y/[\psi]_\simeq]$ follows. We need to show, next, that
	$$
	\mathcal{S}_c,\bar{F} \models^+_n \sim ST_x(\psi\boxto\chi) \leftrightarrow\sim\forall w(Rxyw\to ST_w(\chi))[x/\alpha, y/[\psi]_\simeq].
	$$ 
	The left-to-right direction easily follows by \eqref{E:a1}, \eqref{E:a5}, and Lemma \ref{L:sheaf-truth}.3. If 
	
	\noindent$\mathcal{S}_c,\bar{F} \models^-_n \forall w(R(x,y,w)\to ST_w(\chi))[x/\alpha, y/[\psi]_\simeq]$, then let $\bar{G} \in Glob$ and an $a \in U$ be such that $\bar{G}\sqsupseteq \bar{F}$ and $
	\mathcal{S}_c,\bar{G}\models^+_n Sy \wedge (\forall z)_O(Ezy\Leftrightarrow ST_z(\psi))[y/a]$. By Lemma \ref{L:n4-standard}.1, we also have $
	\mathcal{S}_c,\bar{G} \models^-_n \forall w(R(x,y,w)\to ST_w(\chi))[x/\mathbb{F}_{\bar{F}\bar{G}}(\alpha), y/[\psi]_\simeq]$. Next, Corollary \ref{C:indiscernibles} and Lemma \ref{L:sheaf-truth}.3 imply that $a = [\psi]_\simeq$, so that $\mathcal{S}_c,\bar{G} \models^-_n \forall w(R(x,y,w)\to ST_w(\chi))[x/\mathbb{F}_{\bar{F}\bar{G}}(\alpha), y/a]$ follows. Since $\bar{G}\sqsupseteq \bar{F}$ and $a \in U$ were chosen arbitrarily, it follows that
	$$
		\mathcal{S}_c,\bar{F} \models^+_n \forall y(Sy \wedge (\forall z)_O(Ezy\Leftrightarrow ST_z(\psi))\to \sim \forall w(R(x,y,w)\to ST_w(\chi)))[x/\alpha],
	$$
	or, equivalently, that $\mathcal{S}_c,\bar{F} \models^+_n \sim ST_x(\psi\boxto\chi)[x/\alpha]$.
\end{proof}
It only remains now to show that $\mathcal{S}_c$ is a model of $Th$:
\begin{lemma}\label{L:th}
	For every $\bar{F} \in Glob$, we have $\mathcal{S}_c, \bar{F}\models^+_n Th$.
\end{lemma}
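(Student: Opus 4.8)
The theory $Th$ consists of the five sentences \eqref{E:th5}--\eqref{E:th9}, and the first of these, \eqref{E:th5}, has already been verified in Lemma~\ref{L:indiscernibles}. The plan is to treat the remaining four axioms \eqref{E:th6}--\eqref{E:th9} in a uniform way, exploiting the fact that $S^{\mathbb{M}^+} = \{[\phi]_\simeq\mid \phi \in \mathcal{CN}\}$ together with the comprehension property recorded in Lemma~\ref{L:sheaf-truth}.3, which tells us that each class $[\phi]_\simeq$ behaves as the ``set'' whose membership is strongly equivalent to $ST_y(\phi)$, i.e. $\mathcal{S}_c, \bar{F} \models^+_n Sx \wedge (\forall y)_O(Eyx \Leftrightarrow ST_y(\phi))[x/[\phi]_\simeq]$ for every $\bar{F} \in Glob$. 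For each of \eqref{E:th6}--\eqref{E:th9} I would exhibit the required existential witness as a suitable class $[\theta]_\simeq$ and reduce the verification to a strong equivalence between $ST_w(\theta)$ and the defining formula of the axiom; since $\Leftrightarrow$ is symmetric and transitive (\eqref{E:T6}, \eqref{E:T7}) and Corollary~\ref{C:set-encoding} unfolds the format $(\forall w)_O(Ewz \Leftrightarrow \cdots)$ into the matching of both $\models^+_n$ and $\models^-_n$ at all $\bar{G} \sqsupseteq \bar{F}$, these reductions compose cleanly.

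Concretely: for \eqref{E:th6} with $p^1 \in Prop$ the witness is $[p]_\simeq$, and since $ST_y(p) = p(y)$ the claim is immediate from Lemma~\ref{L:sheaf-truth}.3. For \eqref{E:th7}, given a set $x = [\phi]_\simeq$, I would take the witness $[\sim\phi]_\simeq$ and use $ST_z(\sim\phi) = \sim ST_z(\phi)$ together with \eqref{E:T4} to pass from $Ezx \Leftrightarrow ST_z(\phi)$ to $\sim Ezx \Leftrightarrow \sim ST_z(\phi) \Leftrightarrow Ez[\sim\phi]_\simeq$. For \eqref{E:th8} with $\ast \in \{\wedge, \to\}$ and sets $x = [\phi]_\simeq$, $y = [\psi]_\simeq$, the witness is $[\phi\ast\psi]_\simeq$; here $ST_w(\phi \ast \psi) = ST_w(\phi)\ast ST_w(\psi)$, and the congruence theorem \eqref{E:T13} turns the two membership equivalences $Ewx \Leftrightarrow ST_w(\phi)$ and $Ewy \Leftrightarrow ST_w(\psi)$ into $(Ewx \ast Ewy) \Leftrightarrow ST_w(\phi\ast\psi) \Leftrightarrow Ew[\phi\ast\psi]_\simeq$, which is exactly the required condition.

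The conditional axiom \eqref{E:th9} is the main obstacle and the only genuinely new case. Given sets $x = [\phi]_\simeq$ and $y = [\psi]_\simeq$, I would take the witness $[\phi\boxto\psi]_\simeq$, so that the crux is to identify $ST_w(\phi\boxto\psi)$ with the defining formula $\forall u(Rwxu \to Euy)$ of the axiom. Corollary~\ref{C:simplified} already performs the essential collapse, showing that $ST_w(\phi\boxto\psi)$ is strongly equivalent to $\forall u(Rwy'u \to ST_u(\psi))$ once $y'$ is instantiated to the unique set-witness $[\phi]_\simeq$ of $ST_z(\phi)$ (uniqueness being exactly Corollary~\ref{C:indiscernibles}); it then remains to rewrite $ST_u(\psi)$ as $Eu[\psi]_\simeq$ via Lemma~\ref{L:sheaf-truth}.3 and the congruence theorems \eqref{E:T13} and \eqref{E:T14} for $\to$ and $\forall$, and to read off $Ew[\phi\boxto\psi]_\simeq \Leftrightarrow ST_w(\phi\boxto\psi)$ from Lemma~\ref{L:sheaf-truth}.3. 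Throughout, I must be careful that all these assertions are checked hereditarily, i.e. at every $\bar{G} \sqsupseteq \bar{F}$; this is unproblematic because, by Definition~\ref{D:canonical-sheaf}, every transition map $\mathbb{F}_{\bar{F}\bar{G}}$ fixes the classes $[\theta]_\simeq$, so a witness chosen at $\bar{F}$ remains a witness at every later node and the strong equivalences supplied by Lemma~\ref{L:sheaf-truth}.3 persist upward. The bookkeeping of this collapse-and-congruence argument for \eqref{E:th9} is where the only real care is needed; \eqref{E:th6}--\eqref{E:th8} become routine once the witness is named.
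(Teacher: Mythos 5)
Your proposal is correct and follows essentially the same route as the paper's proof: \eqref{E:th5} is discharged by Lemma~\ref{L:indiscernibles}, and \eqref{E:th6}--\eqref{E:th9} are verified by exhibiting the witnesses $[p]_\simeq$, $[\sim\phi]_\simeq$, $[\phi\ast\psi]_\simeq$, $[\phi\boxto\psi]_\simeq$ via Lemma~\ref{L:sheaf-truth}.3, with Corollary~\ref{C:simplified} (resting on Corollary~\ref{C:indiscernibles}) doing the collapse for \eqref{E:th9}. The only cosmetic difference is that you invoke the individual congruence theorems \eqref{E:T4}, \eqref{E:T13}, \eqref{E:T14} where the paper uniformly cites the packaged substitution principle \eqref{E:T18}.
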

\begin{proof}
	That the Nelsonian sheaf $\mathcal{S}_c$ satisfies \eqref{E:th5} was shown in Lemma \ref{L:indiscernibles}. The satisfaction of \eqref{E:th6} follows from Lemma \ref{L:sheaf-truth} (one needs to instantiate $y$ to $[p]_\simeq$). We consider the remaining parts of $Th$ in more detail below:
	
	\eqref{E:th8}. Let $\ast \in \{\wedge, \to\}$, $\bar{F} \in Glob$, and $a, b \in S^{\mathbb{M}^+}$. Then, by Definition \ref{D:canonical-sheaf}, there must exist some $\phi, \psi \in \mathcal{CN}$ such that $a = [\phi]_\simeq$ and $b = [\psi]_\simeq$. But then Lemma \ref{L:sheaf-truth}.3 implies that we have all of the following:
	\begin{align}
		\mathcal{S}_c, \bar{F}&\models^+_n Sx \wedge (\forall w)_O(Ewx\Leftrightarrow ST_w(\phi))[x/a]\label{E:phi}\tag{$\S$}\\
		\mathcal{S}_c, \bar{F}&\models^+_n Sy \wedge (\forall w)_O(Ewy\Leftrightarrow ST_w(\psi))[y/b]\label{E:psi}\tag{$\P$}\\
		\mathcal{S}_c, \bar{F}&\models^+_n Sz \wedge (\forall w)_O(Ewz\Leftrightarrow ST_w(\phi)\ast ST_w(\psi))[z/[\phi\ast\psi]_\simeq]\notag
	\end{align}
	whence $\mathcal{S}_c, \bar{F}\models_{fo} \eqref{E:th8}$ clearly follows by \eqref{E:T18}. The argument for the satisfaction of \eqref{E:th7} is similar
	
	\eqref{E:th9}. Again, let $\bar{F} \in Glob$ and $a, b \in S^{\mathbb{M}^+}$. Then let $\phi, \psi \in \mathcal{CN}$ be such that $a = [\phi]_\simeq$ and $b = [\psi]_\simeq$. Lemma \ref{L:sheaf-truth}.3 implies that both \eqref{E:phi} and \eqref{E:psi} hold, and that we have:
	$$
	\mathcal{S}_c, \bar{F}\models^+_n Sz \wedge (\forall w)_O(Ewz\Leftrightarrow ST_w(\phi\boxto\psi))[z/[\phi\boxto\psi]_\simeq].
	$$
	By Corollary \ref{C:simplified} and \eqref{E:T18}, it follows now that we must have:
	$$
	\mathcal{S}_c, \bar{F}\models^+_n Sz \wedge (\forall w)_O(Ewz\Leftrightarrow \forall w'(Rwxw'\to ST_{w'}(\psi)))[x/a,z/[\phi\boxto\psi]_\simeq].
	$$
	whence $\mathcal{S}_c, \bar{F}\models^+_n \eqref{E:th9}$ clearly follows. 
\end{proof}
We are now finally in a position to prove a converse to Proposition \ref{P:easy}:
\begin{proposition}\label{P:hard}
	For all $\Gamma, \Delta \subseteq \mathcal{CN}$ and for every $x \in Ind$, if $Th, ST_x(\Gamma)\models_{\mathsf{QN4}}ST_x(\Delta)$, then  $\Gamma \models_{\mathsf{N4CK}} \Delta$.
\end{proposition}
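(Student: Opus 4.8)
The plan is to argue by contraposition, exactly dually to the proof of Proposition \ref{P:easy}, but now feeding the canonical Nelsonian sheaf $\mathcal{S}_c$ of Definition \ref{D:canonical-sheaf} into the $\mathsf{QN4}$ sheaf semantics. Assume $\Gamma \not\models_{\mathsf{N4CK}} \Delta$; by the completeness of $\mathfrak{N4CK}$ relative to $\mathsf{N4CK}$ and the Lindenbaum-style extension lemma underlying the construction of $W_c$ in \cite{nelsonian}, there exists a maximal pair $(\Gamma^*, \Delta^*) \in W_c$ with $\Gamma \subseteq \Gamma^*$ and $\Delta \cap \Gamma^* = \emptyset$ (so that $\Delta \subseteq \Delta^*$, since $\Gamma^* \cup \Delta^* = \mathcal{CN}$). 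The goal is then to produce a single $\models^+_n$-evaluation point of $\mathcal{S}_c$ that verifies all of $Th \cup ST_x(\Gamma)$ while refuting every member of $ST_x(\Delta)$, which by Proposition \ref{P:nelsonian-sheaves} witnesses $Th, ST_x(\Gamma) \not\models_{\mathsf{QN4}} ST_x(\Delta)$.

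Concretely, I would take the length-one standard sequence $\alpha := ((\Gamma^*, \Delta^*)) \in Seq$ (admissible by Definition \ref{D:standard-sequence} with $n = 0$, the defining condition holding vacuously), so that $end(\alpha) = (\Gamma^*, \Delta^*)$ and $\pi^1(end(\alpha)) = \Gamma^*$. As the node I would use $\bar{F} := \Lambda \in Glob$, the empty global sequence, and I would evaluate along any $f : Ind \to U$ with $f(x) = \alpha$ (the choice on the remaining variables is irrelevant since $ST_x(\phi) \in \mathcal{FO}^x$). The three ingredients needed are all already available: Lemma \ref{L:th} gives $\mathcal{S}_c, \Lambda \models^+_n Th$; Lemma \ref{L:sheaf-truth}.1 gives, for every $\phi \in \Gamma \subseteq \Gamma^* = \pi^1(end(\alpha))$, that $\mathcal{S}_c, \Lambda \models^+_n ST_x(\phi)[x/\alpha]$; and the same Lemma \ref{L:sheaf-truth}.1 gives, for every $\psi \in \Delta$ (hence $\psi \notin \Gamma^*$ because $\Gamma^* \cap \Delta^* = \emptyset$), that $\mathcal{S}_c, \Lambda \not\models^+_n ST_x(\psi)[x/\alpha]$. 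Thus $(\mathcal{S}_c, \Lambda, f)$ $\models^+_n$-satisfies the bi-set $(Th \cup ST_x(\Gamma), ST_x(\Delta))$, and the contrapositive is complete.

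The noteworthy point is that the proposition itself is a short assembly step: essentially all the real work has been front-loaded into the construction of $\mathcal{S}_c$ and into Lemmas \ref{L:canonical-sheaf}, \ref{L:indiscernibles}, \ref{L:sheaf-truth}, and \ref{L:th}. Consequently the only genuine obstacle at this stage is bookkeeping about the universal model, namely justifying the existence of a maximal pair $(\Gamma^*, \Delta^*) \in W_c$ lying above $(\Gamma, \Delta)$ in the sense that $\Gamma \subseteq \Gamma^*$ and $\Delta \cap \Gamma^* = \emptyset$. This is precisely the completeness/Lindenbaum input for $\mathsf{N4CK}$ established in \cite{nelsonian}, together with the elementary fact that maximality forces $\Gamma^* \cap \Delta^* = \emptyset$ (otherwise a common formula would yield $\Gamma^* \vdash \Delta^*$, contradicting maximality). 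Once this is in hand, the reduction to Lemmas \ref{L:th} and \ref{L:sheaf-truth} and to Proposition \ref{P:nelsonian-sheaves} is immediate, and combining Proposition \ref{P:hard} with Proposition \ref{P:easy} delivers the main embedding result $\Gamma \models_{\mathsf{N4CK}} \Delta$ iff $Th \cup ST_x(\Gamma) \models_{\mathsf{QN4}} ST_x(\Delta)$.
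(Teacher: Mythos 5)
Your proposal is correct and follows essentially the same route as the paper's own proof: contraposition, Lindenbaum extension of $(\Gamma,\Delta)$ to a maximal pair in $W_c$, viewing that pair as a length-one standard sequence evaluated at the node $\Lambda$ of $\mathcal{S}_c$, and then invoking Lemma \ref{L:th} together with Lemma \ref{L:sheaf-truth}.1 to witness $Th, ST_x(\Gamma)\not\models_{\mathsf{QN4}}ST_x(\Delta)$. The extra bookkeeping you supply (that $\Delta\subseteq\Delta^*$, $\Gamma^*\cap\Delta^*=\emptyset$, and the explicit appeal to Proposition \ref{P:nelsonian-sheaves}) is left implicit in the paper but is exactly what its terse argument relies on.
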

\begin{proof}
	We argue by contraposition. If $\Gamma \not\models_{\mathsf{N4CK}} \Delta$, then $(\Gamma, \Delta)$ must be $\mathsf{N4CK}$-satisfiable, and therefore, applying the standard Lindenbaum construction (cf. \cite[Lemma 8]{nelsonian}), we can find a $(\Gamma', \Delta')\in W_c$ such that $(\Gamma', \Delta')\supseteq (\Gamma, \Delta)$. By Definition \ref{D:standard-sequence}, we have $(\Gamma', \Delta')\in Seq$, therefore, Lemma \ref{L:sheaf-truth}.1 and Lemma \ref{L:th} together imply that $\mathcal{S}_c, \Lambda \models^+_n (Th \cup ST_x(\Gamma), ST_x(\Delta)[x/(\Gamma', \Delta')]$, 
	or, equivalently, that $Th, ST_x(\Gamma)\not\models_{\mathsf{QN4}}ST_x(\Delta)$, as desired.
\end{proof}
We can now formulate and prove the main result of this section:
\begin{theorem}\label{T:foil}
	For all $\Gamma, \Delta \subseteq \mathcal{CN}$ and for every $x \in Ind$, we have $\Gamma \models_{\mathsf{N4CK}} \Delta$ iff $Th\cup ST_x(\Gamma)\models_{\mathsf{QN4}}ST_x(\Delta)$.
\end{theorem}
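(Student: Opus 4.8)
The plan is to observe that the biconditional in Theorem \ref{T:foil} has already been split into its two halves and that each half is an independently established proposition, so the proof reduces to simply conjoining them. The forward implication, namely that $\Gamma \models_{\mathsf{N4CK}} \Delta$ entails $Th \cup ST_x(\Gamma) \models_{\mathsf{QN4}} ST_x(\Delta)$, is exactly Proposition \ref{P:easy}; the converse implication, that $Th \cup ST_x(\Gamma) \models_{\mathsf{QN4}} ST_x(\Delta)$ entails $\Gamma \models_{\mathsf{N4CK}} \Delta$, is exactly Proposition \ref{P:hard}. Since $\Gamma$, $\Delta$, and $x$ are universally quantified in precisely the same way in both propositions, no further bookkeeping is needed: the theorem is the immediate conjunction of the two.

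Thus the first and only real step I would take is to cite Proposition \ref{P:easy} for the left-to-right direction and Proposition \ref{P:hard} for the right-to-left direction, concluding the equivalence. There is essentially no residual calculation to perform at the level of the theorem itself; the statement is a packaging of work that has been fully discharged upstream.

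The reason there is no obstacle \emph{here} is that the difficulty has been deliberately front-loaded into the supporting machinery. The easy direction rests on the model transfer from Nelsonian sheaves to Nelsonian conditional models carried out in Lemmas \ref{L:n4-mod-n4ck1} and \ref{L:n4-mod-n4ck2} (showing $\mathcal{M}_{\mathcal{S}}\in\mathbb{NC}$ and that $ST_x$ commutes with both satisfaction relations), while the hard direction rests on the canonical sheaf $\mathcal{S}_c$ of Definition \ref{D:canonical-sheaf}, its status as a Nelsonian sheaf, the truth-style Lemma \ref{L:sheaf-truth}, and the verification in Lemma \ref{L:th} that $\mathcal{S}_c \models^+_n Th$. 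Had I been proving the theorem from scratch, the genuine obstacle would have been designing the double-indexed domain $U = Seq \cup \{[\phi]_\simeq \mid \phi\in\mathcal{CN}\}$ and the global-choice-function worlds $Glob$ so that the ternary $R^{\mathbb{M}^+}$ correctly encodes conditional accessibility while respecting the monotonicity conditions \eqref{Cond:1}--\eqref{Cond:2} and the $\&$-based falsity clause of $ST_x(\psi\boxto\chi)$. But given Propositions \ref{P:easy} and \ref{P:hard} as available, the theorem follows at once.

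\begin{proof}
	Immediate by combining Proposition \ref{P:easy} (which gives the left-to-right implication) with Proposition \ref{P:hard} (which gives the right-to-left implication).
\end{proof}
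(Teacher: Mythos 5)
Your proof is correct and is exactly the paper's own argument: Theorem \ref{T:foil} is proved there, verbatim, by citing Propositions \ref{P:easy} and \ref{P:hard} for the two directions. Your additional remarks about where the real work lives (Lemmas \ref{L:n4-mod-n4ck1}--\ref{L:n4-mod-n4ck2} for one direction, the canonical sheaf machinery of Definition \ref{D:canonical-sheaf} through Lemma \ref{L:th} for the other) accurately reflect the paper's structure.
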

\begin{proof}
	By Propositions \ref{P:easy} and \ref{P:hard}.
\end{proof}
Before we end this section, we would like to mention, that the proof of Theorem \ref{T:foil} not only allows us to completely mirror, on the level of conditional logic, the selection of the embedding results for modal logic reported in Section \ref{S:idea}, but also to improve on \cite[Proposition 7]{odintsovwansing} for the $\mathsf{N4}$-based modal logic $\mathsf{FSK}^d$; namely, if we repeat the constructions of this section adapting them to the modal case (basically, using $\sigma\tau_x$ in place of $ST_x$ and throwing away all the parts dealing with $Th$ and the existence of formula-defined truth-sets) then we arrive at the following
\begin{theorem}\label{T:modal}
	For all $\Gamma, \Delta \subseteq \mathcal{MD}$ and for every $x \in Ind$, we have $\Gamma \models_{\mathsf{FSK}^d} \Delta$ iff $\sigma\tau_x(\Gamma)\models_{\mathsf{QN4}}\sigma\tau_x(\Delta)$.
\end{theorem}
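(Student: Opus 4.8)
The plan is to prove the two inclusions separately, mirroring the decomposition of Theorem \ref{T:foil} into Propositions \ref{P:easy} and \ref{P:hard}, but with the conditional apparatus stripped down to its modal core: throughout I replace $ST_x$ by $\sigma\tau_x$, I replace Nelsonian conditional models by the Nelsonian modal ($\mathsf{FSK}^d$-)models of \cite{nelsonian,odintsovwansing}, and I discard everything that was specific to conditionals---the theory $Th$, the predicates $S$ and $O$, the reading of $E$ as set-membership, the comprehension Lemma \ref{L:n4-comprehension}, and the extensionality/indiscernibility arguments (Lemma \ref{L:indiscernibles}, Corollary \ref{C:indiscernibles}). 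The decisive simplification is that $\sigma\tau_x(\Box\psi) = \forall y(Exy\to\sigma\tau_y(\psi))$ mentions the accessibility predicate $E^2$ directly rather than through a formula-defined truth-set, so no existential over ``sets'' occurs and no first-order encoding of truth-sets is required. I will use at the outset the observation that $\sigma\tau_x$ commutes with $\sim$ and that, by \eqref{E:a6} and \eqref{E:a4}, $\sim\forall y(Exy\to\sigma\tau_y(\psi))$ is $\mathsf{QN4}$-equivalent to $\exists y(Exy\wedge\sim\sigma\tau_y(\psi))$; this is exactly what makes the falsity clause of $\Box$ in $\mathsf{FSK}^d$ match the sheaf-semantic falsity clauses for $\forall$ and $\to$.

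For the easy direction (the analogue of Proposition \ref{P:easy}) I argue by contraposition. Given a Nelsonian sheaf $\mathcal{S}$ and a point refuting $\sigma\tau_x(\Gamma)\models_{\mathsf{QN4}}\sigma\tau_x(\Delta)$, I pass to the generated subsheaf by Lemma \ref{L:n4-standard}.2 and then build a Nelsonian modal model $\mathcal{M}_\mathcal{S}$ in the style of Definition \ref{D:n4-mod-n4ck}: worlds are pairs $(\mathbf{w},a)$ with $a\in U_\mathbf{w}$, the order is $(\mathbf{w},a)\leq_\mathcal{S}(\mathbf{v},b)$ iff $\mathbf{w}\leq\mathbf{v}$ and $\mathtt{H}_{\mathbf{w}\mathbf{v}}(a)=b$, the valuations are read off $\mathtt{M}^+$ and $\mathtt{M}^-$, and the accessibility relation is read off the positive extension of $E$, namely $(\mathbf{w},a)$ accesses $(\mathbf{w},b)$ iff $E^{\mathtt{M}^+_\mathbf{w}}(a,b)$; the negative extension of $E$ plays no role, since $E$ occurs only positively under $\sigma\tau_x$. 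I verify that $\mathcal{M}_\mathcal{S}$ lies in the intended model class---the monotonicity conditions relating $\leq$ to accessibility follow from $\mathtt{H}_{\mathbf{w}\mathbf{v}}$ being a homomorphism, exactly as in Lemma \ref{L:n4-mod-n4ck1} but with the single binary relation in place of $R_{(X,Y)}$. Then I prove by induction on $\phi\in\mathcal{MD}$ that $\mathcal{M}_\mathcal{S},(\mathbf{w},a)\models^\star\phi$ iff $\mathcal{S},\mathbf{w}\models^\star_n\sigma\tau_x(\phi)[x/a]$ for $\star\in\{+,-\}$, the analogue of Lemma \ref{L:n4-mod-n4ck2}; the Boolean and negation cases are as before, and the only new case $\phi=\Box\psi$ uses the $\forall$/$\sim\forall$ sheaf clauses together with the equivalence noted above. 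This yields an $\mathsf{FSK}^d$-countermodel for $(\Gamma,\Delta)$.

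For the hard direction (the analogue of Proposition \ref{P:hard}) I again argue by contraposition, starting from the canonical $\mathsf{FSK}^d$-model $\mathcal{M}_c$ built from maximal pairs, whose accessibility relation reads boxes---$(\Gamma_0,\Delta_0)$ accesses $(\Gamma_1,\Delta_1)$ iff $\{\psi\mid\Box\psi\in\Gamma_0\}\subseteq\Gamma_1$ and $\{\sim\Box\psi\mid\sim\psi\in\Gamma_1\}\subseteq\Gamma_0$---and for which the truth lemma (the modal counterpart of Proposition \ref{P:truth-lemma}) is available from \cite{nelsonian}. I then reproduce the machinery of standard sequences, local and global choice functions, and global sequences (Definition \ref{D:standard-sequence}, Lemma \ref{L:global-functions}), now with steps carrying no formula-labels since accessibility is unindexed. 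The canonical sheaf $\mathcal{S}_c$ is defined over $Glob$ as in Definition \ref{D:canonical-sheaf}, but its universe is simply $U:=Seq$, with no adjoined classes $[\phi]_\simeq$ (these only witnessed the set-existential in $ST_x(\psi\boxto\chi)$, which has no modal counterpart); accordingly $E^{\mathbb{M}^+}$ now encodes a single accessibility step, $E^{\mathbb{M}^+}(\alpha,\beta)$ iff $\beta=\alpha^\frown((\Xi,\Theta))$ with $end(\alpha)$ accessing $(\Xi,\Theta)$, and $E^{\mathbb{M}^-}=\emptyset$. I prove the two-part sheaf truth lemma, the analogue of Lemma \ref{L:sheaf-truth} Parts 1 and 2 (Part 3 and the whole of Lemma \ref{L:th} vanish, there being no $Th$ to satisfy): $\mathcal{S}_c,\bar{F}\models^+_n\sigma\tau_x(\phi)[x/\alpha]$ iff $\phi\in\pi^1(end(\alpha))$, and the same with $\models^-_n$ and $\sim\phi$. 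Feeding a maximal extension $(\Gamma',\Delta')\supseteq(\Gamma,\Delta)$ into this lemma produces a $\mathsf{QN4}$-point refuting $\sigma\tau_x(\Gamma)\models_{\mathsf{QN4}}\sigma\tau_x(\Delta)$, and the theorem follows by combining the two directions, just as Theorem \ref{T:foil} follows from Propositions \ref{P:easy} and \ref{P:hard}.

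I expect the main obstacle to be the $\Box$-case of the sheaf truth lemma in its falsity half, namely $\mathcal{S}_c,\bar{F}\models^-_n\sigma\tau_x(\Box\psi)[x/\alpha]$ iff $\sim\Box\psi\in\pi^1(end(\alpha))$. This requires unfolding $\sim\forall y(Exy\to\sigma\tau_y(\psi))$ as an existential claim and matching it, across the whole upward cone generated by $\bar{F}$ together with the global choice functions, to the existence of a one-step accessible maximal pair falsifying $\psi$---the point where the interaction conditions between $\leq$ and accessibility (the analogues of \eqref{Cond:1} and \eqref{Cond:2}) and the persistence Lemma \ref{L:n4-standard}.1 must be combined with care. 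A secondary, largely expository, difficulty is fixing the precise $\mathsf{FSK}^d$ semantics and its frame conditions from \cite{nelsonian,odintsovwansing} so that both the model-from-sheaf and the canonical-model constructions provably land in the intended class; but since no $Th$, no comprehension, and no extensionality argument is involved, the modal proof is uniformly simpler than the conditional one it specializes.
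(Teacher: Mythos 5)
Your proposal is correct and is essentially the paper's own proof: the paper proves Theorem \ref{T:modal} precisely by remarking that one repeats the constructions of Section \ref{S:N4CK} with $\sigma\tau_x$ in place of $ST_x$ and with all the machinery tied to $Th$ and formula-defined truth-sets (the predicates $S$, $O$, comprehension, extensionality/indiscernibility, and the adjoined elements $[\phi]_\simeq$) discarded, which is exactly the adaptation you carry out in both directions. Your elaboration of the two halves (the sheaf-to-model construction mirroring Definition \ref{D:n4-mod-n4ck} and Lemmas \ref{L:n4-mod-n4ck1}--\ref{L:n4-mod-n4ck2}, and the canonical-sheaf truth lemma over $Glob$ with universe $Seq$ mirroring Definition \ref{D:canonical-sheaf} and Lemma \ref{L:sheaf-truth}.1--2) matches the intended argument, including the key simplification that $\sigma\tau_x(\Box\psi)$ involves no existential over truth-sets.
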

This alternative embedding result shows that the very definition of translation used for the classical logic $\mathsf{K}$, also embeds $\mathsf{FSK}^d$ into $\mathsf{QN4}$; and this time our embedding commutes with the propositional connectives of $\mathsf{N4}$.

That such a result would fall out from our proof of Theorem \ref{T:foil} should not surprise us, as $\mathsf{FSK}^d$ was shown to be the modal companion to $\mathsf{N4CK}$ in \cite[Lemma 16]{nelsonian}. 
 
 \section{Discussion, conclusion, and future work}\label{S:conclusion}
Theorem \ref{T:foil} shows that $\mathsf{N4CK}$ is faithfully embedded into $\mathsf{QN4}$ by a (classically equivalent) variant of the classical formalization of the Chellas semantics for $\mathsf{CK}$ based on the pair $(Th, ST_x)$ for any given $x \in Ind$. The conditional logic $\mathsf{N4CK}$ can be thus seen as a result of a Nelsonian reading of the classical conditional logic $\mathsf{CK}$, and, therefore, as one plausible counterpart to $\mathsf{CK}$ on the basis of Nelson's logic of strong negation. This result allows to view $\mathsf{N4CK}$ as a strong candidate for the role of a natural minimal $\mathsf{N4}$-based logic of conditionals, and thus completes a series of other arguments to that effect presented earlier in \cite{nelsonian}. 

Our nearest research plans include extension of this series to other non-classical constructive logics. An especially fitting candidate for such an extension appears to be the negation-inconsistent constructive logic $\mathsf{C}$ introduced by H. Wansing in \cite{w}, especially given that the subject of conditional logics on the basis of the propositional fragment of $\mathsf{C}$ has already seen its first rather intriguing steps in \cite{wu}, and the methods of the current paper seem to open a way to a considerable refinement of these first results. However, one should also keep in mind that $\mathsf{C}$ is not a sublogic of $\mathsf{CL}$ and is therefore not in the scope of the general criterion put forward in the final paragraphs of Section \ref{S:conditional}. 

Before we end this paper, we would like to observe that $Th$ is but one of the infinitely many theories that can be used to prove results like Theorem \ref{T:foil}. More precisely, let us call a $T\subseteq \mathcal{FO}^\emptyset$ $\mathsf{N4}$-\textit{conditional} iff it can be substituted for $Th$ in Theorem \ref{T:foil} above, in other words, iff for all $\Gamma, \Delta \subseteq \mathcal{CN}$ and for every $x \in Ind$, it is true that $\Gamma \models_{\mathsf{N4CK}} \Delta$ iff $T\cup ST_x(\Gamma)\models_{\mathsf{QN4}}ST_x(\Delta)$. Then Theorem  \ref{T:foil} can be reformulated as stating that $Th$ is $\mathsf{N4}$-conditional. But we can also show that:
\begin{corollary}\label{C:foil}
	Let $Th'\subseteq \mathcal{FO}^\emptyset$ be such that $\mathcal{S}_c\models^+_n Th'\supseteq Th$. Then $Th'$ is $\mathsf{N4}$-conditional.
\end{corollary}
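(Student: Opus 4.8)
The plan is to check the two directions of the biconditional defining $\mathsf{N4}$-conditionality separately, reusing the two halves of the proof of Theorem \ref{T:foil} and exploiting that the canonical Nelsonian sheaf $\mathcal{S}_c$ furnishes a \emph{universal} counter-model for $\mathsf{N4CK}$. Throughout I would fix arbitrary $\Gamma, \Delta \subseteq \mathcal{CN}$ and $x \in Ind$.

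For the ``only if'' half I would start from Proposition \ref{P:easy}: assuming $\Gamma \models_{\mathsf{N4CK}} \Delta$, it already yields $Th \cup ST_x(\Gamma) \models_{\mathsf{QN4}} ST_x(\Delta)$. Since $Th \subseteq Th'$ we have $Th \cup ST_x(\Gamma) \subseteq Th' \cup ST_x(\Gamma)$, and $\models_{\mathsf{QN4}}$ is monotone in its premises (enlarging the left-hand set can only shrink the class of evaluation points satisfying all premises). Hence $Th' \cup ST_x(\Gamma) \models_{\mathsf{QN4}} ST_x(\Delta)$. This is the only place where the inclusion $Th \subseteq Th'$ is needed.

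For the ``if'' half I would argue by contraposition, essentially transcribing the proof of Proposition \ref{P:hard} with $Th$ replaced by $Th'$. Assuming $\Gamma \not\models_{\mathsf{N4CK}} \Delta$, the pair $(\Gamma, \Delta)$ is $\mathsf{N4CK}$-satisfiable, so the Lindenbaum construction (cf. \cite[Lemma 8]{nelsonian}) produces a maximal pair $(\Gamma', \Delta') \in W_c$ with $\Gamma \subseteq \Gamma'$ and $\Delta \subseteq \Delta'$. Maximality forces $\Gamma' \cap \Delta' = \emptyset$, so each $\phi \in \Gamma$ belongs to $\Gamma'$ while each $\psi \in \Delta$ lies outside $\Gamma'$. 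Viewing $(\Gamma', \Delta')$ as the length-one element of $Seq$ and applying Lemma \ref{L:sheaf-truth}.1, I obtain $\mathcal{S}_c, \Lambda \models^+_n ST_x(\phi)[x/(\Gamma', \Delta')]$ for every $\phi \in \Gamma$ and $\mathcal{S}_c, \Lambda \not\models^+_n ST_x(\psi)[x/(\Gamma', \Delta')]$ for every $\psi \in \Delta$. The decisive point is that the hypothesis $\mathcal{S}_c \models^+_n Th'$ supplies $\mathcal{S}_c, \Lambda \models^+_n Th'$ directly, so the evaluation point $(\mathcal{S}_c, \Lambda, f)$ with $f(x) = (\Gamma', \Delta')$ satisfies the bi-set $(Th' \cup ST_x(\Gamma), ST_x(\Delta))$. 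This witnesses $Th' \cup ST_x(\Gamma) \not\models_{\mathsf{QN4}} ST_x(\Delta)$, completing the contraposition.

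I expect no serious obstacle: the entire content of the corollary is the observation that $Th$ entered the hard direction of Theorem \ref{T:foil} \emph{solely} through the fact $\mathcal{S}_c \models^+_n Th$ established in Lemma \ref{L:th}, which the present hypothesis strengthens to $\mathcal{S}_c \models^+_n Th'$, while the easy direction needs only monotonicity together with $Th \subseteq Th'$. The single routine thing worth spelling out is that maximality of $(\Gamma', \Delta')$ gives $\Gamma' \cap \Delta' = \emptyset$ (a shared formula would witness $\Gamma' \vdash \Delta'$), which is what licenses passing from $\psi \in \Delta'$ to the failure of the positive satisfaction clause in Lemma \ref{L:sheaf-truth}.1.
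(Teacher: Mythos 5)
Your proof is correct and follows essentially the same route as the paper: the easy direction via Proposition \ref{P:easy} plus monotonicity of $\models_{\mathsf{QN4}}$ and $Th \subseteq Th'$, and the hard direction by rerunning the proof of Proposition \ref{P:hard} with the hypothesis $\mathcal{S}_c \models^+_n Th'$ in place of Lemma \ref{L:th}. Your explicit remark that maximality yields $\Gamma' \cap \Delta' = \emptyset$ is a detail the paper leaves implicit, but it is exactly what licenses the appeal to Lemma \ref{L:sheaf-truth}.1.
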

\begin{proof}
	Assume the hypothesis. If $\Gamma, \Delta \subseteq \mathcal{CN}$ and $x \in Ind$ are such that $\Gamma \models_{\mathsf{N4CK}} \Delta$, then, by Proposition \ref{P:easy}, we must have $Th\cup ST_x(\Gamma)\models_{\mathsf{QN4}}ST_x(\Delta)$, whence, by monotonicity of $\models_{\mathsf{QN4}}$ and $Th'\supseteq Th$, also $Th'\cup ST_x(\Gamma)\models_{\mathsf{QN4}}ST_x(\Delta)$. In the other direction, we can just repeat the proof of Proposition \ref{P:hard} using the fact that $\mathcal{S}_c\models^+_n Th'$.
\end{proof}
Corollary \ref{C:foil} allows us to strengthen the connection between Theorem \ref{T:foil} and the earlier embedding result \cite[Theorem 2]{olkhovikov} reported for the intuitionistic conditional logic $\mathsf{IntCK}$. Namely, we can define a theory $Th^i\supseteq Th$ which is both $\mathsf{N4}$-conditional and classically equivalent to the theory mentioned in \cite{olkhovikov}. To obtain $Th^i$, we need to extend $Th$ with the following sentences, for every $p^1 \in Prop$:
\begin{align*}
\forall x(Sx\vee Ox),\,&\forall x\sim(Sx\wedge Ox),\,\forall x(px\to Ox),\,
\forall x\forall y(Exy\to (Ox\wedge Sy))\\
&\forall x\forall y\forall z(Rxyz\to (Ox\wedge Sy\wedge Oz))	
\end{align*}
It is clear that all of these sentences are in the scope of Corollary \ref{C:foil}; but they are by no means the only interesting additions to $Th$ made available by the said  corollary. It is easy to see, for example, that we can also  replace the main $\to$ in \eqref{E:th5} with $\Rightarrow$ without affecting Theorem \ref{T:foil}.

Of course, the same trick is possible w.r.t. Proposition \ref{P:ck-into-cl} which was our blueprint for Theorem \ref{T:foil}. Indeed, we can call a $T\subseteq \mathcal{FO}^\emptyset$ $\mathsf{CL}$-\textit{conditional} iff for all $\Gamma, \Delta \subseteq \mathcal{CN}$ and for every $x \in Ind$, it is true that $\Gamma \models_{\mathsf{CK}} \Delta$ iff $T\cup st_x(\Gamma)\models_{\mathsf{CL}}st_x(\Delta)$. Then Proposition  \ref{P:ck-into-cl} shows us that $Th_{ck}$ is $\mathsf{CL}$-conditional, and we can also strengthen it as follows:
\begin{corollary}\label{C:ck-into-cl}
	Let $T\subseteq \mathcal{FO}^\emptyset$ be such that $\mathcal{M}^{cl}\models_c T\supseteq Th_{ck}$. Then $T$ is $\mathsf{CL}$-conditional.
\end{corollary}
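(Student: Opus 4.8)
The plan is to run the very same argument as for Corollary \ref{C:foil}, transposed from the Nelsonian setting to the classical one, with Proposition \ref{P:ck-into-cl} now playing the role that Theorem \ref{T:foil} (i.e. Propositions \ref{P:easy} and \ref{P:hard}) played there. First I would fix the reading of the hypothesis: since the classical construction $\mathcal{M}^{cl}$ of Lemma \ref{L:ck-mod-cl} is parametrised by an arbitrary $\mathcal{M}\in\mathbb{CK}$ (there being no single canonical classical model analogous to $\mathcal{S}_c$), the assumption $\mathcal{M}^{cl}\models_c T$ is to be read as: for every $\mathcal{M}\in\mathbb{CK}$, the associated $\mathcal{M}^{cl}$ satisfies $T$. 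With this convention in place, the corollary splits into the two implications that together constitute $\mathsf{CL}$-conditionality.

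For the left-to-right (soundness) implication, suppose $\Gamma\models_{\mathsf{CK}}\Delta$ for some $\Gamma,\Delta\subseteq\mathcal{CN}$ and $x\in Ind$. Proposition \ref{P:ck-into-cl} yields $Th_{ck}\cup st_x(\Gamma)\models_{\mathsf{QCL}}st_x(\Delta)$, and then monotonicity of $\models_{\mathsf{QCL}}$ together with the inclusion $T\supseteq Th_{ck}$ upgrades this at once to $T\cup st_x(\Gamma)\models_{\mathsf{QCL}}st_x(\Delta)$. This half uses only $T\supseteq Th_{ck}$ and says nothing about the models $\mathcal{M}^{cl}$.

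For the right-to-left (completeness) implication, I would argue by contraposition, rerunning the hard half of Proposition \ref{P:ck-into-cl}. Assuming $\Gamma\not\models_{\mathsf{CK}}\Delta$, pick $\mathcal{M}\in\mathbb{CK}$ and $\mathbf{w}\in W$ with $\mathcal{M},\mathbf{w}\models_{ck}\Gamma$ and $\mathcal{M},\mathbf{w}\not\models_{ck}\delta$ for every $\delta\in\Delta$, and form $\mathcal{M}^{cl}$ exactly as in Lemma \ref{L:ck-mod-cl}. Part 2 of that lemma gives $\mathcal{M}^{cl}\models_c st_x(\gamma)[x/\mathbf{w}]$ for each $\gamma\in\Gamma$ and $\mathcal{M}^{cl}\not\models_c st_x(\delta)[x/\mathbf{w}]$ for each $\delta\in\Delta$, while the standing hypothesis supplies $\mathcal{M}^{cl}\models_c T$, strengthening the $\mathcal{M}^{cl}\models_c Th_{ck}$ of Lemma \ref{L:ck-mod-cl}.1. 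Hence $T\cup st_x(\Gamma)\not\models_{\mathsf{QCL}}st_x(\Delta)$, which is precisely the desired contrapositive.

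The argument is essentially mechanical, so there is no genuine difficulty; the single point deserving care is the interpretation of the hypothesis flagged in the first paragraph. What makes the completeness half go through with no extra work is that the truth-transfer of Lemma \ref{L:ck-mod-cl}.2 is wholly independent of which first-order sentences $\mathcal{M}^{cl}$ validates — it speaks only of $st_x$ — so replacing $Th_{ck}$ by the larger $T$ costs nothing, provided $\mathcal{M}^{cl}$ still satisfies that larger theory, which is exactly the content of the assumption.
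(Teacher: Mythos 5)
Your proof is correct and takes essentially the same route as the paper, which simply notes that the argument of Corollary~\ref{C:foil} transposes: the soundness half is Proposition~\ref{P:ck-into-cl} plus monotonicity of $\models_{\mathsf{QCL}}$, and the completeness half reruns the countermodel construction of Lemma~\ref{L:ck-mod-cl} with $T$ in place of $Th_{ck}$. Your universal reading of the hypothesis $\mathcal{M}^{cl}\models_c T$ (for every $\mathcal{M}\in\mathbb{CK}$) is indeed the reading the argument requires, since the structure $\mathcal{M}^{cl}$ used in the contrapositive step depends on the chosen countermodel; flagging this is a fair clarification of the paper's notation rather than a deviation from its proof.
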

The proof is similar to the above proof of Corollary \ref{C:foil}. However, now that we have established the polymorphism of both $\mathsf{N4}$-conditional and $\mathsf{CL}$-conditional first-order theories, further (and increasingly deeper) questions suggest themselves. For example, what is the relation between the theories mentioned in Corollary \ref{C:foil} to the theories mentioned in Corollary \ref{C:ck-into-cl}? It appears that at least the following is highly plausible:

\textbf{Conjecture}. \textit{Let $Th'\subseteq \mathcal{FO}^\emptyset$ be such that $\mathcal{S}_c\models^+_n Th'\supseteq Th$. Then $\mathcal{M}^{cl}\models_c Th'$; in particular, $Th'$ is also $\mathsf{CL}$-conditional.}

If the above conjecture is true, then it might still be the case, that some subtheories of the complete theory of $\mathcal{M}^{cl}$ that extend $Th_{ck}$ are such that none of their classical equivalents both extends $Th$ and is a subtheory of the complete $\mathsf{N4}$-theory of $\mathcal{S}_c$. Moreover, this discrepancy between the two sets of theories can be either coincidental or necessary. Indeed, notice that the definitions of $\mathcal{M}^{cl}$ and $\mathcal{S}_c$ are not completely forced by our main result: at least the definition of $\mathbb{M}^-$ could have been given differently without affecting Theorem \ref{T:foil}. So is it possible to revise those definitions in such a way that one gets a perfect match between the theories arising in the two structures? We sum these considerations up as the following

\textbf{Open question 1}. \textit{Assume the truth of the above conjecture. Is it true that for every $T\subseteq \mathcal{FO}^\emptyset$ we have $\mathcal{M}^{cl}\models_c T\supseteq Th_{ck}$ iff there exists a $T'\subseteq \mathcal{FO}^\emptyset$ such that $T'$ is equivalent to $T$ over $\mathsf{CL}$ and $\mathcal{S}_c\models^+_n T'\supseteq Th$? If not, then is it possible to tweak the definitions of both $\mathcal{M}^{cl}$ and $\mathcal{S}_c$ in such a way that the answer to this question becomes affirmative and both Proposition \ref{P:ck-into-cl} and Theorem \ref{T:foil} are still true relative to these new definitions?}

However, the above question may seem a bit too narrow in that we only ask about the relation between the theories arising in two particular structures (even when we allow those structures to differ from $\mathcal{M}^{cl}$ and $\mathcal{S}_c$ as defined above). A much more interesting question is whether we can establish any meaningful relation between the $\mathsf{N4}$-conditional theories and $\mathsf{CL}$-conditional theories in general. In other words:

\textbf{Open question 2}. \textit{Let $T\subseteq \mathcal{FO}^\emptyset$ be $\mathsf{N4}$-conditional. Is it then also $\mathsf{CL}$-conditional?} 

\textbf{Open question 3}. \textit{Let $T\subseteq \mathcal{FO}^\emptyset$ be $\mathsf{CL}$-conditional. Is it always the case that there exists some $T'\subseteq \mathcal{FO}^\emptyset$ which is equivalent to $T$ over $\mathsf{CL}$ and also $\mathsf{N4}$-conditional?}

It is much harder to formulate the right inverses to the Open questions 2 and 3. It seems that not all $\mathsf{CL}$-conditional theories are also $\mathsf{N4}$-conditional: one example is likely to be provided by $Th_{ck}$ itself. However, some sort of a canonical reformulation $\rho(T)$ of a classical theory $T$ might be still possible with the effect that $\rho(T)$ and $T$ are classically equivalent and, in case $T$ is $\mathsf{CL}$-conditional, $\rho(T)$ is $\mathsf{N4}$-conditional. In case such a reformulation technique is found and given in an effective way, the above conjectures and open questions can be extended by similar items asking about the format of the possible match-up between $\mathsf{CL}$-conditional theories and their canonical reformulations.

\begin{appendices}
\section{Proofs of some results about $\mathsf{QN4}$}\label{App:N4}
To simplify the notation, we fix an $\Omega \subseteq \Pi$ and set $\mathcal{FO}^+:= \mathcal{FO}^+(\Omega^\pm \cup \{\epsilon^2\})$, $\mathbb{I}:= \mathbb{I}(\Omega^\pm \cup \{\epsilon^2\})$, $EP_i:= EP_i(\Omega^\pm \cup \{\epsilon^2\})$, $\mathsf{QIL}^+:= \mathsf{QIL}^+(\Omega^\pm \cup \{\epsilon^2\})$, and $\mathfrak{QIL}^+:= \mathfrak{QIL}^+(\mathcal{FO}^+)$. Moreover, we set $\mathcal{FO}:= \mathcal{FO}(\Omega)$, $At:= At(\Omega)$, $Lit:= Lit(\Omega)$, $\mathbb{N}4:= \mathbb{N}4(\Omega)$, and $EP_n:= EP_i(\Omega)$ for this Appendix.

\subsection{Proof of Proposition \ref{P:intuitionistic-embedding}}\label{App:intuitionistic-embedding}
The proof of Proposition \ref{P:intuitionistic-embedding} is complicated by the fact that $Tr$ is not injective; we have, for example
$$
Tr(\sim(v_0 \equiv v_1 \to v_1 \equiv v_2)) = (v_0 \equiv v_1 \wedge \epsilon(v_1,v_2)) = Tr(v_0 \equiv v_1 \wedge \sim(v_1 \equiv v_2)). 
$$
Therefore, we need to choose a subset of $\mathcal{FO}$ which can both serve as a representative for the whole set $\mathcal{FO}$ and provide a basis for an injective restriction of $Tr$. The next definition singles out this class:
\begin{definition}\label{D:nnf}
	A $\phi \in \mathcal{FO}$ is in \textit{negation normal form} iff, for every $\sim\psi \in Sub(\phi)$ it is true that $\psi\in At$.
\end{definition}
The following Lemma sums up the properties of negation normal forms in $\mathsf{N4}$ required for our argument:
\begin{lemma}\label{L:nnf}
	The following statements hold:
	\begin{enumerate}
		\item $tr:= Tr\upharpoonright\{\phi \in \mathcal{FO}\mid \phi\text{ is in negation normal form}\}$ is injective.
		
		\item For every $\phi \in \mathcal{FO}$, define $NNF(\phi)\in \mathcal{FO}$ by the following induction on the construction of $\phi$:
		\begin{align*}
			NNF(\phi)&:= \phi &&\text{$\phi$ is a literal}\\
			NNF(\psi\ast\chi)&:= NNF(\psi)\ast NNF(\chi)&&\ast \in \{\wedge, \vee, \to\}\\
			NNF(Qx\psi)&:= QxNNF(\psi)&&Q \in \{\forall, \exists\}\\
			NNF(\sim\sim\psi)&:= NNF(\psi)\\
			NNF(\sim(\psi\star\chi))&:= NNF(\sim\psi)\ast NNF(\sim\chi) &&\{\star, \ast\} = \{\wedge, \vee\}\\
			NNF(\sim(\psi\to\chi))&:= NNF(\psi)\wedge NNF(\sim\chi);\\
			NNF(\sim Qx\psi)&:= Q'xNNF(\sim\psi) &&\{Q,Q'\} = \{\forall, \exists\}
		\end{align*} 
		Then $NNF(\phi)$ is in negation normal form, and $\phi\leftrightarrow NNF(\phi)\in\mathsf{QN4}$.
		
		\item For every $\phi \in \mathcal{FO}$, we have $Tr(\phi) = Tr(NNF(\phi))$.
	\end{enumerate}
\end{lemma}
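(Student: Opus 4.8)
The plan is to establish all three parts by induction, the only preliminary being a well-founded measure that both legitimizes the recursive definition of $NNF$ and supports the inductions for Parts 2 and 3. Since clauses such as $NNF(\sim(\psi\to\chi)) := NNF(\psi)\wedge NNF(\sim\chi)$ call $NNF$ on $\sim\chi$, which is not literally a subformula of $\sim(\psi\to\chi)$, I would first fix a weight $\mu(\phi)$ counting all occurrences of connectives and quantifiers (with $\sim$ counted as one symbol) and check that in every defining clause each recursive argument has strictly smaller $\mu$-value; for instance $\mu(\sim\chi)=\mu(\chi)+1<\mu(\psi)+\mu(\chi)+2=\mu(\sim(\psi\to\chi))$. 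This turns the definition of $NNF$ into a genuine well-founded recursion and furnishes the induction principle used below.

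For Part 1, I would argue by induction on the structure of a negation-normal $\phi$, using unique readability of the target language $\mathcal{FO}^+(\Omega^\pm \cup \{\epsilon^2\})$. The key observation is that on negation-normal inputs $Tr$ never fires the clauses governed by $\sim\sim$, $\sim(\cdot\star\cdot)$, $\sim(\cdot\to\cdot)$ or $\sim Qx$, since all of these presuppose a non-atomic formula under the $\sim$; only the literal clauses and the clauses for the top positive connective or quantifier apply. Hence the head symbol of $Tr(\phi)$ records the outermost symbol of $\phi$: a literal yields an atom whose predicate letter ($P_+$, $P_-$, $\equiv$, or $\epsilon$) uniquely identifies which of $P(\bar{x}_n)$, $\sim P(\bar{x}_n)$, $x\equiv y$, or $\sim(x\equiv y)$ produced it, while each of $\wedge$, $\vee$, $\to$, $\forall x$, $\exists x$ is preserved as the topmost symbol. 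Thus if $\phi,\psi$ are both in negation normal form with $Tr(\phi)=Tr(\psi)$, unique readability forces them to share their outermost symbol and, in the compound cases, to have componentwise equal translations, so the induction hypothesis yields $\phi=\psi$.

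For Parts 2 and 3 I would run parallel inductions on $\mu$. Part 3 is the more mechanical of the two: in each clause the defining equation for $NNF$ mirrors exactly a clause of $Tr$ (this is precisely how the two definitions were arranged), so $Tr(\phi)=Tr(NNF(\phi))$ falls out immediately from the induction hypothesis applied to the recursive arguments; e.g.\ for $\phi=\sim(\psi\to\chi)$ both sides reduce to $Tr(\psi)\wedge Tr(\sim\chi)$. That $NNF(\phi)$ is in negation normal form is likewise a routine induction, since every clause returns either a literal or a formula built by a positive connective or quantifier from the outputs of recursive $NNF$-calls, which are in negation normal form by the induction hypothesis.

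The only genuinely delicate point, and the one I would flag as the main obstacle, is the equivalence $\phi\leftrightarrow NNF(\phi)\in\mathsf{QN4}$ in Part 2. Here I must avoid relying on $\leftrightarrow$ being a congruence for $\sim$, which Lemma \ref{L:derived-connectives} explicitly warns fails in $\mathsf{QN4}$. The $\mu$-based induction is exactly what sidesteps this: in a clause such as $\phi=\sim(\psi\to\chi)$ I apply the induction hypothesis to the \emph{whole} negated subformula $\sim\chi$ (never to $\chi$ under a $\sim$), obtaining $\psi\leftrightarrow NNF(\psi)$ and $\sim\chi\leftrightarrow NNF(\sim\chi)$; I then combine the base equivalence supplied by the matching axiom among \eqref{E:a1}--\eqref{E:a6} (here \eqref{E:a4}) with the congruence of $\leftrightarrow$ under the positive connectives and quantifiers, which does hold in $\mathsf{QIL}^+$ and hence, by Lemma \ref{L:intuitionistic-inclusion}, in $\mathsf{QN4}$, and close by transitivity of $\leftrightarrow$. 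The remaining negation clauses are treated identically, each invoking its matching axiom: \eqref{E:a1} for $\sim\sim$, \eqref{E:a2}--\eqref{E:a3} for the propositional De Morgan laws, and \eqref{E:a5}--\eqref{E:a6} for the quantifier dualities.
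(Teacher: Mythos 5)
Your proof is correct and follows essentially the same route as the paper's (sketched) proof: Part 1 via the observation that $Tr$ is injective on literals and commutes with connectives and quantifiers on non-negated compounds, Parts 2 and 3 by induction using Lemma \ref{L:intuitionistic-inclusion} together with the axioms \eqref{E:a1}--\eqref{E:a6}. The paper gives only a sketch; your additions --- the well-founded measure $\mu$ that legitimizes the recursion (since clauses like the one for $\sim(\psi\to\chi)$ recurse on $\sim\chi$, which is not a subformula), and the explicit discipline of never applying the induction hypothesis under a $\sim$ so as to avoid the failed congruence $(\phi\leftrightarrow\psi)\to(\sim\phi\leftrightarrow\sim\psi)$ --- are precisely the details the sketch leaves implicit.
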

\begin{proof}[Proof (a sketch)]
	(Part 1) Observe that $Tr$ is injective on literals and commutes with the connectives and quantifiers for the non-negated complex formulas.
	
	(Part 2) That $NNF(\phi)$ is in the negation normal form is immediate from the definition. As for the provability of $\phi\leftrightarrow NNF(\phi)$, we reason by induction on the construction of $\phi \in \mathcal{FO}$, using Lemma \ref{L:intuitionistic-inclusion} plus the axioms \eqref{E:a1}--\eqref{E:a6}.
	
	(Part 3) By a straightforward induction on the construction of $\phi \in \mathcal{FO}$.
\end{proof}
We are now ready to prove our proposition:
\begin{proof}[Proof of Proposition \ref{P:intuitionistic-embedding}]
	(Left-to-right) Assume that $\Gamma\models_{\mathsf{QN4}}\Delta$ and choose any $\mathfrak{QN4}$-deduction $\bar{\chi}_r$ from the premises in $\Gamma$ such that, for some $\psi_1,\ldots,\psi_s\in \Delta$ we have $\chi_r = \psi_1\vee\ldots\vee\psi_s$. Consider $Tr(\bar{\chi}_r)$: the translation of every axiom of $\mathfrak{QIL}^+$ is again an instance of the same axiom and the same is true for the applications of every rule in $\mathfrak{QIL}^+$. Finally, all the instances of \eqref{E:a1}--\eqref{E:a6} get translated into formulas of the form $\psi\leftrightarrow\psi$ for an appropriate $\psi\in \mathcal{FO}^+$; all of these translations are also clearly provable in $\mathfrak{QIL}^+$. Therefore, $Tr(\bar{\chi}_r)$ is straightforwardly extendable to a $\mathfrak{QIL}^+$-deduction of $Tr(\chi_r) = Tr(\psi_1\vee\ldots\vee\psi_s) = Tr(\psi_1)\vee\ldots\vee Tr(\psi_s)$ 
	from premises in $Tr(\Gamma)$. We have thus shown that $Tr(\Gamma)\models_{\mathsf{QIL}^+}Tr(\Delta)$.
	
	(Right-to-left) Assume that $Tr(\Gamma)\models_{\mathsf{QIL}^+}Tr(\Delta)$ and choose any $\mathfrak{QIL}^+$-deduction $\bar{\chi}_r$ from the premises in $Tr(\Gamma)$ such that, for some $\psi_1,\ldots,\psi_s\in \Delta$ we have $\chi_r =  Tr(\psi_1)\vee\ldots\vee Tr(\psi_s)$. Consider $tr^{-1}(\bar{\chi}_r)$. Again, every instance of a $\mathfrak{QIL}^+$-axiom is transferred by $tr^{-1}$ into an instance of the same axiom and that the applications of all the rules in $\mathfrak{QIL}^+$ are likewise preserved by $tr^{-1}$. Therefore, $tr^{-1}(\bar{\chi}_r)$ must be a deduction of $tr^{-1}(\chi_r)$ in $\mathfrak{QIL}^+$ and hence also in $\mathfrak{QN4}$. Note, next, that for every $\theta \in \mathcal{FO}$ we have:
	\begin{align*}
		tr^{-1}(Tr(\theta)) &= tr^{-1}(Tr(NNF(\theta))) &&\text{by Lemma \ref{L:nnf}.3}\\
		&= tr^{-1}(tr(NNF(\theta)))&&\text{by Lemma \ref{L:nnf}.2}\\
		&= NNF(\theta)&&\text{by Lemma \ref{L:nnf}.1}
	\end{align*}
	Therefore $tr^{-1}(\bar{\chi}_r)$ is an $\mathfrak{QN4}$-deduction of
	$$
	tr^{-1}(\chi_r) = tr^{-1}(Tr(\psi_1)\vee\ldots\vee Tr(\psi_s)) = tr^{-1}Tr(\psi_1)\vee\ldots\vee tr^{-1}Tr(\psi_s) = NNF(\psi_1)\vee\ldots\vee NNF(\psi_s)
	$$
	from premises in $NNF(\Gamma)$. In view of Lemma \ref{L:nnf}.2, it is straightforward to extend $tr^{-1}(\bar{\chi}_r)$ to an $\mathfrak{QN4}$-deduction of $\psi_1\vee\ldots\vee\psi_s$ from the premises in $\Gamma$.
\end{proof}

\subsection{Proof of Proposition \ref{P:nelsonian-sheaves}}\label{App:nelsonian-sheaves}
Proposition \ref{P:nelsonian-sheaves} follows from Proposition \ref{P:intuitionistic-embedding} together with the following two lemmas:
\begin{lemma}\label{L:sheaf-embedding1}
	Let $\mathcal{S}\in \mathbb{N}4$, and let $\mathcal{S}^i = (W^i, \leq^i, \mathtt{M}^i, \mathtt{H}^i)$ be such that $W^i := W$, $\leq^i:= \leq$, $\mathtt{H}^i:= \mathtt{H}$, and that, for every $\mathbf{w} \in W^i = W$, we have $U^{\mathtt{M}^i_\mathbf{w}}:= U_\mathbf{w} = U^{\mathtt{M}^+_\mathbf{w}} = U^{\mathtt{M}^-_\mathbf{w}}$. Finally, for all $\mathbf{w} \in W$ and $P^n \in \Pi$, we set that
	\begin{align*}
		P^{\mathtt{M}^i_\mathbf{w}}_+&:=P^{\mathtt{M}^+_\mathbf{w}};&&P^{\mathtt{M}^i_\mathbf{w}}_-:= P^{\mathtt{M}^-_\mathbf{w}};&&\epsilon^{\mathtt{M}^i_\mathbf{w}}:= \epsilon^{\mathtt{M}^-_\mathbf{w}}.
	\end{align*}
	Then all of the following statements hold:
	\begin{enumerate}
		\item $\mathcal{S}^i \in \mathbb{I}$.
		
		\item For every $\mathbf{w} \in W$ and every function $f$, we have $(\mathcal{S}, \mathbf{w}, f) \in EP_{n}$ iff $(\mathcal{S}^i, \mathbf{w}, f) \in EP_{i}$.
		
		\item For every $\mathbf{w} \in W$, every $f$ such that $(\mathcal{S}, \mathbf{w}, f) \in EP_{n}$, and every $\phi \in \mathcal{FO}$, we have $\mathcal{S}, \mathbf{w}\models^+_{n} \phi[f]$ iff $\mathcal{S}^i, \mathbf{w}\models_i Tr(\phi)[f]$.
	\end{enumerate}	
\end{lemma}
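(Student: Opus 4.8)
The plan is to handle the three parts in the order stated, with essentially all of the work concentrated in Part 3. For Part 1, I would simply verify the four clauses of Definition \ref{D:intuitionistic-sheaf} for $\mathcal{S}^i$. Nonemptiness of $W^i = W$ and the fact that $\leq^i = \leq$ is a preorder are inherited verbatim from $\mathcal{S}^+ \in \mathbb{I}$, and the functoriality requirements $\mathtt{H}^i_{\mathbf{w}\mathbf{w}} = id[U_\mathbf{w}]$ and $\mathtt{H}^i_{\mathbf{w}\mathbf{v}}\circ\mathtt{H}^i_{\mathbf{v}\mathbf{u}} = \mathtt{H}^i_{\mathbf{w}\mathbf{u}}$ hold because $\mathtt{H}^i = \mathtt{H}$ and these already hold in both components. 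The only clause needing an argument is that each $\mathtt{H}_{\mathbf{w}\mathbf{v}}$ is a homomorphism $\mathtt{M}^i_\mathbf{w}\to\mathtt{M}^i_\mathbf{v}$ over the merged signature $\Omega^\pm\cup\{\epsilon^2\}$. This splits into preservation of each $P_+$, each $P_-$, and of $\epsilon$; but since $(P_+)^{\mathtt{M}^i_\mathbf{w}} = P^{\mathtt{M}^+_\mathbf{w}}$, $(P_-)^{\mathtt{M}^i_\mathbf{w}} = P^{\mathtt{M}^-_\mathbf{w}}$, and $\epsilon^{\mathtt{M}^i_\mathbf{w}} = \epsilon^{\mathtt{M}^-_\mathbf{w}}$ by construction, preservation of $P_+$ is exactly the homomorphism property of $\mathtt{H}_{\mathbf{w}\mathbf{v}}$ in $\mathcal{S}^+\in\mathbb{I}(\Omega)$, while preservation of $P_-$ and $\epsilon$ is that same property in $\mathcal{S}^-\in\mathbb{I}(\Omega\cup\{\epsilon^2\})$. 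Part 2 is then immediate from unfolding $EP_n$ and $EP_i$: since $U^{\mathtt{M}^i_\mathbf{w}} = U_\mathbf{w} = U^{\mathtt{M}^+_\mathbf{w}}$, the condition $f:Ind\to U^{\mathtt{M}^i_\mathbf{w}}$ defining membership in $EP_i$ coincides with the condition $f:Ind\to U^{\mathtt{M}^+_\mathbf{w}}$ built into $(\mathcal{S}^+,\mathbf{w},f)\in EP_i(\Omega)$, which is precisely what $(\mathcal{S},\mathbf{w},f)\in EP_n$ requires (together with $\mathcal{S}\in\mathbb{N}4$, matched by $\mathcal{S}^i\in\mathbb{I}$ from Part 1).

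The crux is Part 3, and the key observation is that the statement as written is not directly amenable to induction, because $Tr$ is defined by a case split on the leading connective of both $\phi$ and $\sim\phi$. I would therefore strengthen it to a simultaneous claim and prove, by a single induction on the construction of $\phi\in\mathcal{FO}$, both (a) that $\mathcal{S},\mathbf{w}\models^+_n\phi[f]$ iff $\mathcal{S}^i,\mathbf{w}\models_i Tr(\phi)[f]$, and (b) that $\mathcal{S},\mathbf{w}\models^-_n\phi[f]$ iff $\mathcal{S}^i,\mathbf{w}\models_i Tr(\sim\phi)[f]$, for all $\mathbf{w}$ and all $f$ with $(\mathcal{S},\mathbf{w},f)\in EP_n$. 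Every recursive clause of $Tr$ refers to a strict subformula of $\phi$, so the induction is well-founded. In the base cases, clauses (a) and (b) for $P(\bar{x}_n)$ reduce to the defining equalities $(P_+)^{\mathtt{M}^i_\mathbf{w}} = P^{\mathtt{M}^+_\mathbf{w}}$ and $(P_-)^{\mathtt{M}^i_\mathbf{w}} = P^{\mathtt{M}^-_\mathbf{w}}$, and for $x\equiv y$ to $f(x) = f(y)$ and $\epsilon^{\mathtt{M}^i_\mathbf{w}} = \epsilon^{\mathtt{M}^-_\mathbf{w}}$, respectively.

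In the inductive step each case is a direct match. For $\wedge$, $\vee$, $\to$, and the quantifiers, clause (a) copies the shared positive/intuitionistic clause verbatim (using $\mathtt{H}^i = \mathtt{H}$ and $U^{\mathtt{M}^i_\mathbf{v}} = U_\mathbf{v}$), while clause (b) matches the Nelson falsification clause against the De Morgan-dualized $Tr(\sim(\cdot))$; here the swaps $\{\star,\ast\} = \{\wedge,\vee\}$ and $\{Q,Q'\} = \{\forall,\exists\}$ in the definition of $Tr$ are exactly what aligns $\models^-_n$ with $\models_i$ (for instance, $\mathcal{S},\mathbf{w}\models^-_n(\psi\wedge\chi)[f]$ unfolds to a disjunction, matching $Tr(\sim(\psi\wedge\chi)) = Tr(\sim\psi)\vee Tr(\sim\chi)$ via the induction hypothesis (b) for $\psi$ and $\chi$; and $\mathcal{S},\mathbf{w}\models^-_n \exists x\psi[f]$ unfolds to a universal over $\mathbf{v}\geq\mathbf{w}$, matching $Tr(\sim\exists x\psi) = \forall x\, Tr(\sim\psi)$). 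The case $\phi = \sim\psi$ uses the $\mathsf{QN4}$ clauses equating $\mathcal{S},\mathbf{w}\models^+_n\sim\psi[f]$ with $\mathcal{S},\mathbf{w}\models^-_n\psi[f]$ and $\mathcal{S},\mathbf{w}\models^-_n\sim\psi[f]$ with $\mathcal{S},\mathbf{w}\models^+_n\psi[f]$, together with $Tr(\sim\sim\psi) = Tr(\psi)$, so that (a) for $\sim\psi$ follows from (b) for $\psi$ and (b) for $\sim\psi$ from (a) for $\psi$.

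I do not anticipate a genuine obstacle beyond bookkeeping: the one thing that must be got right is the choice of the companion statement (b), after which the verification is mechanical, precisely because the Nelson satisfaction clauses for $\models^+_n$ and $\models^-_n$ and the inward-pushing of strong negation performed by $Tr$ are dual by design. Once (a) is established, Part 3 of the lemma is exactly the statement (a) read off for arbitrary $\phi$.
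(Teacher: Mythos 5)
Your proposal is correct and follows essentially the same route as the paper: Parts 1 and 2 by direct unfolding of the definitions, and Part 3 by a structural induction whose hypothesis is strengthened to cover both the positive claim for $\phi$ and the claim for $\sim\phi$ (equivalently, $\models^-_n$), which is exactly how the paper's case analysis proceeds (each case there verifies both $Tr(\phi)$ and $Tr(\sim\phi)$). Your explicit remark that the plain statement is not directly amenable to induction, and that the companion statement (b) is the needed strengthening, is precisely the point the paper leaves implicit.
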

\begin{proof}
	Parts 1 and 2 follow trivially by definition. Part 3 is proved by a straightforward induction on the construction of $\phi\in \mathcal{FO}$ for all $\mathbf{w}$ and $f$ such that $(\mathcal{S}, \mathbf{w}, f) \in EP_n$. We consider some typical cases in this induction in more detail.
	
	\textit{Case 1}. $\phi  =  P(\bar{x}_m)$ for some $m \in \omega$, $P^m \in \Pi$ and $\bar{x}_m\in Ind^m$. But then:
	\begin{align*}
		\mathcal{S}, \mathbf{w}\models^+_n \phi[f] \text{ iff } f(\bar{x}_m)\in 	P^{\mathtt{M}^+_\mathbf{w}} \text{ iff } f(\bar{x}_m)\in P_+^{\mathtt{M}^i_\mathbf{w}}
		\text{ iff } \mathcal{S}^i, \mathbf{w}\models_i P_+(\bar{x}_m)[f]\text{ iff } \mathcal{S}^i, \mathbf{w}\models_i Tr(\phi)[f].
	\end{align*}
	On the other hand, we have:
	\begin{align*}
		\mathcal{S}, \mathbf{w}\models^+_n \sim\phi[f] \text{ iff } \mathcal{S}, \mathbf{w}\models^-_n \phi[f] &\text{ iff } f(\bar{x}_m)\in 	P^{\mathtt{M}^-_\mathbf{w}} \text{ iff } f(\bar{x}_m)\in P_-^{\mathtt{M}^i_\mathbf{w}}\\
		&\text{ iff } \mathcal{S}^i, \mathbf{w}\models_i P_-(\bar{x}_,)[f]\text{ iff } \mathcal{S}^i, \mathbf{w}\models_i Tr(\sim\phi)[f].
	\end{align*}
	
	\textit{Case 2}. $\phi = (\psi \to \chi)$. Then we have:
	\begin{align*}
		\mathcal{S}, \mathbf{w}\models^+_n \phi[f] &\text{ iff }(\forall \mathbf{v}\geq \mathbf{w})(\mathcal{S},\mathbf{v}\not\models^+_{n}\psi[f\circ\mathtt{H}_{\mathbf{w}\mathbf{v}}]\text{ or }\mathcal{S}, \mathbf{v}\models^+_{n}\chi[f\circ\mathtt{H}_{\mathbf{w}\mathbf{v}}])\\
		&\text{ iff }(\forall \mathbf{v}\geq^i \mathbf{w})(\mathcal{S}^i,\mathbf{v}\not\models_iTr(\psi)[f\circ\mathtt{H}^i_{\mathbf{w}\mathbf{v}}]\text{ or }\mathcal{S}^i, \mathbf{v}\models_iTr(\chi)[f\circ\mathtt{H}^i_{\mathbf{w}\mathbf{v}}])&&\text{by IH}\\
		&\text{ iff } \mathcal{S}^i, \mathbf{w}\models_i (Tr(\psi)\to Tr(\chi))[f] \text{ iff } \mathcal{S}^i, \mathbf{w}\models_i Tr(\phi)[f].
	\end{align*}
	On the other hand, we have:
	\begin{align*}
		\mathcal{S}, \mathbf{w}\models^+_n \sim\phi[f] &\text{ iff } \mathcal{S},\mathbf{w}\models^+_{n}\psi[f]\text{ and }\mathcal{S}, \mathbf{v}\models^-_{n}\chi[f]\\
		&\text{ iff } \mathcal{S},\mathbf{w}\models^+_{n}\psi[f]\text{ and }\mathcal{S}, \mathbf{w}\models^+_{n}\sim\chi[f]\\
		&\text{ iff } \mathcal{S}^i,\mathbf{w}\models_iTr(\psi)[f]\text{ and }\mathcal{S}^i, \mathbf{w}\models_iTr(\sim\chi)[f]&&\text{by IH}\\
		&\text{ iff } \mathcal{S}^i, \mathbf{w}\models_i Tr(\psi)\wedge Tr(\sim\chi)\text{ iff } \mathcal{S}^i, \mathbf{w}\models_i Tr(\phi)[f].
	\end{align*}
	
	\textit{Case 3}. $\phi = \forall x\psi$. Then we have:
	\begin{align*}
		\mathcal{S}, \mathbf{w}\models^+_n \phi[f] &\text{ iff } (\forall\mathbf{v}\geq \mathbf{w})(\forall a\in U_{\mathbf{v}})(\mathcal{S},\mathbf{v}\models^+_{n}\psi[(f\circ\mathtt{H}_{\mathbf{w}\mathbf{v}})[x/a]])\\
		&\text{ iff } (\forall\mathbf{v}\geq^i \mathbf{w})(\forall a\in U^{\mathtt{M}^i_\mathbf{v}})(\mathcal{S}^i,\mathbf{v}\models_iTr(\psi)[(f\circ\mathtt{H}^i_{\mathbf{w}\mathbf{v}})[x/a]])&&\text{by IH}\\
		&\text{ iff } \mathcal{S}^i, \mathbf{w}\models_i \forall xTr(\psi)[f]\text{ iff } \mathcal{S}^i, \mathbf{w}\models_i Tr(\phi)[f].
	\end{align*}
	On the other hand, we have: 
	\begin{align*}
		\mathcal{S}, \mathbf{w}\models^+_n \sim\phi[f] &\text{ iff } (\exists a\in U_{\mathbf{w}})(\mathcal{S},\mathbf{w}\models^-_{n}\psi[f[x/a]])\\
		&\text{ iff } (\exists a\in U_{\mathbf{w}})(\mathcal{S},\mathbf{w}\models^+_{n}\sim\psi[f[x/a]])\\
		&\text{ iff } (\exists a\in U^{\mathtt{M}^i_\mathbf{w}})(\mathcal{S}^i,\mathbf{w}\models_iTr(\sim\psi)[f[x/a]])&&\text{by IH}\\
		&\text{ iff } \mathcal{S}^i, \mathbf{w}\models_i \exists xTr(\sim\psi)[f]\text{ iff } \mathcal{S}^i, \mathbf{w}\models_i Tr(\phi)[f].
	\end{align*}
	The remaining cases are trivial, or similar to the cases considered above, or both.
\end{proof}
\begin{lemma}\label{L:sheaf-embedding2}
	Let $\mathcal{S}\in \mathbb{I}$ and let $\mathcal{S}^n = (W^n, \leq^n, \mathtt{M}^{n+}, \mathtt{M}^{n-}, \mathtt{H}^n)$ be such that $W^n := W$, $\leq^n:= \leq$, $\mathtt{H}^n:= \mathtt{H}$, and that, for every $\mathbf{w} \in W^n = W$, we have $U^n_\mathbf{w} = U^{\mathtt{M}^{n+}_\mathbf{w}} = U^{\mathtt{M}^{n-}_\mathbf{w}}:= U^{\mathtt{M}_\mathbf{w}}$. Finally, for all $\mathbf{w} \in W$ and $P^m \in \Pi$, we set that
	
	\begin{align*}
		P^{\mathtt{M}^{n+}_\mathbf{w}}&:= P^{\mathtt{M}_\mathbf{w}}_+;&&P^{\mathtt{M}^{n-}_\mathbf{w}}:= P^{\mathtt{M}_\mathbf{w}}_- ;&&\epsilon^{\mathtt{M}^{n-}_\mathbf{w}}:= \epsilon^{\mathtt{M}_\mathbf{w}}.
	\end{align*}	
	Then all of the following statements hold:
	\begin{enumerate}
		\item $\mathcal{S}^n \in \mathbb{N}4$.
		
		\item For every $\mathbf{w} \in W$ and every function $f$, we have $(\mathcal{S}, \mathbf{w}, f) \in EP_{i}$ iff $(\mathcal{S}^n, \mathbf{w}, f) \in EP_{n}$.
		
		\item For every $\mathbf{w} \in W$, every $f$ such that $(\mathcal{S}, \mathbf{w}, f) \in EP_{n}$, and every $\phi \in \mathcal{FO}$, we have $\mathcal{S}^n, \mathbf{w}\models^+_n \phi[f]$ iff $\mathcal{S}, \mathbf{w}\models_i Tr(\phi)[f]$.
	\end{enumerate}	
\end{lemma}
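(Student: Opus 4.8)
The plan is to exploit the fact that the passage $\mathcal{S}\mapsto\mathcal{S}^n$ defined here is the exact inverse of the passage $\mathcal{S}\mapsto\mathcal{S}^i$ of Lemma \ref{L:sheaf-embedding1}: the latter reads the $+$-extensions of $\mathcal{S}^i$ off the positive component of a Nelsonian sheaf and its $-$-extensions (together with $\epsilon$) off the negative component, while the construction at hand performs precisely the opposite substitution on the very same domains and with the very same family $\mathtt{H}$. I would therefore first dispatch Parts 1 and 2 by a direct check, and then obtain Part 3 as a corollary of Lemma \ref{L:sheaf-embedding1}.3 rather than by rerunning its induction.

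For Part 1 I would verify the two clauses of Definition \ref{D:nelsonian-sheaf}. Since $W^n$, $\leq^n$, and $\mathtt{H}^n$ are literally $W$, $\leq$, $\mathtt{H}$, the preorder axioms and the coherence conditions on $\mathtt{H}$ required by Definition \ref{D:intuitionistic-sheaf} are inherited verbatim; only the homomorphism requirement needs attention. As $\mathcal{S}\in\mathbb{I}$ is taken over $\Omega^\pm\cup\{\epsilon^2\}$, each $\mathtt{H}_{\mathbf{w}\mathbf{v}}$ preserves every $P_+$, every $P_-$, and $\epsilon$ in the sense of Definition \ref{D:homomorphism}; hence it preserves every $P$ in the models $\mathtt{M}^{n+}_\mathbf{w}$ (whose extensions are the $P_+$-extensions), giving $\mathcal{S}^{n+}=(W,\leq,\mathtt{M}^{n+},\mathtt{H})\in\mathbb{I}(\Omega)$, and it preserves every $P$ and $\epsilon$ in the models $\mathtt{M}^{n-}_\mathbf{w}$, giving $\mathcal{S}^{n-}=(W,\leq,\mathtt{M}^{n-},\mathtt{H})\in\mathbb{I}(\Omega\cup\{\epsilon^2\})$. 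These are exactly clauses 1 and 2 of Definition \ref{D:nelsonian-sheaf}, so $\mathcal{S}^n\in\mathbb{N}4$. Part 2 is then immediate: since $U^{\mathtt{M}^{n+}_\mathbf{w}}=U^{\mathtt{M}_\mathbf{w}}$ for every $\mathbf{w}$, the requirement $f:Ind\to U^{\mathtt{M}_\mathbf{w}}$ defining membership in $EP_i$ is literally the requirement $f:Ind\to U^{\mathtt{M}^{n+}_\mathbf{w}}$ defining membership in $EP_n$.

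For Part 3 the cleanest route is to observe that $(\mathcal{S}^n)^i=\mathcal{S}$. Writing $\mathtt{N}_\mathbf{w}$ for the classical models produced by the construction of Lemma \ref{L:sheaf-embedding1} when applied to $\mathcal{S}^n$, one gets for every $\mathbf{w}$ that $U^{\mathtt{N}_\mathbf{w}}=U^{\mathtt{M}^{n+}_\mathbf{w}}=U^{\mathtt{M}_\mathbf{w}}$ and that $P_+^{\mathtt{N}_\mathbf{w}}=P^{\mathtt{M}^{n+}_\mathbf{w}}=P_+^{\mathtt{M}_\mathbf{w}}$, $P_-^{\mathtt{N}_\mathbf{w}}=P^{\mathtt{M}^{n-}_\mathbf{w}}=P_-^{\mathtt{M}_\mathbf{w}}$, and $\epsilon^{\mathtt{N}_\mathbf{w}}=\epsilon^{\mathtt{M}^{n-}_\mathbf{w}}=\epsilon^{\mathtt{M}_\mathbf{w}}$; since $W$, $\leq$, and $\mathtt{H}$ are untouched throughout, the two sheaves agree on every component and hence coincide. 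Applying Lemma \ref{L:sheaf-embedding1}.3 to the Nelsonian sheaf $\mathcal{S}^n$ (legitimate by Part 1) then yields, for every $\mathbf{w}$, every admissible $f$, and every $\phi\in\mathcal{FO}$, that $\mathcal{S}^n,\mathbf{w}\models^+_n\phi[f]$ iff $(\mathcal{S}^n)^i,\mathbf{w}\models_i Tr(\phi)[f]$, i.e. iff $\mathcal{S},\mathbf{w}\models_i Tr(\phi)[f]$, which is exactly the claimed equivalence.

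There is no genuine obstacle here, as the statement is the formal mirror of Lemma \ref{L:sheaf-embedding1}; the only points demanding any care are the homomorphism bookkeeping in Part 1 and the identity $(\mathcal{S}^n)^i=\mathcal{S}$. If a self-contained argument is preferred to the reduction above, Part 3 can instead be proved by a direct induction on the construction of $\phi$ that copies the cases in the proof of Lemma \ref{L:sheaf-embedding1}.3 with the two sides of each biconditional interchanged; there the only mildly delicate cases are the negated literals and negative equality, where the identities $P^{\mathtt{M}^{n-}_\mathbf{w}}=P_-^{\mathtt{M}_\mathbf{w}}$ and $\epsilon^{\mathtt{M}^{n-}_\mathbf{w}}=\epsilon^{\mathtt{M}_\mathbf{w}}$ must be invoked to match $Tr(\sim P(\bar{x}_m))=P_-(\bar{x}_m)$ and $Tr(\sim(x\equiv y))=\epsilon(x,y)$.
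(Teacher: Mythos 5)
Your proof is correct, and for Part 3 it takes a genuinely different route from the paper. The paper simply reruns the induction on the construction of $\phi$, case by case, with the roles of the two sides interchanged relative to Lemma \ref{L:sheaf-embedding1}.3 (this is the option you mention at the end as a fallback). You instead observe that the two constructions are mutually inverse, verify the identity $(\mathcal{S}^n)^i=\mathcal{S}$ component by component, and then obtain Part 3 as an instance of Lemma \ref{L:sheaf-embedding1}.3 applied to the Nelsonian sheaf $\mathcal{S}^n$ (which is legitimate and non-circular, since your Part 1 supplies $\mathcal{S}^n\in\mathbb{N}4$ beforehand). The trade-off is clear: your reduction eliminates the duplicated induction entirely, at the price of spelling out the homomorphism bookkeeping for Part 1 (which the paper dismisses as trivial) and the round-trip identity; the check that every predicate of $\Omega^\pm\cup\{\epsilon^2\}$ — each $P_+$, each $P_-$, and $\epsilon$ — is recovered with its original extension is exactly the right thing to verify, and you do it. The paper's version is more self-contained and symmetric in presentation, but yours is the more economical argument and makes the conceptual point (that the two lemmas are two faces of one bijective correspondence) explicit rather than implicit.
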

\begin{proof}
	Again, the first two parts are trivial and Part 3 is proved by an induction on the construction of $\phi \in \mathcal{FO}$. We illustrate the idea using the same selection of cases as in the previous proof:
	
	\textit{Case 1}. $\phi  =  P(\bar{x}_m)$ for some $m \in \omega$, $P^m \in \Pi$ and $\bar{x}_m\in Ind^m$. But then:
	\begin{align*}
		\mathcal{S}^n, \mathbf{w}\models^+_n \phi[f] \text{ iff } f(\bar{x}_m)\in 	P^{\mathtt{M}^{n+}_\mathbf{w}} \text{ iff } f(\bar{x}_m)\in P_+^{\mathtt{M}_\mathbf{w}}
		\text{ iff } \mathcal{S}, \mathbf{w}\models_i P_+(\bar{x}_m)[f]\text{ iff } \mathcal{S}, \mathbf{w}\models_i Tr(\phi)[f].
	\end{align*}
	On the other hand, we have:
	\begin{align*}
		\mathcal{S}^n, \mathbf{w}\models^+_n \sim\phi[f] \text{ iff } \mathcal{S}^n, \mathbf{w}\models^-_n \phi[f] &\text{ iff } f(\bar{x}_m)\in 	P^{\mathtt{M}^{n-}_\mathbf{w}} \text{ iff } f(\bar{x}_m)\in P_-^{\mathtt{M}_\mathbf{w}}\\
		&\text{ iff } \mathcal{S}, \mathbf{w}\models_i P_-(\bar{x}_m)[f]\text{ iff } \mathcal{S}, \mathbf{w}\models_i Tr(\sim\phi)[f].
	\end{align*}
	
	\textit{Case 2}. $\phi = (\psi \to \chi)$. Then we have:
	\begin{align*}
		\mathcal{S}^n, \mathbf{w}\models^+_n \phi[f] &\text{ iff }(\forall \mathbf{v} \geq^n \mathbf{w} )(\mathcal{S}^n,\mathbf{v}\not\models^+_{n}\psi[f\circ\mathtt{H}^n_{\mathbf{w}\mathbf{v}}]\text{ or }\mathcal{S}^n, \mathbf{v}\models^+_{n}\chi[f\circ\mathtt{H}^n_{\mathbf{w}\mathbf{v}}])\\
		&\text{ iff }(\forall \mathbf{v} \geq \mathbf{w})(\mathcal{S},\mathbf{v}\not\models_iTr(\psi)[f\circ\mathtt{H}_{\mathbf{w}\mathbf{v}}]\text{ or }\mathcal{S}, \mathbf{v}\models_iTr(\chi)[f\circ\mathtt{H}_{\mathbf{w}\mathbf{v}}])&&\text{by IH}\\
		&\text{ iff } \mathcal{S}, \mathbf{w}\models_i (Tr(\psi)\to Tr(\chi))[f] \text{ iff } \mathcal{S}, \mathbf{w}\models_i Tr(\phi)[f].
	\end{align*}
	On the other hand, we have:
	\begin{align*}
		\mathcal{S}^n, \mathbf{w}\models^+_n \sim\phi[f] &\text{ iff } \mathcal{S}^n,\mathbf{w}\models^+_{n}\psi[f]\text{ and }\mathcal{S}^n, \mathbf{w}\models^-_{n}\chi[f]\\
		&\text{ iff } \mathcal{S}^n,\mathbf{w}\models^+_{n}\psi[f]\text{ and }\mathcal{S}^n, \mathbf{w}\models^+_{n}\sim\chi[f]\\
		&\text{ iff } \mathcal{S},\mathbf{w}\models_iTr(\psi)[f]\text{ and }\mathcal{S}, \mathbf{w}\models_iTr(\sim\chi)[f]&&\text{by IH}\\
		&\text{ iff } \mathcal{S}, \mathbf{w}\models_i Tr(\psi)\wedge Tr(\sim\chi)\text{ iff } \mathcal{S}, \mathbf{w}\models_i Tr(\phi)[f].
	\end{align*}
	
	\textit{Case 3}. $\phi = \forall x\psi$. Then we have:
	\begin{align*}
		\mathcal{S}^n, \mathbf{w}\models^+_n \phi[f] &\text{ iff } (\forall \mathbf{v} \geq^n \mathbf{w})(\forall a\in U^n_{\mathbf{v}})(\mathcal{S}^n,\mathbf{v}\models^+_{n}\psi[(f\circ\mathtt{H}^n_{\mathbf{w}\mathbf{v}})[x/a]])\\
		&\text{ iff } (\forall \mathbf{v} \geq \mathbf{w})(\forall a\in U^{\mathtt{M}_\mathbf{v}})(\mathcal{S},\mathbf{v}\models_iTr(\psi)[(f\circ\mathtt{H}_{\mathbf{w}\mathbf{v}})[x/a]])&&\text{by IH}\\
		&\text{ iff } \mathcal{S}, \mathbf{w}\models_i \forall xTr(\psi)[f]\text{ iff } \mathcal{S}, \mathbf{w}\models_i Tr(\phi)[f].
	\end{align*}
	On the other hand, we have: 
	\begin{align*}
		\mathcal{S}^n, \mathbf{w}\models^+_n \sim\phi[f] &\text{ iff } (\exists a\in U^n_{\mathbf{w}})(\mathcal{S}^n,\mathbf{w}\models^-_{n}\psi[f[x/a]])\\
		&\text{ iff } (\exists a\in U^n_{\mathbf{w}})(\mathcal{S}^n,\mathbf{w}\models^+_{n}\sim\psi[f[x/a]])\\
		&\text{ iff } (\exists a\in U^{\mathtt{M}_\mathbf{w}})(\mathcal{S},\mathbf{w}\models_iTr(\sim\psi)[f[x/a]])&&\text{by IH}\\
		&\text{ iff } \mathcal{S}, \mathbf{w}\models_i \exists xTr(\sim\psi)[f]\text{ iff } \mathcal{S}, \mathbf{w}\models_i Tr(\phi)[f].
	\end{align*}
\end{proof}
We can now prove our proposition:
\begin{proof}[Proof of  Proposition \ref{P:nelsonian-sheaves}]
	For all $\Gamma, \Delta \subseteq \mathcal{FO}$ it is true that:
	\begin{align*}
		\Gamma\models_{\mathsf{QN4}}\Delta &\text{ iff } Tr(\Gamma)\models_{\mathsf{QIL}^+}Tr(\Delta) &&\text{by Proposition \ref{P:intuitionistic-embedding}}\\
		&\text{ iff } Tr(\Gamma)\models_{i}Tr(\Delta) &&\text{by definition of }\mathsf{QIL}^+\\
		&\text{ iff } \Gamma\models^+_{n}\Delta &&\text{by Lemmas \ref{L:sheaf-embedding1} and \ref{L:sheaf-embedding2}}
	\end{align*}
\end{proof}

\subsection{Proof of Lemma \ref{L:n4-standard}}\label{App:n4-standard}

	(Part 1) If $\mathbf{w}\leq\mathbf{v}$ and $\mathcal{S},\mathbf{w}\models^+_{n}\phi[f]$, then, by Lemma \ref{L:sheaf-embedding1}, we must have $\mathcal{S}^i,\mathbf{w}\models^+_{i}Tr(\phi)[f]$, whence $\mathcal{S}^i,\mathbf{v}\models^+_{i}Tr(\phi)[f\circ\mathtt{H}^ i_{\mathbf{w}\mathbf{v}}]$ by Lemma \ref{L:intuitionistic-standard}.1 and $\mathcal{S},\mathbf{v}\models^+_{n}\phi[f\circ\mathtt{H}_{\mathbf{w}\mathbf{v}}]$, again by Lemma \ref{L:sheaf-embedding1}. In case $\mathcal{S},\mathbf{w}\models^-_{n}\phi[f]$, we must have  $\mathcal{S},\mathbf{w}\models^+_{n}\sim\phi[f]$, whence $\mathcal{S},\mathbf{v}\models^+_{n}\sim\phi[f\circ\mathtt{H}_{\mathbf{w}\mathbf{v}}]$ which is the same as $\mathcal{S},\mathbf{v}\models^-_{n}\phi[f\circ\mathtt{H}_{\mathbf{w}\mathbf{v}}]$.
	
	(Part 2) We note that we clearly have $(\mathcal{S}|_\mathbf{w})^i = (\mathcal{S}^i)|_\mathbf{w}$. Thus, by Lemma \ref{L:sheaf-embedding1},  we have $\mathcal{S}|_\mathbf{w}, \mathbf{v}\models^+_{n}\phi[f]$ iff $(\mathcal{S}|_\mathbf{w})^i, \mathbf{v}\models_{i}Tr(\phi)[f]$ iff $(\mathcal{S}^i)|_\mathbf{w}, \mathbf{v}\models_{i}Tr(\phi)[f]$ iff, by Lemma \ref{L:intuitionistic-standard}.2, we have $\mathcal{S}^i, \mathbf{v}\models_{i}Tr(\phi)[f]$, iff, again by  Lemma \ref{L:sheaf-embedding1}, $\mathcal{S}, \mathbf{v}\models^+_{n}\phi[f]$.
	
	On the other hand, we have $\mathcal{S}|_\mathbf{w}, \mathbf{v}\models^-_{n}\phi[f]$ iff $\mathcal{S}|_\mathbf{w}, \mathbf{v}\models^+_{n}\sim\phi[f]$ iff $\mathcal{S}, \mathbf{v}\models^+_{n}\sim\phi[f]$ iff $\mathcal{S}, \mathbf{v}\models^-_{n}\phi[f]$.

\subsection{Proof of Lemma \ref{L:derived-connectives}}\label{App:derived-connectives}
	The claims of Lemma \ref{L:derived-connectives} can be easily verified on the basis of the Nelsonian sheaf semantics. Also, it is pretty straightforward to obtain zhe proofs of \eqref{E:T1}--\eqref{E:T16} in $\mathfrak{QN4}$. As an example, we sketch the following proofs:
	
	\eqref{E:T13}. We consider the case when $\ast = \to$. We start by building the following chains of $\mathsf{QN4}$-valid implications:
	\begin{align*}
		((\phi \Leftrightarrow \psi)  \wedge (\chi \Leftrightarrow \theta))&\to ((\phi \leftrightarrow \psi)  \wedge (\chi \leftrightarrow \theta))&&\text{ by Lemma \ref{L:intuitionistic-inclusion}, def. of }\Leftrightarrow\\
		&\to (\phi \to \chi)\leftrightarrow (\psi \to \theta)&&\text{ by Lemma \ref{L:intuitionistic-inclusion}}
	\end{align*}
	and
	\begin{align*}
		((\phi \Leftrightarrow \psi)  \wedge (\chi \Leftrightarrow \theta))&\to ((\phi \leftrightarrow \psi)  \wedge (\sim\chi \leftrightarrow \sim\theta))&&\text{ by Lemma \ref{L:intuitionistic-inclusion}, def. of }\Leftrightarrow\\
		&\to (\phi \wedge \sim\chi)\leftrightarrow (\psi \wedge \sim\theta)&&\text{ by Lemma \ref{L:intuitionistic-inclusion}}\\
		&\to \sim(\phi \to \chi)\leftrightarrow \sim(\psi \to \theta) &&\text{ by Lemma \ref{L:intuitionistic-inclusion}, }\eqref{E:a4}
	\end{align*}
	We thus have shown that 
	$$
	((\phi \Leftrightarrow \psi)  \wedge (\chi \Leftrightarrow \theta)) \to ((\phi \to \chi)\leftrightarrow (\psi \to \theta)\wedge \sim(\phi \to \chi)\leftrightarrow \sim(\psi \to \theta)),
	$$
	whence \eqref{E:T13} for $\ast = \to$ follows by the definition of $\Leftrightarrow$.
	
	\eqref{E:T14}. We consider the case when $Q = \exists$. Again, we build two valid implication chains (the commentary applies to both chains simultaneously):
	\begin{align*}
		(\phi \Leftrightarrow \psi) &\to (\phi \leftrightarrow \psi)&& (\phi \Leftrightarrow \psi)\to (\sim\phi \leftrightarrow \sim\psi)&&\text{ by Lemma \ref{L:intuitionistic-inclusion}, def. of }\Leftrightarrow\\
		&\to (\exists x\phi\leftrightarrow \exists x\psi)&&\qquad\qquad\to (\forall x\sim\phi\leftrightarrow\forall x\sim\psi)&&\text{ by Lemma \ref{L:intuitionistic-inclusion}}\\
		& &&\qquad\qquad\to(\sim\exists x\phi\leftrightarrow\sim\exists x\psi)&&\text{ by Lemma \ref{L:intuitionistic-inclusion}, }\eqref{E:a5}
	\end{align*}
	Combining the two implication chains yields that 
	$$
	(\phi \Leftrightarrow \psi)\to ((\exists x\phi\leftrightarrow \exists x\psi)\wedge(\sim\exists x\phi\leftrightarrow\sim\exists x\psi)),
	$$
	whence \eqref{E:T14} for $Q = \exists$ follows by the definition of $\Leftrightarrow$.
	
	\subsection{The use of ampersand in $\mathsf{QN4}$}\label{App:ampersand}
In this appendix, we quickly motivate the usefulness of ampersand. Ampersand is especially convenient for handling the restricted existential quantification in the context of $\mathsf{QN4}$. The latter statement can be easily motivated as long as one is prepared to assume that motivation the states of any given Nelsonian sheaf $\mathcal{S}$ can be seen as knowledge states so that an object $a \in U_\mathbf{w}$ is understood as an object known to exist at $\mathbf{w}$; the states that are accessible from $\mathbf{w}$ are then to be understood as future knowledge states that are possible in $\mathbf{w}$. 

Indeed, when we say that some object has some property, say $p_0$, this is true iff at least one currently known object verifies $p_0$; this is false iff every object that is either currently known or will become known in the future, falsifies $p_0$. Let us now try to formalize this idea: if our current state of knowledge, located in the web of other possible knowledge states is faithfully captured by some world $\mathbf{w}$ in some Nelsonian sheaf $\mathcal{S}$, then we will say that our existence claim is true at $(\mathcal{S}, \mathbf{w})$  iff $U_\mathbf{w} \cap p^{\mathtt{M}_\mathbf{w}^+}_0\neq \emptyset$; and false at $(\mathcal{S}, \mathbf{w})$ iff, for every $\mathbf{v} \geq \mathbf{w}$, we have $U_\mathbf{v} \subseteq p^{\mathtt{M}_\mathbf{v}^-}_0$. It is easy to see now that both the truth and the falsity condition of our existence claim are exactly those of the sentence $\exists xp_0(x)$, which is also how one usually formalizes simple existence claims classically.

Suppose now that we would like to restrict our claim about existence of an object with the property $p_0$ to the family of objects that verify some additional property $p_1$. At least one natural way to understand such a restriction is to say that our claim is true iff at least one object that is currently known to have $p_1$, also verifies $p_0$. As for the falsity condition, our claim must be false iff every object that is either currently known or will be known in the future to have $p_1$ also falsifies $p_0$. Turning again to possible formalizations, we are looking for a formula that must be true at $(\mathcal{S}, \mathbf{w})$  iff $(U_\mathbf{w}\cap p^{\mathtt{M}_\mathbf{w}^+}_1) \cap p^{\mathtt{M}_\mathbf{w}^+}_0\neq \emptyset$ and false at $(\mathcal{S}, \mathbf{w})$ iff, for every $\mathbf{v} \geq \mathbf{w}$, we have $(U_\mathbf{v}\cap  p^{\mathtt{M}_\mathbf{v}^+}_1)\subseteq p^{\mathtt{M}_\mathbf{v}^-}_0$. 

Now, the classical formalization of this sort of restricted existential quantification is given by $\exists x(p_1(x)\wedge p_0(x))$, and it is easy to see that in the context of $\mathsf{QN4}$ this sentence has exactly the truth condition that we have given above. However, its falsity condition is different, namely, the sentence is false at $(\mathcal{S}, \mathbf{w})$ iff, for every $\mathbf{v} \geq \mathbf{w}$, we have $U_\mathbf{v} \subseteq (p^{\mathtt{M}_\mathbf{v}^-}_1\cup  p^{\mathtt{M}_\mathbf{v}^-}_0)$. Note that in the semantics of Nelsonian sheaves, $p^{\mathtt{M}_\mathbf{v}^-}_1$ and $p^{\mathtt{M}_\mathbf{v}^+}_1$ are, generally speaking, completely independent from one another and, therefore, the latter condition is also independent from the requirement that $(U_\mathbf{v}\cap  p^{\mathtt{M}_\mathbf{v}^+}_1)\subseteq p^{\mathtt{M}_\mathbf{v}^-}_0$ for every  $\mathbf{v} \geq \mathbf{w}$.

We are doing much better, though, if in the above formalization we replace conjunction with ampersand. The truth condition then remains the same, since Lemma \ref{L:intuitionistic-inclusion} and \eqref{E:T13} together imply that $
\exists x(p_1(x)\,\&\, p_0(x))\leftrightarrow \exists x(p_1(x)\wedge p_0(x))$.

However, the falsity condition looks different, since, by Lemma \ref{L:intuitionistic-inclusion}, \eqref{E:T2}, \eqref{E:T11}, and \eqref{E:T14}, it follows that $
\sim\exists x(p_1(x)\,\&\, p_0(x))\leftrightarrow \forall x(p_1(x)\to \sim p_0(x))$.

In other words, our alternative formalization is false at $(\mathcal{S}, \mathbf{w})$  iff, for every $\mathbf{v} \geq \mathbf{w}$ we have $(U_\mathbf{v}\cap  p^{\mathtt{M}_\mathbf{v}^+}_1)\subseteq p^{\mathtt{M}_\mathbf{v}^-}_0$, which is exactly what we wanted initially.

Note that ampersand becomes necessary for our representations of restricted existential quantifications only insofar as we get interested in what happens when a statement is false. This is not always the case though. For example, if one is formulating a theory in $\mathsf{QN4}$ and would like to ensure the existence of an object with the property $p_0$ among those objects that happen to satisfy $p_1$, then it might make perfect sense to formalize this part of one's theory by requiring that  $\exists x(p_1(x)\wedge p_0(x))$ as long as one is only interested in the models where this claim is true and is indifferent to the models where the claim happens to be false.

\subsection{A proof of Lemma \ref{L:substitution}}\label{App:substitution}
	We proceed by induction on the construction of $\chi \in \mathcal{FO}$.
	
	\textit{Basis}. If $\chi \in At$, then the following cases are possible:
	
	\textit{Case 1}. $\chi = (v_0 \equiv v_0)$. Then \eqref{E:T17} assumes the form $
	(\phi\Leftrightarrow\psi)\to(\phi\Leftrightarrow\psi)$,
	and is a theorem of $\mathsf{QN4}$ by Lemma \ref{L:intuitionistic-inclusion}.
	
	\textit{Case 2}. $\chi \neq (v_0 \equiv v_0)$. Then \eqref{E:T17} is just
	$(\phi\Leftrightarrow\psi)\to(\chi\Leftrightarrow\chi)$,
	and is a theorem of $\mathsf{QN4}$ by \eqref{E:T5} and Lemma \ref{L:intuitionistic-inclusion}.
	
	\textit{Induction step}. The following cases are possible:
	
	\textit{Case 1}. $\chi = (\chi_0\ast\chi_1)$ for some $\ast\in \{\wedge, \vee, \to\}$. Then we reason as follows:
	\begin{align}
		\vdash_{\mathsf{QN4}}(\phi\Leftrightarrow\psi)&\to((\chi_0/\phi)\Leftrightarrow(\chi_0/\psi))\label{E:sb1}&&\text{by IH}\\
		\vdash_{\mathsf{QN4}}(\phi\Leftrightarrow\psi)&\to((\chi_1/\phi)\Leftrightarrow(\chi_1/\psi))\label{E:sb2}&&\text{by IH}\\
		\vdash_{\mathsf{QN4}}(\phi\Leftrightarrow\psi)&\to(((\chi_0/\phi)\Leftrightarrow(\chi_0/\psi))\wedge((\chi_1/\phi)\Leftrightarrow(\chi_1/\psi)))\label{E:sb3}&&\text{by \eqref{E:sb1}, \eqref{E:sb2}, Lm \ref{L:intuitionistic-inclusion}}\\
		\vdash_{\mathsf{QN4}}(((\chi_0/\phi)&\Leftrightarrow(\chi_0/\psi))\wedge((\chi_1/\phi)\Leftrightarrow(\chi_1/\psi)))\to\notag\\
		&\qquad\qquad\to(((\chi_0\ast\chi_1)/\phi)\Leftrightarrow((\chi_0\ast\chi_1)/\psi))\label{E:sb4}&&\text{by \eqref{E:T13}}\\
		\vdash_{\mathsf{QN4}}(\phi\Leftrightarrow\psi)&\to((\chi/\phi)\Leftrightarrow(\chi/\psi))\label{E:sb5}&&\text{by \eqref{E:sb3},\eqref{E:sb4},Lm \ref{L:intuitionistic-inclusion}}		
	\end{align}
	
	\textit{Case 2}.  $\chi = \sim\chi_0$ or $\chi = Qx\chi_0$ for some $Q\in \{\forall, \exists\}$ and some $x \in Ind$. We reason as in Case 1 applying \eqref{E:T4} (resp. \eqref{E:T14}) in place of \eqref{E:T13}.
\section{Proof of Lemma \ref{L:n4-comprehension}}\label{App:n4-comprehension}
	We proceed by induction on the construction of $\phi \in \mathcal{CN}$. The basis of induction follows from \eqref{E:th6}. As for the induction step, we need to consider the following cases:
	
	\textit{Case 1}. $\phi = \psi\ast\chi$, where $\ast \in \{\wedge, \to\}$. We consider the following derivation D1 from premises in $\mathsf{QN4}$:
	\begin{align}
		&Sx\wedge(\forall w)_O(Ewx\Leftrightarrow ST_w(\psi))\label{E:fo1}&&\text{premise}\\
		&Sy\wedge(\forall w)_O(Ewy\Leftrightarrow ST_w(\chi))\label{E:fo2}&&\text{premise}\\
		&Sz\wedge(\forall w)_O(Ewz\Leftrightarrow (Ewx\ast Ewy))\label{E:fo3}&&\text{premise}\\
		&(\forall w)_O(Ewz\Leftrightarrow(ST_w(\psi)\ast Ewy))\label{E:fo4}&&\text{by \eqref{E:fo1}, \eqref{E:fo3}, \eqref{E:T18}}\\
		&(\forall w)_O(Ewz\Leftrightarrow(ST_w(\psi)\ast ST_w(\chi)))\label{E:fo5}&&\text{by \eqref{E:fo2}, \eqref{E:fo4}, \eqref{E:T18}}\\
		&\exists z(Sz\wedge (\forall w)_O(Ewz\Leftrightarrow ST_w(\psi\ast\chi)))\label{E:fo7}&&\text{by \eqref{E:fo5}, def. of $ST$}
	\end{align}
	We now reason as follows:
	\begin{align*}
		&Th, \eqref{E:fo1}, \eqref{E:fo2} \models_{n} \exists z\eqref{E:fo3}\to \exists z(Sz\wedge (\forall w)_O(Ewz\Leftrightarrow ST_w(\psi\ast\chi)))&&\text{(D1, \eqref{R:DT}, \eqref{R:E})}\\
		&Th, \eqref{E:fo1}, \eqref{E:fo2} \models_{n} Sx\wedge Sy &&\text{(trivially)}\\
		&Th,  \eqref{E:fo1}, \eqref{E:fo2} \models_{n} (Sx\wedge Sy )\to \exists z\eqref{E:fo3}&&\text{\eqref{E:th8}}\\
		&Th, \eqref{E:fo1}, \eqref{E:fo2} \models_{n}\exists z(Sz\wedge (\forall w)_O(Ewz\Leftrightarrow ST_w(\psi\ast\chi)))&&\text{\eqref{E:mp}}\\
		&Th\models_{n} \exists x\eqref{E:fo1} \to (\exists z\eqref{E:fo2}\to \exists z(Sz\wedge (\forall w)_O(Ewz\Leftrightarrow ST_w(\psi\ast\chi))))&&\text{\eqref{R:DT}, \eqref{R:E}}\\
		&Th\models_{n} \exists x\eqref{E:fo1} \wedge \exists y\eqref{E:fo2}&&\text{(IH)}\\
		&Th\models_{n}\exists z(Sz\wedge (\forall w)_O(Ewz\Leftrightarrow ST_w(\psi\ast\chi)))&&\text{\eqref{E:mp}} 
	\end{align*}
	
	\textit{Case 2}. $\phi = \sim\psi$. Similar to Case 1, but using \eqref{E:th7} in place of \eqref{E:th8}.
	
	\textit{Case 3}. $\phi = \psi\vee\chi$. By Cases 1 and 2 and Corollary \ref{C:disjunstion}.
	
	\textit{Case 4}. $\phi = (\psi\boxto\chi)$. We consider the following deductions from premises in $\mathsf{QN4}$, letting $T:= Th \cup \{Sy'\wedge(\forall w)_O(Ewy'\Leftrightarrow ST_w(\psi))\} = Th \cup \{\theta\}$.
	
	Deduction D2+:
	\begin{align}
		&\forall w(Rxy'w\to ST_w(\chi))\label{E:ffo2}&&\text{premise}\\
		&\exists y((Sy\wedge(\forall w)_O(Ewy\Leftrightarrow ST_w(\psi)))\,\&\,\notag\\
		&\qquad\qquad\qquad\,\&\,\forall w(Rxyw\to ST_w(\chi)))\label{E:ffo4}&&\text{by $\theta$,  \eqref{E:ffo2}, \eqref{Ax:10}, \eqref{E:T15}}\\
		&ST_x(\psi\boxto\chi)\label{E:ffo5}&&\text{by \eqref{E:ffo4}, def. of $ST$}
	\end{align}
	
	Deduction D2--:
	\begin{align}
		&\sim ST_x(\psi\boxto\chi)\label{E:ffo1-}&&\text{premise}\\
		&\forall y((Sy\wedge(\forall w)_O(Ewy\Leftrightarrow ST_w(\psi)))\to\notag\\
		&\qquad\qquad\qquad\to\sim\forall w(Rxyw\to ST_w(\chi)))\label{E:ffo2-}&&\text{by \eqref{E:ffo1-},  \eqref{E:a5}, \eqref{E:T16}}\\
		&\theta\to\forall w(Rxy'w\to ST_w(\chi))\label{E:ffo3-}&&\text{by \eqref{E:ffo2-}, \eqref{Ax:9}}\\	
		&\sim\forall w(Rxy'w\to ST_w(\chi))\label{E:ffo3-}&&\text{by $\theta$,  \eqref{E:ffo2-}}
	\end{align}
	
	Deduction D3+:
	\begin{align}
		&Sy\wedge(\forall w)_O(Ewy\Leftrightarrow ST_w(\psi))\label{E:ffo7}&&\text{premise}\\	
		&\forall w(Rxyw\to ST_w(\chi))\label{E:ffo8}&&\text{premise}\\
		&(\forall w)_O(Ewy\Leftrightarrow Ewy')\label{E:ffo8a}&&\text{by $\theta$, \eqref{E:ffo7}, \eqref{E:T18}}\\
		&y \equiv y'\label{E:ffo9}&&\text{by \eqref{E:ffo8a}, \eqref{E:th5}}\\
		&\forall w(Rxy'w\to ST_w(\chi))&&\text{by \eqref{E:ffo8}, \eqref{E:ffo9}, \eqref{Ax:12}}
	\end{align} 
	
	Deduction D3--:
	\begin{align}
		&\eqref{E:ffo3-}, \eqref{E:ffo7}&&\text{premises}\notag\\	
		&y \equiv y'\label{E:ffo9-}&&\text{as in D3+}\\
		&\sim\forall w(Rxyw\to ST_w(\chi))&&\text{by \eqref{E:ffo3-}, \eqref{E:ffo9-}, \eqref{Ax:12}}
	\end{align} 
	We sum up the intermediate results of these deductions:
	\begin{align}
		&T\models_{n}\forall w(Rxy'w\to ST_w(\chi))\to ST_x(\psi\boxto\chi)\label{fo1}\,\text{D2+, \eqref{R:DT}}\\
		&T\models_{n}(\eqref{E:ffo7}\wedge\eqref{E:ffo8})\to\forall w(Rxy'w\to ST_w(\chi))\label{fo1a}\,\text{D3+, \eqref{R:DT}}\\
		&T\models_{n}(\eqref{E:ffo7}\,\&\,\eqref{E:ffo8})\to\forall w(Rxy'w\to ST_w(\chi))\label{fo1b}\,\text{\eqref{fo1a}, \eqref{E:T15}}\\
		&T\models_{n}\exists y(\eqref{E:ffo7}\,\&\,\eqref{E:ffo8})\to\forall w(Rxy'w\to ST_w(\chi))\label{fo1c}\,\text{\eqref{fo1b}, \eqref{R:E}}\\
		&T\models_{n}ST_x(\psi\boxto\chi)\to\forall w(Rxy'w\to ST_w(\chi))\label{fo2}\,\text{\eqref{fo1c}, def. $ST$}\\
		&T\models_{n}\forall x(ST_x(\psi\boxto\chi)\leftrightarrow\forall w(Rxy'w\to ST_w(\chi)))\label{fo3}\,\text{\eqref{fo1},\eqref{fo2}, \eqref{R:Gen}}\\
		&T\models_{n}\sim ST_x(\psi\boxto\chi)\to\sim\forall w(Rxy'w\to ST_w(\chi))\label{fo1-}\,\text{D2--, \eqref{R:DT}}\\
		&T\models_{n}\eqref{E:ffo3-}\to(\eqref{E:ffo7}\to \sim\forall w(Rxyw\to ST_w(\chi)))\label{fo1a-}\,\text{D3--, \eqref{R:DT}}\\
		&T\models_{n}\eqref{E:ffo3-}\to\forall y(\eqref{E:ffo7}\to \sim\forall w(Rxyw\to ST_w(\chi)))\label{fo1b-}\,\text{\eqref{fo1a-}, \eqref{R:A}}\\
		&T\models_n\sim\forall w(Rxy'w\to ST_w(\chi))\to\sim ST_x(\psi\boxto\chi)\label{fo1c-}\,\text{\eqref{fo1b-}, \eqref{E:a5}, \eqref{E:T16}}\\
		&T\models_n\forall x(\sim ST_x(\psi\boxto\chi)\leftrightarrow\sim\forall w(Rxy'w\to ST_w(\chi)))\label{fo2-}\,\text{\eqref{fo1-},\eqref{fo1c-}, \eqref{R:Gen}}\\
		&T\models_n\forall x( ST_x(\psi\boxto\chi)\Leftrightarrow\forall w(Rxy'w\to ST_w(\chi)))\label{fo3-}\,\text{\eqref{fo3},\eqref{fo2-}}		
	\end{align}	
	We now feed these results into the next deduction D4:
	\begin{align}
		&Sz\wedge(\forall w)_O(Ewz\Leftrightarrow ST_w(\chi))\label{E:ffo13}&&\text{premise}\\	
		&Sy'\wedge Sz\label{E:ffo14}&&\text{by $\theta$, \eqref{E:ffo13}}\\
		&\exists z'(Sz'\wedge(\forall x)_O(Exz'\Leftrightarrow \forall w(Rxy'w\to Ewz)))\label{E:ffo15}&&\text{by \eqref{E:ffo14}, \eqref{E:th9}}\\
		&\exists z'(Sz'\wedge(\forall x)_O(Exz'\Leftrightarrow \forall w(Rxy'w\to ST_w(\chi))))\label{E:ffo16}&&\text{by \eqref{E:ffo13}, \eqref{E:ffo15}, \eqref{E:T18}}\\
		&\exists z'(Sz'\wedge(\forall x)_O(Exz'\Leftrightarrow ST_x(\psi\boxto\chi)))\label{E:ffo17}&&\text{by \eqref{E:ffo16}, \eqref{fo3-}, \eqref{E:T18}}
	\end{align}
	We now finish our reasoning as follows:
	\begin{align*}
		Th&\models_{n}\exists y'\theta \to (\exists z\eqref{E:ffo13} \to \exists z'(Sz' \wedge(\forall x)_O(Exz'\Leftrightarrow ST_x(\psi\boxto\chi))))&&\text{(D4, \eqref{R:DT}, \eqref{R:E})}\\
		Th&\models_{n} \exists y'\theta \wedge \exists z\eqref{E:ffo13}&&\text{(IH)}\\
		Th&\models_{n}\exists z'(Sz'\wedge (\forall x)_O(Exz'\Leftrightarrow ST_x(\psi\boxto\chi)))&&\eqref{E:mp}
	\end{align*}

\section{Proof of Lemma \ref{L:global-functions}}\label{App:seq}
In order to prove the Lemma, we will need to prove a couple of technical results first.
\begin{lemma}\label{L:standard-sequence}
	The following statements hold:
	\begin{enumerate}
		\item If $\alpha, \beta \in Seq$ are such that $\alpha\mathrel{\prec}\beta$, and $\phi \in \mathcal{CN}$, $(\Gamma, \Delta) \in W_c$ are such that $\alpha' = \alpha^\frown(\phi, (\Gamma, \Delta)) \in Seq$, then there exists a $\beta' = \beta^\frown(\phi, (\Xi, \Theta)) \in Seq$ and $\alpha'\mathrel{\prec}\beta'$. 
		
		\item For all $(\Gamma,\Delta), (\Xi,\Theta) \in W_c$ such that $(\Gamma,\Delta)\leq_c (\Xi,\Theta)$, and every $\alpha \in Seq(\Gamma,\Delta)$, there exists a $\beta \in Seq(\Xi, \Theta)$ such that $\alpha\mathrel{\prec}\beta$.
		\item For all $(\Gamma,\Delta), (\Xi,\Theta) \in W_c$ and $\alpha \in Seq(\Gamma,\Delta)$ such that $end(\alpha)\leq_c(\Xi,\Theta)$, there exists a $\beta \in Seq$ such that $\alpha\mathrel{\prec}\beta$ and $end(\beta) = (\Xi,\Theta)$. 
	\end{enumerate} 	
\end{lemma}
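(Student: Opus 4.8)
The plan is to prove the three statements in the listed order, using Part 1 as the workhorse for Part 2 and proving Part 3 independently by a backward induction. The one fact underlying everything is that $\mathcal{M}_c \in \mathbb{NC}$ by Proposition \ref{P:truth-lemma}.1, so the canonical relation $R_c$ satisfies the frame conditions \eqref{Cond:1} and \eqref{Cond:2} for every bi-set, in particular for each bi-set of the form $\|\phi\|_{\mathcal{M}_c}$.

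For Part 1 I would argue directly from \eqref{Cond:1}. Since $\alpha \prec \beta$, the last clause of Definition \ref{D:standard-sequence} gives $end(\alpha) \leq_c end(\beta)$, while $\alpha' = \alpha^\frown(\phi, (\Gamma, \Delta)) \in Seq$ gives $R_c(end(\alpha), \|\phi\|_{\mathcal{M}_c}, (\Gamma, \Delta))$. Feeding these two facts into \eqref{Cond:1}, with $end(\alpha), end(\beta), (\Gamma, \Delta)$ playing the roles of $w, w', v$ in the left-hand diagram of Figure \ref{Fig:completion-patterns}, yields a world $(\Xi, \Theta) \in W_c$ with $R_c(end(\beta), \|\phi\|_{\mathcal{M}_c}, (\Xi, \Theta))$ and $(\Gamma, \Delta) \leq_c (\Xi, \Theta)$. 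Setting $\beta' := \beta^\frown(\phi, (\Xi, \Theta))$, this new $R_c$-link makes $\beta'$ a standard sequence, and $\alpha' \prec \beta'$ holds because $\alpha \prec \beta$ supplies all the earlier $\leq_c$-links, the labels agree (the same $\phi$ was appended), and $(\Gamma, \Delta) \leq_c (\Xi, \Theta)$ supplies the final one.

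Part 2 then follows by a forward induction on the length of $\alpha$. In the base case $\alpha = ((\Gamma, \Delta))$ one takes $\beta = ((\Xi, \Theta))$. For the step, write $\alpha = \alpha_0^\frown(\phi, (\Gamma', \Delta'))$, obtain $\beta_0 \in Seq(\Xi, \Theta)$ with $\alpha_0 \prec \beta_0$ from the induction hypothesis, and apply Part 1 to $\alpha_0, \beta_0, \phi$ to extend $\beta_0$ to the desired $\beta$; since $init(\beta) = init(\beta_0) = (\Xi, \Theta)$, we have $\beta \in Seq(\Xi, \Theta)$.

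Part 3 requires a backward induction on the length of $\alpha$, and this is where I expect the main (though still modest) difficulty: the single $\leq_c$-step at the very end of $\alpha$ must be propagated leftward through the entire sequence, and for this the relevant condition is \eqref{Cond:2} rather than \eqref{Cond:1}. In the step, write $\alpha = \alpha_0^\frown(\phi_n, (\Gamma_n, \Delta_n))$; from $R_c(end(\alpha_0), \|\phi_n\|_{\mathcal{M}_c}, (\Gamma_n, \Delta_n))$ together with $(\Gamma_n, \Delta_n) \leq_c (\Xi, \Theta)$, condition \eqref{Cond:2} (with $end(\alpha_0), (\Gamma_n, \Delta_n), (\Xi, \Theta)$ in the roles of $w, v, v'$ of the right-hand diagram) produces a world $(\Xi', \Theta')$ with $end(\alpha_0) \leq_c (\Xi', \Theta')$ and $R_c((\Xi', \Theta'), \|\phi_n\|_{\mathcal{M}_c}, (\Xi, \Theta))$. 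The induction hypothesis applied to $\alpha_0$ and $(\Xi', \Theta')$ yields a $\beta_0 \in Seq$ with $\alpha_0 \prec \beta_0$ and $end(\beta_0) = (\Xi', \Theta')$; then $\beta := \beta_0^\frown(\phi_n, (\Xi, \Theta))$ is a standard sequence ending at $(\Xi, \Theta)$, and $\alpha \prec \beta$ follows as before. The only point to watch throughout is that the bi-set labelling each appended step remains the fixed truth-set $\|\phi\|_{\mathcal{M}_c}$ of the same formula, so that the instances of \eqref{Cond:1} and \eqref{Cond:2} being invoked are all legitimate.
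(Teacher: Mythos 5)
Your proposal is correct, and for Parts 1 and 2 it coincides with the paper's own proof: Part 1 is exactly the single application of \eqref{Cond:1} to the facts $end(\alpha)\leq_c end(\beta)$ and $R_c(end(\alpha),\|\phi\|_{\mathcal{M}_c},(\Gamma,\Delta))$, and Part 2 is the same induction on length combining the IH with Part 1. The only divergence is in Part 3: you strip the \emph{last} link of $\alpha$, first invoking \eqref{Cond:2} to produce the intermediate world $(\Xi',\Theta')$ and then applying the IH to the initial segment $\alpha_0$ with this shifted target; the paper instead strips the \emph{first} link, writing $\alpha = ((\Gamma,\Delta),\phi)^\frown(\alpha')$, applying the IH to the tail $\alpha'$ with the fixed target $(\Xi,\Theta)$, and only afterwards repairing the detached initial link via \eqref{Cond:2} applied to $R_c((\Gamma,\Delta),\|\phi\|_{\mathcal{M}_c},init(\alpha'))$ and $init(\alpha')\leq_c init(\beta')$. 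The two inductions are mirror images of one another, each using \eqref{Cond:2} exactly once per step, so nothing substantive is gained or lost either way: your version tracks the intuition of propagating the final $\leq_c$-step leftward more directly, while the paper's version has the mild bookkeeping advantage that the induction target $(\Xi,\Theta)$ stays constant throughout the recursion.
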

\begin{proof}
	(Part 1) Assume the hypothesis. Since $\alpha\mathrel{\prec}\beta$, we must have $end(\alpha)\leq_c end(\beta)$, and, since $\alpha' = \alpha^\frown(\phi, (\Gamma, \Delta)) \in Seq$, we must have $R_c(end(\alpha),\|\phi\|_{\mathcal{M}_c}, (\Gamma, \Delta))$. Since $\mathcal{M}_c$ satisfies condition \eqref{Cond:1} of Definition \ref{D:model}, there must exist some $(\Xi, \Theta)\in W_c$ such that both $(\Gamma, \Delta)\mathrel{\leq_c}(\Xi, \Theta)$ and $R_c(end(\beta),\|\phi\|_{\mathcal{M}_c}, (\Xi, \Theta))$. The latter means that we have both $\beta' = \beta^\frown(\phi, (\Xi, \Theta)) \in Seq$ and $\alpha'\mathrel{\prec}\beta'$.
	
	(Part 2) By induction on the length of $\alpha$. If $\alpha$ has length $1$, then we can set $\beta:= (\Xi,\Theta)$. If $\alpha$ has length $k + 1$ for some $1 \leq k < \omega$, then we apply IH and Part 1.
	
	(Part 3) Again, we proceed by induction on the length of $\alpha$. If $\alpha$ has length $1$, then we can set $\beta:= (\Xi,\Theta)$. If $\alpha$ has length $k + 1$ for some $1 \leq k < \omega$, then for some $\alpha' \in Seq$ of length $k$ and some $\phi \in \mathcal{CN}$, we must have $\alpha = ((\Gamma, \Delta), \phi)^\frown(\alpha')$, with $end(\alpha) = end(\alpha')$. Applying now IH to $\alpha'$, we find some $\beta' \in Seq$ such that $end(\beta') = (\Xi,\Theta)$ and $\alpha'\mathrel{\prec}\beta'$. But then, in particular, we must have $init(\alpha') \leq_c init(\beta')$. Moreover, since $\alpha = ((\Gamma, \Delta), \phi)^\frown(\alpha') \in Seq$, we must also have $R_c((\Gamma, \Delta),\|\phi\|_{\mathcal{M}_c}, init(\alpha'))$. But then, since $\mathcal{M}_c$ satisfies condition \eqref{Cond:2} of Definition \ref{D:model}, there must exist some $(\Gamma', \Delta')\in W_c$ such that both $(\Gamma, \Delta)\mathrel{\leq_c}(\Gamma', \Delta')$ and $R_c((\Gamma', \Delta'),\|\phi\|_{\mathcal{M}_c},init(\beta'))$. For $\beta := ((\Gamma', \Delta'), \phi)^\frown(\beta')$ we have $\beta \in Seq$, $end(\beta) = end(\beta') = (\Xi,\Theta)$, and  $\alpha\mathrel{\prec}\beta$.  
\end{proof}
 The following Lemma sums up some facts about local choice functions:
\begin{lemma}\label{L:choice-functions}
	Let  $(\Gamma, \Delta) \in W_c$. Then the following statements hold:
	\begin{enumerate}
		\item For every $(\Xi, \Theta)\in W_c$ such that $(\Gamma,\Delta)\leq_c (\Xi,\Theta)$, we have $\mathfrak{F}((\Gamma,\Delta),(\Xi,\Theta))\neq \emptyset$.
		
		\item For every $\alpha \in Seq(\Gamma,\Delta)$ and every $(\Xi, \Theta)\in W_c$ such that $end(\alpha) \leq_c (\Xi, \Theta)$, there exist a $\beta \in Seq$ such that $end(\beta) = (\Xi, \Theta)$, and an $f\in\mathfrak{F}((\Gamma,\Delta),init(\beta))$ such that $f(\alpha) = \beta$.
		
		\item For every $(\Xi, \Theta)\in W_c$, $id[Seq(\Xi,\Theta)]\in \mathfrak{F}((\Xi,\Theta),(\Xi,\Theta))$.
		
		\item Given an $n \in \omega$, any $(\Gamma_0,\Delta_0),\ldots,(\Gamma_n, \Delta_n)\in W_c$ such that $(\Gamma_0,\Delta_0)\leq_c\ldots\leq_c(\Gamma_n, \Delta_n)$, and any $f_1,\ldots,f_n$ such that for every $i < n$ we have $f_{i + 1}\in \mathfrak{F}((\Gamma_i, \Delta_i),(\Gamma_{i + 1}, \Delta_{i + 1}))$, we also have that $f_1\circ\ldots\circ f_n \in \mathfrak{F}((\Gamma_0, \Delta_0),(\Gamma_{n}, \Delta_{n}))$.
	\end{enumerate} 
\end{lemma}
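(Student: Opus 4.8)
The plan is to derive all four parts directly from Lemma \ref{L:standard-sequence}, which already supplies the substantive completion facts about standard sequences; once those are granted, what remains is little more than a bookkeeping exercise combining the axiom of choice with the observation that the extension relation $\prec$ is a preorder. Accordingly, I would open the proof by recording that $\prec$ is reflexive and transitive on $Seq$: both follow at once from Definition \ref{D:standard-sequence}, since its clauses (1) and (2) are equalities of length and of the interpolated formulas, while clause (3) is a finite conjunction of $\leq_c$-comparisons, and $\leq_c$ is itself reflexive and transitive. This preorder observation is the only structural ingredient the remaining arguments will need.

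Parts 1 and 3 are then immediate. For Part 1, given $(\Gamma,\Delta)\leq_c(\Xi,\Theta)$, Lemma \ref{L:standard-sequence}.2 tells us that for every $\alpha\in Seq(\Gamma,\Delta)$ the set of $\beta\in Seq(\Xi,\Theta)$ with $\alpha\prec\beta$ is nonempty; a single appeal to choice selects one such $\beta$ for each $\alpha$, producing a function $f:Seq(\Gamma,\Delta)\to Seq(\Xi,\Theta)$ with $\alpha\prec f(\alpha)$ throughout, i.e. $f\in\mathfrak{F}((\Gamma,\Delta),(\Xi,\Theta))$. For Part 3, reflexivity of $\leq_c$ legitimizes $(\Xi,\Theta)\leq_c(\Xi,\Theta)$, and reflexivity of $\prec$ gives $\alpha\prec\alpha=id[Seq(\Xi,\Theta)](\alpha)$ for every $\alpha$, so the identity map is a local choice function.

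Part 2 is the step I expect to require the most care, although it too is routine. Here I would first invoke Lemma \ref{L:standard-sequence}.3 to produce a \emph{single} $\beta\in Seq$ with $\alpha\prec\beta$ and $end(\beta)=(\Xi,\Theta)$. Reading clause (3) of $\prec$ at the initial coordinate, and using $init(\alpha)=(\Gamma,\Delta)$, yields $(\Gamma,\Delta)\leq_c init(\beta)$, which is exactly what is needed for $Seq(init(\beta))$ to be an admissible codomain. The desired choice function is then assembled by hand: put $f(\alpha):=\beta$, and for every other $\gamma\in Seq(\Gamma,\Delta)$ use Lemma \ref{L:standard-sequence}.2 (exactly as in Part 1) to select some $f(\gamma)\in Seq(init(\beta))$ with $\gamma\prec f(\gamma)$. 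The only thing to verify is the consistency of this prescription, namely that $\beta\in Seq(init(\beta))$ and $\alpha\prec\beta$, both of which hold by construction; hence $f\in\mathfrak{F}((\Gamma,\Delta),init(\beta))$ with $f(\alpha)=\beta$.

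Finally, Part 4 I would prove by induction on $n$, reading the composition in the paper's reversed convention $f_1\circ\ldots\circ f_n:\alpha\mapsto f_n(\ldots f_1(\alpha)\ldots)$. The base case $n=0$ is the empty composition, which by the convention on empty compositions fixed in Section \ref{S:Prel} equals $id[Seq(\Gamma_0,\Delta_0)]$ and so lies in $\mathfrak{F}((\Gamma_0,\Delta_0),(\Gamma_0,\Delta_0))$ by Part 3. For the step one writes $f_1\circ\ldots\circ f_{n+1}=(f_1\circ\ldots\circ f_n)\circ f_{n+1}$; the inductive hypothesis makes $g:=f_1\circ\ldots\circ f_n$ a member of $\mathfrak{F}((\Gamma_0,\Delta_0),(\Gamma_n,\Delta_n))$, and then for each $\alpha\in Seq(\Gamma_0,\Delta_0)$ we have $\alpha\prec g(\alpha)$ and $g(\alpha)\prec f_{n+1}(g(\alpha))$, whence $\alpha\prec (g\circ f_{n+1})(\alpha)$ by transitivity of $\prec$, while the domains and codomains match coordinatewise. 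Thus the genuine mathematical content—completing the diagrams dictated by conditions \eqref{Cond:1} and \eqref{Cond:2}—has already been absorbed into Lemma \ref{L:standard-sequence}, and the present lemma reduces to the preorder bookkeeping and choice described above, which is exactly why I would not expect any real obstacle beyond the careful handling of Part 2.
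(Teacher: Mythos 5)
Your proof is correct, and for Parts 1, 3, and 4 it coincides with the paper's own argument, which disposes of them exactly as you do: Lemma \ref{L:standard-sequence}.2 plus the Axiom of Choice for Part 1, triviality (reflexivity) for Part 3, and an easy induction on $n$ using transitivity of $\prec$ for Part 4. The one substantive divergence is Part 2. The paper also starts from Lemma \ref{L:standard-sequence}.3 to obtain $\beta$ with $\alpha \prec \beta$ and $end(\beta) = (\Xi,\Theta)$, but it then builds $f$ \emph{coherently}: it maps each initial segment $((\Gamma_0,\Delta_0),\phi_1,\ldots,\phi_i,(\Gamma_i,\Delta_i))$ of $\alpha$ to the corresponding initial segment of $\beta$, and extends this partial function to the rest of $Seq(\Gamma,\Delta)$ by induction on the length of a standard sequence, invoking the one-step extension property of Lemma \ref{L:standard-sequence}.1 at each stage. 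You instead pin $f(\alpha) := \beta$ outright and fill in every other value by an unconstrained appeal to Lemma \ref{L:standard-sequence}.2 together with choice, which is legitimate because you correctly extracted $(\Gamma,\Delta) \leq_c init(\beta)$ from clause (3) of the definition of $\prec$. Your shortcut is sound precisely because membership in $\mathfrak{F}((\Gamma,\Delta),init(\beta))$ demands nothing beyond pointwise $\prec$-extension with the right codomain; there is no coherence condition tying the values of $f$ at different sequences, so the value at the single point $\alpha$ may simply be overridden. What you gain is economy: Lemma \ref{L:standard-sequence}.1 and the induction on sequence length disappear from the argument entirely. What the paper's construction yields is a choice function that respects initial segments and one-step extensions, but this extra structure is never exploited afterwards --- Lemma \ref{L:global-functions}.2 consumes the present lemma only as a black box --- so your version suffices for all downstream uses.
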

\begin{proof}
	(Part 1) By Lemma  \ref{L:standard-sequence}.2 and the Axiom of Choice.
	
	(Part 2) Assume the hypothesis. By Lemma  \ref{L:standard-sequence}.3, we can find a $\beta\in Seq$ such that both $\alpha\mathrel{\prec}\beta$ and $end(\beta) = (\Xi, \Theta)$ are satisfied. Trivially, we must also have $\beta \in Seq(init(\beta))$. Assume, wlog, that $\alpha = ((\Gamma_0,\Delta_0),\phi_1,\ldots,\phi_n, (\Gamma_n, \Delta_n))$ and that $\beta = ((\Xi_0,\Theta_0),\phi_1,\ldots,\phi_n, (\Xi_n, \Theta_n))$. We now set $f((\Gamma_0,\Delta_0),\phi_1,\ldots,\phi_i, (\Gamma_i, \Delta_i)):= ((\Xi_0,\Theta_0),\phi_1,\ldots,\phi_i, (\Xi_i, \Theta_i))$ for every $i \leq n$ and, proceeding by induction on the length of a standard sequence, extend this partial function to other elements of $Seq(\Gamma,\Delta)$ in virtue of Lemma \ref{L:standard-sequence}.1. The resulting function $f$ clearly has the desired properties.
	
	Part 3 is trivial, and an easy induction on $n \in \omega$ also yields us Part 4.
\end{proof}
\begin{proof}[Proof of Lemma \ref{L:global-functions}]
	(Part 1) Note that we have $Seq(\Gamma_0,\Delta_0) \cap Seq(\Gamma_1,\Delta_1) = \emptyset$ whenever $(\Gamma_0,\Delta_0), (\Gamma_1,\Delta_1) \in W_c$ are such that $(\Gamma_0,\Delta_0)\neq (\Gamma_1,\Delta_1)$. Therefore, we can define the global choice function in question by $F:= (\bigcup\{Id[Seq(\Gamma_0,\Delta_0)]\mid (\Gamma_0,\Delta_0)\neq (\Gamma,\Delta)\}) \cup f$.
	Lemma \ref{L:choice-functions}.3 then implies that $F\in\mathfrak{G}$.
	
	(Part 2) Assume the hypothesis. By Lemma  \ref{L:choice-functions}.2, we can choose a $\beta \in Seq$ such that $end(\beta) = (\Xi, \Theta)$, and an $f\in\mathfrak{F}((\Gamma,\Delta),init(\beta))$ such that $f(\alpha) = \beta$. By Part 1, we can find an $F\in\mathfrak{G}$ such that $F\upharpoonright Seq(\Gamma,\Delta) = f$ and thus also $F(\alpha) = f(\alpha) = \beta$.  Parts 3 and 4 are, again, straightforward.
	
	As for Part 5, note that we must have  both $\alpha \in Seq(init(\alpha))$ and, for an appropriate $(\Gamma, \Delta) \in W_c$, that $F\upharpoonright Seq(init(\alpha)) \in \mathfrak{F}(init(\alpha), (\Gamma, \Delta))$. But then we must have 
	\noindent$\alpha\mathrel{\prec}(F\upharpoonright Seq(init(\alpha)))(\alpha) = F(\alpha)$ by definition of a local choice function.
\end{proof}

\end{appendices}

}
\end{document}